\DeclareMathOperator{\HOM}{hom}
\DeclareMathOperator{\Hom}{Hom}
\newcommand{\homu}{\HOM_{\textbf{u}}}
\newcommand{\D}{\mathscr{D}}
\DeclareMathOperator{\Der}{Der}
\DeclareMathOperator{\Log}{log}
\DeclareMathOperator{\ann}{ann}
\DeclareMathOperator{\SP}{Sp}
\DeclareMathOperator{\Sp}{\SP^{\bullet}}
\DeclareMathOperator{\gr}{gr}
\DeclareMathOperator{\In}{in}
\DeclareMathOperator{\Spec}{Spec}
\DeclareMathOperator{\Jac}{Jac}
\DeclareMathOperator{\V}{V}
\DeclareMathOperator{\x}{\mathfrak{x}}
\DeclareMathOperator{\y}{\mathfrak{y}}
\newtheorem{theorem}{Theorem}[section]
\newtheorem{lemma}[theorem]{Lemma}
\newtheorem{cor}[theorem]{Corollary}
\newtheorem{prop}[theorem]{Proposition}
\newtheorem{conjecture}[theorem]{Conjecture}
\theoremstyle{definition}
\newtheorem{example}[theorem]{Example}
\newtheorem{definition}[theorem]{Definition}
\newtheorem{Convention}[theorem]{Convention}
\theoremstyle{remark}
\newtheorem{remark}[theorem]{Remark}
\numberwithin{equation}{section}
\begin{document}
\-

\title{Bernstein--Sato varieties and annihilation of powers}


\author{Daniel Bath}
\address{Department of Mathematics, Purdue University, West Lafayette, IN, USA.}
\email{dbath@purdue.edu}
\thanks{This work was in part supported by the NSF through grant DMS-1401392 and by the Simons Foundation Collaboration Grant for Mathematicians \#580839.}

\subjclass[2010]{Primary 14F10; Secondary 32S40, 32S05, 32S22, 55N25, 32C38.}
\keywords{Bernstein--Sato, b-function, hyperplane, arrangement, D-module, tame, free divisors, logarithmic, differential, annihilator, Spencer, Lie--Rinehart, Milnor fiber, cohomology support, local system, monodromy, characteristic variety, zeta function, monodromy conjecture}



\begin{abstract}
    Given a complex germ $f$ near the point $\x$ of the complex manifold $X$, equipped with a factorization $f = f_{1} \cdots f_{r}$, we consider the $\D_{X,\x}[s_{1}, \dots, s_{r}]$-module generated by $ F^{S} := f_{1}^{s_{1}} \cdots f_{r}^{s_{r}}$. We show for a large class of germs that the annihilator of $F^{S}$ is generated by derivations and this property does not depend on the chosen factorization of $f$. 
    
    We further study the relationship between the Bernstein--Sato variety attached to $F$ and the cohomology support loci of $f$, via the $\D_{X,\x}$-map $\nabla_{A}$. This is related to multiplication by $f$ on certain quotient modules. We show that for our class of divisors the injectivity of $\nabla_{A}$ implies its surjectivity. Restricting to reduced, free divisors, we also show the reverse, using the theory of Lie--Rinehart algebras. In particular, we analyze the dual of $\nabla_{A}$ using techniques pioneered by Narv\'aez--Macarro.
    
    As an application of our results we establish a conjecture of Budur in the tame case: if $\V(f)$ is a central, essential, indecomposable, and tame hyperplane arrangement, then the Bernstein--Sato variety associated to $F$ contains a certain hyperplane. By the work of Budur, this verifies the Topological Mulivariable Strong Monodromy Conjecture for tame arrangements. Finally, in the reduced and free case, we characterize local systems outside the cohomology support loci of $f$ near $\x$ in terms of the simplicity of modules derived from $F^{S}.$
\end{abstract}

\maketitle

\setcounter{tocdepth}{1}
\tableofcontents

\section{Introduction}

Let $X$ be a smooth analytic space or $\mathbb{C}$-scheme of dimension $n$ with structure sheaf $\mathscr{O}_{X}$ and with the sheaf of $\mathbb{C}$-linear differential operators $\D_{X}$. Take a global function $f \in \mathscr{O}_{X}$. The classical construction of the Bernstein--Sato polynomial of $f$ is as follows:

\begin{enumerate}
    \item Consider the $\mathscr{O}_{X}[f^{-1}, s]$-module generated by the symbol $f^{s}.$ This has a $\D_{X}[s]$-module structure induced by the formal rules of calculus.
    \item  The Bernstein--Sato ideal $B_{f}$ of $f$ is
     \[ 
     B_{f} := \mathbb{C}[s] \cap \left(\D_{X}[s] \cdot f + \ann_{\D_{X}[s]} f^{s} \right).
     \]
    \item For $X = \mathbb{C}^{n}$ and $f$ a polynomial, Bernstein showed in \cite{Bernstein} that $B_{f}$ is not zero. For $f$ local and analytic, Kashiwara \cite{Kashiwara-local-existence} proved the same. Since $B_{f}$, or the local version $B_{f, \x}$, is an ideal in $\mathbb{C}[s]$ it has a monic generator, the Bernstein--Sato polynomial of $f$. 
    
\end{enumerate}

The variety $\V(B_{f})$ contains a lot of information about the divisor of $f$ and its singularities. For example, if $\text{Exp}(a) = e^{2\pi i a}$ and if $M_{f,\y}$ is the Milnor Fiber of $f$ at $\y \in \V (f)$, cf. \cite{Milnor}, then Malgrange and Kashiwara showed in \cite{Malgrange}, \cite{Kashiwara} that
\[
\text{Exp}(\V (B_{f,\x})) = \bigcup\limits_{\y \in \V (f) \text{ near } \x} \{ \text{ eigenvalues of the algebraic monodromy on } M_{f,\y} \}
\] 

Suppose $f$ factors as $ f_{1} \cdots f_{r}$. Let $F = (f_{1}, \dots, f_{r}).$ Then there is a generalization of the Bernstein--Sato ideal $B_{f}$ of $f$ called the multivariate Bernstein--Sato ideal $B_{F}$ of $F$ obtained in a similar way.

\begin{enumerate}
    \item Introduce new variables $S := s_{1}, \dots, s_{r}$. Consider the $\mathscr{O}_{X}[F^{-1}, S]$-module generated by the symbol $F^{S} = \prod f_{k}^{s_{k}}.$ Again, this is a $\D_{X}[S]$-module via formal differentiation.
    \item The multivariate Bernstein--Sato ideal $B_{F}$ is
    \[ B_{F} := \mathbb{C}[S] \cap \left( \D_{X}[S] \cdot f + \ann_{\D_{X}[S]}F^{S} \right). \]
    \item For $X = C^{n}$ and $f_{1}, \dots, f_{r}$ polynomials, $B_{F}$ is nonzero, see \cite{Lichtin}. Sabbah proved in \cite{Sabbah} the corresponding statement for $f_{1}, \dots, f_{r}$ local and analytic. However neither $B_{F}$ nor $B_{F,\x}$ need be principal: cf. Bahloul and Oaku \cite{BahloulOaku}. 
\end{enumerate}

The significance of $\V(B_{F})$ or the local version $\V(B_{F,\x})$ is less developed than the univariate counterparts. 
Let $f = f_{1} \cdots f_{r}$ be a product of distinct and irreducible germs at $\x$ and let $F = (f_{1}, \dots, f_{r})$. Let $U_{F,\y}$ be the intersection of a small ball about $\y \in \V(f)$ with $X \setminus \V(f)$. Denote by $V(U_{F,\y})$ the rank one local systems on $U_{F,\y}$ with nontrivial cohomology, i.e. the set of rank one local systems $L$ such that $H^{k}(U_{F, \y}, L)$ is nonzero for some $k$. This is the cohomology support locus of $f$ at $\y$ in the language of Budur and others. Since local systems can be identified with representations $\pi_{1}(U_{F,\y}) \to \mathbb{C}^{\star}$, regard $V(U_{F,\y}) \subseteq (\mathbb{C}^{*})^{r}.$ In \cite{BudurBernstein--Sato}, Budur proposes that the relationship between the roots of the Bernstein--Sato polynomial and the eigenvalues of the algebraic monodromy is generalized by the conjecture
\begin{equation} \label{eqn budur conjecture intro}
\text{Exp}(\V(B_{F,\x})) = \bigcup\limits_{\y \in \V(f) \text{ near } \x} \text{res}_{\y}^{-1} (V(U_{F,\y})). 
\end{equation}
where $\text{res}_{\y}$ restricts a local system on $U_{\x}$ to a local system on $U_{\y}$.
(This generalization passes through the support of the Sabbah specialization complex in the same way that the proof of the univariate version uses the support of the nearby cycle functor.)

This paper follows two threads. First we study the logarithmic derivations $\Der_{X}(- \log f)$ of $f$ inside $\ann_{\D_{X}[S]} F^{S}.$ We are motivated by \cite{uli} where Walther shows that, in the univariate case and with some mild hypotheses on the divisor of $f$, these members generate $\ann_{\D_{X}[s]}f^{s}.$ 

We restrict ourselves to ``nice" divisors: strongly Euler-homogeneous (possessing a particular logarithmic derivation locally everywhere); Saito-holonomic (the logarithmic stratification is locally finite); tame (a restriction on homological dimension). The main result of Section 2 is the following:

\begin{theorem} \label{thm intro first theorem}
Let $F = (f_{1}, \dots, f_{r})$ be a decomposition of $f = f_{1} \cdots f_{r}.$ If $f$ is strongly Euler-homogeneous, Saito-holonomic, and tame then 
\[
\ann_{\D_{X}[S]}F^{S} = \D_{X}[S] \cdot \{ \delta - \sum\limits_{k=1}^{r} s_{k} \frac{\delta \bullet f_{k}}{f_{k}} \mid \delta \in \Der_{X}(-\log f) \}.
\]
\end{theorem}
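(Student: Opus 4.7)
The plan is to adapt Walther's strategy from the univariate case, using the three hypotheses as follows: strong Euler-homogeneity furnishes a distinguished logarithmic derivation $E$ locally; tameness provides a short enough Koszul/Spencer-type resolution via the Lie--Rinehart algebra $\Der_X(-\log f)$; and Saito-holonomicity pins down the logarithmic characteristic variety.

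First I would dispose of the easy inclusion $\supseteq$. For $\delta \in \Der_X(-\log f)$, the Leibniz expansion $\delta \bullet f = \sum_k (\prod_{j \neq k} f_j)(\delta \bullet f_k)$ combined with $f \mid \delta \bullet f$ and the irreducibility/distinctness of the $f_k$ in the UFD $\O_{X,\x}$ force $f_k \mid \delta \bullet f_k$ for every $k$. Thus $a_k^\delta := (\delta \bullet f_k)/f_k \in \O_X$, and formally $\delta \bullet F^S = \bigl(\sum_k s_k a_k^\delta\bigr) F^S$, showing that each proposed generator $\delta - \sum_k s_k a_k^\delta$ indeed annihilates $F^S$.

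For the hard inclusion $\subseteq$, write $A$ for the left $\D_X[S]$-ideal generated by these elements and work locally near $\x$. Strong Euler-homogeneity produces $E \in \Der_X(-\log f)$ with $E \bullet f = f$, hence $\sum_k a_k^E = 1$, and the element $E - \sum_k s_k a_k^E \in A$ plays the role of the familiar operator $\theta - s$ from the univariate story. I would equip $\D_X[S]$ with a good filtration (assigning $s_k$ and derivations degree $1$) and reduce the desired equality $A = \ann_{\D_X[S]} F^S$ to an equality of associated graded ideals inside $\O_{T^*X}[S]$, i.e.\ to an equality of characteristic varieties for the corresponding cyclic modules.

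The tame hypothesis provides a Koszul/Spencer-type resolution of $\D_X[S]/A$ built out of $\Der_X(-\log f)$, bounding its characteristic variety from above by the logarithmic characteristic variety of $f$, crossed with $\Spec \mathbb{C}[S]$; Saito-holonomicity guarantees that this bound is pure of the expected dimension $n+r$. Since the characteristic variety of $\D_X[S] \cdot F^S \subseteq \O_X[F^{-1},S] F^S$ contains the same logarithmic characteristic variety and has no larger dimension, the two agree, forcing $A = \ann F^S$ via a standard symbol-lifting argument anchored at the Euler element. The main obstacle is producing the Spencer-type resolution with the multivariate parameters correctly tracked: the univariate argument hinges on exactness of a single Koszul-like complex in low degree, and the multivariate extension requires one to check exactness after adjoining $\mathbb{C}[S]$ and to verify that the extra $s_k$-terms introduced by the Leibniz rule on the logarithmic generators do not spoil the dimensional bounds coming from Saito-holonomicity.
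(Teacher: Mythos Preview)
Your outline has the right overall shape (pass to the associated graded under the $(0,1,1)$-filtration, compare the ideal generated by symbols of $\theta_F$ with $\gr_{(0,1,1)}(\ann F^S)$, then lift by a symbol argument), but the middle step is where the work lies, and there you diverge from what actually makes the tame case go through.

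First, a small correction: in the easy inclusion you invoke irreducibility and distinctness of the $f_k$, which are not among the hypotheses. The paper instead cites $\Der_X(-\log f)=\bigcap_k\Der_X(-\log f_k)$ (Granger--Schulze), which needs no such assumption and immediately yields $f_k\mid\delta\bullet f_k$.

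The real gap is your mechanism for the hard inclusion. You propose that tameness furnishes a Koszul/Spencer-type resolution of $\D_X[S]/A$, bounding the characteristic variety. But Spencer complexes built from $\Der_X(-\log f)$ give free resolutions only when $\Der_X(-\log f)$ is itself free; that is Section~4 technology and is unavailable for merely tame divisors. Moreover, equality of characteristic \emph{varieties} is strictly weaker than equality of graded \emph{ideals}; the symbol-lifting argument only fires once you know $\gr_{(0,1,1)}(\D_X[S]\cdot\theta_F)=\gr_{(0,1,1)}(\ann F^S)$ as ideals, not merely that their zero loci agree set-theoretically.

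The paper's route is a sandwich $\widetilde{L_F}\subseteq\gr_{(0,1,1)}(\ann F^S)\subseteq\ker\phi_F$, where $\phi_F$ is a map to a multi-Rees algebra $R(\Jac(f_1),\dots,\Jac(f_r))$ manufactured precisely to dominate the graded annihilator (Proposition~2.23), and $\widetilde{L_F}$ is the symbol ideal of $\theta_F$. Both ends are shown to be \emph{prime} of dimension $n+r$, forcing equality throughout. Primality of $\widetilde{L_F}$ is the crux: the paper relates $L_F$ to Walther's univariate Liouville ideal $L_f$ via a \emph{second} grading (the $(0,1,0)$-weight erasing the $s_k$) and an initial-ideal comparison (Appendix~A), pulling Cohen--Macaulayness across; primality itself is then proved by an induction on the dimension of logarithmic strata, with the zero-dimensional case handled by Maisonobe's trick of intersecting with the Euler hypersurface. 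None of the three ingredients (the Rees-algebra upper bound, the initial-ideal transfer from $L_f$, the strata induction for primality) is captured by a Spencer-resolution heuristic, and tameness enters only through Walther's theorem that $L_f$ is Cohen--Macaulay and prime, not through any resolution of $\D_X[S]/A$.
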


The strategy is to take a filtration of $\D_{X}[S]$ and consider the associated graded object of $\ann_{\D_{X}[S]}F^{S}$. This object can be given a second filtration so its initial ideal is similar to the Liouville ideal of \cite{uli}. Appendix A provides the mild generalizations of Gr\"obner type arguments necessary to transfer properties from this initial ideal to the ideal itself and Section 2 proves nice things about our associated graded objects, culminating in Theorem \ref{thm intro first theorem}. In \cite{MaisonobeArrangement}, Maisonobe proves a similar statement in the more restrictive setting of free divisors where many of these methods are not needed. We crucially use one of his techniques.

Not much is known about particular elements of $\V(B_{F})$ even when $F$ corresponds to a factorization (not necessarily into linear forms) of a hyperplane arrangement. In \cite{BudurBernstein--Sato} Budur generalized the $- \frac{n}{d}$ conjecture (see Conjecture 1.3 of \cite{uli}) as follows:

\begin{conjecture} \label{conjecture intro}
\text{\normalfont (Conjecture 3 in \cite{BudurBernstein--Sato})} Let $f = f_{1} \cdots f_{r}$ be a central, essential, indecomposable hyperplane arrangement in $\mathbb{C}^{n}$. Let $F = (f_{1}, \dots, f_{r})$ where the $f_{k}$ are central hyperplane arrangements, not necessarily reduced, of degree $d_{k}.$ Then 
\[
\{ d_{1} s_{1} + \dots + d_{r}s_{r} + n = 0 \} \subseteq V(B_{F}).
\]
\end{conjecture}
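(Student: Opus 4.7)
The plan is to invoke Theorem~\ref{thm intro first theorem} to get a concrete presentation of $\ann F^{S}$ by logarithmic operators, and then to combine the Euler relation with the Spencer/Lie--Rinehart machinery developed in the rest of the paper to isolate the factor $L := d_{1} s_{1} + \cdots + d_{r} s_{r} + n$ in $B_{F}$.

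First, a tame central arrangement is automatically strongly Euler-homogeneous (the global Euler derivation $E = \sum_{i} x_{i} \partial_{i}$ satisfies $E \bullet f_{k} = d_{k} f_{k}$, hence $E \in \Der_{X}(-\log f)$) and Saito-holonomic (the logarithmic stratification agrees with the finite lattice of flats). Theorem~\ref{thm intro first theorem} therefore yields
\[
\ann_{\D_{X}[S]} F^{S} = \D_{X}[S] \cdot \{ \theta_{\delta} : \delta \in \Der_{X}(-\log f) \},
\]
with $\theta_{\delta} = \delta - \sum_{k} s_{k} (\delta \bullet f_{k})/f_{k}$ and, in particular, $\theta_{E} = E - \sum_{k} d_{k} s_{k}$ in the annihilator. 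Since $V(L)$ is an irreducible hypersurface, the containment $V(L) \subseteq V(B_{F})$ is equivalent to $B_{F} \subseteq (L) \subset \mathbb{C}[S]$---that is, every $b \in B_{F}$ must be divisible by $L$. Recalling that $B_{F} = \ann_{\mathbb{C}[S]}[F^{S}]$ in the quotient $N_{F} := M_{F}/fM_{F}$ with $M_{F} = \D_{X}[S] \cdot F^{S}$, the task is to show that the cyclic $\mathbb{C}[S]$-submodule generated by $[F^{S}]$ has $V(L)$ in its support.

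The additive constant $n$ in $L$ enters through the arithmetic identity $\sum_{i} \partial_{i} x_{i} = E + n$ in $\D_{X}$ (obtained from the $n$ commutators $[\partial_{i}, x_{i}] = 1$). Applied to $F^{S}$ and combined with $\theta_{E} \in \ann F^{S}$, this gives in $M_{F}$
\[
\Bigl( \sum_{i} \partial_{i} x_{i} \Bigr) F^{S} = (E + n) F^{S} = \Bigl( \sum_{k} d_{k} s_{k} + n \Bigr) F^{S} = L \cdot F^{S},
\]
exhibiting $L$ as the eigenvalue of the divergence operator acting on $F^{S}$. This is the essential arithmetic source of $L$: $\sum d_{k} s_{k}$ comes from Euler-homogeneity, and the $+n$ shift from $\dim X = n$ via the divergence.

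The main work is to promote this eigenvalue-type statement into the annihilator containment $B_{F} \subseteq (L)$. I would do this through the logarithmic Spencer/Lie--Rinehart complex: the $\theta_{\delta}$ assemble into a Koszul-type complex which, for tame divisors, resolves (or tightly approximates) $M_{F}$. Multiplication by $f$ on this resolution has cokernel computing $N_{F}$, and the Narv\'aez--Macarro analysis of the dual of $\nabla_{A}$ invoked in the abstract should show that the $\mathbb{C}[S]$-annihilator of $[F^{S}]$ is governed by the top exterior power $\wedge^{n} \Der_{X}(-\log f)$; combined with the identity above, this pins the factor $L$. Essentiality of the arrangement ensures the rank-$n$ part of $\Der_{X}(-\log f)$ acts effectively (no trivial direction weakens the constant $n$), while indecomposability prevents $L$ from refining further. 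The principal obstacle is precisely this last step: the Euler/divergence identity by itself only realises $L$ as an eigenvalue on $F^{S}\in M_{F}$; passing from there to $B_{F} \subseteq (L)$ requires the full dual/transpose analysis of $\nabla_{A}$ along $V(L)$, extending Maisonobe's free-divisor argument to the merely tame setting where Saito's logarithmic module is not projective. Making this transfer rigorous for non-free tame arrangements is the heart of the technical difficulty, and is where the Spencer complex, Lie--Rinehart cohomology, and the Narv\'aez--Macarro transpose must all be brought to bear.
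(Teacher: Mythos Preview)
Your opening is correct: under tameness Theorem~\ref{thm intro first theorem} applies, and the Euler identity $\theta_{E}=E-\sum_{k} d_{k} s_{k}$ together with $\sum_{i}\partial_{i} x_{i}=E+n$ is exactly the arithmetic source of $L=d_{1}s_{1}+\cdots+d_{r}s_{r}+n$. But from that point you head in the wrong direction.

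The Spencer/Lie--Rinehart complex and the Narv\'aez--Macarro duality are developed in the paper only for \emph{free} divisors (Section~4) and are used to analyse $\nabla_{A}$, not to prove this theorem. For merely tame, non-free arrangements that machinery is not available in the form you want, so the route you sketch does not close.

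What you are missing is a far more elementary structural fact that finishes the argument outright. For a central, essential, \emph{indecomposable} arrangement there is a coordinate system in which every homogeneous element of $\Der_{X}(-\log f)$ is either of strictly positive $(1,-1)$-degree or a scalar multiple of $E=\sum_{i} x_{i}\partial_{i}$ (this is proved by Walther in the univariate $-n/d$ argument). For such $\delta$ of positive degree the coefficients $(\delta\bullet f_{k})/f_{k}$ lie in $\mathfrak m_{0}$, so $\psi_{F}(\delta)\in D_{n}[S]\cdot(X)$, the left ideal generated by $x_{1},\dots,x_{n}$. Since also $f\in(X)$ and $E+n=\sum_{i}\partial_{i}x_{i}\in D_{n}\cdot(X)$, the description of the annihilator from Theorem~\ref{thm intro first theorem} gives
\[
\ann_{D_{n}[S]}F^{S}+D_{n}[S]\cdot f \;\subseteq\; D_{n}[S]\cdot(X)\;+\;D_{n}[S]\cdot\bigl(-n-\textstyle\sum_{k} d_{k}s_{k}\bigr).
\]
Now intersect with $\mathbb{C}[S]$: for $P(S)\in B_{F}$ and any $\alpha\in V(L)$, evaluating $s_{k}\mapsto\alpha_{k}$ sends $P$ into $D_{n}\cdot(X)\cap\mathbb{C}=0$, since $D_{n}\cdot(X)$ is a proper left ideal. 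Hence $P$ vanishes on $V(L)$, i.e.\ $V(L)\subseteq V(B_{F})$.

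So indecomposability enters not as a vague ``$L$ does not refine further'' principle but as the concrete statement that $\Der_{X}(-\log_{0} f)$ is contained in $\mathfrak m_{0}\cdot\Der_{X}$ modulo the Euler line; and the passage from ``$L$ is an eigenvalue on $F^{S}$'' to ``$V(L)\subseteq V(B_{F})$'' is a two-line left-ideal computation, not a duality argument.
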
 
Using Theorem \ref{thm intro first theorem}, we can prove Conjecture \ref{conjecture intro} in the tame case:

\begin{theorem} \label{thm intro conjecture}

Let $f = f_{1} \cdots f_{r}$ be a central, essential, indecomposable, and tame hyperplane arrangement in $\mathbb{C}^{n}$. Let $F = (f_{1}, \dots, f_{r})$ where the $f_{k}$ are central hyperplane arrangements, not necessarily reduced, of degree $d_{k}.$ Then 
\[
\{ d_{1} s_{1} + \dots + d_{r} s_{r} + n = 0 \} \subseteq V(B_{F}).
\]
\end{theorem}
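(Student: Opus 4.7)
The plan is to combine Theorem \ref{thm intro first theorem} with the Euler derivation to force divisibility by $\tau := d_{1} s_{1} + \cdots + d_{r} s_{r} + n$ of every element of $B_{F, \x}$. Theorem \ref{thm intro first theorem} supplies the equality
\[
\ann_{\D_{X,\x}[S]} F^{S} = \D_{X,\x}[S] \cdot \bigl\{ \delta - \textstyle\sum_{k} s_{k} \delta(f_{k})/f_{k} : \delta \in \Der_{X,\x}(-\log f) \bigr\}.
\]
Centrality of $f$ and the homogeneity of each $f_{k}$ of degree $d_{k}$ place the Euler derivation $E = \sum_{i} x_{i} \partial_{i}$ inside $\Der_{X,\x}(-\log f)$ with $E(f_{k}) = d_{k} f_{k}$. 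Thus $E - \sum_{k} d_{k} s_{k} \in \ann F^{S}$, and via $\sum_{i} \partial_{i} x_{i} = E + n$ this yields
\[
\textstyle\sum_{i} \partial_{i} x_{i} - \tau \in \ann F^{S},
\]
an operator in which $\tau$ appears explicitly.

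Given $b(S) \in B_{F,\x}$, write $b(S) = P(S) f + Q(S)$ with $Q(S) \in \ann F^{S}$, producing the functional equation $b(S) F^{S} = P(S) f F^{S}$. To show $\tau \mid b(S)$, I would specialize to a generic $\sigma \in \V(\tau)$ and prove $b(\sigma) = 0$. Absent this, we would have $F^{\sigma} = b(\sigma)^{-1} P(\sigma) f F^{\sigma}$, so $F^{\sigma} \in \D_{X} \cdot f F^{\sigma}$, forcing $\D_{X} F^{\sigma} = \D_{X} F^{\sigma + \mathbf{1}}$ as cyclic submodules of $\mathscr{O}_{X}[f^{-1}] F^{\sigma}$. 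Ruling out this equality is the entire content of the theorem.

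The main obstacle is producing an invariant on $\V(\tau)$ that detects the strict inclusion $\D_{X} F^{\sigma+\mathbf{1}} \subsetneq \D_{X} F^{\sigma}$. My approach is to exploit the top form $\omega = dx_{1} \wedge \cdots \wedge dx_{n}$ together with the primitive $\eta = \sum_{i} (-1)^{i-1} x_{i} \, dx_{1} \wedge \cdots \wedge \widehat{dx_{i}} \wedge \cdots \wedge dx_{n}$. A direct calculation using $E(f_{k}) = d_{k} f_{k}$ yields
\[
d(\eta F^{\sigma}) = \bigl( n + \textstyle\sum_{k} d_{k} \sigma_{k} \bigr) \omega F^{\sigma} = \tau(\sigma) \, \omega F^{\sigma},
\]
so the natural candidate primitive for $\omega F^{\sigma}$ degenerates precisely on $\V(\tau)$. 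For central essential indecomposable arrangements, this should translate to a non-trivial class of $\omega F^{\sigma}$ in the top de Rham cohomology of $\D_{X} F^{\sigma}$ that is not realized by any class coming from $\D_{X} F^{\sigma+\mathbf{1}}$: essentiality guarantees the origin lies in the singular locus so that the class is not killed by restriction to a smooth substratum, and indecomposability prevents a product factorization under which the class would descend to a lower-dimensional arrangement (where the analogous $\tau$ would differ). Tameness enters only through Theorem \ref{thm intro first theorem}, which guarantees the annihilator is small enough that the Euler-weight bookkeeping above actually controls $\ann F^{S}$.
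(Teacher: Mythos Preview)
Your opening is right: Theorem \ref{thm intro first theorem} plus the Euler derivation $E$ give $E-\sum_k d_k s_k\in\ann F^S$, equivalently $\sum_i\partial_i x_i-\tau\in\ann F^S$. But after that point your route diverges from the paper's and runs into a real gap.

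The paper does \emph{not} try to rule out $\D_X F^{\sigma}=\D_X F^{\sigma+\mathbf 1}$ via de Rham cohomology. Instead it uses a structural fact about indecomposable central arrangements (Walther, Theorem 5.13 of \cite{uli}): after a linear change of coordinates, every homogeneous element of $\Der_X(-\log f)$ is either a scalar multiple of $E$ or has strictly positive $(1,-1)$-degree. Combined with Theorem \ref{thm intro first theorem}, this forces \emph{every} generator $\psi_F(\delta)$ of $\ann F^S$ other than $\psi_F(E)$ to lie in the left ideal $D_n[S]\cdot(x_1,\dots,x_n)$. Since also $f\in(x_1,\dots,x_n)$ and $E+n\in D_n\cdot(x_1,\dots,x_n)$, one gets the containment
\[
\ann_{D_n[S]}F^S + D_n[S]\cdot f \;\subseteq\; D_n[S]\cdot(x_1,\dots,x_n) + D_n[S]\cdot\tau.
\]
Now if $b(S)\in B_F$ and $\alpha\in\V(\tau)$, evaluating $s_k\mapsto\alpha_k$ lands $b(\alpha)$ in the proper left ideal $D_n\cdot(x_1,\dots,x_n)$, hence $b(\alpha)=0$. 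This is a purely algebraic containment argument; no cohomological invariant is needed.

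Your proposal uses only the single relation coming from $E$ and then attempts to detect $\D_X F^{\sigma+\mathbf 1}\subsetneq\D_X F^{\sigma}$ by showing $\omega F^\sigma$ is a nontrivial top de Rham class when $\tau(\sigma)=0$. The computation $d(\eta F^\sigma)=\tau(\sigma)\,\omega F^\sigma$ is correct, but it only shows that one candidate primitive fails; it does not establish non-exactness of $\omega F^\sigma$. Proving that nonvanishing for generic $\sigma\in\V(\tau)$ is essentially the hard content you are trying to avoid, and your invocations of essentiality and indecomposability remain heuristic (``should translate to'', ``prevents a product factorization''). The missing idea is precisely Walther's positivity statement for $\Der_X(-\log_0 f)$ under indecomposability, which lets you control the \emph{entire} annihilator, not just the Euler piece.
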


Conjecture \ref{conjecture intro} was motivated by the formulation of the Topological Multivariable Strong Monodromy Conjecture due to Budur, see Conjecture 5 of \cite{BudurBernstein--Sato}. We now state this. First let $f = f_{1} \cdots f_{r}$ with each $f_{k} \in \mathbb{C}[x_{1}, \dots, x_{n}]$ and let $F = (f_{1}, \dots, f_{r})$. Given a log resolution $\mu: Y \to X$ of $f$, let $\{E_{i} \}_{i \in S}$ be the irreducible components of $f \circ \mu $, let $a_{i, j}$ be the order of vanishing of $f_{j}$ along $E_{i}$, let $k_{i}$ be the order of vanishing of the determinant of the Jacobian of $\mu$ along $E_{i}$, and, for $I \subseteq S$, let $E_{I}^{\circ} = \cap_{i \in I} \setminus \cup_{i \in S \setminus I} E_{i}$. The \emph{topological zeta function} of $F$ is 
\[
Z_{F}^{\text{top}}(S) := \sum\limits_{I \subseteq S} \chi(E_{I}^{\circ}) \cdot \prod\limits_{i \in I} \frac{1}{a_{i,1}s_{1} + \cdots + a_{i, r}s_{r} + k_{i} + 1}
\]
and this is independent of the resolution. Conjecture 5 of \cite{BudurBernstein--Sato} states:

\begin{conjecture} \label{conjecture topological multi strong}
\text{\normalfont (Topological Multivariable Strong Monodromy Conjecture)}
The polar locus of $Z_{F}^{\text{top}}(S)$ is contained in $\V(B_{F}).$

\end{conjecture}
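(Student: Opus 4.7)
The plan is to combine Theorem \ref{thm intro conjecture} with existing work of Budur in \cite{BudurBernstein--Sato} to establish Conjecture \ref{conjecture topological multi strong} in the tame case. The Bernstein--Sato side is supplied by the theorems in the introduction; the topological zeta function side is a combinatorial cancellation that is already present in the literature.

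First I would reduce to the indecomposable setting. If $f = g \cdot h$ where $g$ and $h$ are arrangements in disjoint sets of variables, then both sides of the conjectural inclusion factor compatibly: the topological zeta function factors because a log resolution of $f$ may be taken as a product of log resolutions of $g$ and $h$, yielding a product formula for $Z_F^{\text{top}}$; and $V(B_F)$ factors because Theorem \ref{thm intro first theorem} identifies $\ann_{\D_X[S]} F^{S}$ with an ideal generated by $\Der_X(-\log f)$, and these logarithmic derivations split along the disjoint variable sets. Thus it suffices to treat central, essential, indecomposable, tame arrangements.

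Next I would invoke Budur's analysis in \cite{BudurBernstein--Sato}. For an indecomposable, central, essential, tame arrangement he shows that every polar hyperplane of $Z_F^{\text{top}}(S)$ is forced to be $\{d_1 s_1 + \cdots + d_r s_r + n = 0\}$. This multivariable analogue of the classical $-n/d$ statement rests on an Euler characteristic cancellation along the exceptional divisors over the non-dense edges of the arrangement, carried out on a combinatorially tractable log resolution. Now Theorem \ref{thm intro conjecture} places exactly this hyperplane inside $V(B_F)$, so the inclusion required by Conjecture \ref{conjecture topological multi strong} follows.

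The main obstacle in this chain is not a new one. The combinatorial pole computation for $Z_F^{\text{top}}$ is precisely the motivation behind Conjecture \ref{conjecture intro}, and what was missing to close the loop in the tame case was only the Bernstein--Sato side, namely the containment of $\{d_1 s_1 + \cdots + d_r s_r + n = 0\}$ in $V(B_F)$. That input is exactly what Theorem \ref{thm intro conjecture}, and hence ultimately Theorem \ref{thm intro first theorem}, provides, so the proof of the Topological Multivariable Strong Monodromy Conjecture for tame arrangements is reduced to stitching together these two ingredients.
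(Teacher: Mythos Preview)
Your proposal has a genuine gap in Step~2. The claim that for an indecomposable, central, essential arrangement ``every polar hyperplane of $Z_F^{\text{top}}(S)$ is forced to be $\{d_1 s_1 + \cdots + d_r s_r + n = 0\}$'' is false. Even when the arrangement is indecomposable, its intersection lattice has many dense edges besides the origin, and each dense edge $W$ contributes a distinct candidate polar hyperplane of the form $\{\sum_{k : W \subseteq H_k} d_k s_k + \operatorname{codim} W = 0\}$. What Budur actually proves in Theorem~8 of \cite{BudurBernstein--Sato} is that each such hyperplane lies in $\V(B_F)$ \emph{provided} Conjecture~\ref{conjecture intro} holds for the arrangement localized at $W$ (which is again central, essential, and indecomposable, but in $\operatorname{codim} W$ variables). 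So Theorem~\ref{thm intro conjecture} must be applied not once but at every dense edge.

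This is exactly why the paper's one-line proof of Corollary~\ref{cor Topological Multivariable Strong Monodromy Conjecture} highlights that ``tameness is a local condition'': the localized arrangement at any edge of a tame arrangement is again tame, so Theorem~\ref{thm indecomposable} applies to each of them, and then Budur's Theorem~8 closes the loop. Your reduction to the globally indecomposable case addresses only the coarsest splitting and misses the recursive structure over all dense edges; as written, your argument would place only a single hyperplane in $\V(B_F)$ while the polar locus may contain many.
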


By work of Budur in loc. cit., Conjecture \ref{conjecture intro} implies Conjecture \ref{conjecture topological multi strong} for hyperplane arrangements. Consequently, we conclude Section 2 with the following:

\begin{cor} \label{cor topological multivariable mondodromy}
The Topological Multivariable Strong Monodromy Conjecture is true for (not necessarily reduced) tame hyperplane arrangements.
\end{cor}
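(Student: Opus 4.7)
\textbf{Plan for Corollary~\ref{cor topological multivariable mondodromy}.} The corollary should follow essentially formally from Theorem~\ref{thm intro conjecture} combined with Budur's reduction in \cite{BudurBernstein--Sato}, once one verifies that the reduction cooperates with the tameness hypothesis. I would break the argument into three steps.

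First, I would recall the shape of Budur's reduction for central hyperplane arrangements: the candidate polar hyperplanes of $Z_F^{\text{top}}(S)$ are naturally indexed by the dense edges $L$ of the intersection lattice of $\V(f)$, and the hyperplane assigned to $L$ is precisely the one appearing in Conjecture~\ref{conjecture intro} applied to the localized arrangement $\mathcal{A}_L := \{H_k : H_k \supseteq L\}$. This localization is central, essential (when viewed in the quotient by $L$), and indecomposable by the very definition of "dense edge," so the hypotheses of Conjecture~\ref{conjecture intro} hold. Compatibility of the multivariate Bernstein--Sato variety with restriction to a transversal slice through a generic point of $L$ then reduces the containment of the $L$-hyperplane in $V(B_F)$ to Conjecture~\ref{conjecture intro} for $\mathcal{A}_L$.

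Second, I would invoke the standard hereditary property that tameness of a hyperplane arrangement is inherited by its localizations at flats: if $\mathcal{A}$ is tame and $L$ is a flat, then $\mathcal{A}_L$ is again tame. This is the key compatibility between Budur's combinatorial reduction and the homological hypothesis used in Theorem~\ref{thm intro first theorem}, and it is what allows Theorem~\ref{thm intro conjecture} to be applied to every localized arrangement arising in the reduction, not merely to $\mathcal{A}$ itself.

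Third, after assembling the two previous steps, Theorem~\ref{thm intro conjecture} yields Conjecture~\ref{conjecture intro} for every $\mathcal{A}_L$, and Budur's reduction then upgrades this edge-by-edge containment to the containment of the full polar locus of $Z_F^{\text{top}}(S)$ in $V(B_F)$, which is the statement of Conjecture~\ref{conjecture topological multi strong} for $f$. The only genuine obstacle is the explicit verification of the hereditary property of tameness under localization and of the compatibility of $V(B_F)$ with transversal slicing in the non-reduced generality permitted by the corollary's statement; both are known, but should be cited or spelled out carefully to make the deduction airtight.
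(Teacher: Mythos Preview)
Your proposal is correct and follows exactly the paper's approach: the paper's proof is the single sentence ``This follows by Theorem~8 of \cite{BudurBernstein--Sato} since tameness is a local condition,'' and your three steps are precisely an unpacking of that sentence---Budur's Theorem~8 is the edge-by-edge reduction via dense-edge localizations you describe, and the phrase ``tameness is a local condition'' is the hereditary property of tameness under localization at flats that you single out as the key compatibility.
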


The paper's second thread follows the link between $\text{Exp}(\V(B_{F,\x}))$ and the cohomology support loci of $f$ near $\x$. The bridge between the two is, with $A = (a_{1}, \dots, a_{r}) \in \mathbb{C}^{r}$, resp. $A-1 = (a_{1}-1, \dots, a_{r}-1) \in \mathbb{C}^{r}$, the $\D_{X,\x}$-linear map
\[ \nabla_{A}:  \frac{\D_{X,\x}[S]F^{S}}{(S-A)\D_{X,\x}[S]F^{S}} \to \frac{\D_{X,\x}[S]F^{S}}{(S-(A-1))\D_{X,\x}[S]F^{S}}.
\]
Here $(S-A)\D_{X,\x}[S]F^{S}$, resp. $(S-(A-1))\D_{X,\x}[S]F^{S}$, is the submodule of $\D_{X,\x}[S]F^{S}$ generated by $s_{1}-a_{1}, \dots, s_{r} - a_{r}$, resp. $s_{1} - (a_{1} - 1), \dots, s_{r} - (a_{r} - 1)$, and $\nabla_{A}$ is induced by $F^{S} \mapsto F^{S+1}.$ In the classical, univariate case, the following are equivalent (cf. Bj\"ork, 6.3.15 of \cite{Bjork}): (a) $A-1 \notin \V(B_{f,\x})$; (b) $\nabla_{A}$ is injective; (c) $\nabla_{A}$ is surjective.
Showing that (a), (b), and (c) are equivalent in the multivariate case would verify that $\text{Exp}(\V(B_{F,\x}))$ equals the cohomology suport loci of $f$ near $\x$. Moreover, under the hypotheses of Theorem \ref{thm intro first theorem}, it would show that intersecting $V(B_{F,\x})$ with appropriate hyperplanes gives $V(B_{f,\x})$.

In any case, (a) implies (b) and (c). Under the hypotheses of Theorem \ref{thm intro first theorem}, we prove that $s_{1}-a_{1}, \dots, s_{r} - a_{r}$ behaves like a regular sequence on $\D_{X}[S]F^{S}$. This allows us to recreate a picture similar to Bj\"ork's and prove, using different methods, the main result of Section 3:

\begin{theorem}
Let $f = f_{1} \cdots f_{r}$ be strongly Euler-homogeneous, Saito-holonomic, and tame and let $F = (f_{1}, \dots, f_{r}).$ If $\nabla_{A}$ is injective then $\nabla_{A}$ is surjective.
\end{theorem}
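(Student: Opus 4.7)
The plan is to adapt Bj\"ork's holonomicity and dimension-counting argument for the univariate equivalence (Proposition 6.3.15 of \cite{Bjork}) to the multivariate setting, with Theorem \ref{thm intro first theorem} as the crucial structural input. The strategy is to present both source and target of $\nabla_{A}$ as holonomic $\D_{X,\x}$-modules of equal characteristic cycle; then the four-term exact sequence
\[
0 \to \ker \nabla_{A} \to \frac{\D_{X,\x}[S]F^{S}}{(S-A)\D_{X,\x}[S]F^{S}} \xrightarrow{\nabla_{A}} \frac{\D_{X,\x}[S]F^{S}}{(S-(A-1))\D_{X,\x}[S]F^{S}} \to \operatorname{coker}\nabla_{A} \to 0
\]
together with additivity of characteristic cycles will force $CC(\ker \nabla_{A}) = CC(\operatorname{coker}\nabla_{A})$; injectivity of $\nabla_{A}$ then implies $\operatorname{coker}\nabla_{A}$ has zero characteristic cycle and so, by holonomicity, vanishes.

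The first task is to show that $s_{1} - a_{1}, \dots, s_{r} - a_{r}$ acts as a regular sequence on $\D_{X,\x}[S]F^{S}$. Theorem \ref{thm intro first theorem} is tailored for this: the annihilator is generated by operators $\delta - \sum_{k} s_{k}(\delta \bullet f_{k})/f_{k}$ which are affine-linear in $S$. I would filter $\D_{X,\x}[S]$ so that these generators have initial forms whose dependence on $S$ is through scalar coefficients, thereby reducing regularity on $\D_{X,\x}[S]F^{S}$ to regularity on an associated graded module whose presentation is essentially a Liouville-type ideal studied in Section 2. On that initial object, regularity of the parameter sequence should be transparent, and the Gr\"obner-style lifting machinery of Appendix A will then transport this regularity back to $\D_{X,\x}[S]F^{S}$ itself.

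Once regularity is in hand, the Koszul complex on $(S-A)$ collapses and presents the source of $\nabla_{A}$ as $\D_{X,\x}/\D_{X,\x} \cdot \{\delta - \sum_{k} a_{k}(\delta \bullet f_{k})/f_{k} : \delta \in \Der_{X}(-\log f)\}$, with the parallel formula for the target using $A-1$. Under the tame hypothesis both quotients are holonomic $\D_{X,\x}$-modules, and their characteristic cycles are determined only by the principal symbols of the log-derivation generators, hence are independent of whether the scalar vector is $A$ or $A-1$. This supplies the equality of characteristic cycles needed to close the argument.

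The main obstacle is the regular-sequence step. The fact that each generator in Theorem \ref{thm intro first theorem} entangles a single derivation with all of $s_{1}, \dots, s_{r}$ means one cannot simply quotient out one parameter at a time by any direct manipulation. The delicate point is selecting a filtration compatible simultaneously with the order filtration on $\D_{X}$, with the $S$-grading, and with the strongly Euler-homogeneous structure, so that the resulting initial ideal is both combinatorially tractable and faithful enough to detect regularity; once that filtration is chosen, the subsequent characteristic cycle comparison should be essentially formal.
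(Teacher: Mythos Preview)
Your approach is correct and follows a genuinely different route from the paper's. Both arguments rest on the same structural input: the filtered Koszul analysis of Propositions~\ref{prop s regular} and~\ref{prop Koszul resolution}, which shows that $s_1-a_1,\dots,s_r-a_r$ is regular on $\D_{X,\x}[S]F^{S}$ and that $S$ is regular on $\gr_{(0,1,1)}(\D_{X,\x}[S]F^{S})$. From this shared foundation the two proofs diverge. The paper proceeds by a direct manipulation of the Bernstein--Sato functional equation: injectivity of $\nabla_{A}$ forces $H^{-1}$ of the Koszul complexes $K^{\bullet}(\widetilde{s_1}^{v_1},\dots,\widetilde{s_r}^{v_r}; \D_{X,\x}[S]F^{S}/\D_{X,\x}[S]F^{S+1})$ to vanish for all positive exponents, and this vanishing is then used in an inductive ``peeling'' algorithm that starts from an arbitrary multivariate Bernstein--Sato polynomial and successively strips factors of $s_k-(a_k-1)$ until a polynomial not vanishing at $A-1$ is produced, witnessing surjectivity.

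Your characteristic-cycle route is shorter and more conceptual; the only place it needs sharpening is the claim that the characteristic cycles ``are determined only by the principal symbols of the log-derivation generators.'' Knowing the symbols of the generators gives only $\mathrm{Char}(N_A)\subseteq V\bigl(\gr_{(0,1)}\Der_{X,\x}(-\log f)\bigr)$; for the cycle comparison you need the stronger statement that $\gr_{(0,1)}(N_A)$ itself is independent of $A$. That does hold, and it follows from the same filtered machinery: regularity of $S$ on $\gr_{(0,1,1)}(\D_{X,\x}[S]F^{S})$ (Proposition~\ref{prop s regular}) together with Corollary~\ref{cor gr-equality} gives $\gr_{(0,1)}(N_A)\simeq \gr_{(0,1)}(\D_{X,\x})/(L_{f,\x}+\gr_{(0,1)}(\D_{X,\x})\cdot\gr_{(0,1)}(E_\x))$ for every $A$, via the standard fact that modding a filtered module out by central elements whose symbols form a regular sequence on the associated graded yields the expected associated graded. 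Holonomicity then follows since this ring has dimension $n$, and the additivity argument closes exactly as you describe. The paper's approach avoids invoking characteristic cycles but pays for it with a longer combinatorial argument; yours recovers the classical Bj\"ork picture once the regular-sequence step is in place.
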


In Section 4 we strengthen the hypotheses of Theorem \ref{thm intro first theorem} and assume $f$ is reduced and free, that is, we assume $\Der_{X,\x}(-\log f)$ is a free $\mathscr{O}_{X,\x}$-module. In \cite{MacarroDuality} Narv\'aez--Macarro computed the $\D_{X,\x}[s]$-dual of $\D_{X,\x}[s]f^{s}$ for certain free divisors; in \cite{MaisonobeArrangement}, Maisonobe shows that this computation easily applies to $\D_{X,\x}[S]$-dual of $\D_{X,\x}[S]F^{S}$. For our free divisors we compute the $\D_{X,\x}$-dual of $\frac{\D_{X,\x}[S]F^{S}}{(S-A)\D_{X,\x}[S]F^{S}}$ and lift $\nabla_{A}$ to this dual. Consequently, we prove:

\begin{theorem}

Let $f = f_{1} \cdots f_{r}$ be reduced, strongly Euler-homogeneous, Saito-holonomic, and free and let $F = (f_{1}, \dots, f_{r}).$ Then $\nabla_{A}$ is injective if and only if $\nabla_{A}$ surjective.

\end{theorem}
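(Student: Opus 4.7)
The implication $\nabla_{A}$ injective $\Rightarrow$ $\nabla_{A}$ surjective follows from Section~3 together with the standard fact that freeness implies tameness. The converse is where Section~4 needs new input, and the plan is to lift $\nabla_{A}$ to its $\D_{X,\x}$-dual, identify that dual with another map of the form $\nabla_{A'}$, and then apply the forward direction already in hand.

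First I would build a logarithmic Spencer--Koszul resolution of $\D_{X,\x}[S]F^{S}$ by free $\D_{X,\x}[S]$-modules. Because $f$ is free, $\Der_{X,\x}(-\log f)$ is a rank-$n$ free $\O_{X,\x}$-module carrying a Lie--Rinehart structure; combining this with Theorem~\ref{thm intro first theorem}, the generators of $\ann_{\D_{X,\x}[S]}F^{S}$ associated to an $\O_{X,\x}$-basis $\delta_{1},\dots,\delta_{n}$ of $\Der_{X,\x}(-\log f)$ assemble into a Chevalley--Eilenberg / Spencer complex of length $n$ that resolves $\D_{X,\x}[S]F^{S}$.

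Next I would dualize this resolution, following the technique of Narv\'aez-Macarro as adapted by Maisonobe. Transposition in $\D_{X,\x}[S]$ combined with the adjoint involution on $\Der_{X,\x}(-\log f)$ should yield an isomorphism
\[
R\Hom_{\D_{X,\x}[S]}\!\bigl(\D_{X,\x}[S]F^{S},\D_{X,\x}[S]\bigr)[n] \;\cong\; \D_{X,\x}[S]F^{-S-\epsilon}
\]
for an explicit shift $\epsilon \in \mathbb{Z}^{r}$ coming from the divergence of the chosen basis against each $f_{k}$. Because $s_{1}-a_{1},\dots,s_{r}-a_{r}$ is a regular sequence on $\D_{X,\x}[S]F^{S}$ (Section~3), specializing $S=A$ is exact on both sides and produces the corresponding identification $R\Hom_{\D_{X,\x}}(M_{A},\D_{X,\x})[n]\cong M_{-A-\epsilon}$, where $M_{B}:=\D_{X,\x}[S]F^{S}/(S-B)\D_{X,\x}[S]F^{S}$. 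Finally, one verifies that the map $\nabla_{A}:M_{A}\to M_{A-1}$, induced by $F^{S}\mapsto F^{S+1}$, corresponds under this identification to the analogous multiplication $M_{-(A-1)-\epsilon}\to M_{-A-\epsilon}$, i.e.\ to $\nabla_{A'}$ with $A':=-A-\epsilon+1$.

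Given these ingredients the theorem drops out: if $\nabla_{A}$ is surjective then $\nabla_{A'}$ is injective, hence surjective by Section~3, and dualizing back shows $\nabla_{A}$ is injective. The hardest part will be the last identification, namely verifying that transposition of the logarithmic Spencer resolution sends the ``multiply by $F$'' map to another ``multiply by $F$'' map rather than to some arbitrary $\D_{X,\x}$-linear map, and pinning down the shift $\epsilon$ coordinate by coordinate. This is the multivariate enhancement of the calculation in \cite{MacarroDuality}, and the signs arising from the Lie--Rinehart adjoint, together with the interaction with the specialization $S \mapsto A$, are where the bookkeeping concentrates.
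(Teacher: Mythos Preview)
Your proposal is correct and follows essentially the same route as the paper: Spencer resolution from the free Lie--Rinehart algebra $\Der_{X,\x}(-\log f)$, the Narv\'aez-Macarro/Maisonobe duality $\mathbb{D}_{S}(\D_{X,\x}[S]F^{S})\simeq \D_{X,\x}[S]F^{-S-1}[n]$, specialization along $S-A$ using the regular-sequence result from Section~3, and then the identification of $\mathbb{D}(\nabla_{A})$ with $\nabla_{-A}$ (your $A'$ with $\epsilon=(1,\dots,1)$), after which the chain $\nabla_{A}$ surjective $\Rightarrow \nabla_{-A}$ injective $\Rightarrow \nabla_{-A}$ surjective $\Rightarrow \nabla_{A}$ injective finishes it. The only refinement is that under the strongly Euler-homogeneous hypothesis the shift is exactly $\epsilon=1$ in each slot---the divergence terms you anticipate are absorbed into Narv\'aez-Macarro's computation and do not vary with $k$---and the paper verifies the ``hardest part'' by lifting $\nabla_{A}$ to the Spencer complex as right multiplication by $f$ together with $s_{i}\mapsto s_{i}+1$, then checking directly that applying $\Hom_{\D_{X,\x}}(-,\D_{X,\x})^{\mathrm{left}}$ at the top exterior power again yields right multiplication by $f$.
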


In Section 5 we summarize the relationship between the cohomology support loci of $f$ near $\x$, $\text{Exp}(V(B_{F,\x})$, and $\nabla_{A}$. In \cite{BudurWangEtAl}, the authors characterize membership in the cohomology support loci of $f$ near $\x$ in terms of the simplicity of certain perverse sheaves. When $f$ is reduced, strongly Euler-homogeneous, Saito-holonomic, and free, we show this characterization can be stated in terms of the simplicity of the $\D_{X,\x}$-module $\frac{\D_{X,\x}[S]F^{S}}{(S-A)\D_{X,\x}[S]F^{S}}.$

After this paper's completion, a preprint \cite{BudurConjectureOver} by Budur, Veer, Wu, and Zhou was announced proving \eqref{eqn budur conjecture intro}, that is, proving Budur's Conjecture 2 from \cite{BudurBernstein--Sato}. While this makes Section 5 less interesting, it does not effect Sections 2-4.

Finally, the author would like to thank Michael Kaminski, Harrison Wong, Luis Narv\'aez-Macarro, Uli Walther, and the referee for all their helpful conversations and comments.

\section{The $\D_{X}[S]$-Annihilator of $F^{S}$}

As in the introduction, let $X$ be a smooth analytic space or $\mathbb{C}$-scheme of dimension $n$ and with structure sheaf $\mathscr{O}_{X}$. Let $f \in \mathscr{O}_{X}$ be regular with divisor $Y = \text{Div}(f)$ and corresponding ideal sheaf $\mathscr{I}_{Y}$. Throughout, $Y = \text{Div}(f)$ will not necessarily be reduced. Let $\D_{X}$ be the sheaf of $\mathbb{C}$-linear differential operators with $\mathscr{O}_{X}$-coefficients and let $\D_{X}[s]$ and $\D_{X}[S] = \D_{X}[s_{1}, \dots, s_{r}]$ be polynomials rings over $\D_{X}.$

Recall the \emph{order filtration} $F_{(0,1)}$ on $\D_{X}$ induced, in local coordinates, by making every $\partial_{x_{k}}$ weight one and every element of $\mathscr{O}_{X}$ weight zero. Denote the differential operators of order at most $k$ as $F_{(0,1)}^{k}$ and the associated graded object as $\gr_{(0,1)}(\D_{X}).$ 

\begin{definition}
Let $\Der_{X}(-\Log f) = \Der_{X}(-\Log(Y))$, be the sheaf of \emph {logarithmic derivations}, i.e. the $\mathscr{O}_{X}$-module with local generators on $U$ the set \[
\Der_{X}(-\log f) := \{\delta \text{ a vector field in } \D_{X}(U) \mid \delta \bullet \mathscr{I} \subseteq \mathscr{I} \}.
\] We also put
\[
\Der_{X}(-\Log_{0} f) := \{\delta \in \Der_{X}(-\Log f) \mid \delta \bullet f = 0 \}.
\]
Note that $\Der_{X}(- \Log_{0} f)$ may depend on the choice of defining equation for f, which is why we have fixed a global $f$.
\end{definition}

\begin{definition}
For $\x \in X$, we say that $f \in \mathscr{O}_{X,\x}$ is \emph{Euler-homogeneous} at $\x$ if there exists $E_{\x} \in \Der_{X,\x}(-\log f)$ such that $E_{\x} \bullet f = f.$ If $E_{\x}$ vanishes at $\x$ then $f$ is \emph{strongly Euler-homogeneous} at $\x.$

Finally, a divisor $Y$ is (strongly) Euler-homogeneous if there is a defining equation $f$ at each $\x$ such that $f$ is (strongly) Euler-homogeneous at $\x$.
\end{definition}

\begin{example} \label{example running non-free example strong Euler}
Let $f = x(2x^{2} + yz)$. Note that $\text{Sing}(f) = \{z-\text{axis}\} \cup \{y-\text{axis}\}$. Along the $z$-axis there is the strong Euler-homogeneity induced by $\frac{1}{3} x (\partial_{x} \bullet f) + \frac{2}{3} y (\partial_{y} \bullet f)$; along the $y$-axis there is the strong Euler-homogeneity induced by $\frac{1}{3} x (\partial_{x} \bullet f) + \frac{2}{3} z (\partial_{z} \bullet f)$. Since $f$ is automatically strongly Euler-homogeneous on the smooth locus, $f$ is strongly Euler-homogeneous everywhere.
\end{example}

\begin{example} \label{example hyperplane strongly Euler-homogeneous}
Let $f$ be a central hyperplane arrangement. Then the Euler vector field $\sum x_{i} \partial_{x_{i}}$ shows that $f$ is strongly Euler-homogeneous at the origin. A coordinate change argument implies $f$ is strongly Euler-homogeneous. 
\end{example}

\begin{definition} \label{def total order filtration}

Define the \emph{total order filtration} $F_{(0,1,1)}$ as the filtration on $\D_{X}[S]$ induced by the $(0,1,1)$-weight assignment that, in local coordinates, gives elements of the form $\mathscr{O}_{U}\partial^{\textbf{u}}S^{\textbf{v}}$, $\textbf{u}$, $\textbf{v}$ non-negative integral vectors, weight $\sum u_{i} + \sum v_{i}$. Let $F_{(0,1,1)}^{k}$ be the homogeneous operators of weight at most $k$ with respect to the total order filtration. When the context is clear, we will use $F_{(0,1,1)}^{k}$ to refer to the similarly defined filtration on $\D_{X}[s]$ (the classical case). Denote the associated graded object associated to $F_{(0,1,1)}$ as $\gr_{(0,1,1)}(\D_{X}[S]).$
\end{definition}

Our principal objective is to study the annihilator of $F^{S}$---the left $\D_{X}[S]$-ideal $\ann_{\D_{X}[S]} F^{S}$. Take the $\mathscr{O}_{X}[f_{1}^{-1},\dots, f_{r}^{-1}, S]$-module generated freely by the symbol $F^{S} = \prod f_{k}^{s_{k}}.$ To make this a $\D_{X}[S]$-module define, for a derivation $\delta$ and $h \in \mathscr{O}_{X},$
\[\delta \bullet \frac{h S^{\textbf{v}}}{f^{j}}F^{S} = \delta \bullet (\frac{h}{f^{j}}) S^{\textbf{v}} F^{S} + \sum\limits_{k} s_{k} \frac{(\delta \bullet f_{k}) h S^{\textbf{v}}}{f_{k}f^{j}} F^{S}.\]

In most cases $\ann_{\D_{X}[S]}F^{S}$ is very hard to compute. In the classical setting, there is a natural identification between the $(0,1,1)$-homogeneous elements of $\ann_{\D_{X}[s]} f^{s}$ and $\Der_{X}(-\log f).$ We will establish a similar correspondence.

\begin{definition}

The \emph{annihilating derivations} of $F^{S}$ are the elements of the $\mathscr{O}_{X}$-module
\[\theta_{F} := \ann_{\D_{X}[S]}F^{S} \cap F_{(0,1,1)}^{1}.\]
We say $\ann_{\D_{X}[S]} F^{S}$ is \emph{generated by derivations} when $\ann_{\D_{X}[S]} F^{S} = \D_{X}[S] \cdot \theta_{F}.$
\end{definition}

\begin{prop} \label{prop psi} For $f = f_{1} \cdots f_{r}$, let $F = (f_{1}, \dots , f_{r}).$ Then as $\mathscr{O}_{X}$-modules, 
\[
\psi_{F}: \Der_{X}(- \log f) \xrightarrow{\simeq} \theta_{F}
\] where $\psi_{F}$ is given by 
\[
\delta \mapsto \delta - \sum_{k=1}^{r} s_{k} \frac{\delta \bullet f_{k}}{f_{k}}.
\]
\end{prop}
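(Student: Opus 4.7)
The plan is to verify that $\psi_{F}$ is well-defined as an $\mathscr{O}_{X}$-linear map landing in $\theta_{F}$, then establish injectivity by a degree argument in the $s$-variables and surjectivity by comparing $F^{S}$- and $s_{k}F^{S}$-coefficients inside the free $\mathscr{O}_{X}[f^{-1},S]$-module $\mathscr{O}_{X}[f^{-1},S]\,F^{S}$.

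For well-definedness I first need each quotient $\tfrac{\delta \bullet f_{k}}{f_{k}}$ to lie in $\mathscr{O}_{X}$, so that $\psi_{F}(\delta)$ truly sits in $\D_{X}[S]$ rather than in its localization. This uses the standard identity
\[
\Der_{X}(-\log f) = \bigcap_{k=1}^{r} \Der_{X}(-\log f_{k}),
\]
which holds because the $f_{k}$ are mutually distinct irreducibles (a running assumption): expanding $\delta \bullet f = \sum_{k}(f/f_{k})(\delta \bullet f_{k})$ and reducing modulo the prime $f_{k}$ forces $\delta \bullet f_{k} \in (f_{k})$. Once this is in hand, $\psi_{F}(\delta)$ visibly lies in $F_{(0,1,1)}^{1}$, and the definition of the $\D_{X}[S]$-action yields $\delta \bullet F^{S} = \sum_{k} s_{k}\tfrac{\delta \bullet f_{k}}{f_{k}}F^{S}$, so $\psi_{F}(\delta) \bullet F^{S} = 0$; thus $\psi_{F}(\delta) \in \theta_{F}$. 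The $\mathscr{O}_{X}$-linearity of $\psi_{F}$ is immediate from the formula.

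Injectivity is the easy step. Every element of $F_{(0,1,1)}^{1}$ decomposes uniquely as a sum of an $S$-degree-$0$ part (a first-order operator in $\D_{X}$) and an $S$-degree-$1$ part $\sum_{k} b_{k}s_{k}$ with $b_{k} \in \mathscr{O}_{X}$; if $\psi_{F}(\delta)$ vanishes then both pieces vanish, and in particular the $S$-degree-$0$ part forces $\delta = 0$.

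The heart of the matter is surjectivity. Given $P \in \theta_{F}$, I would write it locally as $P = a_{0} + \delta + \sum_{k} b_{k}s_{k}$ with $a_{0}, b_{k} \in \mathscr{O}_{X}$ and $\delta$ an $\mathscr{O}_{X}$-linear combination of the coordinate vector fields, then apply $P$ to $F^{S}$ and match coefficients in the free $\mathscr{O}_{X}[f^{-1},S]$-module $\mathscr{O}_{X}[f^{-1},S]F^{S}$. The relation $P \bullet F^{S} = 0$ collapses to
\[
a_{0} + \sum_{k} s_{k}\Big(b_{k} + \tfrac{\delta \bullet f_{k}}{f_{k}}\Big) = 0
\]
in $\mathscr{O}_{X}[f^{-1},S]$, so $a_{0} = 0$ and $b_{k}f_{k} = -\delta \bullet f_{k}$ for each $k$. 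The latter forces $\delta \bullet f_{k} \in (f_{k})$, i.e. $\delta$ is logarithmic with respect to each $f_{k}$; invoking the intersection identity again places $\delta$ in $\Der_{X}(-\log f)$, and $P = \psi_{F}(\delta)$ by construction. The only delicate input is the intersection identity $\Der_{X}(-\log f) = \bigcap_{k}\Der_{X}(-\log f_{k})$; once this standard lemma is available, the proposition reduces to reading off the two $S$-graded components of a single identity in $\D_{X}[S]\,F^{S}$.
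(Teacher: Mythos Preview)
Your argument is correct and matches the paper's approach: invoke $\Der_X(-\log f) = \bigcap_k \Der_X(-\log f_k)$ for well-definedness, then extract the coefficient identities from $P \bullet F^{S} = 0$ using freeness of $\mathscr{O}_X[f^{-1},S]\,F^{S}$. One correction: the $f_k$ are \emph{not} assumed to be mutually distinct irreducibles at this point in the paper---that hypothesis appears only in Section~\ref{final section}---so your parenthetical ``a running assumption'' is wrong; the paper instead cites Lemma~3.4 of \cite{GrangerSchulzeFormal} for the intersection identity, which in fact holds for an arbitrary factorization $f = f_1 \cdots f_r$ (since $\Der_X(-\log g)$ depends only on $g_{\mathrm{red}}$).
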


\begin{proof}

We first prove the claim locally. By Lemma 3.4 of \cite{GrangerSchulzeFormal}, $\Der_{X}(-\Log f) = \bigcap \Der_{X}(-\Log f_{k})$; in particular, $\delta - \sum_{k} s_{k} \frac{\delta \bullet f_{k}}{f_{k}}$ lies in $\D_{X,\x}[S].$

Fix a coordinate system. Take $P \in \theta_{F,\x}$, $P = \delta + p(S)$, where $\delta \in \D_{X,\x}$ is a derivation and $p(S) = \sum_{k} b_{k}s_{k} \in \mathscr{O}_{X,\x}[S]$ is necessarily $S$-homogeneous of $S$-degree 1. Keep the notation $F^{s}$ and the $f_{k}$ for the local versions at $\x$.  By definition,
\begin{align*}
    0 = \left( \delta - \sum_{k} b_{k}s_{k} \right) \bullet F^{S} = \sum_{k} \left( s_{k} \frac{\delta \bullet f_{k}}{f_{k}} - b_{k}s_{k} \right) F^{S}.
\end{align*}
Because $\D_{X,\x} F^{S}$ is a free $\mathscr{O}_{X,\x}[f^{-1}, S]$-module $\sum_{k} (s_{k} \frac{\delta \bullet f_{k}}{f_{k}} - b_{k}s_{k})= 0.$ Thus for each $k$, $\frac{\delta \bullet f_{k}}{f_{k}}s_{k} -  b_{k} s_{k} = 0$. So $\delta \bullet f_{k} \in \mathscr{O}_{X,\x} \cdot f_{k}$; moreover, $\delta \bullet f_{k} = b_{k} f_{k}.$

We have shown $\delta \in \bigcap \Der_{X,\x}(-\Log f_{k})$ and, in fact,

\[\theta_{F,\x} = \{ \delta - \sum\limits_{k} b_{k}s_{k} \mid \delta \in \Der_{X,\x}(- \Log f) \text{, } \delta \bullet f_{k} = b_{k} f_{k} \}.\]

Thus the map $\psi_{F}: \Der_{X,\x}(-\log f) \to \theta_{F,\x}$ given by $\delta \mapsto \delta - \sum_{k} \frac{\delta \bullet f_{k}}{f_{k}} s_{k}$ is a well-defined $\mathscr{O}_{X,\x}$-linear isomorphism for a fixed coordinate system. Showing that $\theta_{F,\x}$ commutes with coordinate change is routine and is effectively shown in Remark \ref{remark well-behaved} (b).

Since $\delta \in \Der(- \Log f)$ precisely when $\delta \bullet f = 0$ in $\mathscr{O}_{X} / (f)$, membership in  $\Der(- \Log f)$ is a local condition. The above shows that $\psi_{F,\x}^{-1}$ is an $\mathscr{O}_{X,\x}$-isomorphism at all $\x$; hence $\psi_{F}^{-1}$ is an isomorphism. 
\end{proof}

\subsection{Hypotheses on $Y$ and $F$.} \text{ }

In this subsection we introduce many of the standard hypothesis on $Y$ and $F$ we use throughout the paper.

\begin{definition}

Let $U \subseteq X$ be open and $f \in \mathscr{O}_{X}(U).$ We will say $F = (f_{1}, \dots, f_{r})$ is a \emph{decomposition} of $f$ when $f = f_{1} \cdots f_{r}.$

\end{definition}

We will also restrict to divisors $Y$ such that $\Der_{X}(-\log Y)$ has a light constraint on its dimension.

\begin{definition} \label{def tame}

Consider the sheaf of differential forms of degree $k$: $\Omega_{X}^{k} = \bigwedge^{k} \Omega_{X}^{1}$ and the differential $\text{d}: \Omega_{X}^{k} \to \Omega_{X}^{k+1}.$ We define the subsheaf of \textit{logarithmic differential forms} $\Omega_{X}^{k}(\log f)$ by
\[ \Omega_{X}^{k}(\log f) := \{ w \in \frac{1}{f} \Omega_{X}^{k} \mid d(w) \in \frac{1}{f} \Omega_{X}^{k+1} \}. \] 
See 1.1 and 1.2 in \cite{SaitoTheoryLogarithmic} for more details.

We say $f \in \mathscr{O}_{X}(U)$, $U \subseteq X$ open, is \textit{tame} if the projective dimension of $\Omega_{U}^{k}( \log f)$ is at most k at each $\x \in U$. A divisor $Y$ is tame if it admits tame defining equations locally everywhere. See Defintion 3.8 and the surrounding text in \cite{uli} for more details on tame divisors. In particular, if $n \leq 3$ then $Y$ is automatically tame.

\end{definition}

We will also use a stratification of $X$ that respects the logarithmic data of $Y$.

\begin{definition} (Compare with 3.8 in \cite{SaitoTheoryLogarithmic}) Define a relation on $X$ by identifying two points $\x$ and $\y$ if there exists an open $U \subseteq X$, $\x, \y \in U$ and a derivation $\delta \in \Der_{U}(-\log (Y \cap U))$ such that (i) $\delta$ is nowhere vanishing on $U$ and (ii) the integral flow of $\delta$ passes through $\x$ and $\y$. The transitive closure of this relation stratifies $X$ into equivalence classes. The irreducible components of the equivalence classes are called the \textit{logarithmic strata}; the collection of all strata the \emph{logarithmic stratification}. 

We say $Y$ is \textit{Saito-holonomic} if the logarithmic stratification is locally finite, i.e. at every $\x \in X$ there is an open $U \subseteq X$, $\x \in U$, such that $U$ intersects finitely many logarithmic strata. Equivalently, $Y$ is Saito-holonomic if the dimension of $ \{ \x \in X \mid \text{rk}_{\mathbb{C}}(\Der_{X}(-\log Y) \otimes \mathscr{O}_{X,\x} / \mathfrak{m}_{X,\x} = i\}$ is at most $i$.

\end{definition}

\begin{remark} \label{remark saito-holonomic}

\begin{enumerate}[(a)]
    \item Pick $\x \in X$ and consider its log stratum $D$ with respect to $f = f_{1} \cdots f_{r}.$ We can find logarithmic derivations $\delta_{1}, \dots, \delta_{m}$ at $\x$, $m = \text{dim } D$, that are $\mathbb{C}$-independent at $\x$. Each $\delta_{i}$ also lies in $\Der_{X,\x}(-\log f_{i})$. By Proposition 3.6 of \cite{SaitoTheoryLogarithmic} there exists a coordinate system $(x_{1}, \dots, x_{n})$ so that these generators can be written as $\delta_{k} = \frac{\partial}{\partial x_{n-m +k}} + \sum_{1 \leq j \leq n-m} g_{j k}(x) \frac{\partial}{\partial x_{j}}$, with the $g_{jk}$ analytic functions defined near $\x$. 
    \item By Lemma 3.5 and Proposition 3.6 of \cite{SaitoTheoryLogarithmic}, the same change of coordinates $\phi_{F}$ from \ref{remark saito-holonomic}.(a) fixes the last $m$ coordinates and satisfies $\phi_{F}(x_{1}, \dots, x_{n-m},0) = (x_{1}, \dots, x_{m}, 0)$. Moreover, it simultaneously satisfies $f_{i}(\phi_{F}(x_{1}, \dots, x_{m})) = u_{i}(x_{1}, \dots, x_{m})f_{i}(x_{1}, \dots, x_{n-m},0)$ where $u_{i}(x_{1}, \dots, x_{m})$ is a unit for $1 \leq i \leq m$.

    \item Now assume the logarithmic stratification is locally finite and the log stratum $D$ of $\x$ has dimension $0$. So $D = \{\x \}.$ Since every other zero dimensional strata is disjoint from $D$, there exists an open $U \ni \x$ such that $U \setminus \x$ consists only of points whose logarithmic stratum are of positive dimension. 
    \item By Lemma 3.4 of \cite{SaitoTheoryLogarithmic}, for a divisor $Y$ connected components of $X \setminus Y$ and $Y \setminus \text{Sing}(Y)$ are logarithmic strata. 
\end{enumerate}

\end{remark}

\begin{example} \label{example running non-free example saito-holonomic}
Let $f= x(2x^2 + yz)$ and note that $\text{Sing}(f) = \{z-\text{axis}\} \cup \{ y-\text{axis} \}.$ Since the Euler derivation $x \partial_{x} + y \partial_{y} + z \partial_{z}$ is a logarithmic derivation, the $z$-axis $\setminus \{0\}$ and the $y$-axis $\setminus \{ 0 \}$ are logarithmic strata. Therefore $f$ is Saito-holonomic.
\end{example}

\begin{example} \label{example hyperplane arrangement saitoholonomic}
By Example 3.14 of \cite{SaitoTheoryLogarithmic}, hyperplane arrangements are Saito-holonomic.
\end{example}

\subsection{Generalized Liouville Ideals.} \text{ }

In Section 3 of \cite{uli}, Walther defines the \emph{Liouville ideal} $L_{f}$ as the ideal in $\gr_{(0,1)}(\D_{X})$ generated by the symbols $\gr_{(0,1)}(\Der_{X}(- \log_{0} f)$. As $\Der_{X}(-\log_{0} f) \subseteq \ann_{\D_{X}} f^{s}$, $L_{f}$ represents the contribution of $\Der_{X}(-\log_{0} f)$ to $\gr_{(0,1)}(\ann_{\D_{X}} f^{s})$. When $f$ is strongly Euler-homogeneous with strong Euler-homogeneity $E_{\x}$, $L_{f}$ is coordinate independent (see Remark 3.2 \cite{uli}) and $\gr_{(0,1,1)}(\D_{X,\x}[s]) \cdot L_{f,\x}$ and $\gr_{(0,1)}(E_{\x}) - s$ generate the simplest degree one elements of $\gr_{(0,1,1)}(\ann_{\D_{X,\x}[s]} f^{s})$.

If we want to generalize this to $F^{S}$, there is no obvious inclusion between $\Der_{X}(-\log_{0} f)$ and $\ann_{\D_{X}} F^{S}$. In fact, $\delta \in \Der_{X}(-\log_{0} f)$ is in $\ann_{\D_{X}} F^{S}$ preciscely when $\delta \in \bigcap \Der_{X}(- \log_{0} f_{i}).$ Trying to define a generalized Liouville ideal using $\bigcap \Der_{X}(- \log_{0} f_{i})$ would lose too many elements of $\Der_{X}(-\log_{0} f)$. 

\begin{definition} \label{definition generalized Liouville}
Recall the isomorphism of $\mathscr{O}_{X}$-modules from Proposition \ref{prop psi}
\[
\psi_{F} : \Der_{X}(-\Log f) \xrightarrow{\simeq} \theta_{F},
\]
which is given by
\[
\psi_{F}(\delta) = \delta - \sum s_{k} \frac{\delta \bullet f_{k}}{f_{k}}.
\]
This restricts to a map of sheaves of $\mathscr{O}_{X}$-modules:
\[
\psi_{F} : \Der_{X}(- \Log_{0}f) \hookrightarrow \theta_{F}.
\] 
Let the \textit{generalized Liouville ideal} $L_{F}$ by the ideal in $\gr_{(0,1,1)}(\D_{X}[S])$ generated by the symbols of $\psi_{F}(\Der_{X}(- \Log_{0}f))$ in the associated graded ring:
\[ L_{F} := \gr_{(0,1,1)}(\D_{X}[S]) \cdot \gr_{(0,1,1)}(\psi_{F}(\Der_{X}(- \Log_{0}f))).\]
We also define
\begin{align*}
    \widetilde{L_{F}} 
        :&= \gr_{(0,1,1)}(D_{X}[S]) \cdot \gr_{(0,1,1)}(\theta_{F}) \\
        &= \gr_{(0,1,1)}(D_{X}[S]) \cdot \gr_{(0,1,1)}(\psi_{F}(\Der_{X}(-\Log (f))).
\end{align*}

\end{definition}

\begin{remark} \label{remark well-behaved} 
With $E_{x}$ a Euler-homogeneity for $f$ at $\x$, the $\mathscr{O}_{X,\x}$-module direct sum $\Der_{X,\x}(- \log f) \simeq \Der_{X,\x}(-\log_{0} f) \oplus \mathscr{O}_{X,\x} \cdot E_{x}$ $L_{F,\x}$, depends on the choice of defining equation for $f$. Following Remark 3.2 of \cite{uli}, if the the divisor of $f$ is strongly Euler-homogeneous, then the algebraic properties of $L_{F,\x}$ and $\widetilde{L_{F,\x}}$ are independent of the choice of local equation of $\text{Div}(f)$.

\begin{enumerate}[(a)]

\item To this end, let $x$ and $x^{\prime}$ denote two coordinates systems, $J = (\frac{\partial x_{j}^{\prime}}{\partial x_{i}})$ the Jacobian matrix with rows $i$, columns $j$, $\partial$ and $\partial^{\prime}$ column vectors of partial differentials in the $x$ and $x^{\prime}$ coordinates, respectively. Let $\nabla(g)$, $\nabla^{\prime}(g)$ be the gradient, as a column vector, of g in the two coordinate systems. Finally, express a derivation $\delta$ in terms of the two coordinate systems as $\delta = c_{\delta}^{T}\partial = c_{\delta}^{\prime \,T} \partial^{\prime}$, where $c_{\delta}$, $c_{\delta}^{\prime}$, are column vectors of $\mathscr{O}_{X}$ functions representing the coefficients of the partials in the $x$ and $x^{\prime}$ coordinates. Note that in $x^{\prime}$-coordinates $c_{\delta}^{\prime} = J^{T} c_{\delta}$. 

\item In $x$-coordinates $\psi_{F}(\delta) = c_{\delta}^{T}\partial - \sum_{k} s_{k} \frac{c_{\delta}^{T} \nabla(f_{k})}{f_{k}}$. In $x^{\prime}$-coordinates $\delta = c_{\delta}^{T} J \partial^{\prime}$ and $\psi_{F}(\delta) = c_{\delta}^{T}J\partial^{\prime} -\sum_{k} s_{k} \frac{c_{\delta}^{T} J \nabla^{\prime}(f_{k})}{f_{k}}$. Thus $\psi_{F}$ commutes with coordinate change. (Note that strongly Euler-homogeneous is not needed here.)

\item Suppose $E_{\x}$ is a strong Euler-homogeneity at $\x \in X$ for $f$. Recall from Remark 3.2 of \cite{uli} that for a unit $u \in \mathscr{O}_{X,\x}$, the map $\alpha_{u} : \Der_{X,\x}(- \Log_{0} f) \to \Der_{X,\x}(- \Log_{0} uf)$ given by $\alpha_{u}(\delta) = \delta - \frac{\delta \bullet u}{u + E_{\x} \bullet u} E_{\x}$ is an $\mathscr{O}_{X,\x}$-isomorphism that commutes with coordinate change. In particular, let $u = \prod_{1 \leq i \leq r} u_{i}$ be a product of units and let $uF = (u_{1}f_{1}, \dots, u_{r}f_{r})$. Then we have an $\mathscr{O}_{X,\x}$-isomorphism 
\[
\psi_{F} \circ \alpha_{u} \circ \psi_{F}^{-1} : \psi_{F}(\Der_{X,\x}(- \Log_{0}f)) \to \psi_{F}(\Der_{X,\x}(- \Log_{0} uf))
\]
that commutes with coordinate change. 

\item To be precise, 
\begin{align*}
    \psi_{F} \circ \alpha_{u} \circ \psi_{F}^{-1}(\delta - \sum\limits_{k} s_{k} \frac{\delta \bullet f_{k}}{f_{k}}) &= \delta - \frac{\delta \bullet u}{u + E_{\x} \bullet u} E_{\x} - \sum\limits_{k} s_{k} \frac{\delta \bullet u_{k}}{u_{k}}  \\
        &- \sum\limits_{k} s_{k} \frac{\delta \bullet f_{k}}{f_{k}}+ \sum\limits_{k} s_{k} \frac{(\delta \bullet u)(E_{\x} \bullet (u_{k}f_{k}))}{u_{k}{f_{k}}(u + E_{\x} \bullet u)}.
\end{align*} 
Note that $E_{\x} \bullet (u_{k}f_{k})$ is a multiple of $f_{k}$ and $\delta \in \Der_{X,\x}(- \Log f_{k})$ so all these fractions make sense.

\item Inspection reveals that the morphism of graded objectes induced by $\psi_{F} \circ \alpha_{u} \circ \psi_{F}^{-1}$ is an $\mathscr{O}_{X,\x}[S]$-linear endomorphism $\beta_{u}$ on  $\gr_{(0,1,1)}\D_{X,\x}[S]$, where
\begin{align*}
   \beta_{u}(\gr_{(0,1,1)}(\partial)) 
        &= \gr_{(0,1,1)}(\partial) - \frac{\partial \bullet u}{u + E_{\x} \bullet u} \gr_{(0,1,1)}(E_{\x}) - \sum\limits_{k} s_{k} \frac{\partial \bullet u_{k}}{u_{k}} \\
        &+ \sum\limits_{k} s_{k} \frac{(\partial \bullet u)(E_{\x} \bullet (u_{k}f_{k}))}{u_{k}{f_{k}}(u + E_{\x} \bullet u)}.  
\end{align*}
Since the $\mathscr{O}_{X,\x}$-linear endomorphism of $\gr_{(0,1)}(\D_{X,\x})$ given by $\gr_{(0,1)}(\partial) \to \gr_{(0,1)}(\partial) - \frac{\partial \bullet u}{u + E_{\x} \bullet u} \gr_{(0,1)}(E_{\x})$ is surjective and injective, $\beta_{u}$ is as well. So $\beta_{u}(L_{F, \x}) = L_{uF, \x}$. It is clear by (d) that $\beta_{u}$ commutes with coordinate change.

\item Therefore for strongly-Euler-homogeneous $f$, the local algebraic properties of $\gr_{(0,1,1)}(\D_{X}[S]) / L_{F}$ are independent of the choice of local equations for the $f_{1}, \dots, f_{r}.$

\item It is also clear that $\alpha_{u}$ sends $E_{\x}$, a strong Euler-homogeneity for f, to a strong Euler-homogeneity for $uf$ and so $\beta_{u}(\widetilde{L_{F,\x}}) = \widetilde{L_{uF, \x}}.$ Hence, if $f$ is strongly Euler-homogeneous then the local properties of $\widetilde{L_{F}}$ do not depend on the defining equations of the $f_{k}$.
\end{enumerate}
\end{remark}

At the smooth points of $f$, $L_{F}$ and $\widetilde{L_F}$ are well understood. First, a lemma:

\begin{lemma} \label{lemma Euler calculation}

Suppose $f = f_{1} \cdots f_{r}$ has the Euler-homogeneity $E_{\x}$ at $\x \in X.$ Let $F = (f_{1}, \dots, f_{r}).$ Then \[\gr_{(0,1,1)}(\psi_{F}(E_{\x})) \notin \mathfrak{m}_{\x} \mathscr{O}_{X,\x}[Y][S] \subseteq \mathscr{O}_{X,\x}[Y][S] \simeq \gr_{(0,1,1)}(\D_{X,\x}[S]).\]

\end{lemma}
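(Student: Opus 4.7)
The plan is to compute $\gr_{(0,1,1)}(\psi_{F}(E_{\x}))$ explicitly and show that its coefficient list contains at least one unit. First I would fix local coordinates $x_{1}, \dots, x_{n}$ at $\x$ and write $E_{\x} = \sum_{i} a_{i} \partial_{x_{i}}$ with $a_{i} \in \mathscr{O}_{X,\x}$. Under the identification $\gr_{(0,1,1)}(\D_{X,\x}[S]) \simeq \mathscr{O}_{X,\x}[y_{1}, \dots, y_{n}][S]$ with $y_{i} = \gr_{(0,1)}(\partial_{x_{i}})$, the definition of $\psi_{F}$ from Proposition \ref{prop psi} yields
\[
\gr_{(0,1,1)}(\psi_{F}(E_{\x})) \;=\; \sum_{i} a_{i} y_{i} \;-\; \sum_{k} \frac{E_{\x} \bullet f_{k}}{f_{k}}\, s_{k},
\]
which is a homogeneous element of total weight $1$. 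Membership in $\mathfrak{m}_{\x}\mathscr{O}_{X,\x}[Y][S]$ is equivalent to every coefficient $a_{i}$ and every $h_{k} := (E_{\x} \bullet f_{k})/f_{k}$ lying in $\mathfrak{m}_{\x}$.

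The crux is the following elementary identity, which I would prove next. Since $E_{\x}$ is an Euler-homogeneity, $E_{\x} \bullet f = f$; expanding $E_{\x} \bullet f = E_{\x} \bullet (f_{1} \cdots f_{r})$ by the Leibniz rule and dividing by $f$ gives
\[
\sum_{k=1}^{r} \frac{E_{\x} \bullet f_{k}}{f_{k}} \;=\; 1 \quad \text{in } \mathscr{O}_{X,\x}.
\]
(Each summand $h_{k}$ lies in $\mathscr{O}_{X,\x}$ because $E_{\x} \in \Der_{X,\x}(-\log f) = \bigcap_{k} \Der_{X,\x}(-\log f_{k})$ by Lemma 3.4 of \cite{GrangerSchulzeFormal}.) Because $1 \notin \mathfrak{m}_{\x}$, at least one $h_{k}$ must be a unit.

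Therefore the coefficient of $s_{k}$ in $\gr_{(0,1,1)}(\psi_{F}(E_{\x}))$ is a unit for some $k$, so $\gr_{(0,1,1)}(\psi_{F}(E_{\x})) \notin \mathfrak{m}_{\x}\mathscr{O}_{X,\x}[Y][S]$. No obstacle beyond the product-rule calculation is expected; the only subtle point is confirming that $(E_{\x} \bullet f_{k})/f_{k}$ genuinely lies in $\mathscr{O}_{X,\x}$ (not just in $\mathscr{O}_{X,\x}[f^{-1}]$), which is exactly the logarithmic condition guaranteed by writing $E_{\x}$ as an element of $\Der_{X,\x}(-\log f)$ and invoking the intersection formula for the logarithmic derivations of a product.
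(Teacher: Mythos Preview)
Your proof is correct and follows essentially the same route as the paper: both use the Leibniz rule on $E_{\x}\bullet f=f$ to obtain $\sum_{k}(E_{\x}\bullet f_{k})/f_{k}=1$, deduce that some coefficient of $s_{k}$ is a unit, and conclude. Your version is a bit more explicit (writing out the symbol in coordinates and noting why each $(E_{\x}\bullet f_{k})/f_{k}$ lies in $\mathscr{O}_{X,\x}$), but the argument is the same.
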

\begin{proof}

Working at $\x \in X$ and letting $\widehat{f_{k}} = \underset{j \neq k}{\prod} f_{j}$:
\begin{align*}
    f = E_{\x} \bullet f = \sum\limits_{k} (E_{\x} \bullet f_{k}) \widehat{f_{k}} = \left(\sum\limits_{k} \frac{E_{\x} \bullet f_{k}}{f_{k}} \right) f.
\end{align*}
So $ 1 = \sum \frac{E_{\x} \bullet f_{k}}{f_{k}}$ in $\mathscr{O}_{X,\x}$; thus there exists a $j$ such that $\frac{E_{\x} \bullet f_{j}}{f_{j}} \notin \mathfrak{m}_{\x}$. As $\psi_{F}(E_{\x}) = E_{\x} + \sum s_{k} \frac{E_{\x} \bullet f_{k}}{f_{k}}$ the claim follows after looking at the symbol $\gr_{(0,1,1)}(\psi_{F}(E_{\x})).$
\end{proof}

\begin{prop}\label{prop dim reference}

Let $f = f_{1} \cdots f_{r}$ be strongly Euler-homogeneous and let $F=(f_{1}, \dots, f_{r})$. Then locally at smooth points, $L_{F}$ and $\widetilde{L_{F}}$ are prime ideals of dimension $n+r+1$ and $n+r$ respectively. Moreover, for any $\x \in X$:
$$\dim \gr_{(0,1,1)} (\D_{X,\x}[S]) / L_{F,\x} \geq n + r + 1 ;$$
$$\dim \gr_{(0,1,1)} (\D_{X,\x}[S]) / \widetilde{L_{F,\x}} \geq n + r.$$

\end{prop}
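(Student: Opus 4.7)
The plan is to handle the two claims separately, working with the identification $\gr_{(0,1,1)}(\D_{X,\x}[S]) \simeq B := \mathscr{O}_{X,\x}[y_1, \ldots, y_n, s_1, \ldots, s_r]$ in local coordinates, where $B$ is a polynomial ring of Krull dimension $2n+r$ over the regular local domain $\mathscr{O}_{X,\x}$. By Proposition \ref{prop psi}, $L_{F,\x}$ and $\widetilde{L_{F,\x}}$ are homogeneous ideals of $B$ under the natural $(y,s)$-grading, generated by symbols of elements of $\psi_F(\Der_{X,\x}(-\Log_0 f))$ and $\psi_F(\Der_{X,\x}(-\Log f))$ respectively; each such symbol is a degree-one $\mathscr{O}_{X,\x}$-linear form in the $y, s$ variables, whence $L_{F,\x} \cap \mathscr{O}_{X,\x} = (0) = \widetilde{L_{F,\x}} \cap \mathscr{O}_{X,\x}$.

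For the lower bound at an arbitrary $\x$, I would combine a generic-rank argument with an extension-of-primes construction. Extending scalars along $\mathscr{O}_{X,\x} \hookrightarrow K := \mathrm{Frac}(\mathscr{O}_{X,\x})$ makes $f$ a unit, so $\Der_{X,\x}(-\Log f) \otimes_{\mathscr{O}_{X,\x}} K$ has $K$-rank $n$ and $\Der_{X,\x}(-\Log_0 f) \otimes_{\mathscr{O}_{X,\x}} K = \ker(\delta \mapsto \delta \bullet f)$ has $K$-rank $n - 1$. Hence $L_{F,\x} K[y, s]$ is generated by at most $n-1$ linearly independent $K$-linear forms in $y, s$, and by Krull's principal ideal theorem $\dim K[y, s]/L_{F,\x} K[y, s] \geq r + 1$. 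Choose a chain of primes $Q_0 \subsetneq \ldots \subsetneq Q_c$ with $c \geq r + 1$ and $Q_c$ maximal in $K[y, s]/L_{F,\x} K[y, s]$. Contracting through the flat map $B \hookrightarrow K[y, s]$ yields a chain $P_0 \subsetneq \ldots \subsetneq P_c$ of primes of $B$, each containing $L_{F,\x}$ and meeting $\mathscr{O}_{X,\x}$ only in $(0)$. The quotient $B/P_c$ is a finitely generated domain over $\mathscr{O}_{X,\x}$ whose generic fibre is a field, so $\mathrm{Frac}(B/P_c)$ is algebraic over $K$; the altitude formula for finitely generated algebras over the universally catenary Noetherian domain $\mathscr{O}_{X,\x}$ then gives $\dim(B/P_c) = \dim \mathscr{O}_{X,\x} + \mathrm{tr.deg}_K \mathrm{Frac}(B/P_c) = n$. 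Extending the chain $P_0 \subsetneq \ldots \subsetneq P_c$ through a maximal chain of primes in $B/P_c$ produces a chain of length $c + n \geq n + r + 1$ in $B/L_{F,\x}$, so $\dim(B/L_{F,\x}) \geq n + r + 1$. The same argument, using generic rank $n$ in place of $n-1$, yields $\dim(B/\widetilde{L_{F,\x}}) \geq n + r$.

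At a smooth point $\x$ of $Y$, I would choose local coordinates so that $Y = \{x_1 = 0\}$; then $f = x_1^m u$ and each $f_k = x_1^{m_k} u_k$ for units $u, u_k \in \mathscr{O}_{X,\x}$ and non-negative integers $m_k$ with $\sum m_k = m \geq 1$. A direct calculation shows $\Der_{X,\x}(-\Log_0 f)$ is freely generated as an $\mathscr{O}_{X,\x}$-module by the $n - 1$ derivations $\delta_j = \partial_j - c_j x_1 \partial_1$ ($j = 2, \ldots, n$), with explicit $c_j \in \mathscr{O}_{X,\x}$, and $E := \frac{1}{m} x_1 \partial_1$ is a strong Euler homogeneity for $f$ complementing $\Der_{X,\x}(-\Log_0 f)$ inside $\Der_{X,\x}(-\Log f)$. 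The symbol $\gr_{(0,1,1)} \psi_F(\delta_j)$ has $y_j$-coefficient equal to $1$, so the $n - 1$ generators of $L_{F,\x}$ form an echelon system in the variables $y_2, \ldots, y_n$; eliminating these gives $B/L_{F,\x} \simeq \mathscr{O}_{X,\x}[y_1, s_1, \ldots, s_r]$, a polynomial ring over a regular local domain, hence a prime of dimension $n + r + 1$. For $\widetilde{L_{F,\x}}$, the extra generator, namely the symbol of $\psi_F(E)$, becomes, modulo $L_{F,\x}$, an element of the form $\frac{1}{m}\bigl(x_1 y_1 - \sum_k (m_k + x_1 \partial_1 u_k / u_k) s_k\bigr)$; since some $m_k \neq 0$, its $s_k$-coefficient is a unit at $\x$, so one can solve for that $s_k$ to present the further quotient as a polynomial ring of dimension $n + r$, also a domain.

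The main obstacle, in my view, is the lower bound in the general case: since $B$ is not local, the identity $\dim(B/I) = \dim B - \mathrm{ht}(I)$ need not hold, and the minimum number of $\mathscr{O}_{X,\x}$-generators of $L_{F,\x}$ can exceed $n - 1$ at special points, obstructing a direct application of Krull's height theorem. The remedy is to exploit the homogeneity condition $L_{F,\x} \cap \mathscr{O}_{X,\x} = (0)$, bound the generic-fibre dimension via the generic rank, and then transport the estimate back to $B$ by extending the generic-fibre chain of primes through $\dim \mathscr{O}_{X,\x}$ additional primes by means of the altitude formula.
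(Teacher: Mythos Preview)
Your smooth-point argument is the paper's, recast in normalized coordinates; the paper works without normalizing (it picks any $\partial_{x_i}$ with $\partial_{x_i}\bullet f$ a unit and appeals to Saito's criterion), but the content is the same. One small slip: your $E=\tfrac{1}{m}x_1\partial_1$ is not an Euler homogeneity for $f=x_1^{m}u$ unless $\partial_1 u=0$, since $E\bullet f = f\bigl(1+\tfrac{x_1\partial_1 u}{mu}\bigr)$. This does not damage the computation, because what you actually use is that $\{\delta_2,\dots,\delta_n,E\}$ is an $\mathscr{O}_{X,\x}$-basis of $\Der_{X,\x}(-\log f)$, which is true and suffices to generate $\widetilde{L_{F,\x}}$.

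For the lower bound you take a different route from the paper. The paper writes one sentence: ``since the smooth points are dense, we get the desired inequalities''---i.e.\ the germ of $V(L_F)$ over $\x$ contains pieces lying over nearby smooth $\x'$, where the dimension has already been computed. You instead run an explicit generic-fibre argument over $K=\mathrm{Frac}(\mathscr{O}_{X,\x})$. Morally these are the same idea (the generic fibre \emph{is} the generic smooth point), and your version has the merit of being fully algebraic and self-contained.

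There is, however, a genuine gap in your extension step. The formula
\[
\dim(B/P_c)=\dim\mathscr{O}_{X,\x}+\mathrm{tr.deg}_K\mathrm{Frac}(B/P_c)
\]
is \emph{not} a general consequence of the altitude formula for finitely generated domains over a universally catenary base. Take $\mathscr{O}_{X,\x}=\mathbb{C}\{t\}$, $B=\mathbb{C}\{t\}[y]$, and $P_c=(ty-1)$, which is the contraction of the maximal ideal $(y-t^{-1})\subset K[y]$; then $B/P_c\cong\mathbb{C}\{t\}[t^{-1}]$ has dimension $0$, not $1$. What the dimension formula actually yields is $\dim(B/P_c)=n$ \emph{provided} $\mathfrak{m}_\x\cdot(B/P_c)\neq B/P_c$, and this fails in the example. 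The fix is immediate using the homogeneity you already recorded: since $L_{F,\x}K[y,s]$ is generated by linear forms, choose the chain $Q_0\subsetneq\cdots\subsetneq Q_c$ to consist of \emph{graded} primes, ending at the irrelevant ideal. Then the contraction $P_c$ equals $(y,s)B$, so $B/P_c=\mathscr{O}_{X,\x}$ on the nose and the extra $n$ primes come for free. With this adjustment your argument goes through.
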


\begin{proof}

Let $\x \in X$ be a part of the smooth locus of $f$; fix coordinates and choose $\partial_{x_{i}}$ such that $\partial_{x_{i}} \bullet f$ is a unit in $\mathscr{O}_{X,\x}.$ Then $\Gamma = \{\partial_{x_{k}} - \frac{\partial_{x_{k}} \bullet f}{\partial_{x_{i}} \bullet f} \partial_{x_{i}}\}_{k=1, k \neq i}^{n} \subseteq \Der_{X,\x}(- \Log_{0} f)$ is a set of $n-1$ linearly independent elements. Saito's Criterion (cf. page 270 of \cite{SaitoTheoryLogarithmic}) implies that $\Gamma$ together with $E_{\x}$, the strong Euler derivation, gives a free basis for $\Der_{X,\x}(- \log f)$. Hence, $\Gamma$ generates $\Der_{X,\x}(- \Log_{0} f)$ freely. As $\mathscr{O}_{X,\x}[Y][S] / L_{F,\x} \simeq \mathscr{O}_{X,\x}[y_{i}][S]$, $L_{F,\x}$ is a prime ideal of dimension $n+r+1$. 

By Lemma \ref{lemma Euler calculation}, and the choice of $j$ outlined in its proof, there is a ring map
\[
\mathscr{O}_{X,\x}[Y][S] / \gr_{(0,1,1)}(\psi_{F}(E_{\x})) \simeq \mathscr{O}_{X,\x}[Y][s_{1}, \dots, s_{j-1}, s_{j+1}, \dots, s_{r}].
\]
Consider the image of $\gr_{(0,1,1)}(\psi_{F}(y_{k} - \frac{\partial_{x_{k}} \bullet f}{\partial_{x_{i}} \bullet f} y_{i}))$ (hereafter denoted with $\overline{(-)}$) in $\mathscr{O}_{X,\x}[Y][S] / \gr_{(0,1,1)}(\psi_{F}(E_{\x}))$ . Since $E_{\x}$ is a strong Euler-homogeneity, the coefficient of each $y_{k}$ in $\gr_{(0,1,1)}(\psi_{F}(E_{\x}))$ lies in $\mathfrak{m}_{\x}$. Thus the coefficient of $y_{k}$ in 
\begin{align*}
\overline{\gr_{(0,1,1)}(\psi_{F}(y_{k} - \frac{\partial_{x_{k}} \bullet f}{\partial_{x_{i}} \bullet f} y_{i}))} &\in \mathscr{O}_{X,\x}[Y][S] / \gr_{(0,1,1)}(\psi_{F}(E_{\x})) \\ 
    &\simeq \mathscr{O}_{X,\x}[Y][s_{1}, \dots, s_{j-1}, s_{j+1}, \dots, s_{r}].
\end{align*}
belongs to $\mathscr{O}_{X,\x} \setminus \mathfrak{m}_{\x}.$ So as rings, $\mathscr{O}_{X,\x}[Y] / \widetilde{L_{F,\x}} \simeq \mathscr{O}_{X,\x}[y_{i}][s_{1}, \dots, s_{j-1}, s_{j+1}, \dots, s_{r}]$ and $\widetilde{L_{F,\x}}$ is a prime ideal of dimension $n+r$.

Since the smooth points are dense, we get the desired inequalities.
\end{proof}

Take a generator $\gr_{(0,1,1)} \left( \delta - \sum s_{k} \frac{\delta \bullet f_{k}}{f_{k}} \right)$, $\delta \in \Der_{X}(-\log_{0}f)$, of $L_{F,\x}$. Erasing the $s_{k}$-terms results in $\gr_{(0,1,1)}(\delta) = \gr_{(0,1)}(\delta) \in L_{f,\x}$. This process is formalized by filtering $\gr_{(0,1,1)}(\D_{X,\x}[S])$ in such a way that the $s_{k}$-terms have degree 0 and then taking the initial ideal of $L_{F,\x}$.

\begin{definition}

It is well known that for an open $U \subseteq X$ with a fixed coordinate system $\gr_{(0,1,1)}(\D_{X}(U)[S]) \simeq \mathscr{O}_{X}(U)[Y][S]$, where $y_{i} = \gr_{(0,1,1)}(\partial_{x_{i}})$. Grade this by the integral vector $(0,1,0) \in \mathbb{N}^{n} \times \mathbb{N}^{n} \times \mathbb{N}^{r}$. For example the element $gY^{\textbf{u}}S^{\textbf{v}}$, where $\textbf{u}, \textbf{v}$ are nonnegative integral vectors and $g \in \mathscr{O}_{U}$, will have $(0,1,0)$-degree $\sum_{j} u_{j}.$ Changing coordinate systems does not effect the number of $y$-terms so this extends to a grading on $\gr_{(0,1,1)}(\D_{X}(U)[S])$. 

Define $\In_{(0,1,0)}L_{F}$ to be the initial ideal of the generalized Liouville ideal with respect to the $(0,1,0)$-grading. See Appendix A for details about initial ideals.

\end{definition}

We now have three ideals: $L_{F}$, $\In_{(0,1,0)} L_{F}$, and $ \gr_{(0,1,1)}(\D_{X}[S]) \cdot L_{f}$, the ideal extension of $L_{f}$ to $\gr_{(0,1,1)}(\D_{X}[S]).$ Proposition \ref{main grobner theorem} shows how some nice properties of $\In_{(0,1,0)} L_{F}$ transfer to nice properties of $L_{F}.$ The following construction will let us transfer nice properties of $L_{f}$, and consequently of $\gr_{(0,1,1)}(\D_{X}[S]) \cdot L_{f}$, to nice properties of $\In_{(0,1,1)} L_{F}.$

\begin{prop}\label{prop main map}

Assume $f = f_{1} \cdots f_{r}$ is strongly Euler-homogeneous and let $F= (f_{1}, \dots, f_{r}).$ Consider $\gr_{(0,1,1)}(\D_{X}[S]) \cdot L_{f}$, the extension of the Liouville ideal to $\gr_{(0,1,1)}(\D_{X}[S])$. Then there is a surjection of rings:
\begin{equation} \label{eqn Lf-initialLF}
\frac{\gr_{(0,1,1)}(\D_{X}[S])}{\gr_{(0,1,1)}(\D_{X}[S]) \cdot L_{f}} \twoheadrightarrow \frac{\gr_{(0,1,1)}(\D_{X}[S])}{\In_{(0,1,0)}L_{F}}.
\end{equation}

\end{prop}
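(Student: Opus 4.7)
My approach is to realize the desired surjection as the canonical quotient map induced by a containment of ideals in $\gr_{(0,1,1)}(\D_{X}[S])$; specifically, I would prove
\[
\gr_{(0,1,1)}(\D_{X}[S]) \cdot L_{f} \;\subseteq\; \In_{(0,1,0)} L_{F}.
\]
Since both sides of \eqref{eqn Lf-initialLF} are quotients of the common ring $\gr_{(0,1,1)}(\D_{X}[S])$, any such containment produces the desired surjection automatically.

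To establish the containment, I would analyze the $(0,1,0)$-initial form of a typical generator of $L_{F}$. By Lemma 3.4 of \cite{GrangerSchulzeFormal} (already invoked in the proof of Proposition \ref{prop psi}), a derivation $\delta \in \Der_{X}(-\Log_{0}f)$ lies in every $\Der_{X}(-\Log f_{k})$, so we may write $\delta \bullet f_{k} = \lambda_{k}f_{k}$ with $\lambda_{k} \in \mathscr{O}_{X}$. Thus $\psi_{F}(\delta) = \delta - \sum_{k} \lambda_{k} s_{k}$, and its symbol in $\gr_{(0,1,1)}(\D_{X}[S])$ splits into a piece of $(0,1,0)$-degree one, namely $\gr_{(0,1)}(\delta)$, and a piece of $(0,1,0)$-degree zero, namely $-\sum_{k} \lambda_{k} s_{k}$. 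Picking off the top $(0,1,0)$-homogeneous component yields
\[
\In_{(0,1,0)}\!\bigl(\gr_{(0,1,1)}(\psi_{F}(\delta))\bigr) \;=\; \gr_{(0,1)}(\delta),
\]
which by definition is a generator of $L_{f}$ as $\delta$ ranges over $\Der_{X}(-\Log_{0}f)$.

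Hence every generator of $L_{f}$ lies in $\In_{(0,1,0)} L_{F}$, and since the right-hand side is a $\gr_{(0,1,1)}(\D_{X}[S])$-ideal, the extension $\gr_{(0,1,1)}(\D_{X}[S]) \cdot L_{f}$ is contained there as well. The main conceptual worry, that $\In_{(0,1,0)} L_{F}$ need not be generated by the $(0,1,0)$-initial forms of a chosen generating set of $L_{F}$ (the familiar Gr\"obner basis subtlety), is not an obstacle here because we only need one direction of inclusion: each $\gr_{(0,1)}(\delta)$ is \emph{individually} realized as an initial form of an element of $L_{F}$, which is all we require. The strong Euler-homogeneity hypothesis is not essential for the inclusion itself, but ensures, via Remark \ref{remark well-behaved}, that $L_{f}$ and $L_{F}$ are intrinsic to $\text{Div}(f)$ rather than artifacts of the defining equation, so that the map \eqref{eqn Lf-initialLF} is canonically defined.
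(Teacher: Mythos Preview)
Your proposal is correct and follows essentially the same approach as the paper: both argue by showing the ideal containment $\gr_{(0,1,1)}(\D_{X}[S]) \cdot L_{f} \subseteq \In_{(0,1,0)} L_{F}$ via the observation that the $(0,1,0)$-initial form of the generator $\gr_{(0,1,1)}(\psi_{F}(\delta))$ is precisely $\gr_{(0,1)}(\delta)$. Your additional remarks on the Gr\"obner subtlety and the role of strong Euler-homogeneity are accurate and more explicit than the paper's terse version.
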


\begin{proof}

$L_{f}$ is generated by the symbols of $\delta \in \Der_{X}(- \Log_{0} f)$ in $\gr_{(0,1)}(\D_{X})$. Thinking of $\gr_{(0,1)}(\D_{X}) \subseteq \gr_{(0,1,1)}(\D_{X}[S])$, $\gr_{(0,1,1)}(\D_{X}[S]) \cdot L_{f}$ will have the generators $\gr_{(0,1,1)}(\delta)$. On the other hand $L_{F}$ is locally generated by $\gr_{(0,1,1)} \left( \delta - \sum s_{k} \frac{\delta \bullet f_{k}}{f_{k}} \right)$ for $\delta \in \Der_{X,\x}(- \Log_{0} f)$. Each such generator has $(0,1,0)$-initial term $\gr_{(0,1,1)}(\delta)$. So $\gr_{(0,1,1)}(\D_{X,\x}[S]) \cdot L_{f,\x} \subseteq \In_{(0,1,0)}L_{F,\x}.$
\end{proof}

\begin{prop}\label{prop main isomorphism}

Suppose $f = f_{1} \cdots f_{r}$ is a strongly Euler-homogeneous divisor and let $F=(f_{1}, \dots, f_{r}).$ Then the following data transfer from the Liouville ideal to the initial ideal of the generalized Liouville ideal:

\begin{enumerate}[(a)]
    \item If $\dim \gr_{(0,1)}(\D_{X,\x}) / L_{f,\x} = n+1$, then \[\dim \gr_{(0,1,1)}(\D_{X,\x}[S]) / L_{F,\x} = n+r+1;\]
    \item If $L_{f}$ is locally a prime ideal, then there is an isomorphism of rings
    \[\frac{\gr_{(0,1,1)}(\D_{X}[S])}{\gr_{(0,1,1)}(\D_{X}[S]) \cdot L_{f}} \simeq \frac{\gr_{(0,1,1)}(\D_{X}[S])}{ \In_{(0,1,0)}L_{F}};\]
    \item If $L_{f}$ is locally Cohen--Macaulay and prime, then $L_{F}$ is locally Cohen--Macaulay.
\end{enumerate}

\end{prop}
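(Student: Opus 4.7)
The plan is to combine the surjection \eqref{eqn Lf-initialLF} from Proposition \ref{prop main map} with the Gr\"obner-type machinery of Appendix A, leveraging the elementary fact that a polynomial extension in the variables $s_{1}, \dots, s_{r}$ preserves Krull dimension, primality, and Cohen--Macaulayness. The key observation is the canonical identification
\[
\frac{\gr_{(0,1,1)}(\D_{X}[S])}{\gr_{(0,1,1)}(\D_{X}[S]) \cdot L_{f}} \;\simeq\; \left( \frac{\gr_{(0,1)}(\D_{X})}{L_{f}} \right)[s_{1}, \dots, s_{r}],
\]
which propagates each of these three properties from $L_{f}$ to the source of \eqref{eqn Lf-initialLF}.

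For (a), I will sandwich $\dim \gr_{(0,1,1)}(\D_{X,\x}[S])/L_{F,\x}$ between two bounds: the surjection \eqref{eqn Lf-initialLF} combined with the polynomial identification above forces $\dim \gr_{(0,1,1)}(\D_{X,\x}[S])/\In_{(0,1,0)} L_{F,\x} \leq n+1+r$; Appendix A transfers this upper bound back to $L_{F,\x}$ via the equality of dimensions between an ideal and its initial ideal; and Proposition \ref{prop dim reference} supplies the matching lower bound $\dim \gr_{(0,1,1)}(\D_{X,\x}[S])/L_{F,\x} \geq n+r+1$. For (b), once $L_{f}$ is locally prime, its polynomial extension $\gr_{(0,1,1)}(\D_{X}[S]) \cdot L_{f}$ is likewise locally prime, so the source of \eqref{eqn Lf-initialLF} is a Noetherian domain of local dimension $n+r+1$. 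By (a) the target has the same dimension, and a surjection of Noetherian rings from a domain onto a ring of equal Krull dimension must be injective: any nonzero element of the kernel would be a nonzerodivisor in the source, strictly lowering its dimension by Krull's principal ideal theorem. For (c), the polynomial-extension identification immediately makes the source of \eqref{eqn Lf-initialLF} locally Cohen--Macaulay, so the isomorphism from (b) renders $\gr_{(0,1,1)}(\D_{X}[S])/\In_{(0,1,0)} L_{F}$ locally Cohen--Macaulay; Appendix A then descends the Cohen--Macaulay property from the initial-ideal quotient back to $\gr_{(0,1,1)}(\D_{X}[S])/L_{F}$, completing the argument.

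The main technical load falls on Appendix A, which must simultaneously provide the dimension equality $\dim \gr_{(0,1,1)}(\D_{X}[S])/L_{F} = \dim \gr_{(0,1,1)}(\D_{X}[S])/\In_{(0,1,0)} L_{F}$ and the Cohen--Macaulay descent from $\gr_{(0,1,1)}(\D_{X}[S])/\In_{(0,1,0)} L_{F}$ to $\gr_{(0,1,1)}(\D_{X}[S])/L_{F}$, in the $(0,1,0)$-weighted setting of $\gr_{(0,1,1)}(\D_{X}[S])$. With those two Gr\"obner-type statements in hand, the three claims of the proposition become formal consequences of the surjection \eqref{eqn Lf-initialLF}, the polynomial-extension inheritance facts, and the lower dimension bound of Proposition \ref{prop dim reference}.
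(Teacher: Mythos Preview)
Your proposal is correct and follows the same route as the paper: use the polynomial-extension identification to push dimension, primality, and Cohen--Macaulayness from $L_{f}$ to the source of \eqref{eqn Lf-initialLF}, then sandwich dimensions using the surjection, Appendix~A, and Proposition~\ref{prop dim reference}, and finally invoke the domain argument for (b) and the Gr\"obner descent for (c).

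One small correction: you invoke Appendix~A for ``the equality of dimensions between an ideal and its initial ideal,'' but Proposition~\ref{main grobner theorem}(b) in this paper only establishes the \emph{inequality}
\[
\dim \frac{\gr_{(0,1,1)}(\D_{X,\x}[S])}{\In_{(0,1,0)} L_{F,\x}} \;\geq\; \dim \frac{\gr_{(0,1,1)}(\D_{X,\x}[S])}{L_{F,\x}}.
\]
The classical equality (over a field) is not proved in this generality where the base ring is $\mathscr{O}_{X,\x}$. Fortunately the inequality is exactly what the sandwich needs: it passes the upper bound $n+r+1$ from the initial-ideal quotient down to the $L_{F}$-quotient, and Proposition~\ref{prop dim reference} closes the gap from below. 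So your argument is unaffected, but the justification should cite the inequality rather than an equality.
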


\begin{proof}

Because $\gr_{(0,1,1)}(\D_{X}[S]) \cdot L_{f}$ is the extension of $L_{f}$ into a ring with new variables $S$, there are ring isomorphisms
\begin{align*}
    \frac{\gr_{(0,1,1)}(D_{X}[S])}{\gr_{(0,1,1)}(D_{X}[S]) \cdot L_{f}} \simeq \frac{\mathscr{O}_{X}[Y][S]}{\mathscr{O}_{X}[Y][S] \cdot L_{f}} \simeq \frac{\mathscr{O}_{X}[Y]}{L_{f}} [S] \simeq \frac{\gr_{(0,1)}(\D_{X})}{L_{f}}[S].
\end{align*} 
So if $\dim \gr_{(0,1)}(\D_{X,\x}) / L_{f,\x} = n+1$, $\dim \gr_{(0,1,1)}(\D_{X,\x}[S]) / \gr_{(0,1,1)} (\D_{X,\x}[S]) \cdot L_{f} = n+r+1$. Similarly if $L_{f,\x}$ is prime, then $\gr_{(0,1,1)}(\D_{X,\x}[S]) \cdot L_{f,\x}$ is prime. 

The map \eqref{eqn Lf-initialLF} gives $n+r+1 \geq \dim \In_{(0,1,0)}L_{F,\x}.$ By Proposition \ref{main grobner theorem} and Remark \ref{remark grobner justification}, $\dim \In_{(0,1,0)}L_{F,\x} \geq \dim L_{F,\x}$. Proposition \ref{prop dim reference} gives $\dim L_{F,\x} \geq n+r+1$, proving (a). As for (b), the hypotheses guarantee that the map \eqref{eqn Lf-initialLF} is locally a surjection from a domain to a ring of the same dimension and hence an isomorphism. To prove (c), recall Proposition \ref{main grobner theorem} and Remark \ref{remark grobner justification} show that if $\In_{(0,1,0)} L_{F}$ is locally Cohen--Macaulay, then $L_{F}$ is locally Cohen--Macaulay. So (b) implies (c).
\end{proof}

\subsection{Primality of $L_{F,\x}$ and $\widetilde{L_{F,\x}}$.} \text{ }

Now we show that when $f$ is strongly Euler-homogeneous and Saito-holonomic and $F$ a decomposition of $f$, that the conclusions of Proposition \ref{prop main isomorphism} imply $L_{F}$ and $\widetilde{L_{F}}$ are locally prime. The method of argument relies on the Saito-holonomic condition: we use the coordinate transformation of Remark \ref{remark saito-holonomic} to reduce the dimension of logarithmic stratum.

Our first proof mirrors the proof of Theorem 3.17 in \cite{uli}. Because our situation is a little more technical and because we end up using this argument again in Theorem \ref{thm main prime theorem}, we give full details.

\begin{theorem} \label{theorem generalized liouville prime}

Suppose $f = f_{1} \cdots f_{r}$ is strongly Euler-homogeneous and Saito-holonomic and let $F=(f_{1}, \dots, f_{r}).$ If $L_{f}$ is locally Cohen--Macaulay and prime of dimension $n+1$, then $L_{F}$ is locally Cohen--Macaulay and prime of dimension $n+r+1.$ In particular, this happens when $f$ is strongly Euler-homogeneous, Saito-holonomic, and tame. 

\end{theorem}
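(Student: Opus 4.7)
The plan is to combine Proposition \ref{prop main isomorphism} with the Saito-holonomic stratification and the Gr\"obner-type machinery of Appendix A. Parts (a) and (c) of Proposition \ref{prop main isomorphism} already yield that $L_{F}$ is locally Cohen--Macaulay of the desired dimension $n+r+1$ under the stated hypotheses on $L_{f}$, so the remaining content is to establish that $L_{F,\x}$ is prime at every $\x \in X$.

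The first observation is that by Proposition \ref{prop main isomorphism}(b) one has a ring isomorphism $\gr_{(0,1,1)}(\D_{X}[S])/\In_{(0,1,0)} L_{F} \simeq (\gr_{(0,1)}(\D_{X})/L_{f})[S]$; the right-hand side is a polynomial ring over a domain, so $\In_{(0,1,0)} L_{F}$ is locally prime. One then needs to transfer primality from the initial ideal up to $L_{F}$ itself. In the portion of the argument that the Gr\"obner machinery of Appendix A does not directly cover, I would proceed by induction on the codimension of the logarithmic stratum through $\x$, following the outline of Theorem 3.17 of \cite{uli}. The base case, when $\x$ lies on an $n$-dimensional stratum (hence on the smooth locus of $Y$), is Proposition \ref{prop dim reference}, where $L_{F}$ is exhibited as a polynomial ring and so is prime. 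For the inductive step, when the stratum $D \ni \x$ has dimension $m < n$, Remark \ref{remark saito-holonomic}(a)--(b) supplies a local coordinate change $\phi_{F}$ yielding coordinates $(x', y)$ with $y = (x_{n-m+1}, \ldots, x_{n})$ such that there are $m$ explicit logarithmic derivations $\delta_{k} = \partial_{x_{n-m+k}} + \sum_{j} g_{jk} \partial_{x_{j}}$ and each $f_{k}$ factors as $u_{k}(y)\, f_{k}(x', 0)$ with $u_{k}$ a unit. This \emph{separates} the stratum directions $y$ from the transverse directions $x'$ and reduces the primality question for $L_{F,\x}$ to the analogous question for the generalized Liouville ideal of $G = (g_{1}, \ldots, g_{r})$, $g_{k}(x') := f_{k}(x', 0)$, living in an ambient space of strictly smaller dimension; Remark \ref{remark well-behaved} legitimises the reduction, confirming it respects the algebraic structure at issue.

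The main obstacle I anticipate is the inductive bookkeeping---verifying that $\phi_{F}$ interacts cleanly with the $(0,1,0)$-grading used to form $\In_{(0,1,0)} L_{F}$, and that the new logarithmic derivations $\delta_{1}, \ldots, \delta_{m}$ contribute to $L_{F}$ in a transparent enough way that after reduction the remaining primality assertion is genuinely one about $L_{G}$ in the smaller ambient dimension. Once this is carried out, the base case (smooth locus) closes the induction. The final ``in particular'' clause then follows by invoking the fact (established in \cite{uli}) that strongly Euler-homogeneous, Saito-holonomic, tame divisors have $L_{f}$ locally Cohen--Macaulay and prime of dimension $n+1$, so that the main statement of the theorem applies.
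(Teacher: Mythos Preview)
Your proposal has the right ingredients but a genuine gap in the induction. You set up induction on the \emph{codimension} of the logarithmic stratum through $\x$, with base case codimension $0$ (the smooth locus) and inductive step codimension $>0$. The difficulty is that the coordinate change of Remark \ref{remark saito-holonomic} does not decrease this quantity: if $\x$ sits on a stratum of dimension $m>0$ (codimension $n-m$), the reduction passes to $X'$ of dimension $n-m$, where the image point $\x'$ lies on a $0$-dimensional stratum, hence of codimension $n-m$ again. So the induction as stated does not move. The paper instead inducts on $\dim X$: once you reduce to $X'$, the inductive hypothesis applies because $\dim X' < \dim X$, not because the stratum codimension has dropped.

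More seriously, you entirely skip the case where the stratum through $\x$ is $0$-dimensional. Here there are no nowhere-vanishing logarithmic derivations at $\x$, so no coordinate change is available and no reduction to a smaller ambient space is possible. This is the heart of the argument. The paper handles it as follows: by Saito-holonomicity there is a neighborhood $U$ of $\x$ in which every other point has positive-dimensional stratum, so $L_{F}$ is already known to be prime on $U\setminus\{\x\}$. If $L_{F,\x}$ were not prime it would have an extra irreducible component $\V(\mathfrak{q})$ lying entirely in the fiber $\pi^{-1}(\x)\subseteq \Spec \mathscr{O}_{X,\x}[Y][S]$. But $L_{F,\x}$ is Cohen--Macaulay of dimension $n+r+1$ (this is where the CM conclusion from Proposition \ref{prop main isomorphism}(c) is actually used), so $\V(\mathfrak{q})$ must have dimension $n+r+1$, while $\pi^{-1}(\x)$ has dimension only $n+r$. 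This contradiction forces $L_{F,\x}$ to be prime. Your remark that $\In_{(0,1,0)}L_{F}$ is prime is correct but does not help here, since the Gr\"obner machinery of Appendix A does not transfer primality upward.
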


\begin{proof}

If we prove the second sentence, the third will follow by Theorem 3.17 and Remark 3.18 of \cite{uli}. By Proposition \ref{prop main isomorphism}, the only thing to prove in the second sentence is that $L_{F}$ is locally prime. To do this we induce on the dimension of $X$. If $\dim X$ is 1, then $L_{F,\x} = 0$ and the claim is trivially true.

So we may assume the claim holds for all $X$ with dimension less than $n$. Suppose $\x$ belong to a logarithmic stratum $\sigma$ of dimension $k$. If $k = n$, then by Proposition \ref{prop dim reference} and Remark \ref{remark saito-holonomic}, $L_{F,\x}$ is prime. Now assume $0 < k < n.$ By Remark \ref{remark saito-holonomic}, we can find a coordinate transformation near $\x$ such that each $f_{i} = u_{i}g_{i}$, where $u_{i}$ is a unit near $\x$ and $g_{i}(x_{1}, \dots, x_{n}) = f_{i}(x_{1}, \dots, x_{n-k}, 0, \dots, 0)$, cf. 3.6 of \cite{SaitoTheoryLogarithmic}. By Remark \ref{remark well-behaved}, $L_{F,\x}$ is well-behaved under coordinate transformations and multiplication by units, so we may instead prove the claim for $L_{G,\x}$, where $g = \prod g_{i}$ and $G = (g_{1}, \dots, g_{r}).$ Let $X^{\prime}$ be the space of the first $n-k$ coordinates and $\x{\prime}$ the first $n-k$ coordinates of $\x$. When viewing $g_{i}^{\prime}$ as an element of $\mathscr{O}_{X^{\prime}, \x{\prime}}$, call it $g_{i}^{\prime}.$ Let $g^{\prime} = \prod g_{i}^{\prime}$ and $G^{\prime} = (g_{1}^{\prime}, \dots, g_{r}^{\prime}).$ Because strongly Euler-homogeneous descends from $X$ to $X^{\prime}$, see Remark 2.8 in \cite{uli}, local properties $L_{G^{\prime}}$ do not depend on the choice of the defining equations for the $g_{i}$. Now 
\[\Der_{X,\x}(-\log g) = \mathscr{O}_{X,\x} \cdot \Der_{X^{\prime}, \x{\prime}}(- \log g^{\prime}) + \sum\limits_{1 \leq j \leq k} \mathscr{O}_{X,\x} \cdot \partial_{x_{n-k+j}},\]
where $\partial_{x_{n-k+j}} \in \Der_{X, \x}(-\log_{0} g_{i})$ for each $1 \leq j \leq k$ and $1 \leq i \leq r$. Therefore $\mathscr{O}_{X,\x}[y_{1}, \dots , y_{n}][S] / L_{G, \x} \simeq \mathscr{O}_{X, \x}[y_{1}, \dots , y_{n-k}][S] / L_{G^{\prime}, \x{\prime}}.$ Since Saito-holonomicity descends to $g^{\prime}$, see 3.5 and 3.6 of \cite{SaitoTheoryLogarithmic} and Remark 2.6 of \cite{uli}, we may instead prove the claim for $X^{\prime}$ and $L_{G^{\prime}, \x{\prime}}$. Since $\dim X^{\prime} < \dim X$, the induction hypothesis proves the claim. 

So we may assume $\sigma$ has dimension $0$. By Remark \ref{remark saito-holonomic}, there is some open $U \ni \x$, such that $\x = U \cap \sigma$ and $U \setminus \x$ consists of points whose logarithmic strata are of strictly positive dimension. The discussion above implies $L_{F}$ is prime at all points of  $U \setminus \x$. 

Let $\pi: \Spec \mathscr{O}_{X}[Y][S] \twoheadrightarrow \Spec \mathscr{O}_{X}$ be the map induced by $\mathscr{O}_{X} \hookrightarrow \mathscr{O}_{X}[Y][S].$ If $L_{F}$ is not prime at $\x$, it must have more than one irreducible component that intersects $\pi^{-1}(\x)$. As $L_{F}$ is prime at points of $U \setminus \x$, if $L_{F,x}$ is not prime it must have an ``extra" irreducible component $\V(\mathfrak{q})$ lying inside $\pi^{-1}(\x)$. By assumption, $L_{F,\x}$ is Cohen--Macaulay of dimension $n+r+1$ and so $\V(\mathfrak{q})$ has dimension $n+r+1$. But $\pi^{-1}(\x)$ has dimension $n+r$. Thus $L_{F,\x}$ is prime completing the induction.
\end{proof}

\begin{prop} \label{prop CM dim 2}
Suppose $f = f_{1} \cdots f_{r}$ is a strongly Euler-homogeneous divisor and let $F=(f_{1}, \dots, f_{r}).$ If $L_{F}$ is locally prime, Cohen--Macaulay, and of dimension $n+r+1$, then $\widetilde{L_{F}}$ is locally Cohen--Macaulay of dimension $n+r$. 
In particular, this happens when $f$ is strongly Euler-homogeneous, Saito-holonomic, and tame.
\end{prop}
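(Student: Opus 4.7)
The plan is to identify $\widetilde{L_{F,\x}}$ as $L_{F,\x}$ together with one extra degree-one generator, and then show that this extra generator is a non-zerodivisor on the Cohen--Macaulay domain $\gr_{(0,1,1)}(\D_{X,\x}[S])/L_{F,\x}$. Strong Euler-homogeneity at $\x$ provides a splitting
\[
\Der_{X,\x}(-\log f) = \Der_{X,\x}(-\log_{0} f) \oplus \mathscr{O}_{X,\x} \cdot E_{\x},
\]
and since $\psi_{F}$ is $\mathscr{O}_{X,\x}$-linear and the symbol map $\gr_{(0,1,1)}$ is $\mathscr{O}_{X,\x}$-linear on elements of $(0,1,1)$-degree one, we obtain
\[
\widetilde{L_{F,\x}} = L_{F,\x} + \bigl(\gr_{(0,1,1)}(\psi_{F}(E_{\x}))\bigr).
\]
Remark \ref{remark well-behaved} ensures the resulting local algebraic data does not depend on the choice of defining equation or strong Euler homogeneity.

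The main step is to show $\gr_{(0,1,1)}(\psi_{F}(E_{\x})) \notin L_{F,\x}$. Since $L_{F,\x}$ is generated by the degree-one elements $\gr_{(0,1,1)}(\psi_{F}(\delta))$ for $\delta \in \Der_{X,\x}(-\log_{0} f)$, its degree-one part is the $\mathscr{O}_{X,\x}$-span of these generators. If $\gr_{(0,1,1)}(\psi_{F}(E_{\x})) = \sum_i h_i \gr_{(0,1,1)}(\psi_{F}(\delta_i))$ with $h_i \in \mathscr{O}_{X,\x}$ and $\delta_i \in \Der_{X,\x}(-\log_{0} f)$, then by $\mathscr{O}_{X,\x}$-linearity this equals $\gr_{(0,1,1)}(\psi_{F}(\sum_i h_i \delta_i))$. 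The map $\delta \mapsto \gr_{(0,1,1)}(\psi_{F}(\delta))$ is injective on $\Der_{X,\x}(-\log f)$, because one recovers the coefficients of $\delta$ from the $Y$-linear part of its symbol. Hence $E_{\x} = \sum_i h_i \delta_i \in \Der_{X,\x}(-\log_{0} f)$, contradicting $E_{\x} \bullet f = f \neq 0$.

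Because $L_{F,\x}$ is prime by hypothesis, $\gr_{(0,1,1)}(\D_{X,\x}[S])/L_{F,\x}$ is a domain, so $\gr_{(0,1,1)}(\psi_{F}(E_{\x}))$, being nonzero there, is a non-zerodivisor on this domain. The standard fact that quotienting a Cohen--Macaulay ring by a non-zerodivisor yields a Cohen--Macaulay ring of dimension one less then gives that $\gr_{(0,1,1)}(\D_{X,\x}[S])/\widetilde{L_{F,\x}}$ is Cohen--Macaulay of dimension $n+r$. For the final assertion, Theorem \ref{theorem generalized liouville prime} shows that the hypotheses ``$f$ strongly Euler-homogeneous, Saito-holonomic, and tame'' yield precisely the conclusion that $L_{F,\x}$ is locally Cohen--Macaulay and prime of dimension $n+r+1$, so the tame case follows at once.

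The main (mild) obstacle is keeping the interaction between the $\mathscr{O}_{X,\x}$-structure on $\Der_{X,\x}(-\log f)$ and the passage to the associated graded object clean enough to support the injectivity used in the non-membership step; once that is handled, the Cohen--Macaulay conclusion is immediate from the standard regular-element-drops-dimension-by-one principle.
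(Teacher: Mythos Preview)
Your proof is correct and follows essentially the same approach as the paper: identify $\widetilde{L_{F,\x}}=L_{F,\x}+(\gr_{(0,1,1)}(\psi_{F}(E_{\x})))$, argue that the extra degree-one generator is not in $L_{F,\x}$ (via injectivity of $\delta\mapsto\gr_{(0,1,1)}(\psi_{F}(\delta))$ and $E_{\x}\notin\Der_{X,\x}(-\log_{0}f)$), and then use primality of $L_{F,\x}$ to conclude it is a non-zerodivisor so the quotient remains Cohen--Macaulay of dimension one less. The paper additionally cites Proposition~\ref{prop dim reference} to confirm the dimension is exactly $n+r$, but your direct ``regular element drops dimension by one'' argument already gives this.
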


\begin{proof}
Let $E_{\x}$ be a strong Euler-homogeneity and consider $\gr_{(0,1,1)}(\psi_{F}(E_{\x}))$, which is $(0,1,1)$-homogeneous of degree $1$. The generalized Liouville ideal is generated by the elements $\psi_{F}(\Der_{X,\x}(-\log_{0} f)$. If $\gr_{(0,1,1)}(\psi_{F}(E_{\x})) \in L_{F,\x}$, then $\psi_{F}(E_{\x}) \in \psi_{F}(\Der_{X,\x}(-\log_{0} f)$. This is impossible since $E_{\x} \notin \Der_{X,\x}(- \log_{0} f)$.

Locally, $\gr_{(0,1,1)}(\D_{X,\x}[S]) / \widetilde{L_{F,\x}}$ is obtained from $\gr_{(0,1,1)}(\D_{X,\x}[S]) / L_{F,\x}$ by modding out by a non-zero element, which must be regular. So $\widetilde{L_{F}}$ is locally Cohen--Macaulay of dimension at most $n+r$. That locally the dimension $\widetilde{L_{F}}$ is $n+r$ follows from the dimension inequality in Proposition \ref{prop dim reference}.

The final sentence is true by Theorem \ref{theorem generalized liouville prime}.
\end{proof}

This section's first main result is that $\widetilde{L_{F,\x}}$ is locally prime when $f$ is strongly Euler-homogeneous, Saito-holonomic, and tame. The strategy is the same as in Theorem \ref{theorem generalized liouville prime}. Under much stricter hypotheses, and in his language, Maisonobe shows in Proposition 3 of \cite{MaisonobeArrangement}, that $\widetilde{L_{F}}$ is locally prime. Experts will note that we recycle the part of his argument where he reduced dimension in our proof. 

\begin{theorem} \label{thm main prime theorem} 
Assume that $f = f_{1} \cdots f_{r}$ is strongly Euler-homogeneous and Saito-holonomic and let $F=(f_{1}, \dots, f_{r}).$ If $\widetilde{L_{F}}$ is locally Cohen--Macaulay of dimension n + r, then $\widetilde{L_{F}}$ is locally prime. In particular, $\widetilde{L_{F}}$ is locally prime, Cohen--Macaulay, and of dimension $n+r$ when $f$ is strongly Euler-homogeneous, Saito-holonomic, and tame.
\end{theorem}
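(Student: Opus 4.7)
The plan is to follow the template of Theorem \ref{theorem generalized liouville prime} almost verbatim, and to isolate the one place where the argument has to be strengthened. First, the ``In particular" clause is automatic from the main assertion combined with Theorem \ref{theorem generalized liouville prime} and Proposition \ref{prop CM dim 2}, so I only need to prove local primality of $\widetilde{L_F}$ under the hypothesis that $\widetilde{L_F}$ is locally Cohen--Macaulay of dimension $n+r$. I would induct on $\dim X$, the base case $\dim X = 1$ being trivial.

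For the inductive step, fix $\x \in X$ and let $\sigma$ be its logarithmic stratum, $k = \dim \sigma$. When $k = n$ the point is smooth for $Y$ and primality is given by Proposition \ref{prop dim reference}. When $0 < k < n$, I would apply Remark \ref{remark saito-holonomic} to obtain the coordinate change separating out the $k$ ``flat'' directions and reducing to $g = \prod g_i$ on an ambient space $X'$ of dimension $n - k$. Remark \ref{remark well-behaved}(g) ensures $\widetilde{L_F}$ is well-behaved under the required unit modifications and coordinate changes, and the $k$ extra partials $\partial_{x_{n-k+j}}$ lie in $\bigcap_i \Der_{X,\x}(-\log_0 g_i)$, so modding them out identifies $\gr_{(0,1,1)}(\D_{X,\x}[S])/\widetilde{L_{G,\x}}$ with $\gr_{(0,1,1)}(\D_{X',\x'}[S])/\widetilde{L_{G',\x'}}$. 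Since strong Euler-homogeneity, Saito-holonomicity, and the Cohen--Macaulay property of $\widetilde{L_F}$ are preserved under this reduction, the inductive hypothesis finishes this case.

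The essential new content is the case $k = 0$, where $\sigma = \{\x\}$. By the cases already handled together with Remark \ref{remark saito-holonomic}(c), $\widetilde{L_F}$ is prime at every point of some punctured neighborhood $U \setminus \{\x\}$. Let $\pi : \Spec \mathscr{O}_{X,\x}[Y][S] \to \Spec \mathscr{O}_{X,\x}$ be the projection. If $\widetilde{L_{F,\x}}$ fails to be prime, the Cohen--Macaulay hypothesis forces every associated prime to be minimal of dimension $n+r$, and any ``extra'' minimal prime $\mathfrak{q}$ must lie inside $\pi^{-1}(\x)$. This is where the proof diverges from Theorem \ref{theorem generalized liouville prime}: there the extra component had dimension $n+r+1$ while $\pi^{-1}(\x)$ had dimension $n+r$, giving an immediate contradiction. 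For $\widetilde{L_F}$ both dimensions equal $n+r$, so I instead argue that $\pi^{-1}(\x)$ is itself an integral scheme of dimension $n+r$, whence an irreducible closed subset of the same dimension has to equal the whole fiber. Thus $\mathfrak{q} = \mathfrak{m}_{X,\x}\mathscr{O}_{X,\x}[Y][S]$, forcing $\widetilde{L_{F,\x}} \subseteq \mathfrak{m}_{X,\x}\mathscr{O}_{X,\x}[Y][S]$.

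The main obstacle, and the step I expect to be technically most delicate, is producing the contradiction from this containment; this is precisely the point where Maisonobe's trick enters. Since $\widetilde{L_{F,\x}}$ contains $\gr_{(0,1,1)}(\psi_F(E_\x))$, Lemma \ref{lemma Euler calculation} asserts that this symbol has at least one term of the form $c \cdot s_k$ with $c \in \mathscr{O}_{X,\x} \setminus \mathfrak{m}_{X,\x}$, so it cannot belong to $\mathfrak{m}_{X,\x}\mathscr{O}_{X,\x}[Y][S]$. This contradiction completes the induction. Care is needed to make sure that under the reductions of the middle paragraph one can still invoke Lemma \ref{lemma Euler calculation} for the reduced Euler-homogeneity on $X'$, but this follows from the fact, recorded in Remark 2.8 of \cite{uli}, that strong Euler-homogeneities descend along the coordinate change, and from the invariance statements of Remark \ref{remark well-behaved}.
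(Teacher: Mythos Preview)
Your proposal is correct and follows the paper's proof almost exactly: same induction on $\dim X$, same reduction via Remark \ref{remark saito-holonomic} and Remark \ref{remark well-behaved} for $0<k<n$, and the same use of Lemma \ref{lemma Euler calculation} in the $k=0$ case. The only cosmetic difference is in how the final contradiction is packaged: the paper observes that $\V(\mathfrak{q}) \subseteq \pi^{-1}(\x) \cap \V(\gr_{(0,1,1)}(\psi_F(E_\x)))$ and shows this intersection is a \emph{proper} closed subset of the irreducible fiber (hence of dimension $<n+r$), whereas you deduce $\mathfrak{q}=\mathfrak{m}_{X,\x}\mathscr{O}_{X,\x}[Y][S]$ directly and then note this ideal cannot contain $\gr_{(0,1,1)}(\psi_F(E_\x))$; these are logically equivalent formulations of the same step.
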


\begin{proof}


By Proposition \ref{prop CM dim 2}, it suffices to prove the first claim. The proof follows the inductive argument of Theorem \ref{theorem generalized liouville prime} with a slight modification at the end. 

If $\dim X$ is 1, then $\widetilde{L_{F,\x}}$ is generated by $\psi_{F}(E_{\x})$, $E_{\x}$ a strong Euler-homogeneity. By Lemma \ref{lemma Euler calculation}, $\mathscr{O}_{X,\x}[Y][S]/\widetilde{L_{F,\x}} \simeq \mathscr{O}_{X,\x}[Y][s_{1}, \dots s_{j-1}, s_{j+1}, \dots s_{r}].$

Now assume the claim holds for all $X$ with dimension less than $n$ and $\x$ belongs to a logarithmic stratum $\sigma$ of dimension $k$. If $k = n$, then $\widetilde{L_{F,\x}}$ is prime by Proposition \ref{prop dim reference}. If $0 < k < n$ we can make the same coordinate transformation as in Theorem \ref{theorem generalized liouville prime} and instead prove $\widetilde{L_{G}}$ is locally prime where $g_{i}(x) = f_{i}(x_{1}, \dots, x_{n-k}, 0, \dots, 0).$ Using the notation of Theorem \ref{theorem generalized liouville prime}, $X^{\prime}$ is  strongly Euler-homogeneous and Saito-holonomic and 
\[\Der_{X,\x}(-\log g) = \mathscr{O}_{X,\x} \cdot \Der_{X^{\prime}, \x{\prime}}(- \log g^{\prime}) + \sum\limits_{1 \leq j \leq k} \mathscr{O}_{X,\x} \cdot \partial_{x_{n-k+j}},\]
where $\partial_{x_{n-k+j}} \in \Der_{X,\x}(-\log_{0} g_{i})$ for each $1 \leq j \leq k$ and $1 \leq i \leq r.$ Moreover, the strong Euler-homogeneity $E_{\x{\prime}}$ for $g^{\prime}$ at $\x{\prime} \in X^{\prime}$ can be viewed as a strong Euler-homogeneity for $g$ at $\x \in X.$ Therefore $\mathscr{O}_{X,\x}[y_{1}, \dots, y_{n}][S] / \widetilde{L_{G,\x}} \simeq \mathscr{O}_{X,\x}[y_{1}, \dots, y_{n-k}][S] / \widetilde{L_{G^{\prime}, \x{\prime}}}.$ Since $\dim X^{\prime} < \dim X$, the induction hypothesis shows that $\widetilde{L_{G^{\prime}, \x{\prime}}}$ is prime. 

So we may assume $\sigma$ has dimension 0. Let $\pi: \Spec \mathscr{O}_{X,\x}[Y][S] \twoheadrightarrow \Spec \mathscr{O}_{X}$ be the map induced by $\mathscr{O}_{X} \hookrightarrow \mathscr{O}_{X,\x}[Y][S].$ Reasoning as in Theorem \ref{theorem generalized liouville prime}, we deduce that if $\widetilde{L_{F,\x}}$ is not prime then there exists a irreducible component $\V(\mathfrak{q})$ of $\widetilde{L_{F,\x}}$ contained entirely in $\pi^{-1}(\x).$

By assumption, $\widetilde{L_{F,\x}}$ is Cohen--Macaulay of dimension $n+r$ and $\V(\mathfrak{q})$ has dimension $n+r$. Let $E_{\x}$ be the strong Euler-homogeneity at $\x$. Then $\V(\mathfrak{q}) \subseteq \pi^{-1}(\x) \cap \V(\gr_{(0,1,1)}(\psi_{F}(E_{\x}))).$ We will show that the intersection of  $\V(\gr_{(0,1,1)}(\psi_{F}(E_{\x})))$ and $\pi^{-1}(\x)$ is proper; since the dimension of $\pi^{-1}(\x)$ is $n+r$ this will show that $\V(\mathfrak{q})$, which we know is of dimension $n+r$, is contained in a closed set of strictly smaller dimension. Therefore no such $\mathfrak{q}$ exists and $\widetilde{L_{F,\x}}$ is prime. 

Recall $\gr_{(0,1,1)}(\psi_{F}(E_{\x})) = \gr_{(0,1,1)}(E_{\x}) - \sum\limits \frac{E_{\x} \bullet f_{k}}{f_{k}} s_{k}.$ Lemma \ref{lemma Euler calculation} proves that there exists an index $j$ such that $\frac{E_{\x} \bullet f_{j}}{f_{j}} \notin \mathfrak{m}_{\x}$. So there is a closed point in $\pi^{-1}(\x)$ that does not lie in $\V (\gr_{(0,1,1)}(\psi_{F}(E_{\x})))$. In particular, the intersection of $\V(\gr_{(0,1,1)}(\psi_{F}(E_{\x})))$ and $\pi^{-1}(\x)$ is proper and the inductive step is complete.
\end{proof}

\subsection{The $\D_{X}[S]$-annihilator of $F^{S}$.} \text{ }

Let $\Jac(f)$ be the Jacobian ideal of $f$. In a given coordinate system, there is a natural $\mathscr{O}_{X,\x}$-linear map 
\[ \phi_{f}: \gr_{(0,1,1)}(\D_{X,\x}[s]) \twoheadrightarrow \text{Sym}_{\mathscr{O}_{X,\x}}(\Jac(f)) \twoheadrightarrow R(\Jac(f)) \] given by \[s \mapsto f t \text{ and } \gr_{(0,1,1)}(\partial_{x_{k}}) \mapsto (\partial_{x_{k}} \bullet f ) t.\] Its kernel contains $\gr_{(0,1)}(\ann_{\D_{X,\x}[s]} f^{s}).$ (See Section 1.3 in  \cite{MorenoMacarroLogarithmic} for details.) So we have the containments
\[ L_{f,\x} + \gr_{(0,1,1)}(\D_{X,\x}[s]) \cdot \gr_{(0,1,1)}(E_{\x} - s) \subseteq \gr_{(0,1)}(\ann_{\D_{X,\x}[s]} f^{s}) \subseteq \text{ker}(\phi_{f}) \]
and equality will hold throughout if $L_{f,\x} + \gr_{(0,1,1)}(\D_{X,x}[s]) \cdot \gr_{(0,1,1)}(E_{\x}-s)$ agrees with $\text{ker}(\phi_{f}).$

This motivates our analysis of $\ann_{\D_{X}[S]} F^{S}$: we will construct a map $\phi_{F}$ from $\gr_{(0,1,1)}(\D_{X}[S])$ into a Rees-algebra like object and squeeze $\gr_{(0,1,1)}(\ann_{\D_{X}[S]} F^{S})$ between $\widetilde{L_{F}}$ and $\text{ker}(\phi_{F}).$

\begin{definition} \label{def phiF}
Let $\Jac(f_{i})$ be the Jacobian ideal of $f_{i}$ and consider the multi-Rees algebra $R(\Jac(f_{1}), \dots, \Jac(f_{r}))$ associated to these $r$ Jacobian ideals. Consider the $\mathscr{O}_{X,\x}$-linear map \[
\phi_{F}: \gr_{(0,1,1)}(\D_{X}[S]) \to R(\Jac(f_{1}), \dots, \Jac(f_{r})) \subseteq \mathscr{O}_{X}[S]
\]
given, having fixed local coordinates on $U$, by
$$ y_{i} \mapsto \sum\limits_{k}  \frac{f}{f_{k}} (\partial_{x_{i}} \bullet f_{k}) s_{k} \text{ and } s_{k} \mapsto f s_{k}.$$

\end{definition}

\begin{prop} \label{prop phi containment}
Let $f = f_{1} \cdots f_{r}$ and $F = (f_{1}, \dots, f_{r})$. Then\[\gr_{(0,1,1)}(\ann_{\D_{X}[S]} F^{S}) \subseteq \text{\normalfont ker}(\phi_{F}).\]
\end{prop}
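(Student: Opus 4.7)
The plan is a direct calculation showing that for $P\in\D_{X,\x}[S]$ of total order $d$, the top $S$-degree part of $f^{d}(P\bullet F^{S})/F^{S}$ is exactly $\phi_{F}(\gr_{(0,1,1)}(P))$. Since $\mathscr{O}_{X,\x}[f^{-1},S]\cdot F^{S}$ is free on $F^{S}$, an element of the annihilator must kill this part, yielding $\gr_{(0,1,1)}(P)\in\ker(\phi_{F})$.

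To carry this out, fix local coordinates and write $P = \sum_{|u|+|v|\le d} a_{u,v}\partial^{u}S^{v}$, so that $\gr_{(0,1,1)}(P) = \sum_{|u|+|v|=d} a_{u,v}Y^{u}S^{v}$. Starting from the basic identity $\partial_{x_{i}}\bullet F^{S} = \sum_{k} s_{k}(\partial_{x_{i}}\bullet f_{k})/f_{k}\cdot F^{S}$, I would prove by induction on $|u|$ the formula
\[
\partial^{u}\bullet F^{S} \;=\; \prod_{i}\Bigl(\sum_{k} s_{k}\frac{\partial_{x_{i}}\bullet f_{k}}{f_{k}}\Bigr)^{u_{i}} F^{S} + R_{u}(x,S)\cdot F^{S},
\]
with $R_{u}$ a rational function in $x$ whose $S$-degree is strictly less than $|u|$. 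The inductive key is that each application of a $\partial_{x_{j}}$ either hits $F^{S}$ and introduces a fresh factor $\sum_{l}s_{l}(\partial_{x_{j}}\bullet f_{l})/f_{l}$, raising $S$-degree by one, or it acts on a rational coefficient and leaves the $S$-degree unchanged. Multiplying the top $S$-degree contribution of $a_{u,v}\partial^{u}S^{v}\bullet F^{S}$ by $f^{|u|+|v|}$ and distributing one $f$-factor into each of the $|u|+|v|$ factors, Definition \ref{def phiF} yields precisely $a_{u,v}\phi_{F}(Y^{u})\phi_{F}(S^{v}) = a_{u,v}\phi_{F}(Y^{u}S^{v})$.

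Summing over all monomials with $|u|+|v|=d$, the coefficient of $S^{v}$-degree $d$ in $f^{d}(P\bullet F^{S})/F^{S}$ is $\phi_{F}(\gr_{(0,1,1)}(P))$. If $P$ annihilates $F^{S}$, freeness of $\mathscr{O}_{X,\x}[f^{-1},S]\cdot F^{S}$ forces $(P\bullet F^{S})/F^{S}=0$ in $\mathscr{O}_{X,\x}[f^{-1},S]$; multiplying by $f^{d}$ gives an identity in $\mathscr{O}_{X,\x}[S]$, and its $S$-degree-$d$ coefficient, namely $\phi_{F}(\gr_{(0,1,1)}(P))$, vanishes. The main obstacle is the inductive bookkeeping in the leading-term formula: I must confirm that the noncommutativity between derivations and the poles $1/f_{k}$ only produces terms of strictly lower $S$-degree, so that the multiplicative formula for the principal symbol survives iteration. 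Once that is in place, matching with the Rees-algebra map $\phi_{F}$ is purely formal.
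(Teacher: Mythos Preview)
Your proposal is correct and follows essentially the same approach as the paper: multiply by $f^{d}$ to clear denominators, identify the top $S$-degree part of $f^{d}(P\bullet F^{S})$ with $\phi_{F}(\gr_{(0,1,1)}(P))F^{S}$, and use freeness of $\mathscr{O}_{X,\x}[f^{-1},S]\cdot F^{S}$ to conclude. The paper compresses your inductive leading-term computation into the sentence ``any time a partial is applied to $gF^{S}$, an $s$-term only comes out of the product rule when the partial is applied to $F^{S}$,'' which is exactly the observation you isolate as the main obstacle and correctly resolve.
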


\begin{proof}

It is enough to show this locally, so take $P \in \ann_{\D_{X,\x}[S]}F^{S}$ of order $\ell$ under the $(0,1,1)$-filtration. For any $Q$ of order $\ell$ it is always true that $f^{\ell}Q \bullet F^{S} \in \mathscr{O}_{X,\x}[S]F^{S}$. Any time a partial is applied to $g F^{S}$, a $s$-term only comes out of the product rule when the partial is applied to $F^{S}$. A straightforward calculation shows that the $S$-lead term of $f^{\ell} P F^{S}$ is exactly $\phi_{F} (\gr_{(0,1,1)}(P)) F^{S}.$ Since $f^{\ell}P$ annihilates $F^{S}$, we conclude $\gr_{(0,1,1)}(P) \in \text{ker}(\phi_{F}).$
\end{proof}

\begin{prop} \label{prop phi prime}

$\text{\normalfont ker}(\phi_{F})$ is a prime ideal of dimension $n+r$. 

\end{prop}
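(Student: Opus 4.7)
The plan is to treat the two assertions separately; both flow from the fact that $\phi_F$ is a ring map from a polynomial ring into a polynomial ring over $\mathscr{O}_{X,\x}$.

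For primality, I note that $\gr_{(0,1,1)}(\D_{X,\x}[S]) \simeq \mathscr{O}_{X,\x}[y_1,\dots,y_n,s_1,\dots,s_r]$ is a polynomial ring, and $\phi_F$ is the unique $\mathscr{O}_{X,\x}$-algebra homomorphism extending its prescription on the generators $y_i$ and $s_k$. Its target $R(\Jac(f_1),\dots,\Jac(f_r))$ sits inside the polynomial ring $\mathscr{O}_{X,\x}[s_1,\dots,s_r]$, which is an integral domain because $\mathscr{O}_{X,\x}$ is. Therefore the image of $\phi_F$ is a domain and $\ker(\phi_F)$ is prime.

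For the dimension, let $A := \mathrm{image}(\phi_F) \subseteq \mathscr{O}_{X,\x}[s_1,\dots,s_r]$. I first identify $\mathrm{Frac}(A)$. Since $\phi_F(s_k) = fs_k \in A$ and $f$ is a nonzero regular function on the local domain $\mathscr{O}_{X,\x}$, the elements $fs_1,\dots,fs_r$ are algebraically independent over $\mathrm{Frac}(\mathscr{O}_{X,\x})$; moreover the identities $s_k = (fs_k)/f$ show that each $s_k$ already lies in $\mathrm{Frac}(A)$. Hence $\mathrm{Frac}(A) = \mathrm{Frac}(\mathscr{O}_{X,\x})(s_1,\dots,s_r)$ and $\mathrm{trdeg}_{\mathbb{C}}\mathrm{Frac}(A) = n+r$.

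Finally, $A$ is a finitely generated $\mathscr{O}_{X,\x}$-algebra that is a domain, and $\mathscr{O}_{X,\x}$ is a regular (hence universally catenary) local ring of dimension $n$. The altitude formula for universally catenary domains then gives
\[
\dim A \;=\; \dim \mathscr{O}_{X,\x} \;+\; \mathrm{trdeg}_{\mathrm{Frac}(\mathscr{O}_{X,\x})}\mathrm{Frac}(A) \;=\; n+r.
\]
The only mildly delicate step is invoking the altitude formula in the analytic (respectively algebraic) local setting, but since $\mathscr{O}_{X,\x}$ is regular this application is routine, and I expect it to be the sole technical point requiring comment in the actual write-up.
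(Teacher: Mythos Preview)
Your proof is correct in outline and takes a more direct route than the paper. The paper squeezes the image $A = \phi_F(\gr_{(0,1,1)}(\D_{X,\x}[S]))$ between two multi-Rees algebras $R((f),\ldots,(f)) \subseteq A \subseteq R(\Jac(f_1),\ldots,\Jac(f_r))$, quotes the known dimension $n+r$ for multi-Rees algebras, and then proves a sandwiching lemma (via the altitude formula and the graded structure) to conclude $\dim A = n+r$. You instead compute $\mathrm{Frac}(A)$ directly---the observation that $fs_k \in A$ with $f$ invertible in the fraction field is exactly what makes this work---and so bypass the multi-Rees machinery entirely. This is a genuine simplification.

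However, your final step has a small gap. The formula $\dim A = \dim \mathscr{O}_{X,\x} + \mathrm{trdeg}_{\mathrm{Frac}(\mathscr{O}_{X,\x})}\mathrm{Frac}(A)$ is \emph{not} a direct consequence of the altitude formula for finitely generated domain extensions of a universally catenary local ring: take for instance $R = k[[x]]$ and $A = R[1/x] = k((x))$, where $\dim A = 0 \neq 1 = \dim R + 0$. What saves you, and what the paper also uses in its sandwiching lemma, is the graded structure: $A$ inherits a nonnegative grading from $\mathscr{O}_{X,\x}[S]$ with degree-zero piece $\mathscr{O}_{X,\x}$, so the graded maximal ideal $\mathfrak{m}_\x + A_+$ contracts to $\mathfrak{m}_\x$ with residue field $\mathbb{C}$. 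The altitude formula applied to that ideal gives height $n+r$, and applied to an arbitrary maximal ideal gives height at most $n+r$. Your identification of the delicate point as living in the ``analytic local setting'' is slightly off-target; regularity of $\mathscr{O}_{X,\x}$ alone does not suffice, and the real issue is producing a maximal ideal of $A$ lying over $\mathfrak{m}_\x$, which the grading handles.
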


\begin{proof}

It is prime. Since Rees rings are domains, to count dimension we squeeze $\phi_{F}(\gr_{(0,1,1)}(\D_{X}[S]))$ between two well-behaved multi-Rees algebras: $R((f), \dots, (f))$ and $R(\Jac(f_{1}), \dots, \Jac(f_{r}))$ (the first multi-Rees algebra is built using $r$ copies of $(f)$). As the latter is the target of $\phi_{F}$ and $\phi_{F}(s_{i}) = f s_{i}$ this is easy:
$$ R((f), \dots, (f)) \subseteq \phi_{F}(\gr_{(0,1,1)}\D_{X}[S]) \subseteq R(\Jac(f_{1}), \dots, \Jac(f_{r}))$$
Moreover, the dimension of a multi-Rees algebra is well known: $R(I_{1}, \dots , I_{r}) = r + $ the dimension of the ground ring. 

So $\phi_{F}(\gr_{(0,1,1)}(\D_{X,\x}[S]))$ is a domain squeezed between subrings of $\mathscr{O}_{X,\x}[S]$ of dimension $n+r$. The result then follows by the following lemma:

\begin{lemma} Let $ R \subseteq A \subseteq B \subseteq C \subseteq R[X]$ be finitely generated, graded $R$-algebras, whose gradings are inherited from the standard grading on $R[X].$ Assume that $R$ is a universally caternary Notherian domain. If $\dim A = \dim C$, then $ \dim A = \dim B = \dim C.$
\end{lemma}

\noindent \emph{Proof.} Claim: if $\mathfrak{m}^{\star}$ is a graded maximal ideal of $A$, then $\mathfrak{m}^{\star} B \neq B$. We prove the contrapositive. So assume $\mathfrak{m}^{\star}B = B$. Then $\mathfrak{m}^{\star} R[X] = R[X].$ Write $\mathfrak{m}^{\star} = (a_{1}, \dots, a_{\ell})$ in terms of homogeneous generators $a_{i} \in A$ and find $r_{1}, \dots, r_{n}$ in $R[X]$ such that $1 = \sum r_{i}a_{i}$. Since the degree of $1$ is zero, we can assume either $r_{i}$ and $a_{i}$ are both degree 0 or $r_{i} = 0$. Thus $1 = \sum r_{i}a_{i}$ occurs in $\mathfrak{m}^{\star} \cap R$ and so $\mathfrak{m}^{\star} = A$, a contradiction.

Now we argue using a version of Nagata's Altitude Formula (see \cite{EisenbudCommutative} Theorem 13.8): $\dim(B_{\mathfrak{q}}) = \dim(A_{\mathfrak{p}}) + \dim (Q(A) \otimes_{A} B)$, for $\mathfrak{q} \in \Spec B$ maximal with respect to the property $\mathfrak{q} \cap A = \mathfrak{p}$. Since $B$ is a finitely generated $A$-algebra, and tensors are right exact, $Q(A) \otimes_{A} B$ is a finitely generated $Q(A)$-algebra. Thus $\dim(Q(A) \otimes_{A} B) = \text{trdeg}_{Q(A)}(Q(A) \otimes_{A} B) = \text{trdeg}_{Q(A)}Q(Q(A) \otimes_{A} B) = \text{trdeg}_{Q(A)}{Q(B)} = \text{trdeg}_{A}B$. Similar statements hold for the other pairs $ A \subseteq C$ and $B \subseteq C.$

Let $\mathfrak{m} \in \Spec A$ such that $\dim(A_{\mathfrak{m}}) = \dim(A)$. By the claim in the first paragraph (so assuming $\mathfrak{m}$ is graded if necessary), we can find $\mathfrak{q} \in \Spec C$ maximal with respect to the property $\mathfrak{q} \cap A = \mathfrak{m}$. So $\dim(C_{\mathfrak{q}}) = \dim(A_{\mathfrak{m}}) + \text{trdeg}_{A}C$. Therefore $\dim(C) \geq \dim(A) + \text{trdeg}_{A}C$ and hence $\text{trdeg}_{A}C = 0$. Since we are looking at algebras finitely generated over the appropriate subring, transcendence degree is additive. So $0 = \text{trdeg}_{A}B$ and $0 = \text{trdeg}_{B}C$.

Let $\mathfrak{m} \in \Spec A$ with $\dim(A_{\mathfrak{m}}) = \dim(A)$, as before. Again, using the claim, select $\mathfrak{p} \in \Spec B$ maximal with respect to the property $\mathfrak{p} \cap A = \mathfrak{m}$. So $\dim(B_{\mathfrak{p}}) = \dim(A_{\mathfrak{m}}) + \text{trdeg}_{A}B$; hence $\dim(B) \geq \dim(A)$. Argue similarly for $B \subseteq C$ to determine $\dim(C) \geq \dim(B)$. This ends the proof. \end{proof}

The following is an analogous statement to Corollary 3.23 in \cite{uli}:

\begin{cor} \label{cor gr-equality}
There is the containment
\[
\widetilde{L_{F}} \subseteq \gr_{(0,1,1)}(\ann_{\D_{X}[S]} F^{S}) \subseteq \text{\normalfont ker}(\phi_{F}).
\]
If $f$ is strongly Euler-homogeneous, Saito-holonomic, and tame then all three ideals are equal.  
\end{cor}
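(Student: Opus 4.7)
The first containment is immediate: by construction $\theta_{F} \subseteq \ann_{\D_{X}[S]} F^{S}$, so the symbols $\gr_{(0,1,1)}(\theta_{F})$ lie in $\gr_{(0,1,1)}(\ann_{\D_{X}[S]} F^{S})$, and since the latter is a left ideal in $\gr_{(0,1,1)}(\D_{X}[S])$ it contains the entire ideal these symbols generate, which is $\widetilde{L_{F}}$ by Definition \ref{definition generalized Liouville}. The second containment is precisely Proposition \ref{prop phi containment}.

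For the equality under the strongly Euler-homogeneous, Saito-holonomic, and tame hypotheses, the plan is to work locally at each $\x \in X$ and run a dimension-theoretic squeeze. Theorem \ref{thm main prime theorem} gives that $\widetilde{L_{F,\x}}$ is prime of dimension $n+r$, and Proposition \ref{prop phi prime} gives that $\ker(\phi_{F,\x})$ is likewise prime of dimension $n+r$. Both ideals live in $\gr_{(0,1,1)}(\D_{X,\x}[S]) \simeq \mathscr{O}_{X,\x}[y_{1},\dots,y_{n}][s_{1},\dots,s_{r}]$, a polynomial ring over the regular local ring $\mathscr{O}_{X,\x}$, hence a regular (in particular Cohen--Macaulay, and therefore catenary) ring of Krull dimension $2n+r$. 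Both primes therefore have height exactly $n$. In a catenary Noetherian ring a strict inclusion of primes forces a strict increase in height, so the inclusion $\widetilde{L_{F,\x}} \subseteq \ker(\phi_{F,\x})$ of two primes of the same height must be an equality. The middle term $\gr_{(0,1,1)}(\ann_{\D_{X,\x}[S]} F^{S})$ is then squeezed between two identical ideals and hence equals both.

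All of the real difficulty is front-loaded into the preceding results: the Saito-holonomic reduction-of-stratum-dimension induction in Theorem \ref{thm main prime theorem}, which establishes primality and the correct Cohen--Macaulay dimension for $\widetilde{L_{F}}$, together with the multi-Rees sandwich argument in Proposition \ref{prop phi prime} that pins down $\ker(\phi_{F})$. The corollary itself is simply the dimension-theoretic capstone pairing these two: once both primes have matching height in a catenary ambient ring, the sandwich from the first paragraph collapses. The only point one has to verify carefully is that the ambient graded ring is catenary at each stalk, which is automatic from the regularity of $\mathscr{O}_{X,\x}$.
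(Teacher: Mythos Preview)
Your proof is correct and follows essentially the same approach as the paper: the containments come from the construction of $\widetilde{L_F}$ and Proposition \ref{prop phi containment}, and the equality from the fact that by Theorem \ref{thm main prime theorem} and Proposition \ref{prop phi prime} the two outer ideals are locally prime of the same dimension $n+r$. Your detour through heights and catenarity is harmless but unnecessary, since for any primes $P \subseteq Q$ in a Noetherian ring with $\dim R/P = \dim R/Q < \infty$ one has $P = Q$ directly from the definition of Krull dimension.
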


\begin{proof}
The containments follow from the construction of $\widetilde{L_{F}}$ and Proposition \ref{prop phi containment}. They are equalities when $f$ is suitably nice because, by Theorem \ref{thm main prime theorem} and Proposition \ref{prop phi prime}, at each $\x \in X$ the outer ideals are prime of the same dimension.
\end{proof}

Because $\D_{X,\x}[S] \cdot \theta_{F} \subseteq \ann_{\D_{X,\x}[S]} F^{S}$, we can use Corollary \ref{cor gr-equality} and a type of Gr\"obner basis argument to prove:

\begin{theorem} \label{thm gen by derivations}
If $f = f_{1} \cdots f_{r}$ is strongly Euler-homogeneous, Saito-holonomic, and tame and if $F=(f_{1}, \dots, f_{r})$, then the $\D_{X}[S]$-annihilator of $F^{S}$ is generated by derivations, that is
$$ \ann_{\D_{X}[S]} F^{S} = \D_{X}[S] \cdot \theta_{F}.$$
\end{theorem}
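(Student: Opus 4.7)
The statement is local, so I would fix $\x \in X$ and work in the stalk $\D_{X,\x}[S]$, showing every $P \in \ann_{\D_{X,\x}[S]} F^S$ lies in $\D_{X,\x}[S] \cdot \theta_{F,\x}$. The natural strategy, hinted at immediately before the theorem, is a Gr\"obner-style ``peel off the initial term'' descent using the $(0,1,1)$-filtration. I would induct on the total order $\ell := \mathrm{ord}_{(0,1,1)}(P)$. The base case $\ell = 0$ puts $P \in \mathscr{O}_{X,\x}[S]$; since $\D_{X,\x}[S] F^S$ is generated freely by $F^S$ as an $\mathscr{O}_{X,\x}[f^{-1}, S]$-module, $P \bullet F^S = 0$ forces $P = 0$.

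For the inductive step, assume the result for annihilator elements of order $< \ell$ and take $P$ of order $\ell \geq 1$. Its symbol $\gr_{(0,1,1)}(P)$ is a $(0,1,1)$-homogeneous element of $\gr_{(0,1,1)}(\ann_{\D_{X,\x}[S]} F^S)$ of degree $\ell$. By Corollary~\ref{cor gr-equality}, under our hypotheses this associated graded ideal coincides with $\widetilde{L_{F,\x}}$, which by definition is generated by the degree-$1$ symbols $\gr_{(0,1,1)}(\psi_F(\delta))$ with $\delta \in \Der_{X,\x}(-\log f)$. Thus I can write
\[
\gr_{(0,1,1)}(P) \;=\; \sum_i g_i \cdot \gr_{(0,1,1)}(\psi_F(\delta_i))
\]
with $\delta_i \in \Der_{X,\x}(-\log f)$ and each $g_i \in \gr_{(0,1,1)}(\D_{X,\x}[S])$ homogeneous of degree $\ell - 1$.

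I would then lift: pick any $\tilde{g}_i \in F_{(0,1,1)}^{\ell-1}(\D_{X,\x}[S])$ with $\gr_{(0,1,1)}(\tilde{g}_i) = g_i$. Since $\psi_F(\delta_i) \in F^1_{(0,1,1)}$ and multiplication on the associated graded is well-defined, the symbol of $\sum_i \tilde{g}_i \psi_F(\delta_i)$ in degree $\ell$ equals $\gr_{(0,1,1)}(P)$. Consequently
\[
P' \;:=\; P - \sum_i \tilde{g}_i \psi_F(\delta_i)
\]
has total order strictly less than $\ell$, and because each $\psi_F(\delta_i) \in \theta_{F,\x} \subseteq \ann_{\D_{X,\x}[S]}F^S$, we have $P' \in \ann_{\D_{X,\x}[S]}F^S$. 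The induction hypothesis gives $P' \in \D_{X,\x}[S] \cdot \theta_{F,\x}$, and hence so does $P$.

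The genuine work for this theorem has been absorbed into the earlier results, chiefly the chain $\widetilde{L_F} \subseteq \gr_{(0,1,1)}(\ann_{\D_X[S]}F^S) \subseteq \ker(\phi_F)$ together with the primality/dimension count forcing these to coincide (Theorem~\ref{thm main prime theorem}, Proposition~\ref{prop phi prime}, Corollary~\ref{cor gr-equality}). Once that equality of graded ideals is available, the passage to the filtered statement is the standard lifting procedure above, with no real obstacle: the induction terminates because the $(0,1,1)$-order is a non-negative integer. The one point to keep clean is that the decomposition of $\gr_{(0,1,1)}(P)$ in $\widetilde{L_{F,\x}}$ can be taken $(0,1,1)$-homogeneous, which is automatic since $\widetilde{L_{F,\x}}$ is generated by homogeneous elements and we may restrict to the degree-$\ell$ component.
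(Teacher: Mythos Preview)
Your proof is correct and follows essentially the same approach as the paper: pass to the associated graded via Corollary~\ref{cor gr-equality}, use the homogeneous generation of $\widetilde{L_{F,\x}}$ by degree-one elements to peel off the leading term, and descend by induction on the $(0,1,1)$-order. One minor slip: the base case $\ell=0$ actually places $P$ in $\mathscr{O}_{X,\x}$, not $\mathscr{O}_{X,\x}[S]$ (the $s_k$ have weight one under $F_{(0,1,1)}$), but your freeness argument for $P=0$ works in either case and the paper itself appeals to $\mathscr{O}_{X}[S]\cap\ann_{\D_X[S]}F^S=0$.
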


\begin{proof}
Take $P \in \ann_{\D_{X}[S]} F^{S}$ of order $k$ under the total order filtration. By Corollary \ref{cor gr-equality}, there exist $L_{1}, \dots, L_{k} \in \theta_{F}$, $n_{1}, \dots, n_{k} \in \mathscr{O}_{X}[Y][S]$ such that 
\[\gr_{(0,1,1)}(P) = \sum n_{i} \cdot \gr_{(0,1,1)}(L_{i}).\]
Since $\gr_{(0,1,1)}(P)$ is homogeneous of degree $k$ and $\gr_{(0,1,1)}(L_{i})$ is homogeneous of degree $1$, we may assume the $n_{i}$ are homogeneous. For each $i$ select $N_{i} \in \D_{X}[S]$ such that $n_{i} = \gr_{(0,1,1)}(N_{i}).$ Consequently, $P - \sum N_{i} \cdot L_{i}$ has order (under the total order filtration) less than $k$ and lies in $\ann_{\D_{X}[S]}F^{S}$. Since $\mathscr{O}_{X}[S] \cap \ann_{\D_{X}[S]}F^{S} = 0$, an induction argument shows $P \in \D_{X}[S] \cdot \theta_{F}.$ 
\end{proof}

\begin{cor} \label{cor algebraic category}

Let $f= f_{1} \cdots f_{r} \in \mathbb{C}[x_{1}, \dots, x_{n}]$, where each $f_{k} \in \mathbb{C}[x_{1}, \dots, x_{n}],$ and let $F = (f_{1}, \dots, f_{r}).$ If $f$ is strongly Euler-homogeneous, Saito-holonomic, and tame, then the $\D_{X}[S]$-annihilator of $F^{S}$ is generated by derivations, that is
\[ \ann_{\D_{X}[S]} F^{S} = \D_{X}[S] \cdot \theta_{F}. \]
More generally, if $X$ is the analytic space associated to a smooth $\mathbb{C}$-scheme and if $f$ and $F=(f_{1}, \dots, f_{r})$ are algebraic, then the conclusion of Theorem \ref{thm gen by derivations} holds in the algebraic category.

\end{cor}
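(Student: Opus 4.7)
The plan is to deduce the algebraic statement from Theorem \ref{thm gen by derivations} via faithfully flat base change along analytification. Let $X^{\mathrm{an}}$ denote the analytic space associated to the given smooth $\mathbb{C}$-scheme (or simply $\mathbb{C}^{n}$ in the polynomial case). Working locally at a point $\x$, the algebraic local ring $\mathscr{O}_{X,\x}$ is faithfully flatly embedded in its analytic counterpart $\mathscr{O}_{X^{\mathrm{an}},\x}$, and this promotes to a faithfully flat extension $\D_{X,\x}[S] \hookrightarrow \D_{X^{\mathrm{an}},\x}[S]$. A preliminary step is to verify that the three running hypotheses transfer to the analytic setting: strong Euler-homogeneity is witnessed by an algebraic vector field that remains logarithmic analytically; Saito-holonomicity is a property of the divisor and its logarithmic strata, which do not change; tameness is a homological condition on logarithmic forms that is preserved under the faithfully flat extension to $\mathscr{O}_{X^{\mathrm{an}},\x}$.

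With this in hand, I would show that all relevant algebraic constructions commute with analytification. Since $\Der_{X,\x}(-\log f)$ is the kernel of the $\mathscr{O}_{X,\x}$-linear map $\Der_{X,\x} \to \mathscr{O}_{X,\x}/(f)$ given by $\delta \mapsto \delta \bullet f \bmod (f)$, flat base change gives $\Der_{X^{\mathrm{an}},\x}(-\log f) = \mathscr{O}_{X^{\mathrm{an}},\x} \otimes_{\mathscr{O}_{X,\x}} \Der_{X,\x}(-\log f)$. Via the isomorphism $\psi_{F}$ of Proposition \ref{prop psi}, the same identity holds for $\theta_{F}$, and consequently the analytic left ideal $\D_{X^{\mathrm{an}},\x}[S] \cdot \theta_{F}$ is the extension of its algebraic analog. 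Analogously, $\D_{X,\x}[S] F^{S}$ sits inside $\mathscr{O}_{X,\x}[f^{-1},S]F^{S}$ and its analytic version is obtained by extending scalars, so applying flat base change to the surjection $\D_{X,\x}[S] \twoheadrightarrow \D_{X,\x}[S]F^{S}$ yields
\[
\ann_{\D_{X^{\mathrm{an}},\x}[S]} F^{S} \;=\; \D_{X^{\mathrm{an}},\x}[S] \otimes_{\D_{X,\x}[S]} \ann_{\D_{X,\x}[S]} F^{S}.
\]

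Theorem \ref{thm gen by derivations}, applied analytically, gives the equality of the analytic annihilator and the analytic $\theta_{F}$-generated ideal. Combined with the two identifications above, the extensions of $\ann_{\D_{X,\x}[S]} F^{S}$ and $\D_{X,\x}[S] \cdot \theta_{F}$ to $\D_{X^{\mathrm{an}},\x}[S]$ coincide, so faithful flatness forces the equality already in $\D_{X,\x}[S]$. The main obstacle I foresee is the bookkeeping in the second paragraph: carefully ensuring that all of the ad hoc constructions ($\theta_{F}$, $\D_{X,\x}[S]F^{S}$, and their annihilators) really are compatible with analytification. This compatibility is routine but each piece must be checked against the definitions. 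Once these identifications are granted, the conclusion is essentially automatic via the standard flatness machinery.
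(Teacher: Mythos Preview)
Your proposal is correct and follows essentially the same route as the paper: invoke Theorem \ref{thm gen by derivations} in the analytic setting and descend via faithful flatness of $\mathscr{O}_{X,\x} \hookrightarrow \mathscr{O}_{X^{\mathrm{an}},\x}$. The paper compresses your second paragraph into the single observation that ``algebraic functions have algebraic derivatives and hence algebraic syzygies'' (deferring the bookkeeping to \cite{uli}), which is precisely the content of your identification $\Der_{X^{\mathrm{an}},\x}(-\log f) = \mathscr{O}_{X^{\mathrm{an}},\x} \otimes_{\mathscr{O}_{X,\x}} \Der_{X,\x}(-\log f)$.
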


\begin{proof}
This follows from Theorem \ref{thm gen by derivations} and the fact algebraic functions have algebraic derivatives and hence algebraic syzygies. See Theorem 3.26 and Remark 2.11 in \cite{uli} for more details. 
\end{proof}

\subsection{Comparing Different Factorizations of $f$} \text{ }

\begin{definition}
Consider the functional equation 
\[ b_{f,\x}(s) f^{s} = P f^{s+1} \]
where $b_{f,\x}(s) \in \mathbb{C}[s]$ and $P \in \D_{X,\x}[s].$ Let $B_{f,\x}$ be the ideal in $\mathbb{C}[s]$ generated by all such $b_{f,\x}(s)$, that is the ideal generated by the \emph{Bernstein--Sato polynomial}. We may write $B_{f,\x} = (\D_{X,x}[s] \cdot f + \ann_{\D_{X,\x}[s]}f^{s}) \cap \mathbb{C}[s]$. Then the variety $\V(B_{f,\x})$ consists of the roots of the Bernstein--Sato polynomial. 

In the multivariate situation we may consider functional equations of the form
\[b_{F,\x}(S) F^{S} = P F^{S+1} \]
where $b_{F,\x}(S) \in \mathbb{C}[S]$ and $P \in \D_{X,\x}[S].$ Just as above, the set of all such $b_{F,\x}(S)$ form an ideal
$B_{F,\x} = (\D_{X,x}[S] \cdot f + \ann_{\D_{X,\x}[S]}F^{S}) \cap \mathbb{C}[S].$ The variety $\V(B_{F,\x})$ is called the \emph{Bernstein--Sato variety} of $F$.
\end{definition}

It would be interesting to compare $\V(B_{F,\x})$ and $\V(B_{G,\x})$ where $F$ and $G$ correspond to two different factorizations of $f$. The following is a particular case of Lemma 4.20 of \cite{BudurBernstein--Sato}: 

\begin{prop} \label{prop diagonal embedding} \normalfont{(Lemma 4.20 of \cite{BudurBernstein--Sato})}
Suppose that $f = f_{1} \cdots f_{r}$ is strongly Euler-homogeneous, Saito-holonomic, and tame. Let $\mathbb{C}[S] = \mathbb{C}[s_{1}, \dots, s_{r}]$, $F = (f_{1}, \dots, f_{r})$, and $G = (f_{1}, \dots, f_{r-2}, f_{r-1} f_{r})$. Then
\[ 
B_{F,\x} + \mathbb{C}[S] \cdot (s_{r-1} - s_{r}) \subseteq \mathbb{C}[S] \cdot B_{G,\x} + \mathbb{C}[S] \cdot (s_{r-1} - s_{r}). 
\]
In particular, let $\Delta: \mathbb{C} \mapsto \mathbb{C}^{r}$ be the diagonal embedding. Then $\Delta(\V(B_{f,\x})) \subseteq \V(B_{F,\x}).$
\end{prop}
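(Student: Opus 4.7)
The plan is to reduce the containment to a computation with the derivation-generated sets $\theta_{F}$ and $\theta_{G}$, using Theorem \ref{thm gen by derivations}. The essential observation is that $f_{1} \cdots f_{r}$ and $f_{1} \cdots f_{r-2} \cdot (f_{r-1} f_{r})$ define the same divisor, so the strongly Euler-homogeneous, Saito-holonomic, and tame hypotheses pass unchanged from $F$ to $G$. Hence both $\ann_{\D_{X,\x}[S]} F^{S} = \D_{X,\x}[S] \cdot \theta_{F}$ and $\ann_{\D_{X,\x}[S']} G^{S'} = \D_{X,\x}[S'] \cdot \theta_{G}$ hold, where $S' = (s_{1}, \ldots, s_{r-2}, s_{r})$ is the variable tuple parametrizing the $G$-factorization.

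Next I would examine the quotient $\pi \colon \D_{X,\x}[S] \twoheadrightarrow \D_{X,\x}[S]/(s_{r-1} - s_{r}) \simeq \D_{X,\x}[S']$. For $\delta \in \Der_{X,\x}(-\log f) = \Der_{X,\x}(-\log g)$ with $\delta \bullet f_{k} = a_{k} f_{k}$, one computes
\[
\pi(\psi_{F}(\delta)) = \delta - \sum_{k=1}^{r-2} a_{k} s_{k} - (a_{r-1} + a_{r}) s_{r} = \psi_{G}(\delta),
\]
using $\delta \bullet (f_{r-1} f_{r}) = (a_{r-1} + a_{r}) f_{r-1} f_{r}$. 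Thus $\pi$ restricts to a surjection $\theta_{F} \twoheadrightarrow \theta_{G}$, and in particular $\pi(\ann_{\D_{X,\x}[S]} F^{S}) \subseteq \ann_{\D_{X,\x}[S']} G^{S'}$.

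The main containment now falls out formally. Given $b(S) \in B_{F,\x}$, write $b(S) = P \cdot f + Q$ with $P \in \D_{X,\x}[S]$ and $Q \in \ann_{\D_{X,\x}[S]} F^{S}$. Apply $\pi$ to obtain $\pi(b(S)) = \pi(P) \cdot f + \pi(Q)$, which lies in $\D_{X,\x}[S'] \cdot f + \ann_{\D_{X,\x}[S']} G^{S'}$. Since $b(S) \in \mathbb{C}[S]$, the image $\pi(b(S))$ lies in $\mathbb{C}[S']$, hence in $B_{G,\x}$. Pulling back, $b(S) \in \mathbb{C}[S] \cdot B_{G,\x} + \mathbb{C}[S] \cdot (s_{r-1} - s_{r})$, proving the first inclusion. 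For the diagonal corollary, I would iterate this merger: after $r-1$ steps the factorization collapses to $(f)$, accumulating one relation $s_{i} - s_{j}$ per step. The result is $B_{F,\x} \subseteq \mathbb{C}[S] \cdot B_{f,\x} + (s_{1} - s_{r}, \ldots, s_{r-1} - s_{r})$, and passing to varieties converts this into $\V(B_{F,\x}) \supseteq \Delta(\V(B_{f,\x}))$.

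The main obstacle is really just verifying cleanly that $\pi$ carries $\theta_{F}$ onto $\theta_{G}$; the identity above makes this transparent, after which the rest of the argument is essentially bookkeeping. All hypotheses required to invoke Theorem \ref{thm gen by derivations} at each iteration step are automatic because the underlying divisor never changes.
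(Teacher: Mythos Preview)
Your proof is correct and follows exactly the route the paper sets up. The paper does not supply its own proof of this proposition---it records it as a special case of Lemma~4.20 of \cite{BudurBernstein--Sato}---but immediately afterward proves Proposition~\ref{prop going down}, which is precisely your key computation $\pi(\psi_{F}(\delta)) = \psi_{G}(\delta)$ phrased as a ring isomorphism, and whose proof is described in the paper as ``Theorem~\ref{thm gen by derivations}, the definition of $\psi_{F,\x}(\delta)$ \ldots\ and a straightforward computation using the product rule.'' Your argument is that computation made explicit, together with the (immediate) passage from the annihilator identity to the Bernstein--Sato ideal containment; this matches the paper's intended derivation.
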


Under the hypotheses of Theorem \ref{thm gen by derivations}, on the level of annihilators we obtain a more precise statement:

\begin{prop} \label{prop going down}
Suppose $f = f_{1} \cdots f_{r} $ is strongly Euler-homogeneous, Saito-holonomic, and tame. Let $F = (f_{1}, \dots, f_{r})$ and $G = (f_{1},, \dots,f_{r-2}, f_{r-1}f_{r})$. Then there is an isomorphism of rings:
\[
\frac{\D_{X}[s_{1}, \dots, s_{r}]}{\ann_{\D_{X}[s_{1}, \dots, s_{r}]}\D_{X}F^{S} + (s_{r-1}-s_{r})} \simeq \frac{\D_{X}[s_{1}, \dots, s_{r-1}]}{\ann_{\D_{X}[s_{1}, \dots, s_{r-1}]}\D_{X}[s_{1}, \dots, s_{r-1}]G^{S}}.
\]

\end{prop}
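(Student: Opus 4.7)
The plan is to realize both annihilators concretely using Theorem \ref{thm gen by derivations} and then exhibit an explicit surjection of rings whose kernel is exactly $(s_{r-1} - s_r)$. Note first that strong Euler-homogeneity, Saito-holonomicity, and tameness are properties of the underlying divisor $\V(f)$, not of a chosen factorization, so Theorem \ref{thm gen by derivations} applies to both $F$ and $G$. Since $f = f_{1} \cdots f_{r-2} \cdot (f_{r-1}f_{r})$, we moreover have $\Der_{X}(-\log f) = \Der_{X}(-\log(f_{1}\cdots f_{r-2}(f_{r-1}f_{r})))$, so $\psi_{F}$ and $\psi_{G}$ are defined on the same sheaf.

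First I would introduce the $\D_{X}$-algebra map $\pi \colon \D_{X}[s_{1},\dots,s_{r}] \twoheadrightarrow \D_{X}[s_{1},\dots,s_{r-1}]$ fixing $s_{1},\dots,s_{r-2}$, sending $s_{r-1} \mapsto s_{r-1}$, and sending $s_{r} \mapsto s_{r-1}$. Because $s_{r-1}-s_{r}$ is central in $\D_{X}[s_{1},\dots,s_{r}]$, $\pi$ is a surjective ring homomorphism with kernel the two-sided ideal $(s_{r-1}-s_{r})$.

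The key computation is to show $\pi(\theta_{F}) = \theta_{G}$. For any $\delta \in \Der_{X}(-\log f)$, using the logarithmic derivative identity $\tfrac{\delta \bullet (f_{r-1}f_{r})}{f_{r-1}f_{r}} = \tfrac{\delta \bullet f_{r-1}}{f_{r-1}} + \tfrac{\delta \bullet f_{r}}{f_{r}}$, one checks directly that
\[
\pi(\psi_{F}(\delta)) = \delta - \sum_{k=1}^{r-2} s_{k}\tfrac{\delta \bullet f_{k}}{f_{k}} - s_{r-1}\bigl(\tfrac{\delta \bullet f_{r-1}}{f_{r-1}} + \tfrac{\delta \bullet f_{r}}{f_{r}}\bigr) = \psi_{G}(\delta).
\]
Via Proposition \ref{prop psi} this yields $\pi(\theta_{F}) = \theta_{G}$. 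Applying Theorem \ref{thm gen by derivations} to both $F$ and $G$ and using surjectivity of $\pi$ then gives $\pi(\ann_{\D_{X}[S]}F^{S}) = \pi(\D_{X}[S]\cdot\theta_{F}) = \D_{X}[s_{1},\dots,s_{r-1}]\cdot\theta_{G} = \ann_{\D_{X}[s_{1},\dots,s_{r-1}]}G^{S}$.

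Finally, the first isomorphism theorem applied to $\pi$ (combined with the standard identity $\pi^{-1}(\ann G^{S}) = \ann F^{S} + \ker(\pi) = \ann F^{S} + (s_{r-1}-s_{r})$, which holds because $\pi$ is surjective onto $\ann G^{S}$) produces the claimed isomorphism. The only nontrivial ingredient is that the annihilator is generated by derivations on both sides, so that applying $\pi$ to a generating set suffices; everything else is routine. The main conceptual obstacle is recognizing that the identity of logarithmic derivative sums across the factorization collapses $\psi_{F}$ to $\psi_{G}$ under the diagonal substitution $s_{r} = s_{r-1}$, which is what makes Theorem \ref{thm gen by derivations} pay off here.
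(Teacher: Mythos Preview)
Your proof is correct and follows essentially the same approach as the paper: invoke Theorem~\ref{thm gen by derivations} for both $F$ and $G$, use the product-rule identity $\tfrac{\delta \bullet (f_{r-1}f_{r})}{f_{r-1}f_{r}} = \tfrac{\delta \bullet f_{r-1}}{f_{r-1}} + \tfrac{\delta \bullet f_{r}}{f_{r}}$ to see that $\psi_{F}(\delta)$ maps to $\psi_{G}(\delta)$ under the substitution $s_{r}\mapsto s_{r-1}$, and conclude. The paper's proof is a one-line reference to exactly these ingredients; your write-up simply makes the first-isomorphism-theorem step explicit.
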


\begin{proof}
This follows from Theorem \ref{thm gen by derivations}, the definition of $\psi_{F,\x}(\delta)$ for $\delta$ a logarithmic derivation, and a straightforward computation using the product rule.
\end{proof}

\begin{remark} \label{remark compare complicated decompositions}
Let $F = (f_{1}, \dots, f_{r})$ correspond to a factorization of $f$ where $f$ is strongly Euler-homogeneous, Saito-holonomic, and tame. For $a \in \mathbb{C}$, $\D_{X,\x}[S] \cdot (s_{1} - a, \dots, s_{r} - a) = \D_{X,\x}[S] \cdot (s_{1} - s_{2}, \dots, s_{r-1} - s_{r}, s_{r} - a).$ By Proposition \ref{prop going down}, there is a ring isomorphism $\D_{X,\x}[S] F^{S} / (s_{1} - a, \dots, s_{r} - a) \cdot \D_{X,\x}[S] F^{S} \simeq \D_{X,\x}[s] f^{s} / (s-a) \cdot \D_{X,\x}[s] f^{s}.$ Using this fact we propose in Remark \ref{remark basic nabla facts} a more precise way to analyze the diagonal embedding of Proposition \ref{prop diagonal embedding}.
\end{remark}

\subsection{Hyperplane Arrangements.} \text{ }

Finally let us turn to the algebraic setting and particular to \textit{central hyperplane arrangements} $\mathcal{A} \subseteq \mathbb{C}^{n} = X$ whose defining equations are given by $f_{\mathcal{A}} = \prod L_{i}$, where the $L_{i} \in \mathbb{C}[x_{1}, \dots, x_{n}]$ are homogeneous polynomials of degree 1.  A central hyperplane arrangement is \textit{indecomposable} if there is no choice of coordinates $t_{1} \sqcup t_{2} $, $t_{1}$ and $t_{2}$ disjoint, such that $f_{\mathcal{A}} = g_{1}(t_{1}) g_{2}(t_{2})$. Central hyperplane arrangements are strongly Euler-homogeneous and Saito-holonomic, cf. examples \ref{example hyperplane strongly Euler-homogeneous}, \ref{example hyperplane arrangement saitoholonomic}.

Write $D_{n}$ for the $n^{\text{th}}$ Weyl Algebra $\mathbb{C}[x_{1}, \dots, x_{n}, \partial_{1}, \dots , \partial_{n}].$ Let $F = (f_{1}, \dots, f_{r})$ be some decomposition of $f_{\mathcal{A}}$ into factors. Construct the $D_{n}[s]$-module ($D_{n}[S]$-module) generated by the symbol $f^{s}$ ($F^{S}$) in an entirely similar way as in the analytic setting.  Furthermore, define the roots of the Bernstein--Sato polynomial $B_{f}$ and the Bernstein--Sato variety $B_{F}$ just as before. For an algebraic $f$ equipped with an algebraic decomposition $F$, $B_{f}$ and $B_{F}$ agree with the analytic versions because algebraic functions have algebraic derivatives and syzygies.

In \cite{BudurBernstein--Sato}, Budur makes the following conjecture: 

\begin{conjecture} \label{conjecture Budur indecomposable} \text{\normalfont (Conjecture 3 in \cite{BudurBernstein--Sato})}
Let $\mathcal{A}$ be a central, essential, indecomposable hyperplane arrangement. Factor $f_{\mathcal{A}} = f_{1} \cdots f_{r}$, where each factor $f_{k}$ is of degree $d_{k}$ and the $f_{k}$ are not necessarily reduced, and let $F = (f_{1}, \dots, f_{r})$. Then 
\[ \{ d_{1} s_{1} + \dots + d_{r} s_{r} + n = 0 \} \subseteq \V(B_{F}). 
\]
\end{conjecture}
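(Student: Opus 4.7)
The plan is to combine Theorem \ref{thm gen by derivations} with the Euler homogeneity of the central arrangement $\mathcal{A}$ and a symbol analysis via $\phi_F$. Since a tame central essential arrangement is strongly Euler-homogeneous (Example \ref{example hyperplane strongly Euler-homogeneous}) and Saito-holonomic (Example \ref{example hyperplane arrangement saitoholonomic}), Theorem \ref{thm gen by derivations} applies in the algebraic category by Corollary \ref{cor algebraic category}, giving $\ann_{D_{n}[S]} F^{S} = D_{n}[S] \cdot \theta_{F}$. The Euler vector field $E = \sum_{i} x_{i} \partial_{x_{i}}$ is logarithmic with $E \bullet f_{k} = d_{k} f_{k}$, so by Proposition \ref{prop psi} the element $\psi_{F}(E) = E - \sum_{k} d_{k} s_{k}$ lies in $\theta_{F}$; equivalently $\sum_{k} d_{k} s_{k} + n \equiv E + n$ modulo $\ann_{D_{n}[S]} F^{S}$.

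The containment $\V(d_{1} s_{1} + \cdots + d_{r} s_{r} + n) \subseteq \V(B_{F})$ is equivalent to the ideal containment $B_{F} \subseteq (\sum_{k} d_{k} s_{k} + n)\,\mathbb{C}[S]$, since the defining ideal of the hyperplane is prime. Unwinding the definition of $B_{F}$ and using the Euler identity above, this in turn reduces to the injectivity of the natural map $\mathbb{C}[S]/(\sum_{k} d_{k} s_{k} + n) \longrightarrow N$, where $N := D_{n}[S]/(D_{n}[S] \cdot f + D_{n}[S] \cdot \theta_{F} + D_{n}[S] \cdot (E + n))$ and the map sends $\bar 1$ to the image of $1$.

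To analyze $N$, I would pass to the associated graded via Corollary \ref{cor gr-equality} and the map $\phi_{F}$ of Definition \ref{def phiF}. A direct calculation gives $\phi_{F}(\sum_{i} x_{i} y_{i}) = \sum_{k} (f/f_{k})(E \bullet f_{k}) s_{k} = f \sum_{k} d_{k} s_{k}$, tying the principal symbol of $E$ to the linear form $\sum_{k} d_{k} s_{k}$ inside the multi-Rees-algebra image of $\phi_{F}$. Because $\mathcal{A}$ is essential and indecomposable, every logarithmic derivation vanishes at the origin---a nonzero constant-coefficient logarithmic derivation would produce a non-essential direction for $\mathcal{A}$---so $T^{\ast}_{0}\mathbb{C}^{n}$ lies in $\V(\gr_{(0,1)}(\theta_{F}))$. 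On $T^{\ast}_{0}\mathbb{C}^{n}$ all relevant symbols vanish: $f(0) = 0$, $\gr_{(0,1)}(E+n)|_{x=0} = 0$, and $\gr_{(0,1)}(\psi_{F}(\delta))|_{x=0} = 0$ for every $\delta \in \Der_{X}(-\Log f)$. This places $T^{\ast}_{0}\mathbb{C}^{n}$ in the symbol variety of $N$, forcing $N \neq 0$.

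The hardest step will be upgrading this non-vanishing of $N$ to the required faithfulness as a $\mathbb{C}[S]/(\sum_{k} d_{k} s_{k} + n)$-module, i.e., that no nonzero class annihilates $\bar 1 \in N$. This is where the primality and Cohen--Macaulayness of $\widetilde{L_{F}}$ from Theorems \ref{theorem generalized liouville prime} and \ref{thm main prime theorem}, combined with the Gr\"obner-type arguments of Appendix A, should intervene: the primality eliminates spurious associated primes at the symbol level, and the Cohen--Macaulay property precludes $\mathbb{C}[S]$-torsion, letting the characteristic-variety computation for $N$ lift to a faithfulness statement for the original non-graded module, and so to the desired divisibility $B_{F} \subseteq (\sum_{k} d_{k} s_{k} + n)\,\mathbb{C}[S]$.
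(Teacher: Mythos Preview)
Your reduction to the injectivity of $\mathbb{C}[S]/(\sum_k d_k s_k + n) \to N$ is correct, and the observation $\psi_F(E) = E - \sum_k d_k s_k \in \theta_F$ is the right starting point. But there are two genuine gaps.

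First, the symbol computation is incomplete and does not use indecomposability in the right place. Essentiality only rules out degree $-1$ logarithmic derivations. For a degree-zero logarithmic derivation $\delta = \sum_i a_i \partial_i$ (each $a_i$ linear), the $(0,1,1)$-symbol of $\psi_F(\delta)$ at $x=0$ is $-\sum_k s_k\, c_k$ with $c_k = (\delta \bullet f_k)/f_k \in \mathbb{C}$, which need not vanish; so your claim that all symbols vanish on $T_0^\ast\mathbb{C}^n$ fails unless you know such $\delta$ are scalar multiples of $E$. That is exactly the content of Walther's argument for indecomposable arrangements (invoked in the paper's proof of Theorem~\ref{thm indecomposable}): after a linear change of coordinates, every homogeneous element of $\Der_X(-\log f)$ is either of strictly positive $(1,-1)$-degree or a multiple of $E$.

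Second, the passage from ``$N \neq 0$'' to faithfulness is only a hope. Primality and Cohen--Macaulayness of $\widetilde{L_F}$ concern $\gr_{(0,1,1)}(\ann F^S)$; once $f$ and $E+n$ are adjoined those properties are no longer at your disposal, and nothing in the outline precludes $\mathbb{C}[S]$-torsion in $N$.

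The paper's proof is short and avoids both issues. With Walther's structural fact in hand, for $\delta$ of positive $(1,-1)$-degree the coefficients of $\psi_F(\delta)$ (both the $\partial$-coefficients and the $s_k$-coefficients $(\delta\bullet f_k)/f_k$) lie in $(X)$, so $\psi_F(\delta)\in D_n[S]\cdot(X)$; and $E+n=\sum_i \partial_i x_i\in D_n\cdot(X)$. Hence
\[
\ann_{D_n[S]}F^{S} + D_n[S]\cdot f \ \subseteq\ D_n[S]\cdot(X) + D_n[S]\cdot\bigl(-n-\textstyle\sum_k d_k s_k\bigr).
\]
If $P(S)\in B_F$ and $\alpha$ lies on $\{\sum_k d_k s_k + n = 0\}$, evaluating $S\mapsto\alpha$ gives $P(\alpha)\in D_n\cdot(X)\cap\mathbb{C}=0$. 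Thus every element of $B_F$ vanishes on the hyperplane. No characteristic-variety or faithfulness argument is needed; in fact once you have the displayed containment, your injectivity statement follows immediately by mapping $N$ onto $D_n[S]/\bigl(D_n[S]\cdot(X) + D_n[S]\cdot(\sum_k d_k s_k + n)\bigr)$.
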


\noindent This conjecture is related to the Topological Multivariable Strong Monodromy Conjecture, see Conjecture \ref{conjecture topological multi strong}, for hyperplane arrangements, which claims that the polar locus of the topological zeta function of $F = (f_{1}, \dots, f_{r})$ is contained in $\V(B_{F,0})$. In Theorem 8 of loc. cit. Budur proves Conjecture \ref{conjecture Budur indecomposable} implies the Topological Multivariable Strong Monodromy Conjecture for hyperplane arrangements. See \cite{BudurBernstein--Sato}, in particular subsection 1.3 and Theorem 8, for details.

Walther proves in Theorem 5.13 of \cite{uli} the $r=1$ version of this conjecture: if $f$ is a tame and indecomposable central hyperplane arrangement of degree $d$, then $-n/d \in \V(B_{f}).$ Analogously, we prove Conjecture \ref{conjecture Budur indecomposable} in the tame case: 

\begin{theorem} \label{thm indecomposable}

Suppose $f_{\mathcal{A}}$ is a central, essential, indecomposable, and tame hyperplane arrangement. Let $F = (f_{1}, \dots, f_{r})$ be a decomposition of $f_{\mathcal{A}}$ where $f_{k}$ has degree $d_{k}$ and the $f_{k}$ are not necessarily reduced. Then 
\[ \{ d_{1} s_{1} + \dots + d_{r}s_{r} + n = 0 \} \subseteq \V(B_{F}). 
\]

\end{theorem}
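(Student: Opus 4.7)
The plan is to establish $B_F \subseteq (H)$ where $H(S) := d_1 s_1 + \cdots + d_r s_r + n$; since $(H)$ is a prime ideal of $\mathbb{C}[S]$, this is equivalent to the claimed inclusion $\V(H) \subseteq \V(B_F)$. Given any $b \in B_F$ with functional equation $b(S) F^S = R \cdot F^{S+1}$ in $\D_X[S] F^S$ and any $A \in \V(H)$, specializing at $S = A$ inside $M_A := \D_X[S]F^S / (S - A) \D_X[S]F^S$ yields $b(A) F^A = R|_A F^{A+1}$. Hence it suffices to prove the \emph{pointwise claim}: for every $A \in \V(H)$, $F^A \notin \D_X \cdot F^{A+1}$ in $M_A$; otherwise some $b \in B_F$ with $b(A) \neq 0$ at some $A \in \V(H)$ would express $F^A = b(A)^{-1} R|_A F^{A+1} \in \D_X F^{A+1}$, contradicting the claim.

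The pointwise claim at the distinguished point $A_0 := (-n/d, \ldots, -n/d) \in \V(H)$ reduces directly to Walther's Theorem 5.13 of \cite{uli}. Iterating Remark \ref{remark compare complicated decompositions} produces an isomorphism $M_{A_0} \simeq \D_X[s]f^s / (s + n/d)\D_X[s]f^s$ carrying $F^{A_0} \mapsto f^{-n/d}$ and $F^{A_0 + 1} \mapsto f^{1-n/d}$. Walther's theorem gives $-n/d \in \V(B_f)$, and the univariate equivalences recalled in the introduction (the equivalence of (a) and (c) applied with $A = -n/d + 1$) then force $f^{-n/d} \notin \D_X \cdot f^{1-n/d}$, yielding the desired non-containment at $A_0$.

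The main obstacle is extending the pointwise claim from $A_0$ to every $A \in \V(H)$, since $M_A$ is generally not isomorphic to $M_{A_0}$. The strategy is to adapt Walther's proof uniformly in $A \in \V(H)$: the structural ingredients used at $A_0$ are available at every $A \in \V(H)$, since (i) by Theorem \ref{thm gen by derivations}, $\ann_{\D_X[S]}F^S = \D_X[S] \theta_F$, so $\ann_{\D_X}(F^A)$ is generated by specializations of $\psi_F(\Der_X(-\log f))$; (ii) the Euler derivation $E = \sum x_i \partial_{x_i}$ satisfies $\psi_F(E) = E - \sum d_k s_k \in \theta_F$, which, by construction of $H$, specializes to the identity $(E + n) F^A = 0$ at every $A \in \V(H)$; and (iii) indecomposability is independent of $A$. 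Walther's argument, which combines (i)--(iii) with indecomposability to manufacture a witness in $M_{A_0}$ to non-surjectivity of multiplication by $f$, should extend with only notational changes to produce the analogous witness in $M_A$ at each $A \in \V(H)$. The delicate point is verifying that Walther's indecomposability step is genuinely uniform in $A$ and does not covertly rely on the diagonal symmetry of $A_0$; as a fallback, since $\{A \in \V(H) : F^A \notin \D_X F^{A+1}\}$ is Zariski-closed in $\V(H)$ by upper semicontinuity of the cokernel of multiplication by $f$, one may attempt to exhibit a Zariski-dense subset on which the pointwise claim holds---for instance, by integer translations $A \mapsto A + c$ with $c \in \mathbb{Z}^r$, $\sum d_k c_k = 0$, that preserve the structure of $\D_X[S]F^S$ after inverting $f$.
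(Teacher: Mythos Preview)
Your ingredients are exactly right, and your ``main'' strategy---adapting Walther's argument uniformly in $A$ via (i)--(iii)---is essentially what the paper does. But you stop short of executing it, and the detour through the specialized modules $M_A$ is unnecessary. The paper works directly in $D_n[S]$ \emph{before} specializing: by indecomposability (your (iii)), every homogeneous $\delta \in \Der_X(-\log f)$ of positive $(1,-1)$-degree has $\psi_F(\delta) \in D_n[S]\cdot(X)$, since the coefficients $\frac{\delta\bullet f_k}{f_k}$ are homogeneous of positive degree; combined with (i) and $f \in (X)$ this gives
\[
\ann_{D_n[S]}F^S + D_n[S]\cdot f \;\subseteq\; D_n[S]\cdot(X) + D_n[S]\cdot\psi_F(E) \;=\; D_n[S]\cdot(X) + D_n[S]\cdot\bigl(-n - \textstyle\sum d_k s_k\bigr),
\]
the last equality because $E + n = \sum \partial_i x_i \in D_n\cdot(X)$. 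Now intersect with $\mathbb{C}[S]$: any $P(S) \in B_F$ lies in the right-hand side, so evaluating at any $A \in \V(H)$ lands $P(A)$ in $D_n\cdot(X) \cap \mathbb{C} = 0$. This is your pointwise claim, proved for all $A$ at once, without ever entering $M_A$.

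Your fallback is both unnecessary and shaky. The semicontinuity statement needs a family over $\V(H)$ with a coherent presentation to make the non-surjectivity locus closed---plausible here but not free. The integer-translation idea is worse: starting from a single point $A_0$ and translating by $c \in \mathbb{Z}^r$ with $\sum d_k c_k = 0$ yields a countable set, never Zariski-dense in the hyperplane $\V(H)$, and the translations are only isomorphisms when the intermediate $\nabla$-maps are, which is exactly what you are trying to establish.
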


\begin{proof}
Since $f_{\mathcal{A}}$ is homogeneous, $\Der_{X}(- \log f)$ is a graded $\mathbb{C}[X]$-module after giving each $x_{i}$ degree one and each $\partial_{i}$ degree -1. In the proof of Theorem 5.13 of \cite{uli}, Walther shows that the indecomposablity hypothesis implies there exists a system of coordinates such that $\delta \in \Der_{X}(-\log f)$ is homogeneous of positive total degree or $\delta = w \sum x_{i} \partial_{i}$, $w \in \mathbb{C}.$ Fix this system of coordinates and $E = \sum x_{i} \partial_{i}$ for the rest of the proof.

By Corollary \ref{cor algebraic category}, $\ann_{D_{n}[S]} F^{S}   = D_{n}[S] \cdot \psi_{F}(\Der_{X}(-\log f)).$ Recall $\psi_{F}(\delta) = \delta - \sum \frac{ \delta \bullet f_{k}}{f_{k}} s_{k}.$ If $\delta$ is of positive $(1,-1)$ total degree, then the coefficient of each $s_{k}$ is either 0 or of positive degree as polynomial in $\mathbb{C}[x_{1}, \dots, x_{n}]$. This shows $\psi_{F}(\delta) \in D_{n}[S] \cdot (X)$, where $D_{n}[S] \cdot (X)$ is the left ideal generated by $x_{1}, \dots, x_{n}.$ Because $E + n \in D_{n} \cdot (X)$,  

\begin{align*}
    \ann_{D_{n}[S]}F^{S} + D_{n}[S] \cdot f & \subseteq D_{n}[S] \cdot (X) + D_{n}[S] \cdot \psi_{F}(E) \\
    & = D_{n}[S] \cdot (X) + D_{n}[S] \cdot (E - \sum d_{k} s_{k}) \\
    & = D_{n}[S] \cdot (X) + D_{n}[S] \cdot (-n - \sum d_{k} s_{k}) .
\end{align*}

Suppose $P(S)$ is in the intersection of $D_{n}[S] \cdot (X) + D_{n}[S] \cdot (-n - \sum d_{k} s_{k})$ and $\mathbb{C}[S]$. For each root $\alpha$ of $-n - \sum d_{k} s_{k}$ there is a natural evaluation map $D_{n}[S] \mapsto D_{n}$ sending $P \mapsto P(\alpha) \in D_{n} \cdot (X).$ Since $D_{n} \cdot (X)$ is a proper ideal of $D_{n}$, $P(\alpha) = 0$ for all such $\alpha$. Therefore $\V(P(S)) \supseteq \V(\mathbb{C}[S] \cdot (-n - \sum d_{k} s_{k}))$ and we have shown
\[ 
\V(B_{F}) = \V((\ann_{D_{n}[S]} F^{S} + D_{n}[S]  \cdot f) \cap \mathbb{C}[S]) \supseteq \V(-n - \sum d_{k} s_{k}).
\]
\end{proof}

As outlined in the introduction, Theorem \ref{thm indecomposable} is related to the Topological Multivariable Strong Monodromy Conjecture, that is, to Conjecture \ref{conjecture topological multi strong}. 

\begin{cor} \label{cor Topological Multivariable Strong Monodromy Conjecture}

The Topological Multivariable Strong Monodromy Conjecture is true for (not necessarily reduced) tame hyperplane arrangements.

\end{cor}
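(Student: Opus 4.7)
The plan is to deduce this corollary as a direct consequence of Theorem \ref{thm indecomposable} combined with the reduction, already carried out by Budur, that relates the Topological Multivariable Strong Monodromy Conjecture for hyperplane arrangements to Conjecture \ref{conjecture Budur indecomposable}. Specifically, Theorem 8 of \cite{BudurBernstein--Sato} asserts that, for any central, essential, indecomposable hyperplane arrangement, the conclusion of Conjecture \ref{conjecture Budur indecomposable} implies the Topological Multivariable Strong Monodromy Conjecture for that arrangement. Theorem \ref{thm indecomposable} supplies the hypothesis of that implication in the tame case.

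Before invoking Budur's implication, I would first reduce to the central, essential, and indecomposable setting. If $\mathcal{A}$ is not essential, write $\mathcal{A}$ as a product $\mathcal{A}' \times \mathbb{C}^{k}$ with $\mathcal{A}'$ essential in an ambient space of dimension $n - k$; tameness passes to $\mathcal{A}'$ since $\Omega^{\bullet}(\log f_{\mathcal{A}})$ splits accordingly, and both the topological zeta function and $\V(B_{F})$ are unchanged in the essential variables (with the shift $n \mapsto n-k$ accounted for uniformly on both sides of the containment). If $f_{\mathcal{A}}$ factors as $g_{1}(t_{1}) g_{2}(t_{2})$ on disjoint coordinates, then $\Der(-\log f_{\mathcal{A}})$ splits as the direct sum of the logarithmic derivations of each factor, the topological zeta function is multiplicative over the product, and $\V(B_{F})$ decomposes as a product of the corresponding Bernstein--Sato varieties. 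Each factor is tame and of strictly lower dimension, so induction on the number of indecomposable factors reduces us to the indecomposable case.

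Once reduced to the central, essential, indecomposable, tame case, Theorem \ref{thm indecomposable} produces the hyperplane $\{d_{1} s_{1} + \cdots + d_{r} s_{r} + n = 0\} \subseteq \V(B_{F})$ appearing in Conjecture \ref{conjecture Budur indecomposable}, and Budur's Theorem 8 of \cite{BudurBernstein--Sato} transports this containment across the reduction to the statement that the polar locus of $Z_{F}^{\textnormal{top}}(S)$ is contained in $\V(B_{F})$. The main bookkeeping point, and the only place one should be cautious, is verifying that the product and essential reductions remain compatible with the tame hypothesis; this follows because tameness is a projective-dimension condition on $\Omega^{k}(\log f)$ that is preserved under direct sums of logarithmic complexes and under extension of scalars by free variables. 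No new $\D$-module input is required beyond Theorem \ref{thm indecomposable}.
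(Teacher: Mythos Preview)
Your argument has a gap in how you invoke Budur's Theorem~8. You read it as a per-arrangement implication: for a single central, essential, indecomposable $\mathcal{A}$, the containment $\{d_{1}s_{1}+\cdots+d_{r}s_{r}+n=0\}\subseteq \V(B_{F})$ alone forces the polar locus of $Z_{F}^{\mathrm{top}}$ into $\V(B_{F})$. That is not what Theorem~8 says. The poles of $Z_{F}^{\mathrm{top}}$ are indexed by \emph{all} dense edges of $\mathcal{A}$, not just the origin; Budur's reduction shows that the candidate pole attached to a dense edge $W$ lies in $\V(B_{F})$ provided Conjecture~\ref{conjecture Budur indecomposable} holds for the localized arrangement $\mathcal{A}_{W}$ (which is again central, essential, and indecomposable, but in a smaller ambient space). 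So even after your product and essential reductions, you must apply Theorem~\ref{thm indecomposable} not once but to every such $\mathcal{A}_{W}$, and for that you need each $\mathcal{A}_{W}$ to be tame.

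This is exactly why the paper's one-line proof reads ``since tameness is a local condition'': the sub-arrangements $\mathcal{A}_{W}$ appearing in Budur's recursion are localizations of $\mathcal{A}$ at points of $W$, and tameness, being a condition on the projective dimension of the stalks of $\Omega^{k}(\log f)$, passes to these localizations. Your explicit product and essential reductions are harmless but redundant---Budur's Theorem~8 already subsumes them---and your tameness checks for those reductions do not cover the crucial case of localizing at a positive-dimensional dense edge. Once you observe that tameness is local in this sense, Budur's Theorem~8 and Theorem~\ref{thm indecomposable} together give the result directly, with no preliminary reduction needed on your part.
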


\begin{proof}

This follows by Theorem 8 of \cite{BudurBernstein--Sato} since tameness is a local condition.
\end{proof}

\begin{remark}
Not all arrangements are tame. For example, the $\mathbb{C}^{4}$-arrangement
$\prod_{(a_{1}, \dots, a_{4}) \in \{0,1\}^{4}} (a_{1}x_{1} + a_{2}x_{2} + a_{3}x_{3} + a_{4}x_{4})$ is not tame. If an arrangement has rank at most $3$, then it is automatically tame.
\end{remark}

\section{The Map $\nabla_{A}$}

In this section we analyze the injectivity of $\D_{X,\x}$-map
$$\nabla_{A}: \frac{\D_{X,\x}[S]F^{S}}{(S-A)\D_{X,\x}[S]F^{S}} \to \frac{\D_{X,\x}[S]F^{S}}{(S-(A-1))\D_{X,\x}[S]F^{S}}$$
under the nice hypotheses of the previous section. This will, see Section 5, let us better understand the relationship between $\V(B_{F,\x})$ and the cohomology support loci of $f$ near $\x$. The section has two parts: a brief discussion of Koszul complexes associated to central elements over certain non-commutative rings with an application to $\frac{\D_{X,\x}[S]F^{S}}{(S-A)\D_{X,\x}[S]F^{S}}$; a detailed proof that under nice hypotheses, if $\nabla_{A}$ is injective then it is surjective.

Let's first give a precise definition of $\nabla_{A}.$

\begin{definition} (Compare to 5.5 and 5.10, in particular $\rho_{\alpha}$, in \cite{BudurBernstein--Sato}) \label{nabla definition}
Define 
\[
\nabla: \D_{X,\x}[S]F^{S} \to \D_{X,\x}[S]F^{S}
\]
by sending $s_{i} \mapsto s_{i} + 1$ for all $i$. To be precise, in local coordinates declare $\partial^{\textbf{u}} = \prod_{t} \partial_{x_{t}}^{u_{t}}$, $S^{\textbf{v}} = \prod_{k} s_{k}^{v_{k}}$, and let $S+1$ be shorthand for replacing each $s_{i}$ with a $s_{i} + 1.$ Then $\nabla$ is given by the assignment 
\[
\sum\limits_{\textbf{u}, \textbf{v}} Q_{\textbf{u}, \textbf{v}} \partial^{\textbf{u}} S^{\textbf{v}} \bullet F^{S} \mapsto  \sum\limits_{\textbf{u}, \textbf{v}} Q_{\textbf{u}, \textbf{v}} \partial^{\textbf{u}} (S+1)^{\textbf{v}} \bullet F^{S+1}.
\]
This is a homomorphism of $\D_{X,\x}$-modules but is not $\mathbb{C}[S]$-linear. 

Denote the ideal of $\D_{X,\x}[S]$ generated by $s_{1}-a_{1}, \dots , s_{r} - a_{r}$, for $a_{1}, \dots, a_{r} \in \mathbb{C}$ by $(S-A)\D_{X,\x}[S]$. Then $\nabla$ is injective and sends $(S-A)\D_{X,\x}[S]F^{S}$ onto $(S+1-A)\D_{X,\x}[S]F^{S+1} = (S-(A-1)\D_{X,\x}[S]F^{S+1} \subseteq (S-(A-1))\D_{X,\x}[S]F^{S}.$ Let $\nabla_{A}$ be the induced homomorphism of $\D_{X,\x}$-modules:
\[
\nabla_{A}: \ \frac{\D_{X,\x}[S]F^{S}}{(S-A)\D_{X,\x}[S]F^{S}} \longrightarrow \frac{\D_{X,\x}[S]F^{S}}{(S-(A-1))\D_{X,\x}[S]F^{S}}.
\]

\end{definition}

As mentioned in the introduction, a source of our motivation is investigating the three statements that show up in the following proposition.

\begin{prop} \label{prop basic nabla} Consider the following three statements, where $A-1$ denotes the tuple $(a_{1} -1, \dots , a_{r} - 1) \in \mathbb{C}^{r}:$
\begin{enumerate}[(a)]
    \item $A-1 \notin \V(B_{F,\x})$; 
    \item  $\nabla_{A}$ is injective;
    \item $\nabla_{A}$ is surjective.
\end{enumerate}
Then in any case (a) implies (b) and (c).
\end{prop}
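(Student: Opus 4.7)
The plan is to exploit the Bernstein--Sato functional equation $b(S) F^{S} = P(S) F^{S+1}$ in $M := \D_{X,\x}[S] F^{S}$ with $b \in B_{F,\x}$, where the hypothesis $A-1 \notin \V(B_{F,\x})$ supplies a choice with $b(A-1) \neq 0$. A structural input used throughout is that $M$ embeds into $\mathscr{O}_{X,\x}[f^{-1}, S] F^{S}$ and is therefore $f$-torsion free.

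For surjectivity (a) $\Rightarrow$ (c), I would note that the target $M/(S-(A-1))M$ is generated as a $\D_{X,\x}$-module by $\overline{F^{S}}$, so it suffices to realize this generator in the image. Reducing the functional equation modulo $(S-(A-1))$ and using the $\D_{X,\x}$-linearity of $\nabla_{A}$ together with $P(A-1) \in \D_{X,\x}$ gives $b(A-1) \overline{F^{S}} = \nabla_{A}\bigl(P(A-1) \overline{F^{S}}\bigr)$; the nonvanishing of the scalar $b(A-1)$ finishes the job.

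For injectivity (a) $\Rightarrow$ (b), I would lift $\bar\xi \in \ker \nabla_{A}$ to $\xi = Q(S) F^{S}$ and unpack $\nabla_{A}(\bar\xi) = 0$ as the membership
\[
Q(S+1) f - \sum_{i}(s_{i} - a_{i} + 1) R_{i}(S) \in \ann_{\D_{X,\x}[S]} F^{S}
\]
for some $R_{i} \in \D_{X,\x}[S]$. The central trick is the substitution $S \mapsto S-1$, which carries $\ann F^{S}$ onto $\ann F^{S-1}$; re-reading the resulting identity in $M[f^{-1}]$ and using $f F^{S-1} = F^{S}$ yields $Q(S) F^{S} = \sum_{i}(s_{i} - a_{i}) R_{i}(S-1) F^{S-1}$. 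Multiplying by the central element $b(S-1)$ and applying the shifted functional equation $b(S-1) F^{S-1} = P(S-1) F^{S}$ produces
\[
b(S-1) Q(S) F^{S} = \sum_{i} (s_{i} - a_{i}) R_{i}(S-1) P(S-1) F^{S},
\]
with both sides manifestly in $M$. The $f$-torsion freeness of $M$ promotes this equality from $M[f^{-1}]$ to $M$, and reducing modulo $(S-A)$ gives $b(A-1) \bar\xi = 0$ in $N$, which forces $\bar\xi = 0$.

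The main subtlety is bookkeeping rather than algebra: one must carefully verify that the ring automorphism $s_{i} \mapsto s_{i} - 1$ of $\D_{X,\x}[S]$ interchanges $\ann F^{S}$ and $\ann F^{S-1}$, and then legitimize the intermediate identity, which a priori lives only in $M[f^{-1}]$, by invoking $f$-torsion freeness before reducing modulo $(S-A)$.
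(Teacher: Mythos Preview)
Your proof is correct and follows essentially the same approach as the paper's. The only minor difference is in the order of operations for (a) $\Rightarrow$ (b): the paper first multiplies by $B(S)$ (so that both sides of the identity land in $\D_{X,\x}[S]F^{S+1} = \nabla(M) \subseteq M$) and only then applies the shift $S \mapsto S-1$ via $\nabla^{-1}$, thereby staying inside $M$ throughout and avoiding any appeal to $f$-torsion freeness; you shift first, which temporarily passes through $\D_{X,\x}[S]F^{S-1} \subseteq M[f^{-1}]$, and then multiply by $b(S-1)$ to return to $M$. Both orderings work, and your bookkeeping with the annihilator and the embedding $M \hookrightarrow M[f^{-1}]$ is accurate.
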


\begin{proof} 

Choose a functional equation $B(S) F^{S} = P(S) F^{S+1}$ where we may assume $B(A-1) \neq 0.$ 

We first prove that (a) implies (c). Since $\nabla(P(S-1)F^{S}) = P(S) F^{S+1}$,
\begin{align*} \overline{P(S-1)F^{S}} \stackrel{\nabla_{A}}{\longmapsto} \overline{P(S)F^{S+1}} = \overline{B(S)F^{S}}.
\end{align*}
This shows that $\nabla_{A}(\overline{P(S-1)F^{S}})$ generates $\frac{\D_{X,\x}[S]F^{S}}{(S-(A-1))\D_{X,\x}[S]F^{S}}.$

To show that (a) implies (b) suppose $\nabla_{A}(\overline{Q(S) F^{S}}) = 0.$ This means $Q(S+1)F^{S+1} \in \sum (s_{i} - (a_{i} - 1)) \cdot \D_{X,\x}[S] F^{S}$. Multiplying both sides by $B(S)$ gives $Q(S+1) B(S) F^{S+1} \in \sum (s_{i} - (a_{i} - 1)) \cdot \D_{X,\x}[S] P(S) F^{S+1}.$ So $Q(S)B(S-1)F^{S} \in \sum (s_{i} - a_{i}) \cdot \D_{X,\x}[S] F^{S}$ and $\overline{Q(S)F^{S}}$ is zero in $\frac{\D_{X,\x}[S]F^{S}}{(S-A)\D_{X,\x}[S]F^{S}}.$
\end{proof}

\begin{remark} \label{remark basic nabla facts}
\begin{enumerate}[(1)]
    \item In the classical setting where $F = (f)$ and $F^{S} = f^{s}$, (a), (b), and (c) of Proposition \ref{prop basic nabla} are equivalent (see 6.3.15 in \cite{Bjork} for the equivalence of (a) and (c); the claims involving (b) follow by a similar diagram chase).
    
    \item Suppose $A = (a , \dots, a)$ and $f$ is strongly Euler-homogeneous, Saito-holonomic, and tame. By Remark \ref{remark compare complicated decompositions}, there is a commutative square of $\D_{X,\x}$- maps:
    \[
    \begin{tikzcd}
        \frac{\D_{X,\x}[S]F^{S}}{(S-A)\D_{X,\x}[S]F^{S}} \arrow[r, "\simeq"] \arrow[d, "\nabla_{A}"] 
            & \frac{\D_{X,\x}[s] f^{s}}{(s-a)\D_{X,\x}[s]f^{s}} \arrow[d, "\nabla_{a}"] \\
        \frac{\D_{X,\x}[S]F^{S}}{(S-(A-1))\D_{X,\x}[S]F^{S}} \arrow[r, "\simeq"]
            & \frac{\D_{X,\x}[s] f^{s}}{(s-(a-1))\D_{X,\x}[s]f^{s}}.
    \end{tikzcd}
    \]
If the conditions in Proposition \ref{prop basic nabla} were equivalent, then the inclusion induced by the diagonal embedding $\V(B_{f,\x}) \hookrightarrow \V(B_{F,\x}) \bigcap \V(s_{1} - s_{2}, \dots, s_{r-1} - s_{r})$, given in Proposition \ref{prop diagonal embedding} would be surjective.
\end{enumerate}
\end{remark}

\begin{example} \label{running non-free example BS computation}
Let $f = x(2x^2 + yz)$ and $F = (x, 2x^2 + yz)$. This is strongly Euler-homogeneous, Saito-holonomic (cf. Examples \ref{example running non-free example strong Euler}, \ref{example running non-free example saito-holonomic}), and tame ($n \leq 3$). Using Singular and Macaulay2 we compute $\V(B_{F,0}) = (s_{1} + 1)(s_{2} + 1) \prod_{k=3}^{r}(s_{1} + 2s_{2} + k)$ and $\V(B_{f,0}) = (s+1)^{3}(s+\frac{4}{3})(s+\frac{5}{3}).$ In this case, the diagonal embedding $\V(B_{f,0}) \hookrightarrow \V(B_{F,0}) \bigcap \V(s_{1} - s_{2})$ of Proposition \ref{prop diagonal embedding} is surjective and, see Remark \ref{remark basic nabla facts}, $\nabla_{-k+1, -k+1}$ is neither surjective nor injective for $k = 3, 4, 5.$
\end{example}

The rest of this section is devoted to proving that under the nice hypotheses of the previous section and in the language of Proposition \ref{prop basic nabla}, that (b) implies (c). Our proof makes use of a Koszul resolution over the central elements $S-A.$

\begin{Convention}
A resolution is a (co)-complex with a unique (co)homology module at its end. An acyclic (co)-complex has no (co)homology. Given a (co)-complex ($C^{\bullet}$) $C_{\bullet}$ resolving $A$, the augmented (co)-complex ($C^{\bullet} \to A$) $C_{\bullet} \to A$ is acyclic. 
\end{Convention}

\begin{definition}

For a (not necessarily commutative) ring $R$ and a sequence of central $R$-elements $a = a_{1}, \dots, a_{k}$ let $K^{\bullet}(a)$ be the Koszul co-complex induced by the elements $a$, cf. Section 6 in \cite{24Hours}. For a left $R$-module $M$, let $K^{\bullet}(a; M) = K^{\bullet}(a) \otimes M$ be the Koszul co-complex on $M$ induced by $a$. We index $K^{\bullet}(a)$ so that the right most object is $K^{0}(a).$

\end{definition}

The following lemma is immediate after considering $ H^{-1} (K^{\bullet}(c_{1}, \dots, c_{r}; M)):$

\begin{lemma} \label{lemma standard koszul argument}

Let $R$ be a, possibly noncommutative, ring, $M$ a left $R$-module, $m_{i} \in M$, and $c_{1}, \dots, c_{r}$ central elements of $R$. Assume $H^{-1}(K(c_{1}, \dots, c_{r}); M) = 0.$ If $c_{i}m_{i} \in (c_{1}, \dots, c_{i-1}, c_{i+1}, \dots, c_{r})M$, then $m_{i} \in (c_{1}, \dots, c_{i-1}, c_{i+1}, \dots, c_{r})M.$

\end{lemma}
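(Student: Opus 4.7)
The plan is to unpack what vanishing of $H^{-1}(K^\bullet(c_1,\dots,c_r;M))$ means concretely and then build a cocycle in degree $-1$ that witnesses the desired membership. With the stated indexing, $K^0(c_1,\dots,c_r;M)=M$, $K^{-1}=M^r$, and the differential $d^{-1}\colon M^r\to M$ sends $(n_1,\dots,n_r)\mapsto\sum_j c_j n_j$ (signs can be absorbed since the $c_j$ are central). One step further left, $K^{-2}=M^{\binom{r}{2}}$ with differential sending the basis vector $e_k\wedge e_\ell$ (for $k<\ell$) to $c_k e_\ell - c_\ell e_k$. The key observation is that in every element of $\operatorname{im}(d^{-2})\subseteq M^r$, the $i$-th coordinate is an $M$-linear combination of the $c_j$ with $j\ne i$, hence lies in $(c_1,\dots,\widehat{c_i},\dots,c_r)M$.

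Now suppose $c_i m_i = \sum_{j\ne i}c_j n_j$ for suitable $n_j\in M$. I form the tuple
\[
\mathbf{n} := (n_1,\dots,n_{i-1},-m_i,n_{i+1},\dots,n_r)\in K^{-1}.
\]
A direct application of $d^{-1}$ yields $d^{-1}(\mathbf{n})=\sum_{j\ne i}c_j n_j - c_i m_i = 0$, so $\mathbf{n}\in\ker d^{-1}$. The hypothesis $H^{-1}(K^\bullet(c_1,\dots,c_r;M))=0$ therefore places $\mathbf{n}$ in $\operatorname{im}(d^{-2})$. By the observation above, the $i$-th coordinate of $\mathbf{n}$, namely $-m_i$, lies in $(c_1,\dots,\widehat{c_i},\dots,c_r)M$, and thus so does $m_i$.

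There is essentially no obstacle: the argument is the standard Koszul-cohomology interpretation of ``almost-regular sequence'' behaviour, and centrality of the $c_j$ is exactly what is needed both to form the Koszul complex on the left $R$-module $M$ and to ensure the ideal $(c_1,\dots,\widehat{c_i},\dots,c_r)M$ is a well-defined $R$-submodule of $M$. The only point to be careful about is bookkeeping of signs in the Koszul differential, which does not affect the ideal membership conclusion.
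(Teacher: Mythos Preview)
Your proof is correct and is exactly the argument the paper has in mind: the paper simply says the lemma ``is immediate after considering $H^{-1}(K^\bullet(c_1,\dots,c_r;M))$,'' and your construction of the cocycle $\mathbf{n}$ together with the observation about the $i$-th coordinate of elements in $\operatorname{im}(d^{-2})$ is precisely that computation.
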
 

Let $v_{1}, \dots, v_{k}$ be positive integers. If $R$ is commutative and if $K^{\bullet}(a; M)$ is a resolution, we know $K^{\bullet}(a_{1}^{v_{1}}, \dots, a_{k}^{v_{k}}; M)$ is a resolution, cf. Exercise 6.16 in \cite{24Hours}. A routine induction argument (that we omit) using the the tensor product of Koszul co-complexes verifies that this is also true for general $R$ and central $a$:

\begin{prop} \label{prop-koszul central 2}

Let $R$ be a, possibly non-commutative, ring, $M$ a $R$-module, $c_{1}, \dots, c_{r}$ central elements of $R$, and $v_{1}, \dots, v_{r} \in \mathbb{Z}_{+}$. If $K^{\bullet}(c_{1}, \dots, c_{r}; M)$ is a resolution, then $K^{\bullet}(c_{1}^{v_{1}}, \dots, c_{r}^{v_{r}}; M)$ is a resolution.

\end{prop}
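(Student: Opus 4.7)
The plan is to reduce the statement to a single-exponent version and then handle that reduction via the long exact sequence coming from the mapping cone description of $K^{\bullet}(a)\otimes V^{\bullet}$, which is available here because each $c_i$ is central.

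First, because the $c_i$ are central, the Koszul complex $K^{\bullet}(c_1,\dots,c_r;M)$ is invariant under permutation of its inputs. It therefore suffices to prove the following sub-claim: if $K^{\bullet}(c_1,c_2,\dots,c_r;M)$ is a resolution, then $K^{\bullet}(c_1^{v},c_2,\dots,c_r;M)$ is a resolution for every $v\geq 1$. The full statement then follows by raising the exponents $v_1,\dots,v_r$ one at a time, permuting the arguments between successive applications to bring the element whose exponent we wish to increase into the first slot.

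For the sub-claim, I would set $V^{\bullet}:=K^{\bullet}(c_2,\dots,c_r;M)$ so that $K^{\bullet}(c_1,\dots,c_r;M)=K^{\bullet}(c_1)\otimes_R V^{\bullet}$. Since $c_1$ is central, this total complex is (up to shift) the mapping cone of multiplication by $c_1$ on $V^{\bullet}$, which gives the long exact sequence
\[
\cdots\to H^{i-1}(V^{\bullet})\xrightarrow{c_1}H^{i-1}(V^{\bullet})\to H^{i}(K^{\bullet}(c_1)\otimes V^{\bullet})\to H^{i}(V^{\bullet})\xrightarrow{c_1}H^{i}(V^{\bullet})\to\cdots.
\]
The hypothesis $H^{i}(K^{\bullet}(c_1)\otimes V^{\bullet})=0$ for $i<0$ is then equivalent, by exactness at the flanking positions, to the conjunction: $c_1$ is surjective on $H^{j}(V^{\bullet})$ for all $j\leq -2$, and $c_1$ is injective on $H^{j}(V^{\bullet})$ for all $j\leq -1$. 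Both properties pass to every positive power $c_1^{v}$ by elementary arguments: if $c_1$ is injective, then $c_1^{v}x=0$ forces $c_1(c_1^{v-1}x)=0$ and hence $x=0$ by iteration; surjectivity lifts to powers by composing preimages. Feeding these statements back into the analogous long exact sequence for $K^{\bullet}(c_1^{v})\otimes V^{\bullet}$ forces its negative cohomology to vanish, which is exactly the sub-claim.

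The main thing to be careful about is the indexing and sign convention used to identify $K^{\bullet}(c_1)\otimes V^{\bullet}$ with the shifted mapping cone of multiplication by $c_1$ (the paper places $K^{0}$ on the right), but the centrality of $c_1$ ensures that multiplication by $c_1$ is a genuine chain endomorphism of $V^{\bullet}$, so the classical commutative-algebra arguments transfer to the present non-commutative setting without modification.
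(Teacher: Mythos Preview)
Your argument is correct and matches the paper's intended approach. The paper omits the proof, saying only that it is ``a routine induction argument (that we omit) using the tensor product of Koszul co-complexes''; your reduction to one exponent at a time together with the mapping-cone long exact sequence for $K^{\bullet}(c_1)\otimes V^{\bullet}$ is exactly such an argument, the mapping-cone LES being the two-row case of the double-complex spectral sequence the paper alludes to.
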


Now return to $\gr_{(0,1,1)}(\D_{X,\x}[S] F^{S})$. Under the nice hypothesis of the previous section, $\gr_{(0,1,1)}(s_{1}), \dots, \gr_{(0,1,1)}(s_{r})$ act like a regular sequence:

\begin{prop} \label{prop s regular}

Let $f = f_{1} \cdots f_{r}$ and let $F=(f_{1}, \dots, f_{r}).$ Suppose that for $\x \in X$ the following hold:
\begin{itemize}
    \item $f$ has the strong Euler-homogeneity $E_{\x}$ at $\x$;
    \item $\widetilde{L_{F,\x}} \subseteq \gr_{(0,1,1)}(\D_{X,\x}[S])$ is Cohen--Macaulay of dimension $n + r$;
    \item $L_{f,\x} + \gr_{(0,1)}(\D_{X,\x}) \cdot \gr_{(0,1)}(E_{\x}) \subseteq \gr_{(0,1)}(\D_{X,\x})$ is Cohen--Macaulay of dimension $n$.
\end{itemize}
Then $K^{\bullet}(S; \gr_{(0,1,1)}(\D_{X,\x}[S]) / \widetilde{L_{F,\x}})$ is co-complex of $\gr_{(0,1,1)}(\D_{X,\x}[S])$-modules resolving $\gr_{(0,1,1)}(\D_{X,\x}[S]) / (\widetilde{L_{F,\x}}, S) \simeq \gr_{(0,1)}(\D_{X,\x}) / (L_{f,\x} + \gr_{(0,1)}(\D_{X,\x}) \cdot \gr_{(0,1)}(E_{\x})).$

\end{prop}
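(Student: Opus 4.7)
The plan is to show that the central elements $s_{1},\dots,s_{r}$ form a regular sequence on $\gr_{(0,1,1)}(\D_{X,\x}[S])/\widetilde{L_{F,\x}}$; once that is established, the Koszul co-complex is automatically a resolution of the corresponding quotient, and both claims in the proposition follow.

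First I would verify the announced isomorphism
\[
\gr_{(0,1,1)}(\D_{X,\x}[S])/(\widetilde{L_{F,\x}},S) \;\simeq\; \gr_{(0,1)}(\D_{X,\x})/\bigl(L_{f,\x}+\gr_{(0,1)}(\D_{X,\x})\cdot \gr_{(0,1)}(E_{\x})\bigr).
\]
Under the coordinate-free identification $\gr_{(0,1,1)}(\D_{X,\x}[S])\simeq \mathscr{O}_{X,\x}[Y][S]$, modding out by $S$ returns $\mathscr{O}_{X,\x}[Y]\simeq \gr_{(0,1)}(\D_{X,\x})$. The ideal $\widetilde{L_{F,\x}}$ is generated by the $(0,1,1)$-symbols of the elements $\psi_{F}(\delta)=\delta-\sum_{k} s_{k}(\delta\bullet f_{k})/f_{k}$ for $\delta\in \Der_{X,\x}(-\log f)$; each such symbol reduces modulo $S$ to $\gr_{(0,1)}(\delta)$. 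Since strong Euler-homogeneity gives the $\mathscr{O}_{X,\x}$-decomposition $\Der_{X,\x}(-\log f)=\Der_{X,\x}(-\log_{0} f)\oplus \mathscr{O}_{X,\x}\cdot E_{\x}$, the image ideal is exactly $L_{f,\x}+\gr_{(0,1)}(\D_{X,\x})\cdot \gr_{(0,1)}(E_{\x})$.

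Next I would combine the three hypotheses. By assumption $\gr_{(0,1,1)}(\D_{X,\x}[S])/\widetilde{L_{F,\x}}$ is Cohen--Macaulay of dimension $n+r$, while the isomorphism above together with the third hypothesis forces
\[
\dim \gr_{(0,1,1)}(\D_{X,\x}[S])/(\widetilde{L_{F,\x}},S)=n.
\]
Thus the $r$ central elements $s_{1},\dots,s_{r}$ cut the dimension by exactly $r$. Since the ambient graded ring is commutative and the module in question is Cohen--Macaulay, passing to the localization at the graded maximal ideal containing $\mathfrak{m}_{X,\x}$, $Y$, and $S$ (or equivalently applying the standard characterization for $\ast$-local CM modules) lets me invoke the usual criterion: a system of elements in the maximal ideal of a CM module whose quotient has dimension dropped by the length of the system is a regular sequence. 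Hence $s_{1},\dots,s_{r}$ is a regular sequence on $\gr_{(0,1,1)}(\D_{X,\x}[S])/\widetilde{L_{F,\x}}$.

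The conclusion is then immediate: the Koszul co-complex $K^{\bullet}(S;\gr_{(0,1,1)}(\D_{X,\x}[S])/\widetilde{L_{F,\x}})$ associated to a regular sequence of central elements is acyclic in negative cohomological degrees, and its zeroth cohomology is the quotient by $(\widetilde{L_{F,\x}},S)$, giving exactly the required resolution. I anticipate that the main subtlety is the CM-plus-dimension-drop implies regular step, since $\gr_{(0,1,1)}(\D_{X,\x}[S])$ is graded rather than local; I would handle this cleanly by localizing at the relevant graded maximal ideal (using that $\widetilde{L_{F,\x}}$ and $S$ are generated by homogeneous elements, so CM-ness and regularity can be tested there) rather than by a direct Koszul chase.
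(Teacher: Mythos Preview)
Your proposal is correct and follows essentially the same approach as the paper: verify the quotient isomorphism by reducing the generators $\gr_{(0,1,1)}(\psi_{F}(\delta))$ modulo $S$, then localize at the unique $(0,1,1)$-graded maximal ideal and use the Cohen--Macaulay criterion (a system of elements cutting the dimension by its length is a regular sequence) to conclude that the Koszul co-complex is a resolution. The paper cites Proposition 1.5.15(c) and Theorem 2.1.2 of Bruns--Herzog for the graded-local reduction and the CM regular-sequence criterion, and Corollary 13.7 of Eisenbud to match the dimensions after localization; these are precisely the ingredients you anticipated.
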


\begin{proof}

The last isomorphism is immediate from the definition of $\psi_{F}$ and the construction of $\widetilde{L_{F,\x}}$ and $L_{f,\x}$, see Definition \ref{definition generalized Liouville} and the preceding comments.

Multiplying $\gr_{(0,1,1)}(\D_{X,\x}[S])$ by $s_{k}$ increases the degree of an element by one. So after doing the appropriate degree shifts, we may view $K^{\bullet}(S; \gr_{(0,1,1)}(\D_{X,\x}[S]))$ as a sequence of graded modules with degree preserving maps. By Proposition 1.5.15 (c) of \cite{BrunsHerzogCMRings}, exactness of such a sequence is a graded local property. The only $(0,1,1)$-graded maximal ideal $\mathfrak{m}^{\star}$ is generated by $\mathscr{O}_{X,\x}$ and the irrelevant ideal. So localize $K^{\bullet}(S; \gr_{(0,1,1)}(\D_{X,\x}[S]) / \widetilde{L_{F,\x}})$ at $\mathfrak{m}^{\star}$. 

By Theorem 2.1.2 of \cite{BrunsHerzogCMRings}, if both $(\gr_{(0,1,1)}(\D_{X,\x}[S])/ \widetilde{L_{F,\x}})_{\mathfrak{m}^{\star}}$ and 
\[
(\gr_{(0,1,1)}(\D_{X,\x}[S]) / \widetilde{L_{F,\x}} + \gr_{(0,1,1)}(\D_{X,\x}[S])_{\mathfrak{m}^{\star}} \simeq (\gr_{(0,1)}(\D_{X,\x}) / L_{f,\x} + \gr_{(0,1)}(E_{x}))_{\mathfrak{m}^{\star}}
\]
are Cohen--Macaulay and the difference in their dimensions is the length of the sequence $S$, then our localized Koszul co-complex is a resolution. Since the dimension of a graded-local ring equals the dimension after localization at the graded maximal ideal, cf. Corollary 13.7 of \cite{EisenbudCommutative}, we are done.
\end{proof}

For $a_{1}, \dots, a_{r} \in \mathbb{C}$, label $S-A = s_{1} - a_{1}, \dots, s_{r} - a_{r} \in \D_{X,\x}[S].$ Being central elements, $S-A$ yields the Koszul co-complex $K^{\bullet}(S-A; \D_{X,\x}[S]F^{S})$ of $\D_{X,\x}[S]$-modules. Its terminal cohomology module is $\D_{X,\x}[S]F^{S} / (S-A) \D_{X,\x}[S]F^{S}$. We show that under our standard hypotheses on $f$, i.e. strongly Euler-homogeneous, Saito-holonomic, and tame, that $s_{1} - a_{1}$, \dots, $s_{r} - a_{r}$ behaves like a regular sequence.

\begin{prop} \label{prop Koszul resolution}

Suppose $f = f_{1} \cdots f_{r}$ is strongly Euler-homogeneous, Saito-holonomic, and tame and let $F=(f_{1}, \dots, f_{r}).$ Then $K^{\bullet}(S-A; \D_{X,\x}[S]F^{S})$ resolves $\D_{X,\x}[S]F^{S} / (S-A) \D_{X,\x}[S]F^{S}$.

\end{prop}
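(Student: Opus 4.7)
The plan is to use the total order filtration on $\D_{X,\x}[S]F^S$ to reduce the exactness statement to its graded counterpart already established in Proposition \ref{prop s regular}. The strategy is: (i) verify the graded Koszul complex is exact; (ii) filter the Koszul co-complex of interest so that its associated graded is the exact complex from (i); (iii) lift exactness to the filtered level by a standard bounded-below descent.

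First I would check that the hypotheses of Proposition \ref{prop s regular} hold for $f$ strongly Euler-homogeneous, Saito-holonomic, and tame. Theorem \ref{thm main prime theorem} together with Proposition \ref{prop CM dim 2} gives that $\widetilde{L_{F,\x}}$ is Cohen--Macaulay of dimension $n+r$. For the second hypothesis, Theorem 3.17 of \cite{uli} gives that $L_{f,\x}$ is Cohen--Macaulay prime of dimension $n+1$; since $\gr_{(0,1)}(E_{\x})$ is nonzero modulo $L_{f,\x}$ (as can be checked at smooth points using Proposition \ref{prop dim reference} and then extended globally by primality of $L_{f,\x}$), multiplication by $\gr_{(0,1)}(E_{\x})$ is a regular element on $\gr_{(0,1)}(\D_{X,\x})/L_{f,\x}$, so the quotient $\gr_{(0,1)}(\D_{X,\x})/(L_{f,\x} + \gr_{(0,1)}(\D_{X,\x}) \cdot \gr_{(0,1)}(E_{\x}))$ is Cohen--Macaulay of dimension $n$. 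Combined with Corollary \ref{cor gr-equality}, which gives $\gr_{(0,1,1)}(\D_{X,\x}[S]F^S) \simeq \gr_{(0,1,1)}(\D_{X,\x}[S])/\widetilde{L_{F,\x}}$, Proposition \ref{prop s regular} then ensures that $K^{\bullet}(S;\, \gr_{(0,1,1)}(\D_{X,\x}[S]F^S))$ is exact in negative degrees.

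Next I would set up the filtration on the Koszul co-complex. Define
\[
G^{m} K^{-p}(S-A;\, \D_{X,\x}[S]F^{S}) := \bigoplus_{|I|=p} F_{(0,1,1)}^{m-p}(\D_{X,\x}[S]F^{S}).
\]
Since each $a_{i} \in \mathbb{C}$ has total order zero while $s_{i}$ has total order one, the element $s_{i}-a_{i}$ has total order exactly one with symbol $\gr_{(0,1,1)}(s_{i}-a_{i}) = s_{i}$. Consequently the Koszul differential respects $G^{\bullet}$, and the associated graded co-complex is (up to the bookkeeping of the shift by $p$) precisely $K^{\bullet}(S;\, \gr_{(0,1,1)}(\D_{X,\x}[S]F^{S}))$, which is exact by the previous paragraph.

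The main step, and the place requiring the most care, is the lift. Let $z \in G^{m} K^{-p}(S-A;\, \D_{X,\x}[S]F^{S})$ be a cycle with $p \geq 1$; its class $\bar{z} \in \gr^{m}_{G} K^{-p}$ is a cycle in the graded complex, so $\bar{z} = \bar{d}(\bar{w})$ for some $\bar{w} \in \gr^{m}_{G} K^{-p-1}$. Lifting $\bar{w}$ to $w \in G^{m} K^{-p-1}$ yields $z - d(w) \in G^{m-1} K^{-p}$, still a cycle but at strictly smaller filtration level. Because $F_{(0,1,1)}^{k}(\D_{X,\x}[S]F^{S}) = 0$ for $k < 0$, iteration terminates after finitely many steps and produces an explicit primitive for $z$. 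This establishes exactness in positions $-p$ for $p \geq 1$; the terminal cohomology at position $0$ is, by construction, $\D_{X,\x}[S]F^{S}/(S-A)\D_{X,\x}[S]F^{S}$. The only obstacle is purely combinatorial, namely verifying that the filtration shifts indexed by subset cardinality align so that the graded exactness of Proposition \ref{prop s regular} transfers summand by summand; no further technical ingredient is needed.
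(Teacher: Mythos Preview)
Your proposal is correct and follows essentially the same route as the paper: filter the Koszul co-complex by the total order filtration (with the appropriate shift by $p$), identify the associated graded with $K^{\bullet}(S;\gr_{(0,1,1)}(\D_{X,\x}[S]F^{S}))$, invoke Proposition \ref{prop s regular} for exactness of the graded complex, and lift. The only cosmetic differences are that the paper phrases the lift as a spectral sequence argument on the augmented co-complex rather than your explicit bounded-below descent, and that the paper cites Corollary 3.19 of \cite{uli} directly for the Cohen--Macaulay property of $\gr_{(0,1)}(\D_{X,\x})/(L_{f,\x}+\gr_{(0,1)}(E_{\x}))$ rather than deriving it from regularity of $\gr_{(0,1)}(E_{\x})$ on the prime Cohen--Macaulay quotient.
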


\begin{proof}

Under the total order filtration, $s_{k} - a_{k}$ has weight one. It is routine to define a filtration $G$, compatible with the total order filtration, on the augmented co-complex $K^{\bullet}(S-A; \D_{X,\x}[S]F^{S}) \to \D_{X,\x}[S]F^{S} / (S-A)\D_{X,\x}[S] F^{S}$ such
$\gr_{G}(K^{\bullet}(S-A; \D_{X,\x}[S]F^{S}) \to \D_{X,\x}[S]F^{S} / (S-A)\D_{X,\x}[S] F^{S})$ is isomorphic to $K^{\bullet}(S; \gr_{(0,1,1)}({\D_{X,\x}[S]}) / \widetilde{L_{F,\x}}) \to \gr_{(0,1)}(\D_{X,\x})/(L_{f,\x} + \gr_{(0,1)}(\D_{X,\x}) \cdot \gr_{(0,1)}(E_{x})).$
If this co-complex is acyclic, then a standard argument using the spectral sequence attached to a filtered co-complex proves that $K^{\bullet}(S-A; \D_{X,\x}[S]F^{S}) \to \D_{X,\x}[S]F^{S} / (S-A)\D_{X,\x}[S] F^{S}$ is acyclic is well. The claim then follows by Theorem \ref{thm main prime theorem}, Corollary 3.19 of \cite{uli}, and Proposition \ref{prop s regular}. 
\end{proof}

Finally we can prove the section's main theorem:
\begin{theorem} \label{thm injective implies surjective}

Let $f= f_{1} \cdots f_{r}$ be strongly Euler-homogeneous, Saito-holonomic, and tame and let $F=(f_{1}, \dots, f_{r}).$ If $\nabla_{A}$ is injective, then it is surjective. 
\end{theorem}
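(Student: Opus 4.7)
The plan is to realize both $N_A := \D_{X,\x}[S]F^S / (S-A)\D_{X,\x}[S]F^S$ and $N_{A-1}$ as holonomic $\D_{X,\x}$-modules sharing the same characteristic cycle. Once this is in hand, any injection $\nabla_A : N_A \hookrightarrow N_{A-1}$ sits in a short exact sequence $0 \to N_A \to N_{A-1} \to \mathrm{coker}\,\nabla_A \to 0$ of coherent $\D_{X,\x}$-modules whose cokernel is itself holonomic, and additivity of characteristic cycles forces $\mathrm{CC}(\mathrm{coker}\,\nabla_A) = \mathrm{CC}(N_{A-1}) - \mathrm{CC}(N_A) = 0$. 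Since a nonzero holonomic $\D_{X,\x}$-module has nonzero characteristic cycle, this will yield $\mathrm{coker}\,\nabla_A = 0$, i.e.\ surjectivity.

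To compute $\mathrm{CC}(N_A)$ I first equip $N := \D_{X,\x}[S]F^S \simeq \D_{X,\x}[S]/\ann_{\D_{X,\x}[S]}F^S$ with the good filtration induced by the total order filtration $F_{(0,1,1)}$ on $\D_{X,\x}[S]$; Theorem \ref{thm gen by derivations} yields $\gr N \simeq \gr_{(0,1,1)}(\D_{X,\x}[S])/\widetilde{L_F}$. Each central element $s_i - a_i$ has $F_{(0,1,1)}$-degree one, so after the usual degree shifts the Koszul resolution $K^{\bullet}(S-A; N) \twoheadrightarrow N_A$ from Proposition \ref{prop Koszul resolution} becomes a filtered complex whose associated graded is the augmented Koszul complex $K^{\bullet}(S; \gr N) \twoheadrightarrow \gr_{(0,1)}(\D_{X,\x})/(L_{f,\x} + \gr_{(0,1)}(\D_{X,\x}) \cdot \gr_{(0,1)}(E_\x))$, where $E_\x$ is any strong Euler-homogeneity for $f$ at $\x$. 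Proposition \ref{prop s regular} supplies acyclicity of this graded augmented complex, so the spectral sequence of the filtered complex degenerates at the $E_1$-page and yields a $\gr_{(0,1)}(\D_{X,\x})$-module isomorphism
$$\gr N_A \simeq \gr_{(0,1)}(\D_{X,\x})/(L_{f,\x} + \gr_{(0,1)}(\D_{X,\x}) \cdot \gr_{(0,1)}(E_\x)).$$
The right-hand side does not depend on $A$, so the same argument applied with $A-1$ in place of $A$ produces the same associated graded and hence $\mathrm{CC}(N_A) = \mathrm{CC}(N_{A-1})$. Moreover, Theorem 3.17 and Corollary 3.19 of \cite{uli} imply this graded quotient has Krull dimension $n$, so both $N_A$ and $N_{A-1}$ are holonomic.

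The principal technical obstacle will be the spectral-sequence bookkeeping: I must verify carefully that the $F_{(0,1,1)}$-filtration descends to a good filtration on $N_A$ and that passing to the associated graded of the Koszul resolution yields the expected resolution of the predicted quotient. This is essentially the filtered-complex calculation already carried out in the proof of Proposition \ref{prop Koszul resolution}, but reread as a computation of the associated graded of a quotient rather than as the acyclicity of an augmented complex. Once this is in place, the remainder of the argument is the standard characteristic-cycle calculation outlined in the first paragraph.
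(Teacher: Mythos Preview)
Your proof is correct and takes a genuinely different route from the paper's. The paper argues by an intricate induction: it uses the injectivity of $\nabla_A$ to establish that $H^{-1}$ of Koszul complexes on $\D_{X,\x}[S]F^S/\D_{X,\x}[S]F^{S+1}$ vanishes (Claim~1), then runs an algorithm that starts from a multivariate Bernstein--Sato polynomial and successively strips away $\widetilde{s_k}$-factors (Claims~2--3, Steps~1--2) until it produces a witness that the cokernel of $\nabla_A$ vanishes. Your characteristic-cycle argument bypasses this machinery entirely. The spectral-sequence step you flag as the main obstacle is fine: the filtered (non-augmented) Koszul complex $K^\bullet(S-A;N)$ has $E_1$ concentrated in cohomological degree zero by Proposition~\ref{prop s regular}, so $E_1=E_\infty$, and the induced filtration on $H^0=N_A$ is exactly the quotient filtration from $N$, giving $\gr N_A = H^0(\gr K^\bullet)=\gr N/S\cdot\gr N$ independently of $A$. (Equivalently, the symbols $s_1,\dots,s_r$ form a regular sequence on $\gr N$, which forces strictness of the filtered Koszul differentials.)

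Your approach actually buys more than the paper's. It shows $N_A$ is holonomic already in the tame case---the paper obtains this only under freeness, cf.\ Remark~\ref{rmk holonomic dual}---and by swapping kernel for cokernel in the short exact sequence the same additivity argument immediately gives surjective~$\Rightarrow$~injective, extending Theorem~\ref{thm injective iff surjective} from free to tame divisors without any Lie--Rinehart duality. The paper's approach, by contrast, is more hands-on and avoids invoking characteristic cycles or Bernstein's inequality.
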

\begin{proof}

For this proof, and this proof alone, write $\widetilde{s_{i}} = s_{i} - (a_{i} - 1).$ Also, $\overline{(-)}$ denotes the image of $(-)$ in the appropriate quotient object.

\textit{The Plan:} If there is some multivariate Bernstein--Sato polynomial $B(S)$ that does not vanish at $(a_{1}-1, \dots, a_{r} - 1)$, then the claim follows by Proposition \ref{prop basic nabla}. So pick a multivariate Bernstein--Sato polynomial $B(S) = \sum A_{k} \widetilde{s_{k}}$, $A_{k} \in \mathbb{C}[S].$ The idea is to successively ``remove" each $s_{k}$ factor from each $A_{k}$. In doing so, we will produce a finite sequence of polynomials $B_{0}, B_{i}, \dots $ satisfying the technical condition \eqref{inductive condition} introduced in Step 1, starting with our multivariate Bernstein--Sato polynomial, such that each polynomial uses fewer variables than its predecessor. The terminal polynomial will demonstrate that the cokernel of $\nabla_{A}$ vanishes. 

The inductive construction of these polynomials is not hard but technical. Before doing it we prove three claims. The first is that a particular cohomology module of the Koszul co-complex of $\widetilde{s_{1}}, \dots, \widetilde{s_{r}}$ on $\frac{\D_{X,\x}[S]F^{S}}{\D_{X,\x}[S]F^{S+1}}$ vanishes. We use this to ``remove" the $\widetilde{s_{k}}$ factors. The second and third claims are the technical details comprising the inductive algorithm used to produce these polynomials.

\textit{Claim 1:} For all positive integers $v_{1}, \dots, v_{r}$, 
\[ H^{-1} \left( K^{\bullet} \left( \widetilde{s_{1}}^{v_{1}}, \dots , \widetilde{s_{r}}^{v_{r}} ; \frac{\D_{X,\x}[S] F^{S}}{ \D_{X,\x}[S]F^{S+1}} \right) \right)= 0. \]

\textit{Proof of Claim $1$:} The $\D_{X,\x}$-map $\nabla_{A}$ is always injective and sends $F^{S} \mapsto F^{S+1}$. If $\nabla_{A}$ is also injective there is a short exact sequence of augmented co-complexes: 
\begin{align*} 
0 &\to (K^{\bullet}(S-A ; \D_{X,\x}[S]F^{S}) \to \frac{\D_{X,\x}[S]F^{S}}{(S-A) \D_{X,\x}[S]F^{S}}) \\
&\to (K^{\bullet}(S-(A-1); \D_{X,\x}[S]F^{S}) \to \frac{\D_{X,\x}[S]F^{S}}{(S-(A-1)) \D_{X,\x}[S]F^{S}}) \\
&\to (K^{\bullet}(S-(A-1); \frac{\D_{X,\x}[S]F^{S}}{\D_{X,\x}[S]F^{S+1}}) \to \frac{\D_{X,\x}[S]F^{S}}{(S-(A-1))\D_{X,\x}[S]F^{S+1}}) \to 0. 
\end{align*}
The first map is induced by $\nabla$ and by $\nabla_{A}$ on the augmented part; the second by quotient maps. By Proposition \ref{prop Koszul resolution} and the canonical long exact sequence, the last (nonzero) augmented co-complex is acyclic. Claim $1$ follows by Proposition \ref{prop-koszul central 2}.

\textit{Claim $2$:} Write $\overline{F^{S}}$ for the image of $F^{S}$ in $\frac{\D_{X,\x}[S] F^{S}}{\D_{X,\x}[S] F^{S+1}}.$ Suppose there exists $P(S) \in \mathbb{C}[S]$, $1 \leq j < r$, positive integers $n_{j+1}, \dots, n_{r}$, and an integer $m \geq \text{max} \{ n_{j+1}, \dots, n_{r} \}$ such that
\[ 
\bigg( \prod_{j+1 \leq k \leq r} \widetilde{s_{k}}^{n_{k}} \bigg) P(S) \bullet \overline{F^{S}} \in (\widetilde{s_{1}}, \dots, \widetilde{s_{j}}, \widetilde{s_{j+1}}^{m}, \dots, \widetilde{s_{r}}^{m}) \frac{\D_{X,\x}[S] F^{S}}{ \D_{X,\x}[S] F^{S+1}}. 
\]
Then for $m^{\prime} = \text{min}\{m-n_{j+1}, \dots , m-n_{r}\}$ we have
\[
P(S) \bullet \overline{F^{S}} \in (\widetilde{s_{1}}, \dots, \widetilde{s_{j}}, \widetilde{s_{j+1}}^{m^{\prime}}, \dots, \widetilde{s_{r}}^{m^{\prime}}) \frac{\D_{X,\x}[S] F^{S}}{ \D_{X,\x}[S] F^{S+1}}.
\]
\textit{Proof of Claim $2$:} The idea is to use Claim 1 and Lemma \ref{lemma standard koszul argument} to ``remove" each $\widetilde{s_{k}}^{n_{k}}$ factor one at a time. We first ``remove" the $\widetilde{s_{j+1}}^{n_{j+1}}$ factor. 

By hypothesis, there exists $Q_{j+1} \in \D_{X,\x}[S]$ such that 
\begin{align*}
    \bigg( \prod_{j+1 \leq k \leq r} \widetilde{s_{k}}^{n_{k}} &\bigg)  P(S) \bullet \overline{F^{S}} - \widetilde{s_{j+1}}^{m} Q_{j+1} \bullet \overline{F^{S}} \\
    &= \widetilde{s_{j+1}}^{n_{j+1}} \Bigg( \bigg( \prod_{j+2 \leq k \leq r} \widetilde{s_{k}}^{n_{k}} \bigg) P(S) \bullet \overline{F^{S}} - \widetilde{s_{j+1}}^{m-n_{j+1}} Q_{j+1} \bullet \overline{F^{S}} \Bigg) \\
    &\in (\widetilde{s_{1}}, \dots, \widetilde{s_{j}}, \widetilde{s_{j+2}}^{m}, \dots, \widetilde{s_{r}}^{m}) \frac{\D_{X,\x}[S] F^{S}}{ \D_{X,\x}[S] F^{S+1}}.
\end{align*}
By Claim $1$, $H^{-1}(K(\widetilde{s_{1}}, \dots, \widetilde{s_{j}}, \widetilde{s_{j+1}}^{n_{j+1}}, \widetilde{s_{j+2}}^{m}, \dots, \widetilde{s_{r}}^{m}; \frac{\D_{X,\x}[S] F^{S}}{\D_{X,\x}[S]F^{S+1}}))$ vanishes. So Lemma \ref{lemma standard koszul argument} implies 
\begin{align*}
\bigg( \prod_{j+2 \leq k \leq r} \widetilde{s_{k}}^{n_{k}} \bigg) P(S) \bullet \overline{F^{S}} &- \widetilde{s_{j+1}}^{m-n_{j+1}} Q_{j+1} \bullet \overline{F^{S}} \\
&\in (\widetilde{s_{1}}, \dots, \widetilde{s_{j}}, \widetilde{s_{j+2}}^{m}, \dots, \widetilde{s_{r}}^{m}) \frac{\D_{X,\x}[S] F^{S}}{ \D_{X,\x}[S] F^{S+1}}.
\end{align*}
Rearrange to see
\[
\bigg( \prod_{j+2 \leq k \leq r} \widetilde{s_{k}}^{n_{k}} \bigg) P(S) \bullet \overline{F^{S}} \in (\widetilde{s_{1}}, \dots, \widetilde{s_{j}}, \widetilde{s_{j+1}}^{m - n_{j+1}}, \widetilde{s_{j+2}}^{m}, \dots, \widetilde{s_{r}}^{m}) \frac{\D_{X,\x}[S] F^{S}}{ \D_{X,\x}[S] F^{S+1}}.
\]
Repeat this process on each remaining factor $\widetilde{s_{k}}^{n_{k}}$, $j+2 \leq k \leq r$ one at a time to conclude
\[
P(S) \bullet \overline{F^{S}} \in (\widetilde{s_{1}}, \dots, \widetilde{s_{j}}, \widetilde{s_{j+1}}^{m - n_{j+1}}, \widetilde{s_{j+2}}^{m - n_{j+2}}, \dots, \widetilde{s_{r}}^{m - n_{r}}) \frac{\D_{X,\x}[S] F^{S}}{ \D_{X,\x}[S] F^{S+1}}.
\]

\textit{Claim $3$:} Suppose $B_{j} \in \mathbb{C}[s_{j+1}, \dots, s_{r}]$, where $j < r$, with $B_{j} \in \mathbb{C}[s_{j+1}, \dots , s_{r}] \cdot (\widetilde{s_{j+1}}, \dots, \widetilde{s_{r}})$ but $B_{j} \notin \mathbb{C}[\widetilde{s_{j+1}}, \dots, \widetilde{s_{r}}] \cdot (\widetilde{s_{k}})$ for all $j+1 \leq k \leq r$. Furthermore, assume that for $m \geq \text{max} \{ n_{j+1}, \dots, n_{r} \}$ we have
\[
B_{j} \bullet F^{S} \in (\widetilde{s_{1}}, \dots, \widetilde{s_{j}}, \widetilde{s_{j+1}}^{m}, \dots, \widetilde{s_{r}}^{m}) \frac{\D_{X,\x}[S] F^{S}}{ \D_{X,\x}[S] F^{S+1}}.
\]
Then, relabeling the $s_{k}$ if necessary, there exists $B_{i} \in \mathbb{C}[s_{i+1}, \dots s_{r}]$, where $j < i < r$, $B_{t} \notin \mathbb{C}[s_{i+1}, \dots, s_{r}] \cdot (\widetilde{s_{k}})$ for $i+1 \leq k \leq r$, so that for $m^{\prime} = \text{min}\{m-n_{j+1}, \dots , m-n_{r}\}$ we have
\[
B_{i} \bullet F^{S} \in (\widetilde{s_{1}}, \dots, \widetilde{s_{i}}, \widetilde{s_{t+1}}^{m^{\prime}}, \dots, \widetilde{s_{r}}^{m^{\prime}}) \frac{\D_{X,\x}[S] F^{S}}{ \D_{X,\x}[S] F^{S+1}}.
\]
\textit{Proof of Claim $3$:} Note that the hypotheses imply $j < r-1$ so the promised choice of $i$ is possible. Since $B_{j} \notin \mathbb{C}[s_{j+1}, \dots, s_{r}] \cdot (\widetilde{s_{k}})$ for all $j+1 \leq k \leq r$, there exists a largest $\emptyset \neq I = \{\widetilde{s_{i_{1}}}, \dots, \widetilde{s_{i_{|i|}}} \} \subsetneq \{j+1, \dots, r\}$ such that $B_{j} \notin \mathbb{C}[s_{j+1}, \dots, s_{r}] \cdot (\widetilde{s_{i_{1}}}, \dots, \widetilde{s_{i_{|I|}}}).$ Relabel so that $I = \{j+1, \dots, i\}.$ This means there exist positive integers $n_{k}$, polynomials $A_{l} \in \mathbb{C}[S]$, and a polynomial $B_{i} \in \mathbb{C}[s_{i+1}, \dots, s_{r}]$ such that
\[ 
B_{j} = \bigg( \prod_{i+1 \leq k \leq r} \widetilde{s_{k}}^{n_{k}} \bigg) B_{i} + \sum_{1 \leq \ell \leq i} \widetilde{s_{\ell}} A_{\ell}.
\]
We may make each $n_{k}$ large enough so as to assume $B_{i} \notin \mathbb{C}[s_{i+1}, \dots , s_{r}] \cdot (\widetilde{s_{k}})$ for any $i+1 \leq k \leq r.$

Therefore
\[
\biggr( \prod_{i+1 \leq k \leq r} \widetilde{s_{k}}^{n_{k}} \biggr) B_{i} \bullet \overline{F^{S}} \in (\widetilde{s_{1}}, \dots, \widetilde{s_{i}}, \widetilde{s_{i+1}}^{m}, \dots, \widetilde{s_{r}}^{m}) \frac{\D_{X,\x}[S]F^{S}}{\D_{X,\x}[S]F^{S+1}}.
\]
Then Claim $3$ follows from Claim $2$. 

\textit{Proof of Theorem.} 

\textit{Step $1$:} We will inductively construct a sequence of polynomials $B_{i_{1}}$, $B_{i_{2}}, \dots$, such that (after potentially relabelling the $s_{k}$) the following hold: $0 \leq i_{t} < r$ for each $i_{t}$; $i_{t} < i_{t+1}$; $B_{i_{t}} \in \mathbb{C}[s_{i_{t}+1}, \dots, s_{r}]$; for $m_{i_{t}}$ arbitrarily large
\begin{gather} \label{inductive condition}
B_{i_{t}} \bullet \overline{F^{S}} \in (\widetilde{s_{1}}, \dots, \widetilde{s_{i_{t}}}, \widetilde{s_{i_{t} + 1}}^{m_{i_{t}}}, \dots, \widetilde{s_{r}}^{m_{i_{t}}}) \frac{\D_{X,\x}[S] F^{S}}{\D_{X,\x}[S]F^{S+1}}.
\end{gather}
We terminate the induction once we produce a $B_{i}$ such that, in addition to the above properties, $B_{i} \notin \mathbb{C}[s_{i+1}, \dots , s_{r}] \cdot (\widetilde{s_{i+1}}, \dots, \widetilde{s_{r}})$.

\qquad \textit{Base Case:} Take a multivariate Bernstein--Sato polynomial $B(S) \in B_{F,\x}$. If $B(S) \notin \mathbb{C}[s_{1}, \dots, s_{r}] \cdot (\widetilde{s_{1}}, \dots, \widetilde{s_{r}})$ then we are done: $B(S) = B_{0}$ works. (Recall $B(S) \bullet F^{S} \in \D_{X,\x}[S]F^{S+1}.$) Otherwise find the largest $J \subsetneq [r]$ such that
$B(S) \notin C[S] \cdot (\widetilde{s_{j_{1}}}, \dots, \widetilde{s_{j_{|J|}}}).$
Re-label to assume $J = \{1, \dots, j\}, j < r$. (We allow $J = \emptyset$, in which case $j=0$.) This means we can write $B(S)$ as 
$$ B(S) =  \bigg( \prod\limits_{j+1 \leq k \leq r} \widetilde{s_{k}}^{n_{k}} \bigg) B_{j} + \sum\limits_{1 \leq t \leq j} \widetilde{s_{t}}A_{t}$$
where  $B_{j} \in \mathbb{C}[s_{j+1}, \dots, s_{r}]$ and each $n_{k}$ a positive integer chosen large enough so that $B_{j} \notin \mathbb{C}[s_{j+1}, \dots, s_{r}] \cdot (\widetilde{s_{k}})$, for $j+1 \leq k \leq r$,  Because $B(S)$ is a multivariate Bernstein--Sato polynomial, $B(S) \bullet F^{S} \in \D_{X,\x}[S]F^{S+1}.$ Therefore,
\begin{equation} \label{eqn obvious containment}
\bigg(\prod\limits_{j+1 \leq k \leq r} \widetilde{s_{k}}^{n_{k}} \bigg) B_{j}\bullet \overline{F^{S}} \in (\widetilde{s_{1}}, \dots, \widetilde{s_{j}}) \frac{\D_{X,\x}[S]F^{S}}{\D_{X,\x}[S]F^{S+1}}.
\end{equation}

Now \eqref{eqn obvious containment} trivially implies that for all $m \geq 0$
\[
\bigg(\prod\limits_{j+1 \leq k \leq r} \widetilde{s_{k}}^{n_{k}} \bigg) B_{j}\bullet \overline{F^{S}} \in (\widetilde{s_{1}}, \dots, \widetilde{s_{j}}, \widetilde{s_{j+1}}^{m}, \dots, \widetilde{s_{r}}^{m}) \frac{\D_{X,\x}[S]F^{S}}{\D_{X,\x}[S]F^{S+1}}.
\]
In particular, the above holds for $m$ arbitrarily large. By Claim 2, there exists $m_{j}$ arbitrarily large such that 
\[
 B_{j}\bullet \overline{F^{S}} \in (\widetilde{s_{1}}, \dots, \widetilde{s_{j}}, \widetilde{s_{j+1}}^{m_{j}}, \dots, \widetilde{s_{r}}^{m_{j}}) \frac{\D_{X,\x}[S]F^{S}}{\D_{X,\x}[S]F^{S+1}}.
\]
Then $B_{j}$ is the first element in our sequence of polynomials.

\qquad \textit{Inductive Step:} Suppose $B_{j} \in \mathbb{C}[s_{j+1}, \dots, s_{r}]$ has already been defined. If the algorithm has not terminated, $j < r$ and $B_{j} \notin \mathbb{C}[s_{j+1}, \dots, s_{r}] \cdot (\widetilde{s_{k}})$ for all $j+1 \leq k \leq r.$ Then use Claim $3$ to define $B_{i}$, where $j < i < r.$ Note that if $j = r-1$ then $B_{r-1} \notin \mathbb{C}[s_{r}] \cdot (\widetilde{s_{r}})$ and so the algorithm terminates at $B_{r-1}.$ 

\textit{Step $2$:} Use the terminal polynomial $B_{i} \in \mathbb{C}[s_{i+1}, \dots, s_{r}]$, $i < r$, produced by Step 1. This means $B_{i} \notin \mathbb{C}[s_{i+1}, \dots, s_{r}] \cdot (\widetilde{s_{i+1}}, \dots, \widetilde{s_{r}})$ and easily implies
\[
B_{i} \bullet \overline{F^{S}} \in (\widetilde{s_{1}}, \dots, \widetilde{s_{r}}) \frac{\D_{X,\x}[S]F^{S}}{\D_{X,\x}[S]F^{S+1}}.
\]
On one hand, since $B_{i}$ does not vanish at $(a_{i+1} -1, \dots, a_{r} -1)$, $B_{i} \overline{F^{S}}$ and $\overline{F^{S}}$ generate the same submodule of $\frac{\D_{X,\x}[S]F^{S}}{(\widetilde{s_{i}}, \dots, \widetilde{s_{r}})\D_{X,\x}[S] F^{S} + \D_{X,\x}[S]F^{S+1}};$ on the other hand, $0 = B_{i} \bullet \overline{F^{S}} \in \frac{\D_{X,\x}[S]F^{S}}{(\widetilde{s_{i}}, \dots, \widetilde{s_{r}})\D_{X,\x}[S] F^{S} + \D_{X,\x}[S]F^{S+1}}.$ Thus, $\frac{\D_{X,\x}[S]F^{S}}{(\widetilde{s_{i}}, \dots, \widetilde{s_{r}})\D_{X,\x}[S] F^{S} + \D_{X,\x}[S]F^{S+1}} = 0$ (because it is generated by $\overline{F^{S}}).$ That is, the cokernel of $\nabla_{A}$ vanishes.
\end{proof}

Using Theorem \ref{thm injective implies surjective} we can show that the three conditions of Proposition \ref{prop basic nabla} are equivalent in a very special and restricted case: 

\begin{prop} \label{prop nabla converse ex}

Suppose $f = f_{1} \cdots f_{r}$ is a central, essential, indecomposable, and tame hyperplane arrangement, where each $f_{k}$ is of degree $d_{k}$ and the $f_{k}$ are not necessarily reduced. Let $F = (f_{1}, \dots, f_{r}).$ If $A-1 \in \{d_{1}s_{1} + \dots + d_{r} s_{r} + n = 0\},$ then $A-1 \in \V(B_{F,0})$ and $\nabla_{A}$ is neither surjective nor injective. 

\end{prop}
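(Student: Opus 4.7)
The plan is to handle the two assertions separately. For the first, $A-1 \in \V(B_{F,0})$ should follow from a purely local version of Theorem \ref{thm indecomposable}: both Corollary \ref{cor algebraic category} (giving $\ann_{\D_{X,0}[S]} F^S = \D_{X,0}[S] \cdot \theta_F$) and the coordinate system furnished by indecomposability survive localization at the origin, so the argument of that proof yields $B_{F,0} \subseteq \mathbb{C}[S] \cdot L(S)$ where $L(S) := d_1 s_1 + \cdots + d_r s_r + n$; thus $\{L(S) = 0\} \subseteq \V(B_{F,0})$. Once non-surjectivity of $\nabla_A$ is established, non-injectivity is the contrapositive of Theorem \ref{thm injective implies surjective}.

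The heart of the matter is non-surjectivity. The image of $\nabla_A$ inside $M_{A-1} := \D_{X,0}[S]F^S/(S-(A-1))\D_{X,0}[S]F^S$ is the $\D_{X,0}$-submodule generated by $\overline{F^{S+1}} = f \cdot \overline{F^S}$, while $M_{A-1}$ itself is $\D_{X,0}$-generated by $\overline{F^S}$; surjectivity would therefore furnish $Q_0 \in \D_{X,0}$ with $Q_0 f - 1 \in I_{A-1}$, where $I_{A-1}$ is the left ideal of $\D_{X,0}$ generated by $\{\psi_F(\delta)|_{S=A-1} : \delta \in \Der_{X,0}(-\log f)\}$. I would derive a contradiction inside the nonzero left $\D_{X,0}$-module $\D_{X,0}/\D_{X,0} \cdot (x_1, \dots, x_n)$, which is isomorphic as a vector space to $\mathbb{C}[\partial_1, \dots, \partial_n]$.

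The key claim is that $I_{A-1} \subseteq \D_{X,0} \cdot (x_1, \dots, x_n)$. By the graded splitting of $\Der_{X,0}(-\log f)$ for the indecomposable central arrangement, every homogeneous $\delta$ is either a scalar multiple of the Euler field $E = \sum x_i \partial_i$ (Euler degree zero) or of strictly positive Euler degree. For $\delta = E$, the hyperplane hypothesis $L(A-1) = 0$ yields $\psi_F(E)|_{S=A-1} = E - \sum d_k(a_k-1) = E + n = \sum \partial_i x_i$, which obviously lies in $\D_{X,0} \cdot (x_1, \dots, x_n)$; for $\delta$ of positive Euler degree the polynomial coefficients of $\delta$ sit in $(x_1, \dots, x_n)^{\geq 2}$, and a direct computation writing $\delta = \sum_{i,j} b_{ij} x_j \partial_i$ and commuting $x_j$ past $\partial_i$ places both $\delta$ and $\sum(a_k-1)(\delta\bullet f_k)/f_k$ into $\D_{X,0} \cdot (x_1, \dots, x_n)$. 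Since $f$ vanishes at the origin, $Q_0 f \in \D_{X,0} \cdot (x_1, \dots, x_n)$ as well, and the relation $Q_0 f - 1 \in I_{A-1}$ then forces $1 \in \D_{X,0} \cdot (x_1, \dots, x_n)$, the desired contradiction. The subtle point is that the hyperplane hypothesis is used precisely at the Euler-degree-zero layer: without $L(A-1) = 0$ the operator $\psi_F(E)|_{S=A-1}$ would differ from $\sum \partial_i x_i$ by a nonzero constant, and $I_{A-1}$ would surject onto the quotient, voiding the argument.
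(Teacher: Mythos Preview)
Your argument is correct and follows essentially the same route as the paper. The paper establishes the containment $\ann_{\D_{X,0}[S]}F^{S} + \D_{X,0}[S] \cdot f \subseteq \D_{X,0}[S] \cdot \mathfrak{m}_{0} + \D_{X,0}[S] \cdot (-n - \sum d_{k} s_{k})$ in $\D_{X,0}[S]$ and then evaluates at $S=A-1$ to reach $\D_{X,0} \subseteq \D_{X,0}\cdot\mathfrak{m}_0$; you evaluate first and prove $I_{A-1}+\D_{X,0}\cdot f\subseteq \D_{X,0}\cdot\mathfrak{m}_0$ directly, but the substance (the graded splitting of $\Der_{X,0}(-\log f)$, the identity $E+n=\sum\partial_i x_i$, and the appeal to Theorem~\ref{thm injective implies surjective} for non-injectivity) is identical.
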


\begin{proof}
An easy extension of the argument in Theorem \ref{thm indecomposable} shows that both
\begin{equation} \label{eqn analytic indecomposable}
\ann_{\D_{X,0}[S]}F^{S} + \D_{X,0}[S] \cdot f \subseteq \D_{X,0}[S] \cdot \mathfrak{m}_{0} + \D_{X,0}[S] \cdot (-n - \sum d_{k} s_{k}).
\end{equation}
and  $A-1 \in \V(B_{F,0}).$ Now $\nabla_{A}$ is surjective precisely when
\[
\D_{X,0}[S] = \ann_{\D_{X,0}[S]}F^{S} + \D_{X,0}[S] \cdot f + \sum \D_{X,0}[S] \cdot (s_{k} - (a_{k} -1)).
\]
By \eqref{eqn analytic indecomposable}, if $\nabla_{A}$ is surjective, 
\[
\D_{X,0}[S] \subseteq \D_{X,0}[S] \cdot \mathfrak{m}_{0} + \D_{X,0}[S] \cdot (-n - \sum d_{k} s_{k}) + \sum \D_{X,0}[S] \cdot (s_{k} - (a_{k} -1)).
\]
After evaluating each $s_{k}$ at $a_{k}-1$, we deduce 
$\D_{X,0} \subseteq \D_{X,0} \cdot \mathfrak{m}_{0}$. Therefore $\nabla_{A}$ is not surjective. By Theorem \ref{thm injective implies surjective}, $\nabla_{A}$ is not injective.
\end{proof}

\section{Free Divisors, Lie--Rinehart Algebras, and $\nabla_{A}$}

In Definition \ref{def tame} we defined tame divisors. A stronger condition is freeness:

\begin{definition}

A divisor $Y$ is \emph{free} if it locally everywhere admits a defining equation $f$ such that $\Der_{X,\x}(- \log f)$ is a free $\mathscr{O}_{X,\x}$-module.

\end{definition}

Freeness implies tameness because $\Omega_{X,\x}( \log f)$ and $\Der_{X,\x}(- \log f)$ are dual and if $\Omega_{X,\x}( \log f)$ is free, then $\Omega_{X,\x}^{p}( \log f) = \bigwedge^{p} \Omega_{X,\x}( \log f)$ (see 1.7, 1.8 of \cite{SaitoTheoryLogarithmic}).

Throughout this section we upgrade our working hypotheses of strongly Euler-homogeneous, Saito-holonomic, and tame to reduced, strongly Euler-homogeneous, Saito-holonomic, and free. The goal is to investigate the surjectivity of the map $\nabla_{A}.$ Let's give a road map. First we compute $\text{Ext}$ modules of $\D_{X,\x}[S]F^{S} / (S-A) \D_{X,\x}[S]F^{S}$ using \cite{MacarroDuality} and the rich theory of Lie--Rinehart algebras. Lifting a surjective $\nabla_{A}$ to these $\text{Ext}$-modules will produce an injective map. This injective map acts like $\nabla_{-A}$. By Theorem \ref{thm injective implies surjective}, $\nabla_{-A}$ is surjective. Using duality again will show that $\nabla_{A}$ is injective.

\subsection{Lie--Rinehart Algebras and the Spencer Co-Complex $\Sp$.} \text{ }

\begin{definition} (Compare with \cite{MorenoMacarroLogarithmic}), \cite{Rinehart} and the appendix of \cite{Malgrange}) Fix a homomorphism of commutative rings $k \to A$. A \emph{Lie--Rinehart algebra} $L$ over $(k,A)$ is a $A$-module L with \emph{anchor map} $\rho : L \to \Der_{k}(A)$ that is $A$-linear, a $k$-Lie algebra map, and satisfies, for all $\lambda$, $\lambda^{\prime} \in L$, $a \in A,$
\begin{align*}
    [ \lambda, a \lambda^{\prime}] = a[\lambda,\lambda^{\prime}] + \rho(\lambda)(a) \lambda^{\prime}.
\end{align*}
We will usually drop $\rho$ and replace $\rho(\lambda)(a)$ with $\lambda(a)$. A morphism $F: L \to L^{\prime}$ of Lie--Rinehart algebras over $(k,A)$ is a $A$-linear map that is a morphism of Lie-algebras satisfying $\lambda(a) = F(\lambda)(a).$
\end{definition}

\begin{example}
\begin{enumerate}[(a)]

    \item $\Der_{k}(A)$ is a Lie--Rinehart algebra over $(k,A)$ with the identity as the anchor map.
    
    \item Any $A$-submodule of $\Der_{k}(A)$ that is also a $k$-Lie algebra is a Lie--Rinehart algebra over $(k,A)$, with anchor map induced by the inclusion into $\Der_{k}(A)$. In particular $\Der_{X,\x}(- \Log(f))$ is a Lie--Rinehart algebra over $(\mathbb{C}, \mathscr{O}_{X,\x}).$
    
    \item If $L$ is a Lie--Rinehart algebra over $(k,A)$, then $L \oplus A$ is a Lie--Rinehart algebra over $(k,A)$ with anchor map induced by the projection $L \oplus A \to L$, $(\lambda, a) \mapsto \lambda.$ So $\Der_{X,\x} \oplus \mathscr{O}_{X,\x}^{r}$ and $\Der_{X,\x}(-\log(f)) \oplus \mathscr{O}_{X,\x}^{r}$ are Lie--Rinehart algebras over $(\mathbb{C}, \mathscr{O}_{X,\x})$. 
    
\end{enumerate}
\end{example}

\begin{definition}

Let $L$ be a Lie--Rinehart algebra over $(k, A)$ with $k \to A.$ Suppose $R$ is a ring (not necessarily a Lie--Rinehart algebra) and $A \to R$ a ring homomorphism that makes $R$ central over $k$, i.e. images of elements of $k$ are central elements in $R$. Then a $k$-linear map $g: L \to R$ is \emph{admissible} if:
\begin{enumerate}[(a)]
    \item $g(a \lambda) = a g(\lambda)$, for $a \in A$, $\lambda \in L$ ($g$ is a morphism of A-modules);
    \item $g([\lambda, \lambda^{\prime}]) = [g(\lambda), g(\lambda^{\prime})]$, for $\lambda$, $\lambda^{\prime} \in L$ ($g$ is a morphism of Lie-algebras);
    \item $g(\lambda) a - a g(\lambda) = \lambda(a) 1_{R}$ for $\lambda \in L$, $a \in A$.
\end{enumerate}

\end{definition}

The following theorem will be our definition of the \emph{universal algebra} $U(L):$

\begin{theorem} \text{ \normalfont (cf. \cite{Rinehart})} \label{thm- universal algebra} For any Lie--Rinehart algebra $L$ over $(k,A)$ there exists a ring $U(L)$, a ring homomorphism $A \to U(L)$ making $U(L)$ central over $k$, and an admissible map $\theta : L \to U(L)$ that is universal in the following sense: for any ring $R$ with a ring homomorphism $A \to R$ making $R$ central over $k$, and any admissible map $g: L \to R$, there is a unique ring homomorphism $h: U(L) \to R$ such that $h \circ \theta = g$. The natural map $\theta: L \to U(L)$ induces a filtration on $U(L)$ given by the powers of images of $\theta$. 
\end{theorem}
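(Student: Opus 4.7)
The plan is to construct $U(L)$ explicitly from a suitable universal enveloping algebra of a $k$-Lie algebra attached to $L$, and then verify the universal property by direct checking. First, I would observe that the direct sum $A \oplus L$ carries a natural structure of $k$-Lie algebra under the bracket
\[
[(a, \lambda), (a^{\prime}, \lambda^{\prime})] := (\lambda(a^{\prime}) - \lambda^{\prime}(a),\, [\lambda, \lambda^{\prime}]),
\]
where I use the Lie-Rinehart axioms to confirm the Jacobi identity. Let $\mathcal{U}_{k}(A \oplus L)$ be its universal enveloping $k$-algebra in the classical sense, and let $V \subseteq \mathcal{U}_{k}(A \oplus L)$ be the unital subalgebra generated by the canonical image of $A \oplus L$.

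Next, I would define $U(L)$ as the quotient of $V$ by the two-sided ideal $P$ generated by all elements of the shape
\[
(a^{\prime},0)\cdot (a, \lambda) - (a^{\prime}a,\, a^{\prime}\lambda),
\]
for $a, a^{\prime} \in A$ and $\lambda \in L$. The compositions $A \hookrightarrow A \oplus L \hookrightarrow \mathcal{U}_{k}(A \oplus L) \twoheadrightarrow U(L)$ and $\theta: L \hookrightarrow A \oplus L \hookrightarrow \mathcal{U}_{k}(A \oplus L) \twoheadrightarrow U(L)$ give a ring map $A \to U(L)$ and a $k$-linear map $\theta : L \to U(L)$. The relations in $P$ force $\theta$ to be $A$-linear, and the relations already present in $\mathcal{U}_{k}(A \oplus L)$ force $\theta$ to be a Lie algebra map and to satisfy $\theta(\lambda) a - a \theta(\lambda) = \lambda(a) \cdot 1$; these are precisely admissibility. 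Centrality of $k$ in $U(L)$ follows because $k \to A \to U(L)$ lands in the image of $A$, and $A$ sits in the degree-one piece over which the Lie bracket identifies commutators with derivations of elements of $A$.

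For the universal property, suppose $R$ is a ring with $A \to R$ central over $k$ and $g: L \to R$ admissible. I would define a $k$-linear map $A \oplus L \to R$ by $(a, \lambda) \mapsto a + g(\lambda)$, check using the axioms that it respects the bracket (the conditions (a)--(c) in the definition of admissibility are tailored exactly for this), and extend it to $\mathcal{U}_{k}(A \oplus L)$ by the universal property of the enveloping $k$-algebra. Restricting to $V$ and observing that the generators of $P$ are sent to zero (using $A$-linearity of $g$ together with centrality of the image of $A$ in $R$), one obtains a well-defined ring map $h: U(L) \to R$ with $h \circ \theta = g$ and $h|_{A}$ the given map. Uniqueness of $h$ is automatic because $A$ and $\theta(L)$ generate $U(L)$ as a ring.

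Finally, the filtration is obtained by letting $F^{p}U(L)$ be the $A$-submodule generated by products of at most $p$ elements of $\theta(L)$; the commutator identity $\theta(\lambda) a - a \theta(\lambda) = \lambda(a)$ shows this is a ring filtration with $F^{0}U(L)$ equal to the image of $A$. The main conceptual obstacle is not in any single step but in verifying that the quotient by $P$ does not collapse $A \to U(L)$; this is standard but ultimately relies on knowing that $U(L)$ admits enough representations, or equivalently on the Poincar\'e--Birkhoff--Witt theorem for Lie--Rinehart algebras (Theorem 3.1 of Rinehart) which is the technical heart of the construction.
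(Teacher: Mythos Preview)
The paper does not give its own proof of this theorem; it is stated with a citation to Rinehart and used as the \emph{definition} of $U(L)$. Your construction is precisely Rinehart's original one (form the $k$-Lie algebra $A\oplus L$, pass to its enveloping algebra, restrict to the subalgebra generated by $A\oplus L$, and kill the relations $(a',0)(a,\lambda)-(a'a,a'\lambda)$), and the verification of the universal property you outline is the standard one. So your approach matches what the paper defers to.

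Two small remarks. First, Rinehart takes the (not necessarily unital) subalgebra generated by the image of $A\oplus L$; after quotienting by $P$ the element $(1_A,0)$ becomes a two-sided unit, so the ring is unital anyway. If you instead take the \emph{unital} subalgebra as you wrote, you have two candidate units, $1$ and $(1_A,0)$, and must add (or observe you get for free from the relations with $a'=1$) the identification $1=(1_A,0)$ in the quotient. Second, the concern you raise at the end---that $A\to U(L)$ might collapse---is not part of what the theorem asserts: the universal property and the filtration make sense regardless, and injectivity of $A\to U(L)$ (when $L$ is $A$-projective) is a consequence of PBW rather than an ingredient in proving the existence statement.
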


We omit the proof of the following proposition. It uses the (not provided) explicit construction of $U(L)$ and standard universal object arguments.

\begin{prop} \label{prop LR polynomial} Given a Lie--Rinehart algebra $L$ over $(k,A)$, consider the direct sum $L \oplus A$. This is a Lie--Rinehart algebra over $(k,A)$ with anchor map induced by projection: $L \oplus k \twoheadrightarrow L \to \Der_{k}(A).$ Then $U(L \oplus A) \simeq U(L)[s].$ Moroever, the natural filtration on $U(L \oplus A)$ corresponds to a ``total order filtration" on $U(L)[S]$, i.e. a filtration where $s$ has weight one.
\end{prop}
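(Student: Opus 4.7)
The plan is to argue purely by universal properties, with the main setup step being to identify the correct Lie--Rinehart structure on $L \oplus A$ and a natural candidate admissible map into $U(L)[s]$.

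First I would check that $L \oplus A$ is indeed a Lie--Rinehart algebra over $(k,A)$. The anchor is $\rho(\lambda, b) := \rho_L(\lambda)$, and the bracket is forced to be
\[
[(\lambda_1, b_1), (\lambda_2, b_2)] = \bigl([\lambda_1, \lambda_2],\ \rho_L(\lambda_1)(b_2) - \rho_L(\lambda_2)(b_1)\bigr).
\]
Direct computation verifies antisymmetry, the Jacobi identity, and the Leibniz-like identity $[\lambda, a\lambda^{\prime}] = a[\lambda,\lambda^{\prime}] + \rho(\lambda)(a)\lambda^{\prime}$; the key observation is that the ``$A$-slot" is central for the anchor, so its interaction with the rest is captured by $\rho_L$ acting on $A$.

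Next, I would construct a natural admissible map
\[
\theta \colon L \oplus A \longrightarrow U(L)[s], \qquad (\lambda, b) \longmapsto \theta_L(\lambda) + bs,
\]
where $\theta_L \colon L \to U(L)$ is the universal admissible map for $L$. Admissibility has three conditions to verify, and since $s$ is central and commutes with $A$, the only nontrivial check is preservation of the bracket. Expanding
\[
[\theta_L(\lambda_1) + b_1 s,\ \theta_L(\lambda_2) + b_2 s]
\]
in $U(L)[s]$, using centrality of $s$ together with the admissibility identity $\theta_L(\lambda)a - a\theta_L(\lambda) = \rho_L(\lambda)(a)$, yields exactly $\theta_L([\lambda_1,\lambda_2]) + (\rho_L(\lambda_1)(b_2) - \rho_L(\lambda_2)(b_1))s$, matching $\theta$ of the Lie--Rinehart bracket defined above.

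For the universal property, suppose $g \colon L \oplus A \to R$ is admissible into some ring $R$ (with $A \to R$ central over $k$). Let $g_L(\lambda) := g(\lambda, 0)$ and $t := g(0,1)$. Then $g_L$ is admissible for $L$ (inheriting all three conditions from $g$), and $A$-linearity of $g$ forces $g(0, b) = bt$. Admissibility of $g$ also gives $g(0,1) \cdot a - a \cdot g(0,1) = \rho(0,1)(a) = 0$ and $[g_L(\lambda), t] = g([(\lambda,0),(0,1)]) = 0$, so $t$ is central over $A$ and commutes with the image of $\theta_L$. By the universal property of $U(L)$, $g_L$ extends uniquely to a ring homomorphism $\tilde{g}_L \colon U(L) \to R$, and then sending $s \mapsto t$ yields a unique ring homomorphism $h \colon U(L)[s] \to R$ with $h \circ \theta = g$. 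This establishes the isomorphism $U(L \oplus A) \simeq U(L)[s]$.

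Finally, the filtration statement is immediate from the construction: the natural filtration on $U(L \oplus A)$ is given by powers of the image of $L \oplus A$, and under the isomorphism above an element $(\lambda, b)$ maps to $\theta_L(\lambda) + bs$, which lies in filtration degree $1$. Hence $\theta_L(L)$ sits in degree $\leq 1$ and $s = \theta(0,1)$ also sits in degree $1$, so the natural filtration on $U(L)[s]$ inherited from $U(L \oplus A)$ is precisely the total order filtration assigning weight $1$ to both the image of $L$ and the variable $s$. I expect the only subtle point to be bookkeeping for condition (c) of admissibility, since one must be careful that $s$ truly commutes with $A$ inside $U(L)[s]$ and that this is compatible with the vanishing of $\rho$ on the second summand; everything else is a routine application of the universal property.
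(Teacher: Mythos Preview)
Your proof is correct and follows essentially the approach the paper indicates (``standard universal object arguments''); the paper omits the proof entirely, and the commented-out sketch in the source constructs mutually inverse maps $U(L)[s] \leftrightarrows U(L\oplus A)$ using both the explicit construction of $U(L)$ and the universal property, whereas you more cleanly verify that $U(L)[s]$ itself satisfies the universal property of $U(L\oplus A)$, avoiding the explicit construction altogether.
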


\begin{example} \label{example universal LR algebras}
\begin{enumerate}[(a)]
    \item The universal Lie--Rinehart algebra of $\Der_{X,\x}$ over $(\mathbb{C},\mathscr{O}_{X,\x})$ is $\D_{X,\x}.$ The natural filtration is the order filtration.
    \item By repeated application of Proposition \ref{prop LR polynomial}, the universal Lie--Rinehart algebra of $\Der_{X,\x} \oplus \mathscr{O}_{X,\x}^{r}$ over $(\mathbb{C},\mathscr{O}_{X,\x})$ is $\D_{X,\x}[s_{1}, \dots, s_{r}].$ The natural filtration is the total order filtration $F_{(0,1,1)}$.
    \item For $F = (f_{1}, \dots, f_{r})$ a decomposition of $f = f_{1} \cdots f_{r}$, the annihilating derivations $\theta_{F,\x}$ constitute a Lie--Rinehart algebra over $(\mathbb{C}, \mathscr{O}_{X,\x}).$ The $\mathscr{O}_{X,\x}$-map $\psi_{F}: \Der_{X,\x}(-\log f) \to \theta_{F,\x}$ is an isomorphism of  Lie--Rinehart algebras over $(\mathbb{C}, \mathscr{O}_{X,\x})$. So there is a containment of Lie--Rinehart algebras over $(\mathbb{C}, \mathscr{O}_{X,\x})$: $\theta_{F,\x}  \subseteq \Der_{X,\x} \oplus \mathscr{O}_{X,\x}^{r}$.
    \item The universal algebra of the  Lie--Rinehart algebra $\Der_{X,\x}[S]$ over $(\mathbb{C}[S], \mathscr{O}_{X,\x}[S])$ is $\D_{X,\x}[S]$. Note that $s_{k}$ is contained in the $0^{\text{th}}$ filtered part and the filtration is induced by the order filtration. 
\end{enumerate}
\end{example}

We care about the formalism of Lie--Rinehart algebras because we want to construct complexes of the universal algebras. Given two Lie--Rinehart algebras $L \subseteq L^{\prime}$, the following gives a complex of $U(L^{\prime})$-modules. 
\begin{definition} \label{def LR complex}
(Compare with 1.1.8 of \cite{MorenoMacarroLogarithmic}) Let $L$ and $L^{\prime}$ be Lie--Rinehart algebras over $(k, A).$ The \emph{Cartan--Eilenberg--Chevalley--Rinehart--Spencer co-complex} associated to $L \subseteq L^{\prime}$ and the left $U(L)$-module $E$ is the co-complex $\Sp_{L,L^{\prime}}(E)$. Here \[\text{Sp}^{-r}(L,L^{\prime}) := U(L^{\prime}) \otimes_{A} \bigwedge^{r} L \otimes_{A} E\]and the $U(L^{\prime})$-linear differential 
\[d^{-r}: \text{Sp}_{L,L^{\prime}}^{-r}(E) \to \text{Sp}_{L,L^{\prime}}^{-(r-1)}(E)\] is given by
\begin{align} \label{eqn- cartan complex}
    d^{-r}(P \otimes \lambda_{1} \wedge \dots \wedge \lambda_{r} \otimes {e}) = &\sum\limits_{i=1}^{r} (-1)^{i-1} P \lambda_{i} \otimes \widehat{\lambda_{i}} \otimes e - \sum\limits_{i=1}^{r} (-1)^{i-1} P \otimes \widehat{\lambda_{i}} \otimes \lambda_{i}e \\
    &+ \sum\limits_{1 \leq i < j \leq r} (-1)^{i+j} P \otimes [\lambda_{i}, \lambda_{j}] \wedge \widehat{\lambda_{i,j}} \otimes e \nonumber.
\end{align}
(Here $\widehat{\lambda_{i,j}}$ is the wedge of the of all the $\lambda$'s except $\lambda_{i}$ and $\lambda_{j}$.)
There is a natural augmentation map
\[
U(L^{\prime}) \otimes_{A} E \to U(L^{\prime}) \otimes_{U(L)} E.
\]When $E = A$, write $\Sp_{L, L^{\prime}}(A)$ as $\Sp_{L, L^{\prime}}.$

\end{definition}

In general, the cohomology of $\Sp_{L, L^{\prime}}(E)$ is mysterious. In principal, it can be computed using the spectral sequence associated to the filtration of $U(L^{\prime})$ promised by Theorem \ref{thm- universal algebra}. In the classical case of a Lie algebra, the Poincar\'e-Birkhoff-Witt theorem says that the natural associated graded ring of universal algebra of $g$ is canonically isomorphic (as algebras) to the symmetric algebra of $g$. Rinehart proved, cf. \cite{Rinehart}, the analogous result for $L$: the natural associated graded ring of $U(L)$ is isomorphic to $\text{Sym}_{A}(L)$. A spectral sequence argument gives the following: 

\begin{prop} \text{\normalfont{(Proposition 1.5.3 in \cite{MorenoMacarroLogarithmic})}} \label{prop koszul LR resolution}
Suppose $L \subseteq L^{\prime}$ are Lie--Rinehart algebras over $(k,A)$ and $E$ a left $U(L)$-module free over $A$. Moreover, suppose $L$, $L^{\prime}$ are free $A$-modules of finite rank such that a basis of $L$ forms a regular sequence in the symmetric algebra $\text{Sym}_{A}(L^{\prime})$. Then $\Sp_{L,L^{\prime}}(E)$ is a finite free $U(L^{\prime})$-resolution of $U(L^{\prime}) \otimes_{U(L)} E$.
\end{prop}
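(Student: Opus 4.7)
The plan is to prove acyclicity of $\Sp_{L,L'}(E)$ (augmented by $U(L') \otimes_{U(L)} E$) by exhibiting a filtration whose associated graded co-complex is a Koszul co-complex on a regular sequence, and then invoking a spectral sequence argument. The finite-free statement is immediate: because $L$ is free of finite rank over $A$, each $\bigwedge^{r} L$ is a finite free $A$-module, so each $\Sp^{-r}_{L,L'}(E) = U(L') \otimes_{A} \bigwedge^{r} L \otimes_{A} E$ is a finite free left $U(L')$-module (using that $E$ is $A$-free). The real content is resolving $U(L') \otimes_{U(L)} E$.

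First I would put on $U(L')$ the natural filtration $F^{\bullet}$ from Theorem \ref{thm- universal algebra} and extend it to $\Sp_{L,L'}(E)$ by declaring $F^{p} \Sp^{-r}_{L,L'}(E) := F^{p-r} U(L') \otimes_{A} \bigwedge^{r} L \otimes_{A} E$ (with $E$ and $\bigwedge^{r} L$ placed in filtered degree $0$). Inspection of the differential \eqref{eqn- cartan complex} shows that this is a filtered co-complex: in $d^{-r}$, the first sum multiplies $P$ by $\lambda_{i} \in F^{1} U(L')$ and so raises $F^{p-r}$ only to $F^{p-r+1}$, while the two remaining sums do not touch $P$ and hence stay inside $F^{p-r} \subseteq F^{p-(r-1)}$. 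The natural augmentation $\Sp^{0}_{L,L'}(E) = U(L') \otimes_{A} E \twoheadrightarrow U(L') \otimes_{U(L)} E$ is also compatible once one filters the target by the image of $F^{\bullet}$.

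The key step is computing the associated graded. By Rinehart's Poincar\'e--Birkhoff--Witt theorem for Lie--Rinehart algebras, there is a canonical isomorphism of graded $A$-algebras $\gr_{F} U(L') \simeq \text{Sym}_{A}(L')$. In the associated graded of $d^{-r}$, only the first sum survives (the other terms drop in filtered degree), so the leading term of the differential becomes
\[
\overline{P} \otimes \lambda_{1} \wedge \cdots \wedge \lambda_{r} \otimes e \longmapsto \sum_{i=1}^{r} (-1)^{i-1} \overline{P} \cdot \overline{\lambda_{i}} \otimes \widehat{\lambda_{i}} \otimes e,
\]
where $\overline{\lambda_{i}}$ denotes the class of $\lambda_{i}$ in $L = \text{Sym}_{A}^{1}(L') \subseteq \text{Sym}_{A}(L')$. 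Thus, fixing an $A$-basis $\lambda_{1},\dots,\lambda_{m}$ of $L$, the associated graded co-complex is, up to reindexing, the Koszul co-complex
\[
K^{\bullet}(\overline{\lambda_{1}},\dots,\overline{\lambda_{m}};\ \text{Sym}_{A}(L')) \otimes_{A} E.
\]
The augmentation in the associated graded becomes the quotient map $\text{Sym}_{A}(L') \otimes_{A} E \twoheadrightarrow \bigl(\text{Sym}_{A}(L')/(\overline{\lambda_{1}},\dots,\overline{\lambda_{m}})\bigr) \otimes_{A} E$. By hypothesis $\overline{\lambda_{1}},\dots,\overline{\lambda_{m}}$ is a regular sequence in $\text{Sym}_{A}(L')$, so this Koszul co-complex is acyclic; tensoring with the $A$-flat module $E$ preserves acyclicity.

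Finally, a standard spectral sequence argument for the filtered augmented co-complex $\Sp_{L,L'}(E) \to U(L') \otimes_{U(L)} E$ lifts this acyclicity from the associated graded to the co-complex itself. The filtration is exhaustive and (after localizing the argument or taking a suitable bounded-below variant) bounded below in each filtered piece, so convergence holds; since the $E_{1}$-page vanishes, so does all higher cohomology, establishing that $\Sp_{L,L'}(E)$ is a finite free $U(L')$-resolution of $U(L') \otimes_{U(L)} E$. The main subtlety I would expect is the bookkeeping of the filtration indices and, relatedly, verifying that the induced filtration on $U(L') \otimes_{U(L)} E$ has associated graded $\text{Sym}_{A}(L')/(L)\cdot\text{Sym}_{A}(L') \otimes_{A} E$; this comes down to the fact that $U(L) \hookrightarrow U(L')$ respects the PBW filtration, a consequence of the freeness of $L$ and $L'$.
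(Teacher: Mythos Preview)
Your proposal is correct and follows exactly the approach the paper sketches: the paper does not give a full proof of this proposition but cites it from \cite{MorenoMacarroLogarithmic}, indicating only that one uses the natural filtration on $U(L')$, Rinehart's PBW theorem identifying $\gr U(L') \simeq \text{Sym}_{A}(L')$, and a spectral sequence argument to reduce to the Koszul co-complex on the regular sequence. Your write-up is a faithful fleshing-out of precisely that outline.
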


We may use Proposition \ref{prop koszul LR resolution} to resolve $\D_{X,\x}[S]F^{S}$, provided $f$ is nice enough:

\begin{prop} \label{prop many sp complexes} \label{LR resolution facts}

Suppose $f = f_{1} \cdots f_{r}$ is reduced, strongly Euler-homogeneous, Saito-holonomic, and free and let $F=(f_{1}, \dots, f_{r}).$ Then $\Sp_{\theta_{F,\x}, \Der_{X,\x} \oplus \mathscr{O}_{X,\x}^{r}}$ is a free $\D_{X,\x}[S]$-resolution of $\D_{X,\x}[S]F^{S}$. 

\end{prop}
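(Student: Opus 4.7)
My plan is to verify the hypotheses of Proposition \ref{prop koszul LR resolution} with $L = \theta_{F,\x}$, $L' = \Der_{X,\x} \oplus \mathscr{O}_{X,\x}^{r}$, and $E = \mathscr{O}_{X,\x}$. The Lie--Rinehart structures and the inclusion $\theta_{F,\x} \hookrightarrow \Der_{X,\x} \oplus \mathscr{O}_{X,\x}^{r}$ are already recorded in Example \ref{example universal LR algebras}(c), while the identification $U(\Der_{X,\x} \oplus \mathscr{O}_{X,\x}^{r}) \simeq \D_{X,\x}[S]$ is Example \ref{example universal LR algebras}(b), so the target of the proposition lives in the right category.

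I will first handle the freeness. Since $f$ is free, $\Der_{X,\x}(-\log f)$ is free over $\mathscr{O}_{X,\x}$ of rank $n$, and the $\mathscr{O}_{X,\x}$-isomorphism $\psi_F$ of Proposition \ref{prop psi} transports this to $\theta_{F,\x}$. The ambient module $\Der_{X,\x} \oplus \mathscr{O}_{X,\x}^{r}$ is obviously free of rank $n+r$, and $E = \mathscr{O}_{X,\x}$ is trivially free over itself. Fixing a free basis $\delta_{1}, \dots, \delta_{n}$ of $\Der_{X,\x}(-\log f)$ gives a free basis $\psi_{F}(\delta_{1}), \dots, \psi_{F}(\delta_{n})$ of $\theta_{F,\x}$.

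The main obstacle is verifying that the images of these basis elements in $\text{Sym}_{\mathscr{O}_{X,\x}}(\Der_{X,\x} \oplus \mathscr{O}_{X,\x}^{r}) \simeq \gr_{(0,1,1)}(\D_{X,\x}[S])$ form a regular sequence. By construction these images generate $\widetilde{L_{F,\x}}$. Since freeness implies tameness (cf.\ Definition \ref{def tame}), Theorem \ref{thm main prime theorem} shows that $\widetilde{L_{F,\x}}$ is Cohen--Macaulay of dimension $n+r$, whereas the ambient ring $\gr_{(0,1,1)}(\D_{X,\x}[S])$ has dimension $2n+r$. Thus $\widetilde{L_{F,\x}}$ has codimension $n$, exactly the number of generators. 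Localizing at the unique graded maximal ideal as in Proposition \ref{prop s regular}, I land in a local Cohen--Macaulay ring where an ideal of codimension equal to its number of generators is automatically generated by a regular sequence; the regular sequence property then lifts back to the graded level by the standard graded-local criterion (Proposition 1.5.15 of \cite{BrunsHerzogCMRings}).

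The last step is to identify the cokernel $\D_{X,\x}[S] \otimes_{U(\theta_{F,\x})} \mathscr{O}_{X,\x}$ with $\D_{X,\x}[S] F^{S}$. The $U(\theta_{F,\x})$-action on $\mathscr{O}_{X,\x}$ is through the anchor map, which sends $\psi_{F}(\delta)$ to $\delta$; this matches exactly the action of $\psi_{F}(\delta)$ on $\mathscr{O}_{X,\x} \cdot F^{S} \subseteq \D_{X,\x}[S]F^{S}$, since $\psi_{F}(\delta) \bullet (gF^{S}) = (\delta \bullet g) F^{S}$ by the defining property $\psi_{F}(\delta) \bullet F^{S} = 0$. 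Consequently $\D_{X,\x}[S] \otimes_{U(\theta_{F,\x})} \mathscr{O}_{X,\x} \simeq \D_{X,\x}[S] / \D_{X,\x}[S] \cdot \theta_{F,\x}$, which by Theorem \ref{thm gen by derivations} coincides with $\D_{X,\x}[S] / \ann_{\D_{X,\x}[S]} F^{S} \simeq \D_{X,\x}[S] F^{S}$. Plugging all of this into Proposition \ref{prop koszul LR resolution} yields the claim.
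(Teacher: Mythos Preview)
Your proof is correct and reaches the same conclusion as the paper's, but the verification of the regular-sequence hypothesis follows a different path. The paper invokes the fact that for reduced free divisors, Saito-holonomic is equivalent to \emph{Koszul free} (citing \cite{BruceRoberts} and \cite{MorenoMacarroQuasi-Homogeneous}): a free basis $\delta_1,\dots,\delta_n$ of $\Der_{X,\x}(-\log f)$ has the property that $\gr_{(0,1)}(\delta_1),\dots,\gr_{(0,1)}(\delta_n)$ is already a regular sequence in $\gr_{(0,1)}(\D_{X,\x})$. Prepending the variables $s_1,\dots,s_r$ (modulo which $\gr_{(0,1,1)}(\psi_F(\delta_i))$ reduces to $\gr_{(0,1)}(\delta_i)$) and then permuting the homogeneous elements gives regularity of $\gr_{(0,1,1)}(\psi_F(\delta_1)),\dots,\gr_{(0,1,1)}(\psi_F(\delta_n))$ in $\gr_{(0,1,1)}(\D_{X,\x}[S])$ directly. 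You instead feed freeness through ``free $\Rightarrow$ tame'' into Theorem~\ref{thm main prime theorem} to get that $\widetilde{L_{F,\x}}$ is Cohen--Macaulay of codimension $n$, and then use the standard fact that in a Cohen--Macaulay (graded-)local ring, $n$ generators of a height-$n$ ideal form a regular sequence. Your route is more internal to the paper, recycling the Section~2 machinery; the paper's route is more direct once the Koszul-free equivalence is quoted, and this is the only place in the proof where the \emph{reduced} hypothesis enters. The identification of the cokernel via the terminal differential and Theorem~\ref{thm gen by derivations} is the same in both arguments.
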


\begin{proof}

We argue as in Section 1.6 of \cite{MorenoLogarithmicDifferential}. First, note that by Proposition 6.3 of \cite{BruceRoberts} and Corollary 1.9 of \cite{MorenoMacarroQuasi-Homogeneous}, that for reduced free divisors being Saito-holonomic is equivalent to being Koszul free, where Koszul free means there is a basis $\delta_{1}, \dots, \delta_{n}$ of $\Der_{X,\x}(-\log f)$ that $\gr_{(0,1)}(\delta_{1}) \dots \gr_{(0,1)}(\delta_{n})$ is a regular sequence in $\gr_{(0,1)}(\D_{X,\x}).$ Let $\delta_{1}, \dots, \delta_{n}$ be such a basis. Then $s_{1}, \dots, s_{n}, \psi_{F,\x}(\delta_{1}), \dots, \psi_{F,\x}(\delta_{n})$ is a regular sequence in $\gr_{(0,1,1)}(\D_{X,\x}[S])$. As these elements are all $(0,1,1)$-homogeneous, we may rearrange them and conclude $\psi_{F,\x}(\delta_{1}), \dots, \psi_{F,\x}(\delta_{n})$ is a regular sequence in $\gr_{(0,1,1)}(\D_{X,\x}[S]) \simeq \text{Sym}_{\mathscr{O}_{X,\x}}(\Der_{X,\x} \oplus \mathscr{O}_{X,\x}^{r}).$ Now Proposition \ref{prop koszul LR resolution} implies that $\Sp_{\theta_{F,\x}, \Der_{X,\x} \oplus \mathscr{O}_{X,\x}^{r}}$ is a free $\D_{X,\x}[S]$-resolution and inspecting the terminal map of this co-complex shows it resolves $\D_{X,\x}[S] / \D_{X,\x}[S] \cdot \theta_{F,\x}$, which, by Theorem \ref{thm gen by derivations}, is isomorphic to $\D_{X,\x}[S] F^{S}$. 
\end{proof}

When $f$ is strongly Euler-homogeneous, Saito-holonomic, and tame we showed in Proposition \ref{prop Koszul resolution} that there is a Koszul co-complex resolution of $\D_{X,\x}[S] F^{S} / (S-A) \D_{X,\x}[S] F^{S}.$ Using $\Sp_{\theta_{F,\x}, \Der_{X,\x} \oplus \mathscr{O}_{X,\x}^{r}}$ we construct a free $\D_{X,\x}[S]$-resolution of $\D_{X,\x}[S] F^{S} / (S-A) \D_{X,\x}[S] F^{S}$.

\begin{prop} \label{prop the LR resolution}
Suppose $f = f_{1} \cdots f_{r}$ is reduced, strongly Euler-homogeneous, Saito-holonomic, and free and let $F=(f_{1}, \dots, f_{r}).$ Then there is a finite, free resolution of $\D_{X,\x}[S]$-modules
\[\frac{\D_{X,\x}[S]}{(S-A)\D_{X,\x}[S]} \otimes_{\D_{X,\x}[S]} \Sp_{\theta_{F,\x}, \Der_{X,\x} \oplus \mathscr{O}_{X,\x}^{r}} \to \frac{\D_{X,\x}[S]F^{S}}{(S-A)\D_{X,\x}[S]}.\]
\end{prop}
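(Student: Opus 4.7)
The plan is to exhibit this as an instance of change-of-rings, using that $(S-A)$ is a regular-like sequence on $\D_{X,\x}[S]F^{S}$. By Proposition~\ref{prop many sp complexes}, the Spencer co-complex $\Sp_{\theta_{F,\x}, \Der_{X,\x} \oplus \mathscr{O}_{X,\x}^{r}}$ is already a finite, free $\D_{X,\x}[S]$-resolution of $\D_{X,\x}[S]F^{S}$. So after tensoring with $\D_{X,\x}[S]/(S-A)\D_{X,\x}[S]$ over $\D_{X,\x}[S]$, the only obstruction to the result still being a resolution (now of $\D_{X,\x}[S]F^{S}/(S-A)\D_{X,\x}[S]F^{S}$) is the possible nonvanishing of the Tor modules
\[
\text{Tor}_{i}^{\D_{X,\x}[S]}\!\left(\tfrac{\D_{X,\x}[S]}{(S-A)\D_{X,\x}[S]},\ \D_{X,\x}[S]F^{S}\right) \qquad (i \geq 1).
\]

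These Tor modules can be computed from the other side as well: the sequence $s_{1}-a_{1}, \dots, s_{r}-a_{r}$ is a genuine regular sequence in the polynomial coefficients, so the Koszul co-complex $K^{\bullet}(S-A;\D_{X,\x}[S])$ is a free $\D_{X,\x}[S]$-resolution of $\D_{X,\x}[S]/(S-A)\D_{X,\x}[S]$. Applying $- \otimes_{\D_{X,\x}[S]} \D_{X,\x}[S]F^{S}$ yields $K^{\bullet}(S-A;\D_{X,\x}[S]F^{S})$. Since free divisors are tame, the hypotheses of Proposition~\ref{prop Koszul resolution} are satisfied, and that proposition tells us exactly that this Koszul co-complex on $\D_{X,\x}[S]F^{S}$ is acyclic, resolving $\D_{X,\x}[S]F^{S}/(S-A)\D_{X,\x}[S]F^{S}$. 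Hence the higher Tor modules vanish.

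Combining the two computations, the tensored Spencer co-complex
\[
\tfrac{\D_{X,\x}[S]}{(S-A)\D_{X,\x}[S]} \otimes_{\D_{X,\x}[S]} \Sp_{\theta_{F,\x},\Der_{X,\x}\oplus\mathscr{O}_{X,\x}^{r}}
\]
has cohomology concentrated in degree zero, and that cohomology is $\D_{X,\x}[S]F^{S}/(S-A)\D_{X,\x}[S]F^{S}$. Finiteness comes for free from the finiteness of $\Sp_{\theta_{F,\x},\Der_{X,\x}\oplus\mathscr{O}_{X,\x}^{r}}$; freeness of the terms follows because each term of the Spencer resolution is $\D_{X,\x}[S] \otimes_{\mathscr{O}_{X,\x}} \bigwedge^{\bullet} \theta_{F,\x}$ with $\theta_{F,\x}$ a free $\mathscr{O}_{X,\x}$-module (Proposition~\ref{prop psi} together with freeness of $f$), so after tensoring with $\D_{X,\x}[S]/(S-A) \simeq \D_{X,\x}$ we obtain finite direct sums of copies of $\D_{X,\x}$.

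The only real content, once the framework is set up, is the vanishing of the higher Tors, and that is precisely Proposition~\ref{prop Koszul resolution}; everything else is bookkeeping with the two available resolutions. I do not anticipate a serious obstacle, since the hard analytic input (strong Euler-homogeneity, Saito-holonomicity, and freeness/tameness forcing the regularity behavior of $S-A$ on $\D_{X,\x}[S]F^{S}$) has already been shouldered by the earlier sections.
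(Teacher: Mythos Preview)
Your proposal is correct and follows essentially the same argument as the paper: reduce to the vanishing of $\text{Tor}_{k}^{\D_{X,\x}[S]}(\D_{X,\x}[S]/(S-A),\,\D_{X,\x}[S]F^{S})$ for $k\ge 1$ via Proposition~\ref{prop many sp complexes}, then compute these Tors from the Koszul resolution $K^{\bullet}(S-A;\D_{X,\x}[S])$ of the first argument and invoke Proposition~\ref{prop Koszul resolution}. The paper's proof is the same, only more tersely stated.
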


\begin{proof}

By Proposition \ref{prop many sp complexes}, it is enough to prove that, for $k \geq 1$,  
\[
\text{Tor}_{\D_{X,\x}[S]}^{k}(\frac{\D_{X,\x}[S]}{(S-A)\D_{X,\x}[S]}, \D_{X,\x}[S]F^{S}) = 0.
\]
As $K^{\bullet}(S-A; \D_{X,\x}[S])$ resolves $\D_{X,\x}[S] / (S-A) \D_{X,\x}[S]$, use Proposition \ref{prop Koszul resolution}.
\end{proof}

\subsection{Dual of $\D_{X,\x}[S] F^{S} / (S-A) \D_{X,\x}[S] F^{S}$.} \text{ }

Now that we have resolutions, we can proceed to our first goal: to compute the $\D_{X,\x}$-dual of $\frac{\D_{X,\x}[S]F^{S}}{(S-A)\D_{X,\x}[S]F^{S}}.$ 
\begin{definition}
(Compare with Appendix A of \cite{MacarroDuality}) Consider a Lie--Rinehart algebra $L$ over $(k,A)$ that is $A$-projective of constant rank $n$. There is an equivalence of categories from right $U(L)$-modules $Q$ to the left $U(L)$-modules given by $Q^{\text{left}} = \Hom_{A}(w_{L}, Q)$ where $w_{L}$ is the dualizing module of $L$, namely, $w_{L} = \Hom_{A}(\bigwedge^{n} L, A)$. Regard $\D_{X,\x}$ as the universal algebra of the Lie--Rinehart algebra $\Der_{X,\x}$ over $(\mathbb{C}, \mathscr{O}_{X,\x})$ and $\D_{X,\x}[S]$ as the universal algebra of the  Lie--Rinehart algebra $\Der_{X,\x}[S]$ over $(\mathbb{C}[S], \mathscr{O}_{X,\x}[S])$. 
In the appropriate derived category of left modules, where $N$ is a left $U(L)$-module, let:
\begin{align*}
    & \mathbb{D}(N) := (\text{RHom}_{\D_{X,\x}}(N, \D_{X,\x})^{\text{left}}; \\
    & \mathbb{D}_{S}(N) := (\text{RHom}_{\D_{X,\x}[S]}(N, \D_{X,\x}[S])^{\text{left}}. 
\end{align*}
\end{definition}

The following demystifies how $(-)^{\text{left}}$ works for the above universal algebras. Its proof is entirely similar to the classical case of $(-)^{\text{left}}$ for $\D_{X,\x}$-modules.

\begin{lemma} \label{lemma equivalence}

Take a $\ell \times m$ matrix $M$ with entries in $\D_{X,\x}[S]$ so that multiplication on the left gives a map of right $\D_{X,\x}[S]$-modules $\D_{X,\x}[S]^{m} \to \D_{X,\x}[S]^{\ell}$. Here an element $\D_{X,\x}[S]^{m}$ is a column vector. For some fixed coordinate system, define the map $\tau: \D_{X,\x}[S] \to \D_{X,\x}[S]$, $\tau (x^{\textbf{u}} \partial^{\textbf{v}} s^{\textbf{w}}) =  (- \partial)^{\textbf{v}} x^{\textbf{u}} s^{\textbf{w}}.$ Extend $\tau$ to $\D_{X,\x}[S]^{m}$ in an obvious way and to $M$ by applying $\tau$ to each entry. Then there is a commutative diagram of left $\D_{X,\x}[S]$-modules, where elements in the bottom row are row vectors and $(-)^{T}$ denotes the transpose:
\[
\begin{tikzcd}
    (\D_{X,\x}[S]^{m})^{\text{\normalfont left}} \arrow[r, "M^{\text{\normalfont left}}"] \arrow[d, "\simeq"]
        & (\D_{X,\x}[S]^{\ell})^{\text{\normalfont left}} \arrow[d, "\simeq"]\\
    \D_{X,\x}[S]^{m} \arrow[r, "\boldsymbol{\cdot} \tau(M)^{T}"]
        & \D_{X,\x}[S]^{\ell}.
\end{tikzcd}
\]

Given a right $\D_{X,\x}$-linear map $M: \D_{X,\x}^{m} \to \D_{X,\x}^{\ell}$, there is an entirely similar commutative diagram of left-$\D_{X,\x}$ modules (where $\tau$ has the obvious definition).
\end{lemma}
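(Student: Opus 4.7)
The plan is to reduce to the case of $1 \times 1$ matrices using the additivity of $(-)^{\text{left}}$, handle that case by computing the side-changing isomorphism explicitly on the free module of rank one, and then reassemble into matrices. Both statements (over $\D_{X,\x}[S]$ and over $\D_{X,\x}$) follow by the same argument, since $S$ consists of central elements fixed by $\tau$.

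First I would describe the vertical isomorphism $(\D_{X,\x}[S])^{\text{left}} \simeq \D_{X,\x}[S]$ explicitly. The Lie--Rinehart algebra $L = \Der_{X,\x}[S]$ over $(\mathbb{C}[S], \mathscr{O}_{X,\x}[S])$ is free of rank $n$, so $w_{L} = \Hom_{\mathscr{O}_{X,\x}[S]}(\bigwedge^{n} L, \mathscr{O}_{X,\x}[S])$ is locally free of rank one, generated in the fixed coordinate system by $dx := (\partial_{x_{1}} \wedge \cdots \wedge \partial_{x_{n}})^{\vee}$. Then evaluation at $dx$ gives an isomorphism of left $\D_{X,\x}[S]$-modules $(\D_{X,\x}[S])^{\text{left}} = \Hom_{\mathscr{O}_{X,\x}[S]}(w_{L}, \D_{X,\x}[S]) \xrightarrow{\sim} \D_{X,\x}[S]$. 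Applying this coordinate-wise provides the identifications $(\D_{X,\x}[S]^{k})^{\text{left}} \simeq \D_{X,\x}[S]^{k}$ used as the vertical arrows.

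Second I would verify the rank-one case: under the above identification, the induced left action of $P \in \D_{X,\x}[S]$ on $(\D_{X,\x}[S])^{\text{left}}$ coincides with right multiplication by $\tau(P)$ on $\D_{X,\x}[S]$. The map $\tau$ extends to an anti-automorphism of $\D_{X,\x}[S]$, so by multiplicativity it suffices to check this on the generators $x_{i}, \partial_{x_{i}}, s_{k}$. The identities $\tau(x_{i}) = x_{i}$ and $\tau(s_{k}) = s_{k}$ are automatic because $\mathscr{O}_{X,\x}[S]$ acts on $w_{L}$ trivially in the relevant sense, while $\tau(\partial_{x_{i}}) = -\partial_{x_{i}}$ is precisely the standard integration-by-parts identity coming from the Lie-derivative action of $\partial_{x_{i}}$ on the top form $dx$.

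Third I would bootstrap to matrices. Since $(-)^{\text{left}}$ is an additive equivalence, it commutes with finite direct sums, so an entry-wise map $\D_{X,\x}[S]^{m} \to \D_{X,\x}[S]^{\ell}$ given on column vectors by left multiplication by $M = (M_{ij})$ is sent, under the coordinate-wise identification, to the map assembled entry-wise from the rank-one maps $P \mapsto P \cdot \tau(M_{ij})$. Because left multiplication by $M$ on column vectors corresponds to right multiplication by $M^{T}$ on row vectors, reassembly produces precisely $\,\cdot\, \tau(M)^{T}$ acting on row vectors. The $\D_{X,\x}$-case is obtained verbatim by suppressing $S$. The main (if entirely routine) obstacle is pinning down the left action on $\Hom_{\mathscr{O}_{X,\x}[S]}(w_{L}, \D_{X,\x}[S])$ carefully enough that the sign $\tau(\partial_{x_{i}}) = -\partial_{x_{i}}$ emerges from the Leibniz rule on top forms; once this rank-one computation is in hand, anti-multiplicativity of $\tau$ and the additivity of $(-)^{\text{left}}$ do the rest.
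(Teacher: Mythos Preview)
Your proposal has the right ingredients but conflates two different ``left actions'' on $(\D_{X,\x}[S])^{\text{left}} = \Hom_{\mathscr{O}_{X,\x}[S]}(w_L,\D_{X,\x}[S])$, and this makes the rank-one step fail as written. Step 2 correctly computes the \emph{left $U(L)$-module structure}: under evaluation at $dx$, the action of $P$ on $\phi$ becomes $\phi(dx)\mapsto \phi(dx)\,\tau(P)$. But the horizontal map $M^{\text{left}}$ is \emph{not} this module action; for $M=[P]$ it is post-composition, $(M^{\text{left}}\phi)(w)=P\cdot\phi(w)$, which under evaluation at $dx$ is $Q\mapsto PQ$, i.e.\ \emph{left} multiplication by $P$. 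So with your vertical arrow (plain evaluation at $dx$) the diagram does not commute even in rank one: $P\cdot\phi(dx)\neq \phi(dx)\cdot\tau(P)$ in general. Step~3 then inherits this error when it asserts that the $(i,j)$ component of $M^{\text{left}}$ is sent to $P\mapsto P\cdot\tau(M_{ij})$.

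The fix---and this is what the paper's (suppressed) argument does---is to take the vertical isomorphism to be $\phi\mapsto \tau(\phi(dx))$, not just evaluation. Your step~2 computation is exactly what shows this composite is a left $\D_{X,\x}[S]$-module isomorphism to \emph{standard} $\D_{X,\x}[S]$. With this corrected vertical map, the commutativity is then a one-line consequence of anti-multiplicativity: $\tau\bigl(M\cdot\phi(dx)\bigr)=\tau(\phi(dx))^{T}\cdot\tau(M)^{T}$, which is precisely $F(\phi)\cdot\tau(M)^{T}$. So your outline becomes correct once you build $\tau$ into the vertical identification and separate the module-structure calculation (which justifies that choice) from the calculation of $M^{\text{left}}$ itself (which uses only that $\tau$ is an anti-involution on matrix products).
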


The first step in computing $\mathbb{D}(\frac{\D_{X,\x}[S]F^{S}}{(S-A\D_{X,\x}[S])})$ is finding a resolution--this is Proposition \ref{prop the LR resolution}. The second is the following technical lemma:

\begin{lemma} \label{LR lemma}
Suppose $f = f_{1} \cdots f_{r}$ is reduced, strongly Euler-homogeneous, Saito-holonomic, and free and let $F=(f_{1}, \dots, f_{r}).$ As complexes of free $\D_{X,\x}$-modules,
\begin{align*}
    \mathbb{D} \Big( \frac{\D_{X,\x}[S]}{(S-A)\D_{X,\x}[S]} & \otimes_{\D_{X,\x}[S]} \Sp_{\theta_{F,\x}, \Der_{X,\x} \oplus \mathscr{O}_{X,\x}^{r}}  \Big) \\
        & \simeq \frac{\D_{X,\x}[S]}{(S-A)\D_{X,\x}[S]} \otimes_{\D_{X,\x}[S]} \mathbb{D}_{S}\left(\Sp_{\theta_{F,\x}, \Der_{X,\x} \oplus \mathscr{O}_{X,\x}^{r}}\right)
\end{align*}
\end{lemma}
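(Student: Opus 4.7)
The plan is to check the claimed isomorphism termwise and then match the differentials. Abbreviate $\Sp := \Sp_{\theta_{F,\x}, \Der_{X,\x} \oplus \O_{X,\x}^{r}}$ and $M_{A} := \D_{X,\x}[S]/(S-A)\D_{X,\x}[S]$. Since $f$ is reduced, strongly Euler-homogeneous, Saito-holonomic, and free, Proposition \ref{prop many sp complexes} says $\Sp$ is a bounded complex of free $\D_{X,\x}[S]$-modules; more precisely, after choosing an $\O_{X,\x}$-basis of $\theta_{F,\x}$ (available because $\psi_{F}$ is an isomorphism and $\Der_{X,\x}(-\log f)$ is free of rank $n$), each term is identified with $\D_{X,\x}[S]^{\binom{n}{k}}$. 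Because the $s_{k}-a_{k}$ are central, $M_{A} \cong \D_{X,\x}$ as $(\D_{X,\x}, \D_{X,\x})$-bimodules, so both $M_{A} \otimes_{\D_{X,\x}[S]} \Sp^{-k}$ and $M_{A} \otimes_{\D_{X,\x}[S]} \mathbb{D}_{S}(\Sp^{-k})$ are free $\D_{X,\x}$-modules of rank $\binom{n}{k}$. Consequently no derived functors really enter, and the claim reduces to producing a termwise identification that commutes with differentials.

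With the chosen bases I would represent each Spencer differential $d^{-k}$ as left-multiplication by a matrix $N_{k}$ with entries in $\D_{X,\x}[S]$. Reducing $N_{k}$ modulo $(S-A)$ yields the matrix $\overline{N_{k}}$ describing the differential of $M_{A} \otimes_{\D_{X,\x}[S]} \Sp^{\bullet}$; applying $\mathbb{D}$ termwise and invoking Lemma \ref{lemma equivalence} inside $\D_{X,\x}$ then converts the LHS differential to right-multiplication on row vectors by $\tau(\overline{N_{k}})^{T}$. For the RHS, Lemma \ref{lemma equivalence} inside $\D_{X,\x}[S]$ says $\mathbb{D}_{S}$ carries $N_{k}$ to right-multiplication by $\tau(N_{k})^{T}$; tensoring with $M_{A}$ then reduces this matrix modulo $(S-A)$, yielding $\overline{\tau(N_{k})^{T}}$. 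The matching of the two complexes thus reduces to the entrywise identity $\tau(\overline{P}) = \overline{\tau(P)}$ for every $P \in \D_{X,\x}[S]$.

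This final commutation is the only substantive point. It holds because $\tau$ acts as the identity on each $s_{k}$, so $\tau(s_{k} - a_{k}) = s_{k} - a_{k}$ and the two-sided ideal $(S-A)\D_{X,\x}[S]$ is $\tau$-stable; the induced anti-involution on the quotient is precisely the $\tau$ used to define $\mathbb{D}$ on $\D_{X,\x}$. The main obstacle is really only bookkeeping: tracking left/right module conventions through the two instances of Lemma \ref{lemma equivalence} and confirming that the basis chosen for $\Sp^{-k}$ induces the same identification of both $M_{A} \otimes_{\D_{X,\x}[S]} \mathbb{D}_{S}(\Sp^{-k})$ and $\mathbb{D}(M_{A} \otimes_{\D_{X,\x}[S]} \Sp^{-k})$ with $\D_{X,\x}^{\binom{n}{k}}$, so that the two matrix presentations of the differentials can be compared directly.
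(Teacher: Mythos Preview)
Your proposal is correct and is essentially the paper's own argument: both sides are computed by choosing bases so that $\SP^{-k}\simeq\D_{X,\x}[S]^{\binom{n}{k}}$, writing the Spencer differentials as matrices, applying Lemma~\ref{lemma equivalence} on each side, and then observing that the two resulting complexes agree because $\tau$ fixes each $s_k$ and hence commutes with evaluation $s_k\mapsto a_k$. The only discrepancy is cosmetic---the paper keeps track of the transposes explicitly (obtaining $\tau((\text{eval}_A d^{-k})^{T})^{T}$ versus $\text{eval}_A(\tau((d^{-k})^{T})^{T})$), whereas you suppress one layer of transposition---but since $\tau(M^{T})^{T}$ is just entrywise $\tau$, this reduces to the same identity $\tau(\overline{P})=\overline{\tau(P)}$ you isolate.
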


\begin{proof}
For brevity, abbreviate $\Sp_{\theta_{F,\x}, \Der_{X,\x} \oplus \mathscr{O}_{X,\x}^{r}}$ to $\SP^{\bullet}$. Write the differential as $d^{-k}: \SP^{-k} \to \SP^{-(k-1)}.$

We will first compute the objects and maps of $\mathbb{D}\left( \frac{\D_{X,\x}[S]}{(S-A)\D_{X,\x}[S]} \otimes_{\D_{X,\x}[S]} \Sp \right).$ Since $\Sp$ is a co-complex of finite, free $\D_{X,\x}[S]$-modules,  $\SP^{-k} \simeq \D_{X,\x}[S]^{\binom{n}{k}}.$ Therefore, as $\D_{X,\x}[S]$-modules,
\begin{equation} \label{eqn pre-A complex}
\frac{\D_{X,\x}[S]}{(S-A)\D_{X,\x}[S]} \otimes_{\D_{X,\x}[S]} \SP^{-k} \simeq \frac{\D_{X,\x}[S]}{(S-A)\D_{X,\x}[S]}^{\binom{n}{k}}.
\end{equation}
On the LHS of \eqref{eqn pre-A complex}, we have the differential $\frac{\D_{X,\x}[S]}{(S-A)\D_{X,\x}[S]} \otimes_{\D_{X,\x}[S]} d^{-k}$. Think of $d^{-k}$ as a matrix. On the RHS of \eqref{eqn pre-A complex} the differential is $\text{eval}_{A}({d^{-k}})$: the matrix $d^{-k}$ except each $s_{i}$ is replaced with $a_{i}$. As right $\D_{X,\x}$-modules, 
\begin{align*} 
    \Hom_{\D_{X,\x}} \left( \frac{\D_{X,\x}[S]}{(S-A)\D_{X,\x}[S]} \otimes_{\D_{X,\x}[S]} \text{Sp}^{-k}, \D_{X,\x} \right) & \simeq \Hom_{\D_{X,\x}} \left( \D_{X,\x}^{\binom{n}{k}}, \D_{X,\x} \right) \nonumber \\
        &\simeq \D_{X,\x}^{\binom{n}{k}}.
\end{align*}
Making the above identification, $\Hom_{\D_{X,\x}} \left( \frac{\D_{X,\x}[S]}{(S-A)\D_{X,\x}[S]} \otimes_{\D_{X,\x}[S]} \Sp, \D_{X,\x} \right)$ has a differential given by multiplication on the left by $(\text{eval}_{A}(d^{-k}))^{T}$--the transpose of $\text{eval}_{A}(d^{-k}).$ To make the Hom complex a complex of left modules we apply the equivalence of categories $(-)^{\text{left}}$. By Lemma \ref{lemma equivalence} we get a complex of left $\D_{X,\x}$ modules isomorphic to the following, with differential given by matrix multiplication on the right
$$ A_{\bullet} := \dots \xrightarrow{} \D_{X,\x}^{\binom{n}{k-1}} \xrightarrow{\boldsymbol{\cdot} \tau((\text{eval}_{A}(d^{-k}))^{T})^{T}} \D_{X,\x}^{\binom{n}{k}} \to \dots .$$

Now we compute the objects and maps of $\frac{\D_{X,\x}[S]}{(S-A)\D_{X,\x}[S]} \otimes_{\D_{X,\x}[S]} \mathbb{D}_{S}(\Sp).$ As right $\D_{X,\x}[S]$-modules, $\Hom_{\D_{X,\x}[S]} \left( \SP^{-k}, \D_{X,\x}[S] \right) \simeq \D_{X,\x}[S]^{\binom{n}{k}}.$ The induced differential is multiplication on the left by $(d^{-k})^{T}.$ By Lemma \ref{lemma equivalence}, we can identify the complex obtained by applying $(-)^{\text{left}}$ with a complex whose terms are $\D_{X,\x}[S]^{\binom{n}{k}}$ and whose differentials are $\tau((d^{-k})^{T})^{T}.$ As left $\D_{X,\x}[S]$-modules (and so as left $\D_{X,\x}$-modules),
\begin{equation} \label{eqn pre-B complex}
\frac{\D_{X,\x}[S]}{(S-A)\D_{X,\x}[S]} \otimes_{\D_{X,\x}[S]} \D_{X,\x}[S]^{\binom{n}{k}} \simeq \frac{\D_{X,\x}[S]}{(S-A)\D_{X,\x}[S]}^{\binom{n}{k}}.
\end{equation}
The RHS of \eqref{eqn pre-B complex} is isomorphic as a left $\D_{X,\x}$-module to $\D_{X,\x}^{\binom{n}{k}}.$ With this identification, the differentials of the complex $\frac{\D_{X,\x}[S]}{(S-A)\D_{X,\x}[S]} \otimes_{\D_{X,\x}[S]} \mathbb{D}_{S}(\Sp)$ are given by $\text{eval}_{A}(\tau((d^{-k})^{T})^{T}).$ Thus the complex of left $\D_{X,\x}[S]$-modules $\frac{\D_{X,\x}[S]}{(S-A)\D_{X,\x}[S]} \otimes_{\D_{X,\x}[S]} \mathbb{D}_{S}(\Sp)$ is isomorphic as a complex of left $\D_{X,\x}$-modules to
\[ B_{\bullet} := \dots \xrightarrow{} \D_{X,\x}^{\binom{n}{k-1}} \xrightarrow{\boldsymbol{\cdot} \text{eval}_{A}(\tau((d^{-k})^{T})^{T})} \D_{X,\x}^{\binom{n}{k}} \to \dots .
\]


We will be done once we show that $A_{\bullet}$ and $B_{\bullet}$ are isomorphic complexes of $\D_{X,\x}$-modules. Because $\tau((\text{eval}_{A}(d^{-k}))^{T})^{T} = \tau(\text{eval}_{A}(d^{-k})) = \text{eval}_{A}(\tau(d^{-k})) = \text{eval}_{A}(\tau((d^{-k})^{T})^{T})$, $A_{\bullet}$ and $B_{\bullet}$ have the same differentials. 
\end{proof}



So we have reduced our problem to, in light of Proposition \ref{LR resolution facts}, computing $\mathbb{D}_{S}(\D_{X,\x}[S]F^{S}).$ In Corollary 3.6 of \cite{MacarroDuality} Narv\'aez--Macarro does this for $\D_{X,\x}[s]f^{s}$ with similar working hypotheses as ours and Maisonobe shows in \cite{MaisonobeArrangement} that this result generalizes to $\D_{X,\x}[S]F^{S}$ as well. In our language, cf. the proof of Proposition \ref{prop many sp complexes}, this result is as follows:

\begin{prop} \text{\normalfont{(Proposition 6 in \cite{MaisonobeArrangement})}} \label{prop Ds dual}
Let $f=f_{1} \cdots f_{r}$ be reduced, strongly Euler-homogeneous, Saito-holonomic, and free and let $F=(f_{1}, \dots, f_{r}).$ Then, in the category of left derived $\D_{X,\x}[S]$-modules, there is a canonical isomorphism
$$\mathbb{D}_{S}(\D_{X,\x}[S]F^{S}) \simeq \D_{X,\x}[S]F^{-S-1}[n].$$
\end{prop}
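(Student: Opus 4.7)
The plan is to follow the strategy of Narv\'aez-Macarro in \cite{MacarroDuality}, now in the Lie-Rinehart formalism, and compute $\mathbb{D}_S(\D_{X,\x}[S]F^S)$ by applying $\text{RHom}_{\D_{X,\x}[S]}(-, \D_{X,\x}[S])$ term-by-term to the finite free Spencer resolution $\Sp^{\bullet} := \Sp_{\theta_{F,\x}, \Der_{X,\x} \oplus \mathscr{O}_{X,\x}^r} \to \D_{X,\x}[S]F^S$ from Proposition \ref{prop many sp complexes}. Since this resolution has length $n$, the dual complex $\Hom_{\D_{X,\x}[S]}(\Sp^{\bullet}, \D_{X,\x}[S])$ is concentrated in degrees $0, \ldots, n$, so a priori we obtain a complex shifted by $n$; proving the proposition amounts to verifying (i) that this dual complex is acyclic in cohomological degrees $< n$, and (ii) that its top cohomology is canonically $\D_{X,\x}[S]F^{-S-1}$.

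First, I would exploit the equivalence of categories $(-)^{\text{left}}$ and Lemma \ref{lemma equivalence} to replace $\Hom_{\D_{X,\x}[S]}(\Sp^{-k}, \D_{X,\x}[S])$ with $\D_{X,\x}[S] \otimes_{\mathscr{O}_{X,\x}} \bigwedge^{n-k} \theta_{F,\x}^{\vee}$ (after pairing with the dualizing module $w_{\theta_{F,\x}}$ of the Lie-Rinehart algebra $\theta_{F,\x}$, which, since $\theta_{F,\x} \simeq \Der_{X,\x}(-\log f)$ is $\mathscr{O}_{X,\x}$-free of rank $n$ in the free case, is itself a free $\mathscr{O}_{X,\x}$-module of rank $1$). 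A direct computation with a basis $\delta_1, \ldots, \delta_n$ of $\Der_{X,\x}(-\log f)$ shows that the transposed (and $\tau$-twisted) differentials are precisely the differentials of a second Spencer-type complex, built from the $\mathscr{O}_{X,\x}$-module $\theta_{F,\x}$ but now acting on a different rank-one $\mathscr{O}_{X,\x}[S]$-module determined by the dualizing twist.

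The key technical identification, and the main obstacle, is to match this twisted complex with $\Sp_{\theta_{F,\x}', \Der_{X,\x} \oplus \mathscr{O}_{X,\x}^r}$ where $\theta_{F,\x}'$ is the Lie-Rinehart algebra of annihilating operators for $F^{-S-1}$. Concretely, for $\delta \in \Der_{X,\x}(-\log f)$, one has $\psi_F(\delta) = \delta - \sum_k s_k (\delta \bullet f_k)/f_k$ as the annihilator of $F^S$, while the annihilator of $F^{-S-1}$ is $\delta + \sum_k (s_k+1)(\delta \bullet f_k)/f_k$. Applying the involution $\tau$ sends $\psi_F(\delta)$ to $-\delta - \sum_k s_k (\delta \bullet f_k)/f_k$, and the discrepancy with the annihilator of $F^{-S-1}$ is exactly $\sum_k (\delta \bullet f_k)/f_k$, the logarithmic divergence of $\delta$ along $f$. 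This correction term is precisely the right $\D_{X,\x}[S]$-action on the dualizing module $w_{\Der_{X,\x}(-\log f)} \simeq \Omega^n_{X,\x}(\log f)$, which is free of rank one generated by $\tfrac{dx_1 \wedge \cdots \wedge dx_n}{f}$. This is the classical ``twisting by $\omega_L$'' that appears throughout \cite{MacarroDuality}, and it is exactly what converts $S$ into $-S-1$.

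Once this matching is established, the resulting complex is $\Sp_{\theta_{F,\x}', \Der_{X,\x} \oplus \mathscr{O}_{X,\x}^r}$. Since $f = f_1 \cdots f_r$ being reduced, strongly Euler-homogeneous, Saito-holonomic and free is preserved under this twist (the regular sequence condition in Proposition \ref{prop koszul LR resolution} depends only on the underlying $\mathscr{O}_{X,\x}$-module structure of $\theta_{F,\x}$, which is unchanged), Proposition \ref{prop many sp complexes} applied to $F^{-S-1}$ shows that this complex resolves $\D_{X,\x}[S]F^{-S-1}$. Therefore the only nonvanishing cohomology of $\mathbb{D}_S(\D_{X,\x}[S]F^S)$ occurs in degree $n$ and equals $\D_{X,\x}[S]F^{-S-1}$, which is the claimed isomorphism $\D_{X,\x}[S]F^{-S-1}[n]$ in the derived category.
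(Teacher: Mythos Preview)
Your proposal is correct and follows precisely the approach that the paper cites: the proposition is stated without proof as Proposition 6 of \cite{MaisonobeArrangement}, which in turn is the multivariate extension of Corollary 3.6 in \cite{MacarroDuality}. Your outline---dualize the Spencer resolution $\Sp_{\theta_{F,\x}, \Der_{X,\x} \oplus \mathscr{O}_{X,\x}^r}$, identify the transposed differentials via the twist by the dualizing module $\Omega^n_{X,\x}(\log f)$, and recognize the result as the Spencer complex for $F^{-S-1}$---is exactly Narv\'aez-Macarro's argument transported to the $r$-variable setting, which is what Maisonobe does.

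One small remark: the paper's commented-out sketch (visible in the source) routes the computation through the intermediate ring $\D_{X,\x}(-\log f)[S]$ and the functor $\mathbb{V}_S$, invoking the appendix of \cite{MacarroDuality} (in particular A.27 and A.32) to reduce $\mathbb{D}_S$ to a computation over the smaller ring, where the duality is transparent because $\mathscr{O}_{X,\x}[S]F^S$ is free of rank one. Your version works directly with the big Spencer complex and identifies the twist by hand. Both are the same argument; yours is slightly more concrete, while the appendix route makes the canonicity of the isomorphism more visible.
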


\begin{theorem} \label{thm nabla dual} Suppose $f = f_{1} \cdots f_{r}$ is reduced, strongly Euler-homogeneous, Saito-holonomic, and free and let $F=(f_{1}, \dots, f_{r}).$ Then in the derived category of $\D_{X,\x}$-modules there is a $\D_{X,\x}$-isomorphism $\chi_{A}$ given by
\begin{align} \label{eqn iso thm nabla dual}
    \chi_{A}: \mathbb{D} \left( \frac{\D_{X,\x}[S] F^{S}}{(S-A)\D_{X,\x}[S] F^{S}} \right) & \xrightarrow{\simeq} \frac{\D_{X,\x}[S] F^{-S-1}}{(S-A)\D_{X,\x}[S]F^{-S-1}}[n] \\
    & \xrightarrow{\simeq} \frac{\D_{X,\x}[S] F^{S}}{(S-(-A-1)\D_{X,\x}[S]F^{S}}[n]. \nonumber
\end{align}

\end{theorem}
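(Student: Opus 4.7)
The plan is to chain together the three ingredients already assembled: the finite free resolution of Proposition \ref{prop the LR resolution}, the duality swap of Lemma \ref{LR lemma}, and the computation of the Spencer dual in Proposition \ref{prop Ds dual}. First I would use Proposition \ref{prop the LR resolution} to represent $\frac{\D_{X,\x}[S]F^{S}}{(S-A)\D_{X,\x}[S]F^{S}}$ by the finite free $\D_{X,\x}[S]$-resolution $\frac{\D_{X,\x}[S]}{(S-A)\D_{X,\x}[S]} \otimes_{\D_{X,\x}[S]} \Sp_{\theta_{F,\x}, \Der_{X,\x} \oplus \mathscr{O}_{X,\x}^{r}}$. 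Then applying Lemma \ref{LR lemma} yields
\[
\mathbb{D}\left(\frac{\D_{X,\x}[S]F^{S}}{(S-A)\D_{X,\x}[S]F^{S}}\right) \simeq \frac{\D_{X,\x}[S]}{(S-A)\D_{X,\x}[S]} \otimes_{\D_{X,\x}[S]} \mathbb{D}_{S}\!\left(\Sp_{\theta_{F,\x}, \Der_{X,\x} \oplus \mathscr{O}_{X,\x}^{r}}\right)
\]
in the derived category of left $\D_{X,\x}$-modules.

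Next, since $\Sp_{\theta_{F,\x}, \Der_{X,\x} \oplus \mathscr{O}_{X,\x}^{r}}$ is a finite free $\D_{X,\x}[S]$-resolution of $\D_{X,\x}[S]F^{S}$ by Proposition \ref{prop many sp complexes}, Proposition \ref{prop Ds dual} identifies $\mathbb{D}_{S}(\Sp_{\theta_{F,\x}, \Der_{X,\x} \oplus \mathscr{O}_{X,\x}^{r}})$ with $\D_{X,\x}[S]F^{-S-1}[n]$ in the derived category of $\D_{X,\x}[S]$-modules. Substituting this in gives
\[
\mathbb{D}\left(\frac{\D_{X,\x}[S]F^{S}}{(S-A)\D_{X,\x}[S]F^{S}}\right) \simeq \frac{\D_{X,\x}[S]}{(S-A)\D_{X,\x}[S]} \otimes^{L}_{\D_{X,\x}[S]} \D_{X,\x}[S]F^{-S-1}[n].
\]
To pass from the derived tensor product to the ordinary quotient $\frac{\D_{X,\x}[S]F^{-S-1}}{(S-A)\D_{X,\x}[S]F^{-S-1}}[n]$ I need the vanishing of $\mathrm{Tor}^{k}_{\D_{X,\x}[S]}\!\left(\D_{X,\x}[S]/(S-A)\D_{X,\x}[S],\, \D_{X,\x}[S]F^{-S-1}\right)$ for $k \geq 1$. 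This can be checked by running the Koszul argument of Proposition \ref{prop Koszul resolution} on $F^{-S-1}$ in place of $F^{S}$: the total order filtration induces a spectral sequence whose $E_{1}$ page is the Koszul complex of $S$ on $\gr_{(0,1,1)}(\D_{X,\x}[S])/\widetilde{L_{F,\x}}$, which is acyclic in positive degrees by Proposition \ref{prop s regular}. This step is routine once one observes that the associated graded of $\D_{X,\x}[S]F^{-S-1}$ is again $\gr_{(0,1,1)}(\D_{X,\x}[S])/\widetilde{L_{F,\x}}$, since the annihilator of $F^{-S-1}$ is $\psi_{F}(\Der_{X,\x}(-\log f))$ after the obvious substitution.

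Finally, the second isomorphism in \eqref{eqn iso thm nabla dual} is the $\D_{X,\x}$-linear isomorphism
\[
\frac{\D_{X,\x}[S]F^{-S-1}}{(S-A)\D_{X,\x}[S]F^{-S-1}} \xrightarrow{\simeq} \frac{\D_{X,\x}[S]F^{S}}{(S-(-A-1))\D_{X,\x}[S]F^{S}}
\]
induced by the $\mathbb{C}$-algebra automorphism of $\D_{X,\x}[S]$ sending $s_{i} \mapsto -s_{i} - 1$, which is compatible with the change of symbol $F^{-S-1} \leftrightarrow F^{S}$ and carries the ideal $(s_{i} - a_{i})$ to the ideal $(-s_{i} - 1 - a_{i}) = (s_{i} - (-a_{i} - 1))$. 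I expect the main obstacle to be the careful bookkeeping in Step 4 (the $\mathrm{Tor}$-vanishing), to ensure the derived tensor product collapses to the claimed ordinary cokernel; everything else is a formal concatenation of the results already established in this section.
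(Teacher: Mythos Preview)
Your proposal is correct and follows essentially the same route as the paper: combine Proposition~\ref{prop the LR resolution}, Lemma~\ref{LR lemma}, and Proposition~\ref{prop Ds dual}, then verify the relevant $\mathrm{Tor}$ vanishing, and finish with the involution $s_k\mapsto -s_k-1$. The only cosmetic difference is that the paper handles the $\mathrm{Tor}$ vanishing by using that same involution to transport $K^{\bullet}(S-A;\D_{X,\x}[S]F^{-S-1})$ to $K^{\bullet}(S-(-A-1);\D_{X,\x}[S]F^{S})$ and then invoking Proposition~\ref{prop Koszul resolution} directly, rather than rerunning the filtered spectral sequence argument on $F^{-S-1}$ as you do; both are valid and equally short.
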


\begin{proof}
The $\D_{X,\x}$-linear involution on $\D_{X,\x}[S]$ defined by sending each $s_{k} \mapsto -s_{k} -1$ induces a $\D_{X,\x}$-linear map $\D_{X,\x}[S]F^{-S-1} \simeq \D_{X,\x}[S]F^{S}$. This gives the second isomorphism of \eqref{eqn iso thm nabla dual}. Considerations using this map and Proposition \ref{prop many sp complexes} show that $\text{Tor}_{\D_{X,\x}[S]}^{k} \left( \frac{\D_{X,\x}[S] F^{S}}{(S-A)\D_{X,\x}[S] F^{S}}, \D_{X,\x}[S] F^{-S-1} \right)$ vanishes for $k \geq 1.$ Proposition \ref{prop Ds dual} then implies the acylicity of the augmented co-complex
\[
\frac{\D_{X,\x}[S]}{(S-A)\D_{X,\x}[S]} \otimes_{\D_{X,\x}[S]} \mathbb{D}_{S} \left( \Sp_{\theta_{F,\x}, \Der_{X,\x} \oplus \mathscr{O}_{X,\x}^{r}} \right) \to \frac{\D_{X,\x}[S]F^{-S-1}}{(S-A)\D_{X,\x}[S]F^{-S-1}}.
\]
This, Proposition \ref{prop the LR resolution}, and Lemma \ref{LR lemma} give the first isomorphism of \eqref{eqn iso thm nabla dual}.
\end{proof}

\begin{remark} \label{rmk holonomic dual}
When $f$ is reduced, strongly Euler-homogeneous, Saito-holonomic, and free, this immediately implies $\D_{X,\x}[S]F^{S}/ (S-A)\D_{X,\x}[S]F^{S}$ is a holonomic $\D_{X,\x}$-module. Without freeness, computing $\text{Ext}$ is currently intractable.
\end{remark}

\subsection{Free Divisors and $\nabla_{A}$.} \text{ }

Recall from Definition \ref{nabla definition} the $\D_{X,\x}$-linear map
$$\nabla_{A}: \frac{\D_{X,\x}[S] F^{S}}{(S-A)\D_{X,\x}[S] F^{S}} \to \frac{\D_{X,\x}[S] F^{S}}{(S-(A-1))\D_{X,\x}[S] F^{S}}$$
induced by $s_{k} \mapsto s_{k} +1$, for each $k$. If $f$ is reduced, strongly Euler-homogeneous, Saito-holonomic, and free, by Proposition \ref{thm nabla dual} the complexes $\mathbb{D}(\frac{\D_{X,\x}[S] F^{S}}{(S-A)\D_{X,\x}[S] F^{S}})$ and $\mathbb{D}(\frac{\D_{X,\x}[S] F^{S}}{(S-(A-1))\D_{X,\x}[S] F^{S}})$ can be identified with modules (i.e. Ext vanishes in all but one place). $\nabla_{A}$ lifts to a map between the resolutions of $\frac{\D_{X,\x}[S] F^{S}}{(S-A)\D_{X,\x}[S] F^{S}}$ and $\frac{\D_{X,\x}[S] F^{S}}{(S-(A-1))\D_{X,\x}[S] F^{S}}$ and to the Hom of those resolutions. Therefore $\nabla_{A}$ induces a map (thinking of these as modules)
$$\mathbb{D}(\frac{\D_{X,\x}[S] F^{S}}{(S-A)\D_{X,\x}[S] F^{S}}) \longrightarrow \mathbb{D}(\frac{\D_{X,\x}[S] F^{S}}{(S-(A-1))\D_{X,\x}[S] F^{S}}).$$
Name this map $\mathbb{D}(\nabla_{A})$.

\begin{theorem} \label{thm D(nabla)}
Suppose $f = f_{1} \cdots f_{r}$ is reduced, strongly Euler-homogeneous, Saito-holonomic, and free and let $F=(f_{1}, \dots, f_{r}).$ Let $\chi_{A}$ be the $\D_{X,\x}$-isomorphism of Theorem \ref{thm nabla dual}. Then there is a commutative diagram
\[
\begin{tikzcd}
    \mathbb{D} \left(\frac{\D_{X,\x}[S]F^{S}}{(S-A)\D_{X,\x}[S]} \right)  \arrow{r}{\simeq}[swap]{\chi_{A}}
            & \frac{\D_{X,\x}[S]F^{S}}{(S-(-A-1))\D_{X,\x}[S]F^{S}} \\
    \mathbb{D} \left(\frac{\D_{X,\x}[S]F^{S}}{(S-(A-1))\D_{X,\x}[S]} \right) \arrow{r}{\simeq}[swap]{\chi_{A-1}} \arrow[u, "\mathbb{D}(\nabla_{A})"]
        &\frac{\D_{X,\x}[S]F^{S}}{(S-(-A))\D_{X,\x}[S]F^{S}} \arrow[u, "\nabla_{-A}"].
\end{tikzcd}
\]
\end{theorem}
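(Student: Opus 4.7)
The plan is to realize $\nabla_A$ at the chain level via the free $\D_{X,\x}$-resolutions of Proposition \ref{prop the LR resolution}, apply $\mathbb{D}$, and compare the result with $\nabla_{-A}$ after passing through each step of the isomorphism $\chi_A$ built in Theorem \ref{thm nabla dual}.

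First I would observe that $\nabla_A$ is induced by the $\D_{X,\x}$-linear $\mathbb{C}$-algebra automorphism $\rho: \D_{X,\x}[S] \to \D_{X,\x}[S]$, $s_k \mapsto s_k + 1$. Because $\rho$ preserves the Lie--Rinehart structure of $\Der_{X,\x}[S]$ over $(\mathbb{C}[S], \mathscr{O}_{X,\x}[S])$ and carries $(S-A)\D_{X,\x}[S]$ bijectively onto $(S-(A-1))\D_{X,\x}[S]$, it lifts to a $\D_{X,\x}$-linear chain map $\tilde\nabla_A$ between the resolutions of $M_A := \D_{X,\x}[S]F^S/(S-A)\D_{X,\x}[S]F^S$ and $M_{A-1}$ supplied by Proposition \ref{prop the LR resolution}.

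Next I would apply $\mathbb{D}$ to $\tilde\nabla_A$ and track the result through the natural isomorphisms comprising $\chi_A$ and $\chi_{A-1}$. Each step in the construction of $\chi_A$ is natural with respect to $\rho$: the isomorphism of Lemma \ref{LR lemma} is built from the transpose-evaluation formalism of Lemma \ref{lemma equivalence} via matrix manipulations that manifestly commute with the coefficient substitution $s_k \mapsto s_k + 1$; the isomorphism of Proposition \ref{prop Ds dual} is Maisonobe's extension of the Narv\'aez--Macarro duality, which factors through $\D_{X,\x}$-linear identifications built from the Lie--Rinehart dualizing module of $\Der_{X,\x}[S]$ and from canonical maps such as $\Hom_{\mathscr{O}_{X,\x}[S]}(\mathscr{O}_{X,\x}[S]F^S, \mathscr{O}_{X,\x}[S]) \simeq \mathscr{O}_{X,\x}[S]F^{-S}$, each of which depends on $S$ only through the base element $F^S$ and therefore intertwines $\rho$ on source and target; and the involution $\sigma: s_k \mapsto -s_k - 1$ conjugates $\rho$ to $\rho^{-1}$, as the direct calculation $\sigma\rho\sigma^{-1}(s_k) = \sigma\rho(-s_k-1) = \sigma(-s_k-2) = s_k - 1 = \rho^{-1}(s_k)$ shows. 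Combined with the contravariance of $\mathbb{D}$, the $\sigma$-conjugation produces a map that shifts $s$ by $+1$ on the target side $M_{-A} \to M_{-A-1}$.

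Putting these compatibilities together identifies $\chi_A \circ \mathbb{D}(\nabla_A) \circ \chi_{A-1}^{-1}$ with the $\D_{X,\x}$-linear map $M_{-A} \to M_{-A-1}$ induced by $\rho$ at the level of $\D_{X,\x}[S]F^S$, which is precisely $\nabla_{-A}$ by definition. The main technical obstacle will be verifying the naturality claim for Proposition \ref{prop Ds dual}; the cleanest way is to step through the Narv\'aez--Macarro--Maisonobe construction (the appendix of \cite{MacarroDuality} as recycled in \cite{MaisonobeArrangement}) and check at every stage that the intermediate $\D_{X,\x}$-module isomorphism is defined by formulas in which $S$ appears only through $F^S$, so that the shift $F^S \mapsto F^{S+1}$ induced by $\rho$ produces the corresponding substitution on each factor.
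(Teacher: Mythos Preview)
Your approach and the paper's diverge at the very first step, and the divergence matters. You frame $\nabla_A$ as coming from the ring automorphism $\rho:s_k\mapsto s_k+1$ of $\D_{X,\x}[S]$ and then try to push naturality of $\chi_A$ with respect to $\rho$ through each of Lemma~\ref{LR lemma}, Proposition~\ref{prop Ds dual}, and the involution $\sigma$. But $\nabla$ is not the map that $\rho$ induces on $\D_{X,\x}[S]F^S\simeq\D_{X,\x}[S]/\ann F^S$: the automorphism $\rho$ carries $\ann F^S$ to $\ann F^{S+1}$, so it gives the \emph{isomorphism} $\D_{X,\x}[S]F^S\xrightarrow{\simeq}\D_{X,\x}[S]F^{S+1}$, whereas $\nabla$ is that isomorphism followed by the \emph{inclusion} $\D_{X,\x}[S]F^{S+1}\hookrightarrow\D_{X,\x}[S]F^S$, i.e.\ a twist by right multiplication by $f$. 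Your lift $\tilde\nabla_A$ therefore cannot be described by $\rho$ alone, and the naturality checks you propose (especially the one for Proposition~\ref{prop Ds dual}, which you already flag as the obstacle) would have to track this $f$-twist through the entire Narv\'aez--Macarro--Maisonobe dualizing machinery. That is not impossible, but it is considerably more work than you indicate, and nothing in your sketch shows how the $f$ re-emerges on the far side as $\nabla_{-A}$.

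The paper bypasses all of this by computing the lift once and for all. Using the explicit differential \eqref{eqn- cartan complex} and the formula for $\psi_F$, one checks directly that the chain map on $\Sp_{\theta_{F,\x},\Der_{X,\x}\oplus\mathscr{O}_{X,\x}^{r}}$ lifting $\nabla$ is, at each slot, ``apply $\rho$ and right-multiply by $f$''. After tensoring with $\D_{X,\x}[S]/(S-A)\D_{X,\x}[S]$ the $\rho$-part is absorbed into the evaluation and the lift of $\nabla_A$ becomes simply right multiplication by $f$ on each $\D_{X,\x}^{\binom{n}{k}}$. Since $f\in\mathscr{O}_{X,\x}$ is fixed by $\tau$, applying $\Hom_{\D_{X,\x}}(-,\D_{X,\x})^{\mathrm{left}}$ again yields right multiplication by $f$ at the terminal slot $\SP^{-n}$. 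Now one has a commutative square with $\cdot f$ on the left, surjections onto $\mathbb{D}(M_A)$ and $\mathbb{D}(M_{A-1})$, and the isomorphisms $\chi_A,\chi_{A-1}$ on the right; the induced map $M_{-A}\to M_{-A-1}$ sends the generator $\overline{F^S}$ to $\overline{fF^S}=\overline{F^{S+1}}$, which is $\nabla_{-A}$ by definition. No naturality of the Narv\'aez--Macarro isomorphism is needed, only that both $\chi_A$ and $\chi_{A-1}$ are computed from the \emph{same} Spencer resolution. I would recommend you replace the naturality argument by this direct computation of the lift; the observation that $\nabla_A$ lifts to multiplication by $f$ is the actual content of the theorem.
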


\begin{proof}

First consider the $\D_{X,\x}$-linear map $\nabla: \D_{X,\x}[S] F^{S} \to \D_{X,\x}[S] F^{S}$ given by sending $s_{i} \to s_{i+1}$ for all $i$. By Propositon \ref{prop many sp complexes}, the co-complex of free $\D_{X,\x}[S]$-modules $\Sp_{\theta_{F,\x}, \Der_{X,\x} \oplus \mathscr{O}_{X,\x}^{r}}$ resolves $\D_{X,\x}[S] F^{S}$. For readability, in this proof we will write this co-complex as $\Sp.$ Regarding this as a co-complex of $\D_{X,\x}$-modules, we may lift $\nabla$ to a chain map. A straightforward computation using \eqref{eqn- cartan complex} and the definition of $\psi_{F,\x}$ shows that one such lift is given by 
\[
\begin{tikzcd}
    \SP^{-k} \arrow{r}{\simeq} \arrow[dashed]{d}
        & \D_{X,\x}[S]^{\binom{n}{k}} \arrow{d}{\sigma_{-k}} \\
    \SP^{-k} \arrow{r}{\simeq} &
        \D_{X,\x}[S]^{\binom{n}{k}},
\end{tikzcd}
\]
where the dashed line is the lift of $\nabla$ at the $-k$ slot and $\sigma_{-k}$ multiplies each component of the direct sum by $f$ on the right and sends each $s_{i}$ to $s_{i+1}$ in every component. 

We may use the finite, free $\D_{X,\x}$-resolution of $\D_{X,\x}[S]F^{S} / (S-A) \D_{X,\x}[S] F^{S}$ by $\frac{\D_{X,\x}[S]}{(S-A)\D_{X,\x}[S]} \otimes_{\D_{X,\x}[S]} \Sp$ to lift $\nabla_{A}$ to a chain map, cf. Proposition \ref{prop the LR resolution}. One such lift is given by 

\begin{equation} \label{diagram lift}
\begin{tikzcd}
    \frac{\D_{X,\x}[S]}{(S-A)\D_{X,\x}[S]} \otimes_{\D_{X,\x}[S]} \SP^{-k}  \arrow[dashed]{d}{\ell_{-k}(\nabla_{A})} \arrow{r}{\simeq}
        & \frac{\D_{X,\x}[S]}{(S-A)\D_{X,\x}[S]}^{\binom{n}{k}} \arrow{d}{\sigma_{-k}^{A}} \\
    \frac{\D_{X,\x}[S]}{(S-(A-1))\D_{X,\x}[S]} \otimes_{\D_{X,\x}[S]} \SP^{-k} \arrow{r}{\simeq}
        & \frac{\D_{X,\x}[S](S-(A-1))}{(S-(A-1))\D_{X,\x}[S]}^{\binom{n}{k}},
\end{tikzcd}
\end{equation}
where the $\ell_{-k}(\nabla_{A})$ is the lift of $\nabla_{A}$ the $-k$ slot and $\sigma_{-k}^{A}$ is induced by $\sigma_{-k}$. That is, $\sigma_{-k}^{A}$ is given by multiplying each component of the direct sum by $f$ on the right.

Apply $\Hom_{\D_{X,\x}}(-, \D_{X,\x})^{\text{left}}$ to the chain map given by the $l_{-k}(\nabla_{A})$. Then \eqref{diagram lift} implies that at the $-n$ slot we have

\begin{equation} \label{diagram hom lift}
\begin{tikzcd}
    \Hom_{\D_{X,\x}}(\frac{\D_{X,\x}[S]}{(S-A)\D_{X,\x}[S]} \otimes_{\D_{X,\x}[S]} \SP^{-n}, \D_{X,\x})^{\text{left}}  
        & \frac{\D_{X,\x}[S]}{(S-A)\D_{X,\x}[S]}  \arrow{l}{\simeq} \\
    \Hom_{\D_{X,\x}}(\frac{\D_{X,\x}[S]}{(S-(A-1))\D_{X,\x}[S]} \otimes_{\D_{X,\x}[S]} \SP^{-n}, \arrow[dashed]{u}{\Hom_{\D_{X,\x}}(\ell_{-n}(\nabla_{A}), \D_{X,\x})^{\text{left}}} \D_{X,\x})^{\text{left}} 
        & \frac{\D_{X,\x}[S]}{(S-(A-1)\D_{X,\x}[S]} \arrow{u}{\Hom_{\D_{X,\x}}(\sigma_{-n}^{A}, \D_{X,\x})^{\text{left}}} \arrow{l}{\simeq},
\end{tikzcd}
\end{equation}
where $\Hom_{\D_{X,\x}}(\sigma_{-n}^{A}, \D_{X,\x})^{\text{left}}$ is simply multiplication by $f$ on the right. Since $\mathbb{D}(\D_{X,\x}[S]F^{S}/ (S-A)\D_{X,\x}[S]F^{S})$ has nonzero homology only at the $-n$ slot, we may identify this complex with that homology module and the map $\mathbb{D}(\nabla_{A})$ is induced by $\Hom_{\D_{X,\x}}(\sigma_{-n}^{A}, \D_{X,\x})^{\text{left}}$, i.e. by multiplication by $f$ on the right. So \eqref{diagram hom lift} and Theorem \ref{thm nabla dual} give the following commutative diagram

\[
\begin{tikzcd}
    \frac{\D_{X,\x}[S]}{(S-A)\D_{X,\x}[S]} \arrow[two heads]{r} 
        & \mathbb{D}(\frac{\D_{X,\x}[S]F^{S}}{(S-A)\D_{X,\x}[S]F^{S}}) \arrow{r}{\simeq}[swap]{\chi_{A}}
            & \frac{\D_{X,\x}[S]F^{S}}{(S-(-A-1))\D_{X,\x}[S]F^{S}} \\
    \frac{\D_{X,\x}[S]}{(S-(A-1))\D_{X,\x}[S]} \arrow{u}{\cdot f} \arrow[two heads]{r} 
        & \mathbb{D}(\frac{\D_{X,\x}[S]F^{S}}{(S-(A-1))\D_{X,\x}[S]F^{S}}) \arrow{r}{\simeq}[swap]{\chi_{A-1}} \arrow{u}{\mathbb{D}(\nabla_{A})} 
            & \frac{\D_{X,\x}[S]F^{S}}{(S-(-A))\D_{X,\x}[S]F^{S}} \arrow[dashed]{u}.
\end{tikzcd}
\]
A straightforward diagram chase shows that the dashed map is $\nabla_{-A}$.
\end{proof}

\begin{theorem} \label{thm injective iff surjective}
Suppose $f = f_{1} \cdots f_{r}$ is reduced, strongly Euler-homogeneous, Saito-holonomic, and free and let $F=(f_{1}, \dots, f_{r}).$ Then $\nabla_{A}$ is injective if and only if it is surjective.
\end{theorem}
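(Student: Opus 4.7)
The plan is to leverage the duality machinery of Theorems \ref{thm nabla dual} and \ref{thm D(nabla)} together with the already-established Theorem \ref{thm injective implies surjective}. Indeed, since freeness implies tameness, one direction (injective implies surjective) is immediately provided by Theorem \ref{thm injective implies surjective}. So my plan is devoted entirely to the converse: surjectivity implies injectivity.

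First I would invoke Remark \ref{rmk holonomic dual}: under our hypotheses the modules $\frac{\D_{X,\x}[S]F^{S}}{(S-A)\D_{X,\x}[S]F^{S}}$ are holonomic $\D_{X,\x}$-modules, so $\mathbb{D}$ restricted to this subcategory is an exact, involutive, contravariant functor (each complex is concentrated in a single degree, as made explicit by Theorem \ref{thm nabla dual}). In particular, $\mathbb{D}$ sends surjections to injections and conversely.

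Given this, suppose $\nabla_{A}$ is surjective. Applying $\mathbb{D}$ produces an injection $\mathbb{D}(\nabla_{A})$. By the commutative square of Theorem \ref{thm D(nabla)}, this map is conjugate via the $\D_{X,\x}$-isomorphisms $\chi_{A}$ and $\chi_{A-1}$ to $\nabla_{-A}$, so $\nabla_{-A}$ is injective. Theorem \ref{thm injective implies surjective} applied to $\nabla_{-A}$ then gives that $\nabla_{-A}$ is surjective. Applying $\mathbb{D}$ one more time, and reading Theorem \ref{thm D(nabla)} with $A$ replaced by $-A$, we recover that $\nabla_{-(-A)} = \nabla_{A}$ is injective, as desired.

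The main obstacle is not in this final assembly---which is a short, formal two-step duality argument interleaved with Theorem \ref{thm injective implies surjective}---but in the inputs. The genuine work is the homological machinery of the preceding subsections, culminating in Proposition \ref{prop Ds dual} (the computation of the $\D_{X,\x}[S]$-dual of $\D_{X,\x}[S]F^{S}$ via the Spencer co-complex $\Sp_{\theta_{F,\x}, \Der_{X,\x} \oplus \mathscr{O}_{X,\x}^{r}}$ and the Lie--Rinehart/PBW formalism of Narv\'aez-Macarro), and the explicit identification in Theorem \ref{thm D(nabla)} of $\mathbb{D}(\nabla_{A})$ with $\nabla_{-A}$. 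Once those are in hand, the biconditional follows by the symmetry in $A \leftrightarrow -A$ built into the duality.
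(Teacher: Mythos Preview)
Your proposal is correct and follows essentially the same approach as the paper's proof: both use holonomic duality to convert surjectivity of $\nabla_{A}$ into injectivity of $\mathbb{D}(\nabla_{A})$, identify this with $\nabla_{-A}$ via Theorem \ref{thm D(nabla)}, apply Theorem \ref{thm injective implies surjective} to $\nabla_{-A}$, and then dualize back. The only cosmetic difference is that in the final step the paper reuses the identification $\mathbb{D}(\nabla_{A}) \simeq \nabla_{-A}$ (so surjectivity of $\nabla_{-A}$ gives surjectivity of $\mathbb{D}(\nabla_{A})$, hence injectivity of $\nabla_{A}$), whereas you apply $\mathbb{D}$ a second time and invoke Theorem \ref{thm D(nabla)} with $A$ replaced by $-A$; these are equivalent.
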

\begin{proof}
By Theorem \ref{thm injective implies surjective}, we may assume $\nabla_{A}$ is surjective. So we have a short exact sequence of holonomic left $\D_{X,\x}$-modules:
\[
0 \to N \to \frac{\D_{X,\x}[S]F^{S}}{(S-A)\D_{X,\x}[S]F^{S}} \xrightarrow{\nabla_{A}} \frac{\D_{X,\x}[S]F^{S}}{(S-(A-1))\D_{X,\x}[S]F^{S}} \to 0.
\]
Using the long exact sequence of $\text{Ext}$ and basic properties of holonomic modules, one checks $\nabla_{A}$ is surjective if and only if $\mathbb{D}(\nabla_{A})$ is injective. Similarly, $\nabla_{A}$ is injective if and only if $\mathbb{D}(\nabla_{A})$ is surjective. We are done by the following: 
\begin{align*}
    \nabla_{A} \text{ is surjective} &\iff \mathbb{D}(\nabla_{A}) \text{ is injective} & [\text{Duality}] \\
    &\iff \nabla_{-A} \text{ is injective} &[\text{Theorem } \ref{thm D(nabla)}]\\
    & \implies \nabla_{-A} \text{ is surjective} & [\text{Theorem } \ref{thm injective implies surjective}]\\
    & \iff \mathbb{D}(\nabla_{A}) \text{ is surjective} &[\text{Theorem } \ref{thm D(nabla)}] \\
    & \iff \nabla_{A} \text{ is injective} & [\text{Duality}].
\end{align*}
\end{proof}

\section{Free Divisors and the Cohomology Support Loci} \label{final section}

In this short section, we assume $f_{1}, \dots, f_{r}$ are mutually distinct and irreducible germs at $\x \in X$ that vanish at $\x$. Let $f = f_{1} \cdots f_{r}$. Take a small open ball $B_{\x}$ about $\x$ and let $U_{\x} = B_{\x} \setminus \text{Var}(f).$ Define $U_{\y}$ for $\y \in \text{Var}(f)$ and $\y$ near $\x$ similarly. 

\begin{definition} (\normalfont{Compare with Section 1, \cite{BudurWangEtAl}})
Let $M(U)$ denote the rank one local systems on $U$. Define the \textit{cohomology support loci of $f$ near $\x$}, denoted as $V(U_{\x}, B_{\x})$, by: 
\[
V(U_{\x}, B_{\x}) := \bigcup\limits_{\y \in D \text{ near } \x} \text{res}_{\y}^{-1} (\{ L \in M(U_{\y}) \mid H^{\bullet}(U_{\y}, L) \neq 0 \}),
\]
where $\text{res}_{\y}: M(U_{\x}) \to M(U_{\y})$ is given by restriction. This agrees with the notion of ``uniform cohomology support locus" given in \cite{BudurBernstein--Sato}, cf. Remark 2.8 \cite{BudurWangEtAl} and \cite{CharacteristivcVaritiesHypersurface}.
\end{definition}

\begin{Convention}
For $A \in \mathbb{C}^{r}$ and $k \in \mathbb{Z}$, let $A-k$ denote $(a_{1} -k, \dots, a_{r} - k).$
\end{Convention}

Let $j$ be the embedding of $U_{\x} \hookrightarrow B_{\x}$. For $L \in M(U_{\x})$, $Rj_{\star}(L[n])$ is a perverse sheaf (hence of finite length). In Theorem 1.5 of \cite{BudurWangEtAl}, the authors prove that
\begin{equation} \label{eqn simple local system}
    V(U_{\x}, B_{\x}) = \{ L \in M(U_{\x}) \mid R j_{\star}(L[n]) \text{ is not a simple perverse sheaf on } B_{\x} \}.
\end{equation}

Using this Budur proves in Theorem 1.5 of \cite{BudurBernstein--Sato}, cf. Remark 4.2 of \cite{BudurWangEtAl}, that
\begin{equation} \label{eqn main budur conjecture}
    \text{Exp}(\V(B_{F,\x})) \supseteq V(U_{\x}, B_{\x}).
\end{equation}
Here $M(U_{\x})$ are identified with representations $\{ \pi_{1}(U_{\x}) \to \mathbb{C}^{\star} \} \subseteq \mathbb{C}^{\star^{r}}$.

While we cannot prove the converse containment to \eqref{eqn main budur conjecture}, we can prove a weaker statement about simplicity of modules:

\begin{theorem} \label{thm my budur converse}

Suppose $f = f_{1} \cdots f_{r}$ and $F = (f_{1}, \dots, f_{r})$, where the $f_{k}$ are mutually distinct and irreducible hypersurface germs at $\x$ vanishing at $\x$. Suppose $f$ is reduced, strongly Euler-homogeneous, Saito-holonomic, and free. If $A \in \mathbb{C}^{r}$ such that the rank one local system $L_{\text{\normalfont Exp}(A)} \notin V(U_{\x}, B_{\x})$, then, for all $k \in \mathbb{Z}$, the map $\nabla_{A+k}$ is an isomorphism and $\frac{\D_{X,\x}[S]F^{S}}{(S-(A+k))\D_{X,\x}[S]F^{S}}$ is a simple $\D_{X,\x}$-module.

\end{theorem}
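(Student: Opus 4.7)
The plan is to combine the simple perverse sheaf criterion \eqref{eqn simple local system} from \cite{BudurWangEtAl} with the de Rham calculations of Theorem 5.2 of \cite{BudurBernstein--Sato} and the main result of the previous section, Theorem \ref{thm injective iff surjective}. First, the hypothesis $L_{\text{Exp}(A)} \notin V(U_{\x}, B_{\x})$ and \eqref{eqn simple local system} force $Rj_{\star} L_{\text{Exp}(A)}[n]$ to be a simple perverse sheaf on $B_{\x}$, and hence equal to $IC(L_{\text{Exp}(A)}[n])$. By Theorem 5.2 of \cite{BudurBernstein--Sato}, for $m \in \mathbb{N}$ sufficiently large both $\D_{X,\x} F^{A-m}$ and $\D_{X,\x} F^{A+m}$ have as de Rham image this common simple regular holonomic complex, so by Riemann--Hilbert they are simple $\D_{X,\x}$-modules and the canonical inclusion $\D_{X,\x} F^{A+m} \hookrightarrow \D_{X,\x} F^{A-m}$ (iterated multiplication by $f$) is an isomorphism. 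For any $k \in \mathbb{Z}$ the module $\D_{X,\x} F^{A+k}$ is sandwiched between these two equal simple modules, forcing it to equal the common value $N := IC(L_{\text{Exp}(A)}[n])$.

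Next I would exploit the Oaku--Takayama surjection $p_{A+k}: M_{A+k} := \D_{X,\x}[S]F^{S}/(S-(A+k))\D_{X,\x}[S]F^{S} \twoheadrightarrow \D_{X,\x} F^{A+k}$. Propositions 3.2 and 3.3 of \cite{OakuTakayama} give that $p_{A\pm m}$ is an isomorphism for $m \gg 0$, so $M_{A\pm m}$ is simple and isomorphic to $N$. For any fixed $k_{0} \in \mathbb{Z}$, choose $m > |k_{0}|$ large enough that both these properties hold, and form the composition
\[
\Theta : M_{A+m} \xrightarrow{\nabla_{A+m}} M_{A+m-1} \longrightarrow \cdots \xrightarrow{\nabla_{A-m+1}} M_{A-m}
\]
of $2m$ copies of $\nabla$. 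The commutative square intertwining each $\nabla_{A+k+1}$ with multiplication by $f$ on the $\D_{X,\x}F^{A+k}$'s yields $p_{A-m}(\Theta(\overline{F^{S}})) = f^{2m}F^{A-m} = F^{A+m}$, a nonzero element of $N$. Hence $\Theta$ is a nonzero $\D_{X,\x}$-linear map between two simple modules, and so $\Theta$ is itself an isomorphism.

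Finally I would bootstrap factor by factor using Theorem \ref{thm injective iff surjective}: an isomorphism composition forces its first factor $\nabla_{A+m}$ to be injective, which by Theorem \ref{thm injective iff surjective} is equivalent to being an isomorphism; cancelling this factor yields a shorter composition that is still an isomorphism and starts with the next $\nabla$, and iterating shows each $\nabla_{A+k}$ with $-m+1 \leq k \leq m$ is an isomorphism. Since $k_{0}$ and hence $m$ were arbitrary, $\nabla_{A+k}$ is an isomorphism for every $k \in \mathbb{Z}$. Transporting simplicity along these isomorphisms from $M_{A+m}$ gives that $M_{A+k}$ is simple for every $k \in \mathbb{Z}$.

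The main obstacle is the last bootstrap step: from the isomorphism of the full composition $\Theta$ one a priori gets only injectivity of the first factor and surjectivity of the last, with no control over intermediate factors. The iff-type statement in Theorem \ref{thm injective iff surjective}, which is where the freeness hypothesis truly enters through the duality machinery of Section 4, is precisely what propagates iso-ness factor-by-factor down the chain.
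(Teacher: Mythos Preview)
There is a gap in your second paragraph: Propositions 3.2 and 3.3 of \cite{OakuTakayama} only yield that $p_{A-m}$ is an isomorphism for $m \gg 0$; they do \emph{not} give the $+m$ direction. Without a separate argument you cannot conclude that $M_{A+m}$ is simple, and the ``nonzero map between two simple modules is an isomorphism'' argument for $\Theta$ does not go through at its source. (One could patch the positive anchor via the duality $\mathbb{D}(M_{A}) \simeq M_{-A-1}[n]$ of Theorem \ref{thm nabla dual} together with Oaku--Takayama applied at $-A-1$, but you do not do this and it is more circuitous than necessary.)

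The paper's argument is simpler and uses only the single Oaku--Takayama anchor on the negative side. After your first paragraph (which is correct: all $\D_{X,\x}F^{A+k}$ equal the common simple module $N$), take $t$ so that $p_{A+t-1-j}$ is an isomorphism for all $j \ge 0$. In the square \eqref{diagram commutative diagram} at level $k=t$ the right vertical inclusion is an equality and the bottom $p_{A+t-1}$ is an isomorphism; since the top $p_{A+t}$ is always surjective, chasing the square gives that $\nabla_{A+t}$ is surjective. Theorem \ref{thm injective iff surjective} upgrades this to an isomorphism, and the square then forces $p_{A+t}$ to be an isomorphism as well. Iterate upward. No long composition and no second anchor are needed. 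Your bootstrap can in fact be repaired by running it from the \emph{target} end instead (nonzero $\Theta$ into the simple $M_{A-m}$ is surjective, so the last factor is surjective, hence an isomorphism by Theorem \ref{thm injective iff surjective}, then cancel and repeat), but that is just the paper's one-step induction in bulkier form.
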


\begin{proof}
For all $A \in \mathbb{C}^{r}$ there is a cyclic $\D_{X,\x}$-module $\D_{X,\x}F^{A}$ defined similarly to $\D_{X,\x}[S]F^{S}.$ Moreover, there is a commutative diagram of $\D_{X,\x}$-modules and maps
\begin{equation} \label{diagram commutative diagram}
\begin{tikzcd}
    \frac{\D_{X}[S]F^{S}}{(S-(A+k))\D_{X}[S]F^{S}} \arrow[r, twoheadrightarrow, "p_{A+k}"] \arrow[d, "\nabla_{A+k}"] 
        & \D_{X,\x}F^{A+k} \arrow[d, hookrightarrow] \\
    \frac{\D_{X}[S]F^{S}}{(S-(A+k-1))\D_{X}[S]F^{S}} \arrow[r, twoheadrightarrow, "p_{A+k-1}"]
        & \D_{X,\x} F^{A+k-1}.
\end{tikzcd}
\end{equation}
By Theorem 5.2 in \cite{BudurBernstein--Sato}, $\D_{X,\x}F^{A}$ is regular, holonomic and
\begin{align*} \label{eqn deRham}
DR(\D_{X,\x}F^{A-k}) = Rj_{\star}L_{\text{Exp}(A)}[n] \text{, for } k \in \mathbb{N} \text{, } k \gg 0; \\
DR(\D_{X,\x}F^{A+k}) = IC(L_{\text{Exp}(A)}[n]) \text{, for } k \in \mathbb{N} \text{, } k \gg 0. \nonumber
\end{align*}
Here $DR$ is the de Rham functor and $L_{\text{Exp}(A)}$ is the local system given by a representation $\pi_{1}(U_{\x}) \to \mathbb{C}^{\star^{r}}.$ Because of \eqref{eqn simple local system}, our hypotheses on $L_{\text{Exp}(A)}$ imply $\D_{X,\x}F^{A+k}$ is simple for all $k \in \mathbb{Z}$. So to prove the theorem, it suffices to show that the $\D_{X,\x}$-maps $p_{A+k}$ and $\nabla_{A+k}$ of \eqref{diagram commutative diagram} are isomorphisms for all $k \in \mathbb{Z}$.

By Proposition 3.2 and 3.3 of \cite{OakuTakayama} there exists an integer $t \in \mathbb{Z}$ such that $p_{A+t-1 - j}$ is an isomorphism for all $j \in \mathbb{Z}_{\geq 0}$. By the commutativity of \eqref{diagram commutative diagram}, $\nabla_{A+t}$ is surjective. By Theorem \ref{thm injective iff surjective}, $\nabla_{A+t}$ is an isomorphism. Thus $p_{A+t}$ is as well. Repeat this procedure to finish the proof.
\end{proof}

\appendix
\section{Initial Ideals}

Suppose the commutative Noetherian ring $R$ is a domain containing a field $\mathbb{K}$. Consider the polynomial ring over many variables $R[X]$, graded by the total degree of a non-negative integral vector $\textbf{u}$. Let $I$ be an ideal contained in $(X) \cdot R[X]$. We closely follow the treatment of Bruns and Conca in \cite{BrunsConca} to obtain our main result, Proposition \ref{main grobner theorem}, which establishes a relationship between the initial ideal $\In_{\textbf{u}}$ of $I$ with respect to the $\textbf{u}$-grading and $I$ itself. This is a weaker analogue to Proposition 3.1 of loc. cit. and is integral to the strategy of Section 2.

\begin{remark} \label{remark grading 1}
\begin{enumerate}[(a)]

\item The \emph{monomials} of $R[X]$ are the elements $x^{\textbf{v}} = \prod x_{i}^{v_{i}}$ for $\textbf{v}$ a non-negative integral vector. 

\item Just as in the case $R = \mathbb{K}$ we can declare a \emph{monomial ordering} $>$ on $R[X]$. This ordering is Artinian, with least element $1 \in R$.

\item Every $f \in R[X] $ has a unique expression in monomials: $f = \sum r_{i}m_{i}$, $r_{i} \in R$, $m_{i}$ a monomial, $m_{i} > m_{i +1}$, for some total ordering $>$ of the monomials.

\item Let the \emph{initial term} of $f$ be $\In_{>}(f) := r_{1}m_{1}$, where we appeal to the unique expression of $f$ above. For $V$ a     $R$-submodule of $R[X]$ let $\In_{>}(V)$ be the R-submodule generated by all the $\In_{>}(f)$ elements for $f \in V$. 

\item Given a nonnegative integral vector $\textbf{u} = (u_{1}, \dots u_{n})$ there is a canonical grading on $R[X]$ given by $\textbf{u}(x_{i}) = u_{i}.$ Every monomial $\prod x_{i}^{v_{i}}$ is $\textbf{u}$-homogeneous of degree $\sum v_{i} u_{i}$ and every element $f \in R[X]$ has a unique decomposition into $\textbf{u}$-homogeneous pieces. The \emph{degree} $\textbf{u}(f)$ is the largest degree of a monomial of f; the \emph{initial term} $\In_{\textbf{u}}(f)$ is the sum of the monomials of $f$ of largest degree.
\end{enumerate}
\end{remark}

\begin{definition} 

Let $f \in R[X]$, $f = \sum r_{i}m_{i}$ its monomial expression, $\textbf{u}$ a non-negative integral vector defining a grading on $R[X]$. We introduce a new variable t by letting T = R[X][t]. Define the \emph{homogenization} of f with respect to $\textbf{u}$ to be 
\[\homu(f) := \sum r_{i}m_{i} t^{\textbf{u}(f) - \textbf{u}(m_{i})} \in T.\]
For a $R$-submodule $V$ of $R[X]$ let 
$$\homu(V) := \text{ the $R[t]$-submodule generated by } \{\homu(f) \mid f \in V\}.$$
\end{definition}

\begin{remark}

\begin{enumerate}[(a)]

\item If $I$ is an ideal of $R[X]$, $\homu(I)$ is an ideal of T.

\item Let $\textbf{u}^{\prime}$ be the non-negative integral vector $(\textbf{u}, 1)$ and extend the grading on $R[X]$ to $T$ by declaring $t$ to have degree 1. Then $\homu(f)$ is a $\textbf{u}^{\prime}$-homogeneous of degree $\textbf{u}(f)$. 

\end{enumerate}
\end{remark}

\begin{prop} Suppose $R$ is a Noetherian domain containing the field $\mathbb{K}$ and let $I$ be an ideal of $R[X]$. Then

\[
    \frac{T}{(\homu(I), t)} \simeq \frac{R[X]}{\In_{\textbf{u}}(I)} \quad \text{and} \quad \frac{T}{(\homu(I), t-1)} \simeq \frac{R[X]}{I}.
\]

\end{prop}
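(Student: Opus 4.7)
The plan is to identify each quotient as the kernel of an appropriate evaluation map out of $T = R[X][t]$.

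For the second isomorphism, I would consider the dehomogenization $\pi \colon T \twoheadrightarrow R[X]/I$ defined by $\pi(t) = 1$ and $\pi(x_i) = x_i \bmod I$. It is surjective. Clearly $(t-1) \subseteq \ker(\pi)$, and since $\homu(f)\big|_{t=1} = f$ for every $f \in I$, the whole ideal $\homu(I)$ lies in $\ker(\pi)$ as well. For the reverse containment, given $g \in \ker(\pi)$, polynomial division in $t$ over the coefficient ring $R[X]$ writes $g = q(t)(t-1) + g\big|_{t=1}$ with $g\big|_{t=1} \in I$. The identity $\homu(h) - h \in (t-1)\,T$ holds for every $h \in R[X]$ (both sides agree at $t=1$), so applying it with $h = g\big|_{t=1}$ places $g$ in $(\homu(I), t-1)$.

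For the first isomorphism, I would instead establish the ideal equality $(\homu(I), t) = (\In_{\textbf{u}}(I), t)$ in $T$, from which $T/(\homu(I), t) \simeq R[X]/\In_{\textbf{u}}(I)$ is immediate. The inclusion $\In_{\textbf{u}}(I) + (t) \subseteq \homu(I) + (t)$ follows from the single identity $\homu(f) \equiv \In_{\textbf{u}}(f) \pmod{t}$, valid for every $f \in I$, since each monomial of $f$ of $\textbf{u}$-degree strictly less than $\textbf{u}(f)$ is scaled by a positive power of $t$ inside $\homu(f)$; taking $R[X]$-multiples within the ambient ideal $\homu(I)$ then realizes every generator of $\In_{\textbf{u}}(I)$. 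For the reverse inclusion, a general element $H \in \homu(I)$ has the form $H = \sum_i g_i\, \homu(f_i)$ with $g_i \in T$ and $f_i \in I$; setting $t = 0$ gives $H \equiv \sum_i g_i\big|_{t=0} \cdot \In_{\textbf{u}}(f_i) \pmod{t}$, which is an $R[X]$-linear combination of generators of $\In_{\textbf{u}}(I)$.

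The only point meriting minor care is that, because $R$ is not assumed to be a field, the classical ``standard monomials form an $R$-basis of $R[X]/\In_{\textbf{u}}(I)$'' arguments of Gröbner theory are unavailable; the direct kernel manipulations above sidestep this issue entirely. The core mechanism in both isomorphisms is the single decomposition $\homu(f) = \In_{\textbf{u}}(f) + t \cdot (\text{lower } \textbf{u}^{\prime}\text{-degree terms})$, which dehomogenizes to $f$ at $t = 1$ and reduces to the initial term at $t = 0$.
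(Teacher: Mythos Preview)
Your argument is correct and is essentially the standard one the paper defers to via the citation to Bruns--Conca, Proposition~2.4: identify each quotient through the evaluation maps $t\mapsto 1$ and $t\mapsto 0$, using the decomposition $\homu(f)=\In_{\textbf{u}}(f)+t\cdot(\text{lower terms})$ together with $\homu(f)|_{t=1}=f$. The only cosmetic point is that your sentence about ``taking $R[X]$-multiples within the ambient ideal $\homu(I)$'' is superfluous---once you have $\In_{\textbf{u}}(f)\in(\homu(I),t)$ for every $f\in I$, the containment $\In_{\textbf{u}}(I)\subseteq(\homu(I),t)$ is immediate because those elements generate $\In_{\textbf{u}}(I)$.
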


\begin{proof}
Argue as in Proposition 2.4 of \cite{BrunsConca}.
\end{proof}

As in the classical case, $t$ is regular on $T / \homu(I)$ (cf. Proposition 2.3 (d) in \cite{BrunsConca}). The argument is similar so we only outline the basic steps.

\begin{definition}

Let $\tau$ be a monomial ordering on $R[X]$, $\textbf{u}$ a non-negative integral vector. Let $n$, $m$ be monomials in $R[X]$. Define a monomoial ordering $\tau \textbf{u}$ on R[X]:
\[
    [m >_{\tau \textbf{u}} n]   \iff   [\textbf{u}(m) > \textbf{u}(n)] \quad \text{ or } \quad [\textbf{u}(m) = \textbf{u}(n)\text{, } m >_{\tau} n].
\]
Define a monomial ordering $\tau \textbf{u}^{\prime}$ on T:
\begin{align*}
    [mt^{i} >_{\tau \textbf{u}^{\prime}} nt^{j}]   \iff 
        & [\textbf{u}^{\prime}(mt^{i}) > \textbf{u}^{\prime}(nt^{j})] \text{ or,}\\
        & [\textbf{u}^{\prime}(mt^{i}) = \textbf{u}^{\prime}(nt^{j}) \text{ and } i < j]     \text{ or,} \\
        & [\textbf{u}^{\prime}(mt^{i}) =   \textbf{u}^{\prime}(nt^{j}) \text{ and } i < j   \text{ and } m >_{\tau} n].
\end{align*}

\end{definition}

\begin{lemma} \label{lemma initial} \text{\normalfont (Compare with 2.3(c) in \cite{BrunsConca})} Suppose $R$ is a Noetherian domain containing the field $\mathbb{K}$. For $V$ a $R$-submodule of $R[X]$, 
$$\In_{\tau \textbf{u}}(V) R[t] = \In_{\tau \textbf{u}^{\prime}}(\homu(V)).$$

\begin{proof} Argue similarly to Proposition 2.3 (c) in \cite{BrunsConca}.
\end{proof}

\end{lemma}

\begin{prop} \label{prop: flat1} \text{ \normalfont (Compare with 2.3(d) in \cite{BrunsConca})} Suppose $R$ is a Noetherian domain containing the field $\mathbb{K}$. Let $I \subseteq X \cdot R[X]$ be an ideal of $R[X]$ and $\textbf{u}$ a nonnegative integral vector. Then $T / \homu(I)$ is a torsion-free $\mathbb{K}[t]$ module. 
\end{prop}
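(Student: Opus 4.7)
The plan is to reduce the claim to showing that $t$ is a non-zero-divisor on $M := T/\homu(I)$ and then to upgrade this to full $\mathbb{K}[t]$-torsion-freeness using the $\textbf{u}'$-graded structure on $M$. The foundational observation is that $\homu(I)$ is a $\textbf{u}'$-homogeneous ideal of $T$: each generator $\homu(f)$ is by construction $\textbf{u}'$-homogeneous of degree $\textbf{u}(f)$, and the $R[t]$-module they generate is in fact an ideal since $I$ is. Consequently $M$ carries a $\mathbb{Z}_{\geq 0}$-grading compatible with the $\mathbb{K}[t]$-action in which $t$ has degree $1$.

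For the regularity of $t$, I would follow the strategy of Bruns--Conca's Proposition 2.3(d). Suppose $t g \in \homu(I)$. Decomposing $g$ into $\textbf{u}'$-homogeneous pieces, I may reduce to the case that $g$ itself is $\textbf{u}'$-homogeneous of some degree $d$. Write $g = \sum_{i \geq 0} g_i t^i$ with $g_i \in R[X]$; $\textbf{u}'$-homogeneity of $g$ forces each nonzero $g_i$ to be $\textbf{u}$-homogeneous of degree $d - i$, so the various $g_i$'s sit in pairwise distinct $\textbf{u}$-graded components of $R[X]$. Setting $t = 1$ in $tg \in \homu(I)$ and using the obvious identity $\homu(f)|_{t = 1} = f$ gives $g|_{t = 1} \in I$, hence $\homu(g|_{t=1}) \in \homu(I)$. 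A direct monomial-by-monomial bookkeeping then yields the key identity
\[
g \;=\; t^{\,d - d'}\, \homu(g|_{t=1}), \qquad d' := \textbf{u}(g|_{t=1}),
\]
because the distinct $\textbf{u}$-degrees of the $g_i$'s prevent any cancellation upon specializing to $t=1$. This places $g \in \homu(I)$ and proves $t$ is a non-zero-divisor on $M$.

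To upgrade regularity to torsion-freeness, suppose $p(t)\bar g = 0$ in $M$ for some nonzero $p(t) \in \mathbb{K}[t]$. Factoring out the maximal power of $t$ from $p(t)$ and invoking the regularity of $t$, I may assume $p(0) \neq 0$. Decompose the class $\bar g \in M$ into $\textbf{u}'$-homogeneous summands $\bar g = \sum_{d \geq d_0} \bar g_d$ with $\bar g_{d_0} \neq 0$. The lowest-degree $\textbf{u}'$-homogeneous component of $p(t)g$ is precisely $p(0) g_{d_0}$, since each higher-power term $c_i t^i$ with $i \geq 1$ strictly raises the $\textbf{u}'$-degree. By graded-ness of $\homu(I)$ this component lies in $\homu(I)$, and since $p(0) \in \mathbb{K}^{\ast}$ this forces $\bar g_{d_0} = 0$, a contradiction.

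The main obstacle is really only the identity $g = t^{\,d - d'}\, \homu(g|_{t=1})$ for $\textbf{u}'$-homogeneous $g$; once this degree-tracking identity is in hand, both the regularity of $t$ and the graded-module upgrade are routine. The hypothesis that $R$ is merely a Noetherian $\mathbb{K}$-domain (rather than $\mathbb{K}$ itself) causes no real difficulty here, since all of my arguments treat elements of $R$ as graded scalars of $\textbf{u}'$-degree zero and the $\mathbb{K}[t]$-action enters only through the positive-degree element $t$.
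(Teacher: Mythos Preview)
Your proof is correct and takes a genuinely different route from the paper. The paper follows Bruns--Conca closely: it picks an auxiliary monomial order $\tau\textbf{u}'$ on $T$, invokes Lemma~\ref{lemma initial} to identify $\In_{\tau\textbf{u}'}(\homu(I)) = \In_{\tau\textbf{u}}(I)\,R[t]$, and then runs a Gr\"obner-style argument that peels off initial terms of $h$ one at a time, producing at each step some $g\in\homu(I)$ with $\In_{\tau\textbf{u}'}(h-g) < \In_{\tau\textbf{u}'}(h)$. The process terminates either with $h\in\homu(I)$ or with a nonzero constant $r\in R\cap\In_{\tau\textbf{u}}(I)$, and the hypothesis $I\subseteq X\cdot R[X]$ is used precisely to exclude this second outcome. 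By contrast, you bypass monomial orders entirely: you reduce to $\textbf{u}'$-homogeneous $g$, evaluate at $t=1$ to land in $I$, and recover $g$ exactly via the identity $g = t^{\,d-d'}\homu(g|_{t=1})$. This is cleaner, does not rely on Lemma~\ref{lemma initial}, and in fact never invokes the hypothesis $I\subseteq X\cdot R[X]$, so you have actually established a slightly stronger statement. Your separate upgrade from ``$t$ is regular'' to full $\mathbb{K}[t]$-torsion-freeness via the lowest-degree component is also correct and is a standard graded-module argument. The paper's approach has the advantage of fitting into the Gr\"obner--initial-ideal formalism it is building, which is reused in Proposition~\ref{main grobner theorem}; your approach has the advantage of being self-contained and more elementary.
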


\begin{proof} We give a sketch. Suppose $h\in T$, $s(t) \in \mathbb{K}[t]$ such that $s(t)h \in \homu(I)$. We must show that $h \in \homu(I)$. 

Because $\tau \textbf{u}^{\prime}$ is a monomial order, $\In_{\tau \textbf{u}^{\prime}}(s(t)h) = s_{k}t^{k} \In_{\tau \textbf{u}^{\prime}}(h)$, for $s_{k} \in K.$ By hypothesis and Lemma \ref{lemma initial}, $s_{k}t^{k} \In_{\tau \textbf{u}^{\prime}}(h) \in \In_{\tau \textbf{u}}(I)R[t].$ By comparing monomials and using the fact we can ``divide" an equation by $t$ if both sides are multiples of $t$, careful bookkeeping yields the following: there exists $g \in \homu(I)$ such that $h - g < h$ and $s(t)(h - g) \in \homu(I)$. Repeat the process to continually peel off initial terms and conclude either $h \in \homu(I)$ or there exists $0 \neq r \in R \cap \In_{\tau \textbf{u}}(I).$ Because $I \subseteq X \cdot R[X]$, we have $\In_{\tau \textbf{u}}(I) \subseteq X \cdot R[X].$ Therefore no such $r$ exists and the claim is proved.
\end{proof}

The following is the section's main proposition: 

\begin{prop} \label{main grobner theorem}\text{ \normalfont (Compare with 3.1 in \cite{BrunsConca})} Suppose $R$ is a Noetherian domain containing the field $\mathbb{K}$. Let $I \subseteq X \cdot R[X]$ be an ideal of $R[X]$ and $\textbf{u}$ a non-negative integral vector. Then the following hold:

\begin{enumerate}[(a)]

\item If $R[X] / \In_{\textbf{u}}(I)$ is Cohen--Macaulay, then $R[X]/ I$ is Cohen--Macaulay;
\item $\dim(R[X]/\In_{\textbf{u}}(I)) \geq \dim(R[X]/I)$.
\end{enumerate}
\end{prop}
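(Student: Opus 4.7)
The proof will adapt Bruns--Conca's Proposition 3.1 in \cite{BrunsConca} from the field-coefficient setting to our Noetherian-domain setting; the essential input that must be replaced is ordinary $\mathbb{K}[t]$-flatness of the homogenization, which here is furnished by Proposition \ref{prop: flat1}. That proposition guarantees $T/\homu(I)$ is $\mathbb{K}[t]$-torsion-free, so both $t$ and $t-1$ act as nonzerodivisors on $T/\homu(I)$, giving two short exact sequences
\begin{align*}
0 \to T/\homu(I) \xrightarrow{\cdot t} T/\homu(I) \to R[X]/\In_{\textbf{u}}(I) \to 0,\\
0 \to T/\homu(I) \xrightarrow{\cdot (t-1)} T/\homu(I) \to R[X]/I \to 0.
\end{align*}
These realize $R[X]/\In_{\textbf{u}}(I)$ and $R[X]/I$ as the special and generic fibers of $T/\homu(I)$, and let us transfer dimension and Cohen--Macaulayness between each fiber and $T/\homu(I)$ locally at a maximal ideal containing the respective regular element.

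To compare the two fibers, I would exploit the $\textbf{u}'$-grading on $T/\homu(I)$, under which $t$ has degree $1$. Given a maximal ideal $\mathfrak{m} \subset R[X]/I$, first lift through the second sequence to a maximal ideal $\mathfrak{M} \subset T/\homu(I)$ containing $t-1$; then pass to the largest homogeneous ideal $\mathfrak{M}^* \subseteq \mathfrak{M}$, which is again prime, and extend to a graded maximal ideal $\mathfrak{M}_0 \supseteq \mathfrak{M}^*$. Since any graded maximal ideal must absorb the positive-degree variable $t$, we have $t \in \mathfrak{M}_0$, so the first short exact sequence identifies $\mathfrak{M}_0$ with a maximal ideal $\mathfrak{m}_0 \subset R[X]/\In_{\textbf{u}}(I)$. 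Standard graded-localization facts (Bruns--Herzog \cite{BrunsHerzogCMRings}, Theorem 1.5.8 and Section 1.5) then give the dimension comparisons $\dim(T/\homu(I))_{\mathfrak{M}_0} \geq \dim(T/\homu(I))_{\mathfrak{M}^*} \geq \dim(T/\homu(I))_\mathfrak{M}$ and analogous preservation of Cohen--Macaulayness at each step.

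Assembling these pieces: for (b), the chain
\[
\dim(R[X]/I)_\mathfrak{m} + 1 = \dim(T/\homu(I))_\mathfrak{M} \leq \dim(T/\homu(I))_{\mathfrak{M}_0} = \dim(R[X]/\In_{\textbf{u}}(I))_{\mathfrak{m}_0} + 1
\]
gives $\dim(R[X]/I)_\mathfrak{m} \leq \dim R[X]/\In_{\textbf{u}}(I)$; taking the supremum over $\mathfrak{m}$ yields (b). For (a), the hypothesis that $R[X]/\In_{\textbf{u}}(I)$ is Cohen--Macaulay promotes through $(R[X]/\In_{\textbf{u}}(I))_{\mathfrak{m}_0}$ CM $\Rightarrow (T/\homu(I))_{\mathfrak{M}_0}$ CM (regularity of $t$) $\Rightarrow (T/\homu(I))_\mathfrak{M}$ CM (graded localization) $\Rightarrow (R[X]/I)_\mathfrak{m}$ CM (regularity of $t-1$), and (a) follows.

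The main obstacle will be the graded-localization step, where one must verify that for a general Noetherian domain $R$ sitting in degree zero the graded core $\mathfrak{M}^*$ and graded maximal ideal $\mathfrak{M}_0$ behave as required: that $\dim$ and CM data transfer in the correct direction, and that $\mathfrak{M}_0$ is genuinely a maximal ideal of $T/\homu(I)$ corresponding to a maximal ideal of $R[X]/\In_{\textbf{u}}(I)$. Once this graded bookkeeping is handled, the rest of the argument is routine going-mod-a-nonzerodivisor manipulation.
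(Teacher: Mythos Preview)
Your treatment of (a) is essentially the paper's argument, only phrased prime-by-prime rather than globally: the paper observes that every $\textbf{u}'$-graded maximal ideal of $T/\homu(I)$ contains $t$, so Cohen--Macaulayness of the special fiber plus regularity of $t$ gives Cohen--Macaulayness of $T/\homu(I)$ at every graded maximal ideal, hence everywhere (Bruns--Herzog Exercise~2.1.27); it then descends to the generic fiber. Your local chain through $\mathfrak{M}_0 \supseteq \mathfrak{M}^* \subseteq \mathfrak{M}$ encodes the same content.

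For (b), however, your key inequality is in the wrong direction. You claim $\dim(T/\homu(I))_{\mathfrak{M}^*} \geq \dim(T/\homu(I))_{\mathfrak{M}}$, but since $\mathfrak{M}$ contains the inhomogeneous element $t-1$ it is a non-graded prime, and the standard fact (e.g.\ Bruns--Herzog Theorem~1.5.8) gives $\operatorname{ht}\mathfrak{M} = \operatorname{ht}\mathfrak{M}^* + 1$, i.e.\ $\dim(T/\homu(I))_{\mathfrak{M}} = \dim(T/\homu(I))_{\mathfrak{M}^*} + 1$. Feeding this into your displayed chain yields only $\dim(R[X]/I)_{\mathfrak{m}} \leq \dim(R[X]/\In_{\textbf u}(I))_{\mathfrak{m}_0} + 1$, which is too weak, and without equidimensionality (not available for general Noetherian $R$) you cannot recover the lost $+1$ by enlarging $\mathfrak{M}^*$ to $\mathfrak{M}_0$.

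The paper sidesteps this local height bookkeeping entirely: it uses the dehomogenization isomorphism $(T/\homu(I))[t^{-1}] \simeq \bigl(T/(\homu(I),t-1)\bigr)[y,y^{-1}]$ (Bruns--Herzog Proposition~1.5.18) to compare global dimensions directly, obtaining
\[
\dim T/(\homu(I),t-1) = \dim (T/\homu(I))[t^{-1}] - 1 \leq \dim T/\homu(I) - 1 = \dim T/(\homu(I),t),
\]
where the last equality holds because $t$ lies in every graded maximal ideal and is a nonzerodivisor. This global route is what makes the inequality go through; your prime-by-prime comparison does not.
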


\begin{proof}
(a). We follow the argument of Proposition 3.1 in \cite{BrunsConca}: first, we show that Cohen--Macaulayness percolates from $T/(\homu(I), t)$ to $T/\homu(I)$; second, that it descends from $T/\homu(I)$ to $T/(\homu(I), t-1)$. 

First, the percolation. Since $\textbf{u}^{\prime}(t) = 1$, any maximal $\textbf{u}^{\prime}$-graded ideal $\mathfrak{m}^{\star}$ of $ T /\homu(I)$ contains t. Consider the commutative diagram

\[
\begin{tikzcd}
    & T / \homu(I) \arrow[r] \arrow[d]
        & T_{m} / (\homu(I))_{\mathfrak{m}^{\star}} \arrow[d] \\
    & T/ (\homu(I), t) \arrow[r]
        & T_{\mathfrak{m}^{\star}}/(\homu(I), t)_{\mathfrak{m}^{\star}},
\end{tikzcd}
\]
with horizontal maps localization at $\mathfrak{m}^{\star}$, vertical maps quotients by $t$.

It suffices to show that $T/\homu(I)$ is Cohen--Macaulay after localizing at a maximal $\textbf{u}^{\prime}$-graded ideal $\mathfrak{m}^{\star}$ (cf. Exercise 2.1.27 \cite{BrunsHerzogCMRings}). Since $t \in \mathfrak{m}^{\star}$, by assumption $T_{\mathfrak{m}^{\star}}/(\homu(I), t)_{\mathfrak{m}^{\star}}$ is Cohen--Macaulay. And since $t$ is a non-zero divisor on $T_{\mathfrak{m}^{\star}}/\homu(I)_{\mathfrak{m}^{\star}}$ by Proposition \ref{prop: flat1}, we see $T_{\mathfrak{m}^{\star}}/\homu(I)_{\mathfrak{m}^{\star}}$ is Cohen--Macaulay (cf. Theorem 2.1.3 in \cite{BrunsHerzogCMRings}).

It remains to show that Cohen--Macaulayness descends from $T/ \homu(I)$ to $T/ (\homu(I), t-1)$. By the universal property of localization we have:

\begin{equation} \label{commutative diagram localization}
\begin{tikzcd}
    & T / \homu(I) \arrow[r] \arrow[dd] 
        & T/(\homu(I), t-1) \\
                                \\
    & (T / \homu(I))[t^{-1}] \arrow[uur, "\gamma"].
\end{tikzcd}
\end{equation}

It is well known (cf. Proposition 1.5.18 in \cite{BrunsHerzogCMRings}) that 
\[
(T / \homu(I))[t^{-1}] \simeq ((T / \homu(I))[t^{-1}])_{0}[y, y^{-1}].
\]
So $\gamma$ of \eqref{commutative diagram localization} induces, where $-_{0}$ denotes the degree $0$ elements, the ring maps:
\[
T / (\homu, t-1) \simeq \frac{(T / \homu(I))[t^{-1}]}{(t-1)(T/\homu(I))[t^{-1}])} \simeq ((T / \homu(I))[t^{-1}])_{0}.
\]
We have 
\begin{equation} \label{eqn section 2 CM iso}
(T/\homu(I))[t^{-1}] \simeq (T/(\homu(I), t-1))[y, y^{-1}].
\end{equation}

Therefore, since Cohen--Macualayness is preserved under localization at a non-zero divisor, all we need to show is that if $B[y, y^{-1}]$ is a Laurent polynomial ring that is Cohen--Macaulay then $B$ is an Cohen--Macaulay. To see this take a $\mathfrak{m} \in \text{mSpec}(B)$ and look at $(\mathfrak{m}, y-1) \in \Spec(B[y])$ and the corresponding prime ideal in $B[y, y^{-1}]$.

Now we move onto (b). The descent part of part (a) gives us the plan:
\begin{align*}
    \dim(T / (\homu(I),t-1)) &= \dim((T/(\homu(I), t-1))[y, y^{-1}]) -1 \\ 
        &= \dim((T/\homu(I))[t^{-1}]) - 1 \\
        &\leq \dim(T/\homu(I))-1 \\
        &= \dim(T/(\homu(I), t)).
\end{align*}

The second equality follows by \eqref{eqn section 2 CM iso}. The inequality is not an equality because localization may lower dimension.
For the last equality use the fact dimension of a graded ring can be computed by looking only at the height of the graded maximal ideals (Corollary 13.7 \cite{EisenbudCommutative}). In $T/\homu(I)$, $t$ is contained in all graded maximal ideals; since it is a non-zero divisor, its associated primes are not minimal. 
\end{proof}

\begin{remark}\label{remark grobner justification}
\begin{enumerate}[(a)]

\item This proposition generalizes the common geometric picture for $R=\mathbb{K}$. In this setting the map $\mathbb{K}[t] \to T/\homu(I)$ gives a flat family whose generic fiber is $R/I$ and whose special fiber is $R/\In_{\textbf{u}}(I)$. In our generality, it is easy to extend Proposition \ref{prop: flat1} and show that $\mathbb{K}[t] \hookrightarrow T / \homu(I)$ is a flat ring map whose special fiber is $R[X] / \In_{\textbf{u}}(I)$ and whose generic fiber is $R[X] / I.$

\item In Section 2 we study ideals $I \subseteq (Y,S) \cdot \mathscr{O}_{X,\x}[Y,S]$ where $\mathscr{O}_{X}$ is an analytic structure sheaf and the $\textbf{u}$-grading assigns $1$ to the $y$-terms and $0$ to the $s$-terms. Proposition \ref{main grobner theorem} applies with $R = \mathscr{O}_{X,\x}$. 

\end{enumerate}
\end{remark}

\section{List of Symbols}

\begin{itemize}
    \item $\overline{(-)}$ is the coset representative of $(-)$ in the appropriate quotient object.
    \item $\In_{\textbf{u}}$ is the initial ideal with respect to the $\textbf{u}$-grading;
    \item $F_{(0,1)}$ is the order filtration on $\D_{X,\x}$ and $F_{(0,1,1)}$ is the total order filtration on $\D_{X,\x}$;
    \item $\gr_{(0,1)} (\D_{X,\x})$ is the associated graded object of $\D_{X,\x}$ corresponding to the order filtration and $\gr_{(0,1,1)}(\D_{X,\x}[S])$ is the associated graded object corresponding to the total order filtration;
    \item $\theta_{F} = \ann_{\D_{X,\x}[S]} \bigcap F_{(0,1,1)}^{1}$ are the annihilating derivations of $F^{S}$;
    \item $\psi_{F} : \Der_{X}(-\log f) \to \theta_{F}$ is the map $\delta \mapsto \delta - \sum \frac{\delta \bullet f_{k}}{f_{k}} s_{k};$
    \item $L_{F} = \gr_{(0,1,1)}(\D_{X,\x}[S]) \cdot \gr_{(0,1,1)}(\psi_{F}(\Der_{X}(-\log_{0} f)$ is the generalized Liouville ideal and $\widetilde{L_{F}} = \gr_{(0,1,1)}(\D_{X,\x}[S]) \cdot \theta_{F}$;
    \item $B_{f}$ and $\V (B_{f})$ are the classical Bernstein--Sato ideal and variety of $f$ whereas $B_{F}$ and $ \V (B_{F})$ are the multivariate Bernstein--Sato ideal and variety of $F$;
    \item $\nabla_{A} : \frac{\D_{X,\x}[S] F^{S}}{(S-A) \D_{X,\x}[S] F^{S}} \to \frac{\D_{X,\x}[S] F^{S}}{(S-(A-1)) \D_{X,\x}[S] F^{S}}$ is the $\D_{X,\x}$-map induced by sending each $s_{k}$ to $s_{k} + 1.$
    \item $\Sp_{\theta_{F,\x}, \Der_{X,\x} \oplus \mathscr{O}_{X,\x}^{r}}$ is a complex associated to the Lie--Rinehart algebras $\theta_{F,\x} \subseteq \Der_{X,\x} \oplus \mathscr{O}_{X,\x}^{r}$, cf. Definition \ref{def LR complex};
    \item $\mathbb{D}(-) = (\text{RHom}_{\D_{X,\x}}(-, \D_{X,\x})^{\text{left}}$, $\mathbb{D}_{S}(-) = (\text{RHom}_{\D_{X,\x}[S]}(-, \D_{X,\x}[S])^{\text{left}};$
    \item $\chi_{A}: \mathbb{D} \left( \frac{\D_{X,\x}[S] F^{S}}{(S-A)\D_{X,\x}[S] F^{S}} \right) \xrightarrow{\simeq} \frac{\D_{X,\x}[S] F^{S}}{(S-(-A-1)\D_{X,\x}[S]F^{S}}[n]$, for strongly Euler-homogeneous, Saito-holonomic, and free $f$, cf. Theorem \ref{thm nabla dual}.
\end{itemize}

\bibliographystyle{amsplain}

\begin{thebibliography}{}
\bibitem{BrunsConca}
W. Bruns and A. Conca.
\textit{Grobner Bases, Initial Ideals and Initial Algebras.}
Journal?. 2003.

\bibitem{Eisenbud}
David Eisenbud.
\textit{Commutative Algebra with a View Toward Algebraic Geometry.}
Springer. 1995.

\bibitem{connected}
Nam Kyun Kim and Yang Lee.
\textit{Armendariz Rings and Reduced Rings.}
Journal of Algebra. 2000.

\bibitem{uli}
Uli Walther
\textit{The Jacobian Module, the Milnor Fiber, and the $D$-module generated by $f^{s}$}
??

\bibitem{Saito1}
Kyoji Saito
\textit{Theory of logarithmic differential forms and logarithmic vector fields}
??

\bibitem{CM Rings}
Winifried Bruns and H. Herzog
\textit{Cohen--Macaulay Rings}
??

\bibitem{LCT for ILC}
F. J. Calderon Moreno and L. Narv\'aez Macarro
\textit{On the logarithmic comparison theorem for integrable logarithmic connections}
??

\bibitem{Moreno free}
F. J. Calderon Moreno 
\textit{Logarithmic differential operators and logarithmic de Rham complex relative to a free divisor}
???

\bibitem{Filtration definition}
Ph. Maisonobe and X. Mebkhout
\textit{Le theoreme de comparaison pour les cycles evanescents}
???

\bibitem{symmetry}
Luis Narv\'aez Macarro
\textit{A duality approach to the symmetry of Bernstein--Sato polynomials of free divisors}
???

\bibitem{Budur Local}
Nero Budur
\textit{Bernstein--Sato Ideals and Local Systems}
???

\bibitem{Budur et al}
Nero Budur, Yongqiang Liu, Luis Samuell, Nero Budur
\textit{Cohomology Support Loci of Local Systems}
???

\bibitem{Oaku}
Toshinori Oaku and Nobuki Takayama
\textit{An algorithm for de Rham cohomology groups of the complement of an affine variety via D-module computation}
???

\bibitem{Bernstein}
I. N. Bersntein. 
\textit{Analytic continuation of generalized functions with respect to a parameter}
???

\bibitem{Sabbah}
39 in local systems

\bibitem{Bahloul and Oaku}
Rouchdi Bahloul and Toshinori Oaku
\textit{Local Bernstein--Sato Ideals: Algorithms and Examples}
???

\bibitem{Bjork}
Bj\"ork
\textit{??}
???

\bibitem{Maisonobe}
Philippe Maisonobe
\textit{L'ideal de Bernstein d'un arrangement libre d'hyperplans lineairs}
???

\bibitem{Rinehart}
George Rinehart
\textit{Differential Forms on General Commutative Algebras}
???




\end{thebibliography}

\begin{thebibliography}{amsplain}

\bibitem{BahloulOaku}
Rouchdi Bahloul and Toshinori Oaku.
\newblock Local {B}ernstein-{S}ato ideals: algorithm and examples.
\newblock {\em J. Symbolic Comput.}, 45(1):46--59, 2010.

\bibitem{Bernstein}
I.~N. Bern\v{s}te\u{\i}n.
\newblock Analytic continuation of generalized functions with respect to a
  parameter.
\newblock {\em Funkcional. Anal. i Prilo\v{z}en.}, 6(4):26--40, 1972.

\bibitem{Bjork}
Jan-Erik Bjork.
\newblock {\em Analytic {$\mathscr{D}$}-modules and applications}, volume 247
  of {\em Mathematics and its Applications}.
\newblock Kluwer Academic Publishers Group, Dordrecht, 1993.

\bibitem{BruceRoberts}
J.~W. Bruce and R.~M. Roberts.
\newblock Critical points of functions on analytic varieties.
\newblock {\em Topology}, 27(1):57--90, 1988.

\bibitem{BrunsConca}
Winfried {Bruns} and Aldo {Conca}.
\newblock {Groebner bases, initial ideals and initial algebras}.
\newblock {\em arXiv Mathematics e-prints}, page math/0308102, August 2003.

\bibitem{BrunsHerzogCMRings}
Winfried Bruns and J\"{u}rgen Herzog.
\newblock {\em Cohen-{M}acaulay rings}, volume~39 of {\em Cambridge Studies in
  Advanced Mathematics}.
\newblock Cambridge University Press, Cambridge, 1993.

\bibitem{BudurBernstein--Sato}
Nero Budur.
\newblock Bernstein-{S}ato ideals and local systems.
\newblock {\em Ann. Inst. Fourier (Grenoble)}, 65(2):549--603, 2015.

\bibitem{BudurWangEtAl}
Nero Budur, Yongqiang Liu, Luis Saumell, and Botong Wang.
\newblock Cohomology support loci of local systems.
\newblock {\em Michigan Math. J.}, 66(2):295--307, 2017.

\bibitem{BudurConjectureOver}
Nero Budur, Robin van der Veer, Lei Wu, Peng Zhou.
\newblock Zero loci of Bernstein-Sato Ideals.
\newblock {\em arXiv e-prints}, page arXiv:1907.04010, July 2019.

\bibitem{MorenoMacarroLogarithmic}
F.~J. Calder\'{o}n~Moreno and L.~Narv\'{a}ez~Macarro.
\newblock On the logarithmic comparison theorem for integrable logarithmic
  connections.
\newblock {\em Proc. Lond. Math. Soc. (3)}, 98(3):585--606, 2009.

\bibitem{MorenoMacarroQuasi-Homogeneous}
Francisco Calder\'{o}n-Moreno and Luis Narv\'{a}ez-Macarro.
\newblock The module {$ \D f^s$} for locally quasi-homogeneous free divisors.
\newblock {\em Compositio Math.}, 134(1):59--74, 2002.

\bibitem{MorenoLogarithmicDifferential}
Francisco~J. Calder\'{o}n-Moreno.
\newblock Logarithmic differential operators and logarithmic de {R}ham
  complexes relative to a free divisor.
\newblock {\em Ann. Sci. \'{E}cole Norm. Sup. (4)}, 32(5):701--714, 1999.

\bibitem{EisenbudCommutative}
David Eisenbud.
\newblock {\em Commutative algebra}, volume 150 of {\em Graduate Texts in
  Mathematics}.
\newblock Springer-Verlag, New York, 1995.
\newblock With a view toward algebraic geometry.

\bibitem{GrangerSchulzeFormal}
Michel Granger and Mathias Schulze.
\newblock On the formal structure of logarithmic vector fields.
\newblock {\em Compos. Math.}, 142(3):765--778, 2006.

\bibitem{24Hours}
Srikanth~B. Iyengar, Graham~J. Leuschke, Anton Leykin, Claudia Miller, Ezra
  Miller, Anurag~K. Singh, and Uli Walther.
\newblock {\em Twenty-four hours of local cohomology}, volume~87 of {\em
  Graduate Studies in Mathematics}.
\newblock American Mathematical Society, Providence, RI, 2007.

\bibitem{Kashiwara}
M.~Kashiwara.
\newblock Vanishing cycle sheaves and holonomic systems of differential
  equations.
\newblock In {\em Algebraic geometry ({T}okyo/{K}yoto, 1982)}, volume 1016 of
  {\em Lecture Notes in Math.}, pages 134--142. Springer, Berlin, 1983.

\bibitem{Kashiwara-local-existence}
Masaki Kashiwara.
\newblock {$B$}-functions and holonomic systems. {R}ationality of roots of
  {$B$}-functions.
\newblock {\em Invent. Math.}, 38(1):33--53, 1976/77.

\bibitem{Lichtin}
Ben Lichtin.
\newblock ``{G}eneralized {D}irichlet series and {$b$}-functions''
\newblock {\em Compos. Math.}, 72(2):237--239, 1989.

\bibitem{CharacteristivcVaritiesHypersurface}
Yongqiang Liu and Lauren\c{t}iu Maxim.
\newblock Characteristic varieties of hypersurface complements.
\newblock {\em Adv. Math.}, 306:451--493, 2017.

\bibitem{MaisonobeGeneral}
Philippe {Maisonobe}.
\newblock {Filtration Relative, l'Id\'{e}al de Bernstein et ses pentes}.
\newblock {\em arXiv e-prints}, page arXiv:1610.03354, Oct 2016.

\bibitem{MaisonobeArrangement}
Philippe {Maisonobe}.
\newblock {L'id\'{e}al de Bernstein d'un arrangement libre d'hyperplans
  lin\'{e}aires}.
\newblock {\em arXiv e-prints}, page arXiv:1610.03356, October 2016.

\bibitem{Malgrange}
B.~Malgrange.
\newblock Le polyn\^{o}me de {B}ernstein d'une singularit\'{e} isol\'{e}e.
\newblock In {\em Fourier integral operators and partial differential equations
  ({C}olloq. {I}nternat., {U}niv. {N}ice, {N}ice, 1974)}, pages 98--119.
  Lecture Notes in Math., Vol. 459. Springer, Berlin, 1975.

\bibitem{Milnor}
John Milnor.
\newblock {\em Singular points of complex hypersurfaces}.
\newblock Annals of Mathematics Studies, No. 61. Princeton University Press,
  Princeton, N.J.; University of Tokyo Press, Tokyo, 1968.

\bibitem{MacarroDuality}
Luis Narv\'{a}ez~Macarro.
\newblock A duality approach to the symmetry of {B}ernstein-{S}ato polynomials
  of free divisors.
\newblock {\em Adv. Math.}, 281:1242--1273, 2015.

\bibitem{OakuTakayama}
Toshinori Oaku and Nobuki Takayama.
\newblock An algorithm for de {R}ham cohomology groups of the complement of an
  affine variety via {$D$}-module computation.
\newblock {\em J. Pure Appl. Algebra}, 139(1-3):201--233, 1999.
\newblock Effective methods in algebraic geometry (Saint-Malo, 1998).

\bibitem{Rinehart}
George~S. Rinehart.
\newblock Differential forms on general commutative algebras.
\newblock {\em Trans. Amer. Math. Soc.}, 108:195--222, 1963.

\bibitem{Sabbah}
C.~Sabbah.
\newblock Proximit\'{e} \'{e}vanescente. {II}. \'{E}quations fonctionnelles
  pour plusieurs fonctions analytiques.
\newblock {\em Compositio Math.}, 64(2):213--241, 1987.

\bibitem{SaitoTheoryLogarithmic}
Kyoji Saito.
\newblock Theory of logarithmic differential forms and logarithmic vector
  fields.
\newblock {\em J. Fac. Sci. Univ. Tokyo Sect. IA Math.}, 27(2):265--291, 1980.

\bibitem{uli}
Uli Walther.
\newblock The {J}acobian module, the {M}ilnor fiber, and the {$D$}-module
  generated by {$f^s$}.
\newblock {\em Invent. Math.}, 207(3):1239--1287, 2017.

\end{thebibliography}


\end{document}